\documentclass[a4paper,12pt]{report}

\usepackage{etoolbox}
\usepackage{latexsym}
\usepackage{amsfonts}
\usepackage{amsmath, amssymb, amsthm,amscd,epsfig,mathtools}
\usepackage{mathrsfs}
\usepackage{graphicx}
\usepackage{setspace}
\usepackage{xcolor}
\usepackage{dsfont}
\author{Quinn Patterson}
\usepackage{amsrefs}
\usepackage{pst-node}
\usepackage{tikz-cd} 
\usepackage[all]{xy}
\usepackage{appendix}
\usetikzlibrary{decorations.markings}
\usetikzlibrary{arrows}
\usetikzlibrary{shapes, decorations.text}

\setstretch{1.3}
\widowpenalty10000
\clubpenalty10000
\allowdisplaybreaks

\makeatletter
\def\@tocline#1#2#3#4#5#6#7{\relax
	\ifnum #1>\c@tocdepth 
	\else
	\par \addpenalty\@secpenalty\addvspace{#2}%
	\begingroup \hyphenpenalty\@M
	\@ifempty{#4}{%
		\@tempdima\csname r@tocindent\number#1\endcsname\relax
	}{%
		\@tempdima#4\relax
	}%
	\parindent\z@ \leftskip#3\relax \advance\leftskip\@tempdima\relax
	\rightskip\@pnumwidth plus4em \parfillskip-\@pnumwidth
	#5\leavevmode\hskip-\@tempdima
	\ifcase #1
	\or\or \hskip 1em \or \hskip 2em \else \hskip 3em \fi%
	#6\nobreak\relax
	\dotfill\hbox to\@pnumwidth{\@tocpagenum{#7}}\par
	\nobreak
	\endgroup
	\fi}
\makeatother

\numberwithin{equation}{section} 
\setlength{\textwidth}{6.5in}
\setlength{\textheight}{8.6in}
\setlength{\oddsidemargin}{0in}
\setlength{\evensidemargin}{0in}

\newcommand{\PP}{\mathcal{P}}
\newcommand{\HH}{\mathcal{H}}
\newcommand{\FF}{\mathcal{F}}

\newcommand{\TT}{\mathcal{T}}
\newcommand{\BB}{\mathcal{B}}
\newcommand{\LL}{\mathcal{L}}
\newcommand{\MM}{\mathcal{M}}
\newcommand{\UU}{\mathcal{U}}
\newcommand{\KK}{\mathcal{K}}
\newcommand{\OO}{\mathcal{O}}

\newcommand{\GG}{\mathcal{G}}

\newcommand{\N}{\mathbb{N}}
\newcommand{\Z}{\mathbb{Z}}
\newcommand{\R}{\mathbb{R}}
\newcommand{\C}{\mathbb{C}}
\newcommand{\T}{\mathbb{T}}

\newcommand{\Q}{\mathbb{Q}}
\newcommand{\h}{\mathbb{H}}

\newcommand{\ep}{\epsilon}
\newcommand{\dis}{\displaystyle}
\newcommand{\bs}{\backslash}


\newcommand{\Image}{\operatorname{Image}}
\newcommand{\Tr}{\operatorname{Tr}}
\newcommand{\diag}{\operatorname{diag}}
\newcommand{\Span}{\operatorname{span}}
\newcommand{\Cl}{\operatorname{Cl}}

\newcommand{\Ker}{\operatorname{Kernel}}
\newcommand{\Coker}{\operatorname{Cokernel}}


\newcommand{\gr}{\operatorname{gr}}
\newcommand{\lt}{\operatorname{lt}}
\newcommand{\Cliff}{\mathbb{C}\!\operatorname{liff}}
\newcommand{\Otimes}{\operatorname{\widehat{\otimes}}}
\newcommand{\bigOtimes}{\operatorname{\widehat{\bigotimes}}}

\DeclarePairedDelimiter\IP{\langle}{\rangle}

\DeclarePairedDelimiter\BigIP{\Big\langle}{\Big\rangle}

\DeclarePairedDelimiter\normof{\Vert}{\Vert}
\DeclarePairedDelimiter\bignormof{\big\Vert}{\big\Vert}
\DeclarePairedDelimiter\Bignormof{\Big\Vert}{\Big\Vert}
\DeclarePairedDelimiter\Biggnormof{\Bigg\Vert}{\Bigg\Vert}

\theoremstyle{plain}
\newtheorem{theorem}{Theorem}[section]
\newtheorem{lemma}[theorem]{Lemma}
\newtheorem{proposition}[theorem]{Proposition}
\newtheorem{corollary}[theorem]{Corollary}

\theoremstyle{definition}
\newtheorem{definition}[theorem]{Definition}

\newtheorem{example}[theorem]{Example}

\newtheorem{notation}[theorem]{Notation}

\theoremstyle{remark}
\newtheorem*{remark}{Remark}
\newtheorem*{sketch}{Sketch of proof}

\pagestyle{headings}

\begin{document}

\title{\vspace{-2.5cm}
{\centering\includegraphics[width=0.55\textwidth]{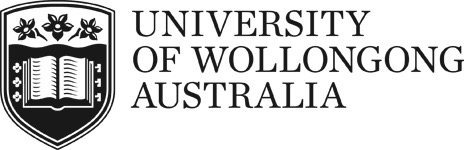}}\\~\\
\singlespacing{\Huge{\textsc{Exact sequences in graded $KK$-theory for Cuntz-Pimsner algebras}}}}

\author{By \\~\\
\Huge{Quinn Patterson} \\~\\ ~\\ ~\\
\normalsize{Supervisors: Professor Aidan Sims and Doctor Adam Sierakowski} \\ \\ \\
\large{\textsc{Bachelor of Mathematics Advanced (Honours)}} \\
\large{School of Mathematics and Applied Statistics}}

\date{\large{\textsc{2018}}}

\maketitle

\begin{abstract}
	In this thesis we generalise the six-term exact sequence in graded $KK$-theory of \cite[Theorem~4.4]{ThePaper} to allow correspondences with non-compact left action. In particular, this allows us to compute the graded $KK$-theory of row-infinite graphs. We develop the theory necessary for following the arguments of \cite{ThePaper} and \cite{FreeProbTheory}, with detailed sections on Hilbert modules, $C^*$-correspondences, Crossed products, Toeplitz algebras, Cuntz-Pimsner algebras and $KK$-theory.
\end{abstract}

\newpage
\pagenumbering{gobble}

\thispagestyle{plain}
\vspace*{0.05\textheight}
\centerline{\bf\Large Acknowledgements}
\vspace{0.5em}

\begin{center}
	{\setlength{\parindent}{0cm}
	First and foremost I would like to thank my family and my partner Jedda for supporting me throughout this thesis and always. I haven't exactly made that an easy task and I appreciate the patience you all have had with me.\\
	I would like to express my deepest gratitude to my supervisors Aidan Sims and Adam Sierakowski for taking me on as their honours student. I could not have asked for better supervisors; their guidance, support and stream of knowledge has been unending. I thank them for sharing their knowledge, for answering my emails at two in the morning, dedicating so much of their time to my thesis, and for teaching me how to be a mathematician. I can only assume that I am single-handedly responsible for wiping out what would have otherwise been several years worth of red pens with my long-winded, convoluted and often completely incorrect writing style.\\
	Besides my supervisors I am incredibly grateful for the support of Adam Rennie who has been a constant presence throughout my entire university experience and always has a book for every occasion. Adam's $KK$-theory lectures gave me new insight which I hope has seeped into the pages of this thesis. I would like to thank Jono Taylor, for proofreading my thesis and always being around to bounce ideas off during my thesis and my entire undergraduate degree and Alexander Mundey for years of being an awesome maths senpai by showing me the bigger context in a way that makes so much sense. Thank you to Angus, Tess, Laura, Jacob and Jess who have each made my time at UOW such an amazing experience with their friendship and support.
	} 
\end{center}

{\small \tableofcontents}\normalsize

\pagenumbering{arabic}
\chapter{Introduction}
\label{Lit review}

$K$-theory is a useful tool for understanding the structure of $C^*$-algebras, in particular $K$-theory gives a method for proving when two $C^*$-algebras are not isomorphic. To each $C^*$-algebra $A$ we associate a pair of ordered abelian groups $K_0(A)$ and $K_1(A)$. Isomorphic $C^*$-algebras have isomorphic $K$-groups, so two $C^*$-algebras with different $K$-groups are not isomorphic.\\
A grading on a $C^*$-algebra $A$ is a self inverse automorphism $\alpha_A$. Such an operator necessarily gives a decomposition of $A$ as the direct sum of two subspaces $A_0$ and $A_1$ on which $\alpha_A$ acts as the identity and by multiplication by $-1$ respectively. We have $A_iA_j\subseteq A_{i+j}$ mod 2, so a grading can be viewed as a representation of $\Z_2$ on $A$. As such, a grading is an algebraic way of splitting a $C^*$-algebra into two natural pieces. A $C^*$-algebra $A$ may admit multiple gradings, so we consider pairs $(A,\alpha_A)$ and refer to $A$ as a graded $C^*$-algebra. Graded $C^*$-algebras are of interest in theoretical physics where they are often known as `superalgebras'.\\ 
Just as $K$-theory can be computed to distinguish $C^*$-algebras, we would like a version of $K$-theory that can be used to distinguish \emph{graded} $C^*$-algebras. As is acknowledged in \cite[Chapter~3]{ThePaper}, there does not appear to be a universally-accepted definition of graded $K$-theory for $C^*$-algebras. In this paper we follow the definitions of \cite{ThePaper} and \cite{Haag} by using Kasparov's $KK$-theory to define graded $K$-theory.\\
 $K$-theory has a sister theory called $K$-homology which gained popularity in the 70s. $K$-homology was the subject of the seminal 1977 paper \cite{BrownDouglasFillmore} of Brown, Douglas and Fillmore and had been used in 1968 by Atiyah and Singer in proving the Atiyah-Singer index theorem \cite{AtiyahSinger}. $KK$-theory was introduced by Gennadi Kasparov as a unifying theory that encompassed both $K$-theory and $K$-homology. More specifically, to each pair of graded $C^*$-algebras $A$ and $B$ we may assign abelian groups $KK_0(A,B)$ and $KK_1(A,B)$ for which when $A$ is trivially graded, $KK_i(\C,A)$ are isomorphic to the $K$-theory groups $K_i(A)$ and $KK_i(A,\C)$ are isomorphic to the $K$-homology groups $K^i(A)$. $KK$-theory however accounts for graded information where $K$-theory and $K$-homology do not. As such, it is natural to define the graded $K$-theory groups of a graded $C^*$-algebra $A$ to be $KK_i(\C,A)$ (and one could make a similar definition of graded $K$-homology).\\ 
 In this thesis we are particularly interested in computing the graded $K$-theory of \emph{Cuntz-Pimsner} algebras. In \cite{FreeProbTheory} Michael Pimsner defined a $C^*$-algebra $\OO_X$ constructed from a correspondence $X$ over a $C^*$-algebra $A$ (see Definition~\ref{def: correspondence}) using representations of $X$ on the Fock space $\FF_X$ (see Example~\ref{Fock this}). These $C^*$-algebras have since been called \emph{Cuntz-Pimsner} algebras and include graph algebras for graphs with no sinks (see \cite{CuntzKreigerIsCuntzPimsner}) and crossed products by $\Z$ (see \ref{thm: Crossed by Z is Cuntz Pimsner}). Thus their study has been of great interest. Takeshi Katsura provided an alternative definition of $\OO_X$ in \cite{Katsura} based on universal properties of $\OO_X$ that diverges from Pimsner's definition when the left action of the correspondence $X$ is not injective. In \cite[Proposition~3.10]{Katsura}, Katsura showed that the graph algebra of graphs with sinks are isomorphic to $\OO_X$ for a correspondence $X$ with non-injective left action, which has provided a strong argument for using Katsura's definition when $X$ has non-injective left action, as is stated in \cite[Chapter~1]{AddingTails}.\\ 
 In \cite[Theorem~2.4]{PimsnerVoiculescu}, Pimsner and Voiculescu proved the existence of an exact sequence in $K$-theory known as the Pimsner-Voiculescu exact sequence which can be used to compute the $K$-theory of crossed products by $\Z$. This gave a powerful tool for understanding the internal structure of crossed products and as pointed out in \cite[10.2]{Blackadar}, was one of the first non-trivial applications of $K$-theory to the structure of $C^*$-algebras. In \cite[Theorem~4.9]{FreeProbTheory}, Pimsner proved the existence of a much more general exact sequence in $KK$-theory involving Cuntz-Pimsner algebras which include crossed products by $\Z$. However, Pimsner made the assumption that $C^*$-algebras involved were trivially graded. In \cite[Theorem~4.4]{ThePaper}, Kumjian, Pask and Sims modified the arguments of Pimsner to show that Pimsner's exact sequence holds true for graded $C^*$-algebras, giving a powerful tool for the computation of graded $K$-theory of $C^*$-algebras.\\ 
 In \cite{ThePaper} Kumjian, Pask and Sims made the assumption that the left action of the correspondence $X$ be injective and by compact operators, whereas Pimsner did not require that the left action be by compacts in \cite{FreeProbTheory} (throughout \cite{FreeProbTheory} the assumption is made that the left action be injective, but it is noted in Remark~1.2~(1) that this is just for simplicity). In particular, when $\OO_X$ is a graph algebra, these two conditions correspond to row-finite graphs with no sources, which is an undesirable restriction.\\ 
 In the context of graph $C^*$-algebras there is a simple technique for dealing with sources called `adding tails' and in \cite{AddingTails} it is shown how this process can be carried over to correspondences where the left action is not injective. In Chapter~\ref{ch: closing remarks} we give some indication how the work of \cite{AddingTails} could be used to find a graded exact sequence from Pimsner's exact sequence for correspondences with non-injective left actions. On the other hand, row-finiteness is an undesirable assumption, and the closest analogue to adding tails for this situation is called `desingularisation' (see \cite{Desingularisation}) and it is quite complicated, particularly if we must also keep track of gradings.\\ 
 Our aim in this thesis is to show that the modifications of Kumjian, Pask and Sims of Pimsner's exact sequence can be generalised to correspondences with non-compact left actions by following the arguments of Pimsner in \cite{FreeProbTheory}. This allows us to compute the graded $K$-theory of graph algebras for row-infinite graphs which we carry out in Chapter~\ref{Graph algebras}. Our results in this area mirror the known results of \cite[Theorem~6.1]{ArbitraryGraphKTheory} for the regular $K$-theory of row-infinite graph algebras and generalises \cite[Corollary~8.3]{ThePaper} in which the graded $K$-theory of row-finite graph algebras is computed.

In Chapter~\ref{C^*-Correspondences} we draw upon \cite{RaeburnWilliams} and \cite{Lance} to develop the theory of Hilbert modules and $C^*$-correspondences with reference to the lecture notes \cite{Crossedandunitizations} of Aidan Sims. We discuss Kasparov's stabilisation theorem in Section \ref{section: Kasparovs stabilisation theorem} and use it to prove the existence of frames for countably generated Hilbert modules, which are needed to prove key structure results (see \ref{comapcts inclusion in Toeplitz}) about the Toeplitz algebra $\TT_X$ in Chapter~\ref{Special Funky C^*-algebras}. We then introduce $C^*$-correspondences and devote a significant portion of the chapter to proving results about the balanced tensor product and giving examples.

In Chapter~\ref{C^*-algebras} we introduce the notion of gradings, assembling some important facts that we will need in Chapter~\ref{KK-theory}, and we develop some further $C^*$-algebraic theory, carefully developing theory on unitizations following \cite[Chapter~2]{RaeburnWilliams} and outlining some facts about tensor products of $C^*$-algebras.

Chapter~\ref{Special Funky C^*-algebras} details the construction of special $C^*$-algebras that we are interested in computing the graded $K$-theory of: Clifford algebras, graph algebras, crossed products, Toeplitz algebras and Cuntz-Pimsner algebras. In particular for Crossed products, Toeplitz algebras and Cuntz-Pimsner algebras we provide careful detail, outlining several important results about the structure of these $C^*$-algebras.

We outline the definition of regular $K$-theory providing some examples in Chapter~\ref{K-theory} so that we may get a taste for how $K$-theory is computed before diving into the world of $KK$-theory. This chapter is shorter than the rest and mostly proof-free, we have included it mostly for illustrative purposes.

In Chapter~\ref{KK-theory} we give a selection of detailed proofs of elementary results in $KK$-theory. We have chosen to state some results without proof with heavy reference to \cite{Blackadar} to avoid building machinery that we would not otherwise be using. We also introduce graded $K$-theory and Morita equivalence, proving that Morita equivalent $C^*$-algebras are also $KK$-equivalent (see Theorem~\ref{Morita equiv KK}).

Chapter~\ref{The Exact Sequence!} is the main original content of this thesis, where we follow the arguments of \cite{FreeProbTheory} and \cite{ThePaper} to prove the existence of an exact sequence in $KK$-theory for $\OO_X$ when the left action on $X$ is not by compacts. We also note the interesting result Theorem~\ref{KK equivalence} that for a correspondence $X$ over $A$ with injective left action, the $C^*$-algebras $A$ and $\TT_X$ are $KK$-equivalent.

In Chapter~\ref{Graph algebras} we use the results of Chapter~\ref{The Exact Sequence!} to compute graded $K$-theory of row-infinite graph algebras, giving specific examples.

We conclude this thesis with closing remarks in Chapter~\ref{ch: closing remarks} by comparing our results in Chapter~\ref{Graph algebras} to the analogous results for the regular $K$-theory of graph algebras, and giving an indication of how the work of \cite{AddingTails} may be used to compute the graded $K$-theory of $\OO_X$ for $X$ with non-injective left action.
\chapter{$C^*$-Correspondences} 
\label{C^*-Correspondences}

In order to understand $KK$-theory we will need to build some background on $C^*$-correspond\-ences. $C^*$-correspond\-ences can be thought of as `generalised homomorphisms' between $C^*$-algebras in an appropriate sense. Specifically, every homomorphism $\phi$ between $C^*$-algebras determines a $C^*$-correspondence $_{\phi}A$ (see Example~\ref{Marnie}), and there is a natural isomorphism between the correspondence $_{\phi\circ\psi}A$ and the balanced tensor product $_{\phi}A\otimes_{\psi}A$ of the individual correspondences (see Proposition~\ref{Trang} and Example~\ref{Gen hom balance}). So we regard isomorphism classes of correspondences as morphisms between $C^*$-algebras with composition induced by balanced tensor products.
The category whose objects are $C^*$-algebras are somewhat better behaved when the morphisms are taken to be isomorphism classes of $C^*$-correspondences rather than homomorphisms. We need to use isomorphism classes for our morphisms because despite being isomorphic, strictly speaking $(X\otimes_A Y)\otimes_B Z$ is not equal to $X\otimes_A(Y\otimes_BZ)$, so $\otimes$ is not associative.
\vspace{0.2cm}	
\paragraph{\textbf{Conventions:}}
Throughout this document by an \underline{ideal} in a $C^*$-algebra we always mean a closed two sided ideal, and by a \underline{homomorphism} between $C^*$-algebras we always mean a $*$-homomorphism. 
\section{Hilbert modules}

We take the following definitions and conventions from \cite[Chapter~2]{RaeburnWilliams}.
\begin{definition}
	Let $A$ be a $C^*$-algebra. A \emph{right $A$-module} $X$ is a complex vector space together with a bilinear right multiplication $(x,a)\mapsto x\cdot a:X\times A\to X$ such that for all $x,y\in X$, $a,b\in A$ and $\lambda\in \C$
	\begin{align*}
	(x+y)\cdot a&=x\cdot a+y\cdot a,\\
	x\cdot (a+b)&=x\cdot a+x\cdot b,\\
	(x\cdot a)\cdot b&=x\cdot (ab),\text{ and}\\
	(\lambda x)\cdot a&=\lambda (x\cdot a).\\
	\end{align*}
	We sometimes write $X_A$ to denote the module $X$ if we wish to emphasise the right action of $A$ on $X$, and we will write $xa$ in place of $x\cdot a$ unless we wish to emphasise the right action.
\end{definition}
\begin{definition}
	A \emph{right inner-product $A$-module} $X$ is a right $A$-module with a pairing $\IP{\cdot,\cdot}_A:X\times X\to A$ satisfying for all $x,y,z\in X$, $\mu,\lambda\in\C$ and $a\in A$
	\begin{align}
	\label{IP1}\IP{x,\lambda y+\mu z}_A&=\lambda\IP{x,y}_A+\mu\IP{x,z}_A,\\
	\label{IP2}\IP{x,ya}_A&=\IP{x,y}_Aa,\\
	\label{IP3}\IP{x,y}^*_A&=\IP{y,x}_A,\\
	\label{IP4}\IP{x,x}_A&\quad\text{ is a positive element of }A,\text{ and}\\
	\label{IP5}\IP{x,x}_A&=0\implies x=0.\\\nonumber
	\end{align}
	In context where it is clear that the inner-product is taking values in $A$, we will write $\IP{\cdot,\cdot}$ instead of $\IP{\cdot,\cdot}_A$.
\end{definition}
\begin{lemma}
	If $X$ is a right inner-product $A$-module, then $\IP{\cdot ,\cdot}_A$ is conjugate linear in the first variable, $\IP{xa,y}=a^*\IP{x,y}$ for all $a\in A$ and $x,y\in X$, and 
	\begin{equation*}
	I=\overline{\Span}\{\IP{x,y}_A:x,y\in X\}
	\end{equation*}
	is an ideal of $A$.
\end{lemma}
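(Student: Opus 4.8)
The plan is to extract all three assertions directly from the axioms (IP1)--(IP5), with the conjugate-symmetry axiom (IP3) doing most of the work: it lets me transfer the known behaviour in the second variable over to the first by taking adjoints in $A$.

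For conjugate linearity in the first variable, given $\lambda,\mu\in\C$ and $x,y,z\in X$ I would compute $\langle\lambda x+\mu y,z\rangle=\langle z,\lambda x+\mu y\rangle^{*}$ using (IP3), expand the bracket with (IP1), and then use conjugate linearity of the adjoint on $A$ together with (IP3) once more to arrive at $\overline{\lambda}\langle x,z\rangle+\overline{\mu}\langle y,z\rangle$. The identity $\langle xa,y\rangle=a^{*}\langle x,y\rangle$ is the same manoeuvre: $\langle xa,y\rangle=\langle y,xa\rangle^{*}=(\langle y,x\rangle a)^{*}=a^{*}\langle y,x\rangle^{*}=a^{*}\langle x,y\rangle$, where the second equality is (IP2) and the last is (IP3).

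For the final claim, $I$ is a closed linear subspace of $A$ by construction, so it remains to check $IA\subseteq I$ and $AI\subseteq I$. Right multiplication is immediate from (IP2), since $\langle x,y\rangle a=\langle x,ya\rangle$ lies in the spanning set, and continuity of multiplication in $A$ pushes this through to the closure. For left multiplication I would note that $I$ is self-adjoint, because $\langle x,y\rangle^{*}=\langle y,x\rangle$ by (IP3); then for $a\in A$ and $b\in I$ we have $ab=(b^{*}a^{*})^{*}$ with $b^{*}a^{*}\in IA\subseteq I$, hence $ab\in I^{*}=I$. Alternatively one can just compute $a\langle x,y\rangle=\langle xa^{*},y\rangle$ from the identity established above. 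This exhibits $I$ as a closed two-sided ideal. There is no real obstacle here — the only points requiring care are bookkeeping with adjoints and remembering to invoke continuity of multiplication when passing from the dense span of inner products to $I$ itself.
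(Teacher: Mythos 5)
Your proposal is correct and follows essentially the same route as the paper: conjugate symmetry (IP3) transfers (IP1) and (IP2) to the first variable via the adjoint, and the two resulting formulas show that left and right multiplication by elements of $A$ preserve the span of inner products, with closedness of $I$ handling limits. Your explicit remark about invoking continuity of multiplication when passing to the closure is a slightly more careful phrasing of a step the paper leaves implicit, but the argument is the same.
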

\begin{proof}
	Fix $x,y,z\in \C$, $\mu,\lambda\in\C$ and $a,b\in A$. Using \eqref{IP1} and \eqref{IP3}, we compute
	\begin{align*}
	\IP{\lambda x+\mu y,z}&=\IP{z,\lambda x+\mu y}^*=\left(\lambda \IP{z,x}+\mu \IP{z,y}\right)^*\\
	&=\overline{\lambda} \IP{z,x}^*+\overline{\mu} \IP{z,y}^*
	=\overline{\lambda} \IP{x,z}+\overline{\mu} \IP{y,z}.\\
	\end{align*}
	Likewise, using \eqref{IP2} and \eqref{IP3} we have
	\begin{equation}\label{Nifty}
	\IP{xa,y}=\left(\IP{y,xa}\right)^*=\left(\IP{y,x}a\right)^*=a^*\IP{y,x}^*=a^*\IP{x,y}.
	\end{equation}
	The formulas \eqref{Nifty} and \eqref{IP2} show that for $a\in A$, $c_i\in\C$ and $x_i,y_i\in X$, both
	\begin{equation*}
	a\left(\sum_i c_i\IP{x_i,y_i}\right)\qquad\text{ and }\qquad \left(\sum_i c_i\IP{x_i,y_i}\right) a
	\end{equation*}
	belong to $I$. Since $I$ is closed by construction, $I$ is an ideal in $A$.
\end{proof}
\begin{definition}
	A \emph{right Hilbert $A$-module} $X$ is a right inner-product $A$-module which is complete in the norm
	\begin{equation}
	\normof{x}_X\coloneqq \normof{\IP{x,x}_A}^{1/2},\label{act}
	\end{equation}
	where the right-hand side is the $C^*$-norm on $A$. We say that $X$ is \textit{full} if
	\begin{equation*}
	\{\IP{x,y}:x,y\in X\}
	\end{equation*}
	is dense in $A$. We will refer to a right Hilbert $A$-module as simply a Hilbert $A$-module.
\end{definition}
That Equation \eqref{act} defines a norm on $X$ follows from the Cauchy-Schwarz inequality
\begin{equation}
\IP{x,y}^*_A\IP{x,y}_A\leq \normof{\IP{x,x}_A}\IP{y,y}_A\label{C-S}
\end{equation}
for right-inner-product $A$-modules.
A proof of this inequality can be found in \cite[Lemma~2.5]{Raeburn Williams}.
Lots of familiar friends are Hilbert modules.
\begin{example}\label{dang}
	Any $C^*$-algebra $A$ is a Hilbert $A$-module under the right action $a\cdot b=ab$ given by multiplication in $A$, and inner-product
	\begin{equation*}
	\IP{a,b}=a^*b.
	\end{equation*}
	We denote this module by $A_A$. Fix an approximate identity $\{e_{\lambda}\}$ for $A$. For $a\in A$ we have
	\begin{equation*}
	a=\lim_{\lambda\in\Lambda}e^*_{\lambda}a=\lim_{\lambda\in\Lambda}\IP{e_{\lambda},a}\in\overline{\Span}\{\IP{b,c}:b,c\in A_A\},
	\end{equation*}
	so $A_A$ is full. The norm on $A_A$ defined by Equation \eqref{act} coincides with the $C^*$-norm on $A$ because
	\begin{equation*}
	\normof{a}_{A_A}=\normof{\IP{a,a}}^{1/2}_A=\normof{a^*a}^{1/2}_A=\normof{a}_A.
	\end{equation*}
\end{example}
\begin{example}
	This example is from \cite{Kasnotes}. A \emph{complex vector bundle} $V$ over a locally compact topological space $X$ is a topological space $V$ together with a continuous surjection $\pi:V\to X$ such that for each $x\in X$, the pre-image $\pi^{-1}(x)$ is a finite-dimensional complex vector space. We call the vector space $\pi^{-1}(x)$ the \textit{fibre} over $x$. A vector bundle $V$ over $X$ is \textit{locally trivial} if for each $x\in X$ there is an open neighbourhood $U$ of $x$ and a homeomorphism $\Phi_U:\pi^{-1}(U)\to U\times \C^k$ for which, if $p:U\times \C^k\to U$ denotes the projection map, we have $\pi|_U=p\circ\Phi_U$. The set of sections $\Gamma(V)$ is the set of continuous functions $\sigma:X\to V$ such that $\sigma(x)\in\pi^{-1}(x)$ for each $x\in X$. A section $\sigma$ \textit{vanishes at infinity} if for every $\ep>0$ there exists a compact set $K\subseteq X$ such that
	\begin{equation*}
	\normof{\sigma|_{X\bs K}}_{\infty}<\ep.
	\end{equation*}
	We denote the set of sections that vanish at infinity $\Gamma_0(V)$.
	A vector bundle $V$ over $X$ is \textit{Hermitian} if there exists a family of Hermitian inner-products $\{\IP{\cdot,\cdot}_x\}_{x\in X}$ on the fibres $\pi^{-1}(x)$ that varies continuously in the sense that for all sections $\sigma_1,\sigma_2$, the function $x\mapsto \IP{\sigma_1(x),\sigma_2(x)}_x$ is a continuous function from $X$ to $\C$.\\
	If $X$ is a locally compact Hausdorff space and $V\to X$ is a locally trivial hermitian vector bundle, then the set $\Gamma_0(V)$ of sections of $V$ that vanish at infinity is a Hilbert $C_0(X)$-module. The right action is defined by
	\begin{equation*}
	(\sigma\cdot f)(x)=f(x)\sigma(x).
	\end{equation*}
	The $C_0(X)$-valued inner-product is defined by
	\begin{equation*}
	\IP{\sigma_1,\sigma_2}(x)=\IP{\sigma_1(x),\sigma_2(x)}_x.
	\end{equation*}
	This inner-product is full. To see this, we aim to apply the Stone-Weierstrass Theorem. So fix $x,y\in X$ with $x\neq y$. Let $U\ni x$ be an open set in $X$ over which we have trivialisation $\Phi:V|_U\to U\times\C^n$. Let $V\subset U$ be an open set containing $x$ and not containing $y$. Let $v\in\C^n$ be a unit vector. By Urysohn's Lemma there exists a continuous function $\phi:X\to \R$ vanishing outside of $V$ such that $\phi(x)=1$. Define $\sigma:X\to V$ by $\sigma(z)=\phi(z)\Phi^{-1}(z,v)$ for $z\in X$. Then $\sigma\in\Gamma_0(V)$ and we have $\IP{\sigma,\sigma}(x)=\phi(x)^2\IP{v,v}=1$ and $\IP{\sigma,\sigma}(y)=0$. Hence
	\begin{equation*}
	B=\{\IP{f,g}:f,g\in\Gamma_0(X)\}\subseteq C_0(X)
	\end{equation*}
	separates points in $X$ and vanishes nowhere. This $B$ is a sub-algebra of $C_0(X)$ because $\IP{\cdot,\cdot}$ is $C_0(X)$-linear. Hence the Stone-Weierstrass theorem states that $\overline{B}=C_0(X)$. Thus $\IP{\cdot,\cdot}_{C_0(X)}$ is full.
\end{example}

\begin{example}\label{direct sum}
	Given two Hilbert $A$-modules $X$ and $Y$, their vector space direct sum $X\oplus Y$ is a Hilbert $A$-module under the right action
	\begin{equation*}
	(x,y)\cdot a=(x\cdot a,y\cdot a)
	\end{equation*}
	and inner-product
	\begin{equation*}
	\IP{(x,y),(z,w)}=\IP{x,z}+\IP{y,w}.
	\end{equation*}
	In particular, $X^n=\bigoplus_{i=1}^nX$ is a Hilbert module.\\
\end{example}

Recognising an internal direct sum turns out to be more difficult. If we were to write a Hilbert $A$-module $X$ as a direct sum $X=X_0\oplus X_1$ we would like each $X_i$ to be a module in its own right. This leads us to the following definition:
\begin{definition}\label{def: Direct sum of Hilbert modules}
	Let $X$ be a Hilbert $A$-module. Given two subspaces $X_0$ and $X_1$, we say that $X$ is the \emph{direct sum of $X_0$ and $X_1$} if $X=X_0\oplus X_1$ as vector spaces and $\IP{x_0,x_1}=0$ for all $x_0\in X_0$ and $x_1\in X_1$. We write $X=X_0\oplus X_1$ and denote this by the Hilbert module direct sum.
\end{definition}
\begin{remark}
	Note that requiring $X_0$ and $X_1$ to be orthogonal forces $X_iA\subseteq X_i$, since for $a\in A$, $x_0\in X_0$ and $x_1\in X_1$,
	\begin{equation*}
	\IP{x_0,x_1a}=\IP{x_0,x_1}a=0=a^*\IP{x_0,x_1}=\IP{x_0a,x_1}.
	\end{equation*}
	Thus each $X_i$ is a sub-module. A first definition may be to require only that each $X_i$ be a Hilbert sub-module without any orthogonality requirements. Under this definition however it is not necessarily the case that $X_0\oplus X_1$ and $X$ are isomorphic (see Definition~\ref{Heidi Klum}).
\end{remark}
\begin{example}
	For any $C^*$-algebra $A$, we have just seen in Example~\ref{direct sum} that the tuple $A^n=\bigoplus_{i=1}^nA_A$ is a Hilbert $A$-module. We call these \emph{free modules}. Consider a unitization (see section~\ref{Multiplier}) $A^{\sim}$ of $A$, and let $p\in M_n(A^{\sim})\cong M_n(\C)\otimes A^{\sim}$ be a projection. Then $pA^n$ is a Hilbert $A$-module with right action
	\begin{equation*}
	\begin{pmatrix}
	a_1\\
	\vdots\\
	a_n\\
	\end{pmatrix}\cdot a=
	\begin{pmatrix}
	a_1a\\
	\vdots\\
	a_na\\
	\end{pmatrix}
	\end{equation*}
	and inner-product
	\begin{equation*}
	\left\langle\begin{pmatrix}
	a_1\\
	\vdots\\
	a_n
	\end{pmatrix},\begin{pmatrix}
	b_1\\
	\vdots\\
	b_n
	\end{pmatrix}\right\rangle=\sum_{i,j=1}^na_i^*b_j.
	\end{equation*}
\end{example}

\begin{definition}\label{def: standard moduel}
	Let $A$ be a $C^*$-algebra and let $\HH_A$ be the set of sequences $(a_n)\in A^{\infty}=\prod_{i=1}^{\infty}A$ such that $\sum_{n=1}^{\infty}a^*_na_n$ converges in $A$. If $\sum_{i=1}^{\infty}a_i^*a_i$ and $\sum_{i=1}^{\infty}b_i^*b_i$ both converge then for any finite sum, the Cauchy Schwartz inequality \eqref{C-S} on the Hilbert module $A_A^n$ shows
	\begin{align*}
	\Bigg\|\sum^n_{i=1}a_i^*b_i\Bigg\|^2&=\Bigg\|\Bigg\langle\begin{pmatrix}
	a_1\\
	\vdots\\
	a_n\\
	\end{pmatrix},
	\begin{pmatrix}
	b_1\\
	\vdots\\
	b_n\\
	\end{pmatrix}
	\Bigg\rangle\Bigg\|^2_{A_A^n}\\
	&=\Bigg\|\Bigg\langle\begin{pmatrix}
	a_1\\
	\vdots\\
	a_n\\
	\end{pmatrix},
	\begin{pmatrix}
	b_1\\
	\vdots\\
	b_n\\
	\end{pmatrix}
	\Bigg\rangle^*
	\Bigg\langle\begin{pmatrix}
	a_1\\
	\vdots\\
	a_n\\
	\end{pmatrix},
	\begin{pmatrix}
	b_1\\
	\vdots\\
	b_n\\
	\end{pmatrix}
	\Bigg\rangle
	\Bigg\|_{A_A^n}\\
	&\leq \Bigg\| \begin{pmatrix}
	a_1\\
	\vdots\\
	a_n\\
	\end{pmatrix}\Bigg\|_{A_A^n}\Bigg\|
	\begin{pmatrix}
	b_1\\
	\vdots\\
	b_n\\
	\end{pmatrix}\Bigg\|_{A_A^n}=\Bigg\|\sum_{i=1}^na_i^*a_i\Bigg\|\Bigg\|\sum_{i=1}^nb_i^*b_i\Bigg\|\\
	\end{align*}
	which shows that $\sum_{i=1}^{\infty}a_i^*b_i$ converges. If $\sum_{i=1}^{\infty}a_i^*a_i$ converges then for $a\in A$
	\begin{align*}
	\sum_{i=1}^n(a_ia)^*a_ia=a^*\Big(\sum_{i=1}^na_i^*a_i\Big)a.
	\end{align*}
	Thus, since the sum converges, we have $(a_ia)$ belonging to $\HH_A$.
	We then define the $A$-valued inner-product on $\HH_A$ by
	\begin{equation*}
	\IP{(a_n),(b_n)}=\sum_{n=1}^{\infty}a^*_nb_n
	\end{equation*}
	and a diagonal right action defined by $(a_n)\cdot a=(a_na)$. We have just shown that $\HH_A$ is the countable Hilbert space direct sum of the Hilbert module $A_A$, the details of which are discussed in Appendix~\ref{Direct sums}.
	It can be shown that $\HH_A$ is complete with respect to the inner-product norm, so $\HH_A$ is a Hilbert $A$-module. This is not the only way of constructing $\HH_A$. Let $\HH$ be any infinite dimensional separable Hilbert space with an orthonormal basis $(e_j)$. Then there is an isomorphism (see Definition~\ref{Heidi Klum}) from the completion of the vector space tensor product $\HH\odot A$ in the norm induced by the inner-product
	\begin{equation*}
	\Big\langle\xi\otimes a,\eta\otimes b\Big\rangle=\IP{\xi,\eta}a^*b
	\end{equation*}
	to $\HH_A$ that acts on elementary tensors by $h\otimes a\mapsto (\IP{h,e_i}a)_{i=1}^{\infty}$. This point of view is taken in \cite[Eaxmple~1.2.6]{Kasnotes}.
\end{definition}

\section{Adjointable operators}

We think of Hilbert $A$-modules as analogues of Hilbert spaces in which the scalar field $\C$ is replaced by a $C^*$-algebra $A$. In particular, a Hilbert $\C$-module is simply a Hilbert space. One of the first things we do with Hilbert modules is to investigate analogues of the usual Hilbert space machinery.
\begin{definition}
	If $A$ is a $C^*$-algebra and we have Hilbert $A$-modules $X$ and $Y$, we say that a map $T:X\to Y$ is \emph{adjointable} if there exists a map $S:Y\to X$ such that
	\begin{equation*}
	\IP{Tx,y}_A=\IP{x,Sy}_A
	\end{equation*}
	for all $x\in X$ and $y\in Y$.
\end{definition}
Our first deviation from the Hilbert space case is that not all bounded linear operators are adjointable.
\begin{example}\label{Wang}
	This example is taken from \cite[page 8]{Lance}. Let $X$ be a compact space and $U$ be a dense, proper, open subset (for example, $X=[0,1]$ and $U=(0,1)$). Set $Y=C(X)$ and $Z=\{f\in C(X):f|_{X\bs U}=0\}\cong C_0(U)$. Then $Y$ and $Z$ are both Hilbert $C(X)$-modules as in Example~\ref{dang}. Consider the inclusion mapping $\iota:Z\to Y$. Clearly $\iota$ is linear and bounded with norm 1. For the function $1\in Y$ and any $f\in Z$ we have
	\begin{equation*}
	\IP{\iota(f),1}=\overline{f}.
	\end{equation*}
	Suppose for contradiction that $S:Y\to Z$ is an adjoint for $\iota$. Then
	\begin{equation*}
	\overline{f}=\IP{f,S(1)}=\overline{f}S(1)
	\end{equation*}
	for all $f\in Z$. That is, $\overline{f}(x)=\overline{f}(x)S(1)(x)$ for all $x\in U$ and $f\in Z$, forcing $S(1)(x)=1$ for all $x\in U$. By definition of $Z$ we have $S(1)(x)=0$ for $x\in X\bs U$, and since $U$ is dense in $X$, we deduce that $S(1)$ is not continuous, contradicting $S(1)\in Z$. Hence $\iota$ is not adjointable.
\end{example}
This leads us to our next definition: that of the space of adjointable operators $\LL(X)$. The adjointable operators are the Hilbert-module equivalent of $\BB(\HH)$ for Hilbert spaces.
\begin{definition}
	Given Hilbert modules $X_A$ and $Y_A$, the set $\LL(X;Y)$ is the set 
	\begin{equation*}
	\LL(X;Y)=\{T:X\to Y:T\text{ is adjointable}\}
	\end{equation*}
	of adjointable operators from $X$ to $Y$. If $X=Y$ then we write $\LL(X)$ for $\LL(X;X)$.
\end{definition}
It turns out that adjointable operators are automatically linear, $A$-linear, bounded and have unique adjoints. First we prove a useful lemma.
\begin{lemma}\label{lemma: norm is sup of IPs}
	Let $X$ be a Hilbert $A$-module. Then for all $x,y\in X$ and $a\in A$ we have $\normof{\IP{x,y}}\leq \normof{x}\normof{y}$ and $\normof{x\cdot a}\leq \normof{x}\normof{a}$. We also have
	\begin{equation*}
	\normof{x}=\sup\{\normof{\IP{x,y}}:y\in X,\normof{y}\leq 1\}.
	\end{equation*}
	In particular, if $\IP{x,z}=\IP{y,z}$ for all $z\in X$ then
	\begin{equation*}
		\normof{x-y}=\sup\{\normof{\IP{x-y,z}}:z\in X, \normof{z}\leq 1\}=0,
	\end{equation*}
	so $x=y$.
\end{lemma}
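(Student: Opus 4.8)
The plan is to establish the three displayed assertions in the order they are stated, since each feeds into the next. First I would prove the Cauchy--Schwarz-type bound $\normof{\IP{x,y}}\leq\normof{x}\normof{y}$. Starting from the inequality \eqref{C-S}, namely $\IP{x,y}^*\IP{x,y}\leq\normof{\IP{x,x}}\IP{y,y}$, I would apply the norm to both sides, using that the $C^*$-norm is monotone on positive elements, to get $\normof{\IP{x,y}^*\IP{x,y}}\leq\normof{\IP{x,x}}\,\normof{\IP{y,y}}$. The left-hand side equals $\normof{\IP{x,y}}^2$ by the $C^*$-identity, and the right-hand side is $\normof{x}^2\normof{y}^2$ by the definition \eqref{act} of the module norm; taking square roots gives the claim.

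Next I would handle $\normof{x\cdot a}\leq\normof{x}\normof{a}$. Here $\normof{x\cdot a}^2=\normof{\IP{xa,xa}}=\normof{a^*\IP{x,x}a}$ using \eqref{IP2} and \eqref{Nifty}. Working in a unitization of $A$ if necessary, from $\IP{x,x}\leq\normof{\IP{x,x}}1$ and the fact that conjugation $b\mapsto a^*ba$ preserves the order on positive elements, I get $a^*\IP{x,x}a\leq\normof{\IP{x,x}}\,a^*a$, so applying the (monotone) norm yields $\normof{x\cdot a}^2\leq\normof{\IP{x,x}}\,\normof{a^*a}=\normof{x}^2\normof{a}^2$, and square roots finish it.

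For the supremum formula, set $s=\sup\{\normof{\IP{x,y}}:y\in X,\normof{y}\leq1\}$. The bound just proved gives $\normof{\IP{x,y}}\leq\normof{x}\normof{y}\leq\normof{x}$ whenever $\normof{y}\leq1$, so $s\leq\normof{x}$. For the reverse inequality, if $x=0$ the statement is trivial; otherwise take $y=x/\normof{x}$, which has $\normof{y}=1$, and compute $\IP{x,y}=\normof{x}^{-1}\IP{x,x}$ by \eqref{IP1}, so $\normof{\IP{x,y}}=\normof{x}^{-1}\normof{\IP{x,x}}=\normof{x}^{-1}\normof{x}^2=\normof{x}$, whence $s\geq\normof{x}$. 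Finally, for the ``in particular'' clause: if $\IP{x,z}=\IP{y,z}$ for all $z\in X$, then conjugate-linearity of the inner product in the first variable (established in the preceding lemma) gives $\IP{x-y,z}=0$ for every $z$, so applying the supremum formula to the element $x-y$ yields $\normof{x-y}=\sup\{0\}=0$ and hence $x=y$ by \eqref{IP5}.

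None of the steps presents a serious obstacle; the only point requiring a little care is invoking monotonicity of the $C^*$-norm on positive elements together with the order inequality $a^*ba\leq\normof{b}a^*a$ in the second bound, both of which are standard facts about $C^*$-algebras and their unitizations.
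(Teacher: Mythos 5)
Your proposal is correct and follows essentially the same route as the paper: norms applied to the Cauchy--Schwarz inequality \eqref{C-S} for the first bound, the computation $\normof{xa}^2=\normof{a^*\IP{x,x}a}$ for the second, and the choice $y=x/\normof{x}$ (with $x=0$ handled separately) for the supremum formula. The only cosmetic difference is that for $\normof{xa}\leq\normof{x}\normof{a}$ you invoke the order inequality $a^*ba\leq\normof{b}a^*a$ and monotonicity of the norm where the paper simply uses submultiplicativity $\normof{a^*\IP{x,x}a}\leq\normof{a^*}\normof{\IP{x,x}}\normof{a}$; both are valid and yield the same estimate.
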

\begin{proof}
	This proof follows that of \cite[Corollary~1.15,Corollary~1.16]{Crossedandunitizations}.
	The first assertion follows from taking norms in the Cauchy-Schwartz inequality \eqref{C-S}. For the second assertion, fix $x\in X$ and $a\in A$. We compute
	\begin{align*}
		\normof{xa}^2&=\normof{\IP{xa,xa}}=\normof{a^*\IP{x,x}a}\leq \normof{a^*}\normof{\IP{x,x}}\normof{a}=\normof{x}\normof{a}^2.\\
	\end{align*}
	Now the proof of the final assertion is the same argument as for Hilbert spaces. If $x=0$ the result is immediate. So fix $x\in X$ non-zero. Then $x/\normof{x}$ has norm equal to one and so
	\begin{equation*}
		\sup_{\normof{y}\leq 1}\normof{\IP{x,y}}\geq \normof{\IP{x,x/\normof{x}}}=\normof{x}.
	\end{equation*}
	For the reverse inequality we compute
	\begin{equation*}
		\sup_{\normof{y}\leq 1}\normof{\IP{x,y}}\leq\sup_{\normof{y}\leq 1}\normof{x}\normof{y}=\normof{x}.
	\end{equation*}
\end{proof}
\begin{lemma}\label{Rang}
	Let $X$ and $Y$ be Hilbert $A$-modules. Every adjointable operator $T:X\to Y$ is linear, $A$-linear and bounded, and there is a unique operator $T^*:Y\to X$ such that $\IP{Tx,y}=\IP{x,T^*y}$ for all $x\in X$ and $y\in Y$.
\end{lemma}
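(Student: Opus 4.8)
The plan is to derive every assertion from the defining relation $\IP{Tx,y}=\IP{x,Sy}$ together with Lemma~\ref{lemma: norm is sup of IPs}, whose last assertion lets us recognise equality of two module elements by comparing all of their inner products. I would first settle uniqueness (and hence well-definedness) of the adjoint, then the two algebraic properties, and leave boundedness --- the only genuinely analytic point --- for last.

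For uniqueness: if $S,S'\colon Y\to X$ both satisfy $\IP{Tx,y}=\IP{x,Sy}=\IP{x,S'y}$ for all $x,y$, then for each fixed $y$ we get $\IP{x,Sy-S'y}=0$ for all $x\in X$; applying $*$ and \eqref{IP3} puts this in the form $\IP{Sy,z}=\IP{S'y,z}$ for all $z\in X$, so $Sy=S'y$ by the last part of Lemma~\ref{lemma: norm is sup of IPs}. Together with the standing hypothesis that some adjoint exists, this produces the unique operator we call $T^{*}$. For linearity and $A$-linearity I would fix $x_1,x_2\in X$, $\lambda\in\C$, $a\in A$ and $y\in Y$, and use additivity of $\IP{\cdot,\cdot}$, conjugate-linearity in the first slot, and the identity \eqref{Nifty} to compute
\begin{align*}
\IP{T(x_1+\lambda x_2),y}&=\IP{x_1+\lambda x_2,T^{*}y}=\IP{x_1,T^{*}y}+\overline{\lambda}\IP{x_2,T^{*}y}=\IP{Tx_1+\lambda Tx_2,y},\\
\IP{T(x_1a),y}&=\IP{x_1a,T^{*}y}=a^{*}\IP{x_1,T^{*}y}=a^{*}\IP{Tx_1,y}=\IP{(Tx_1)a,y}.
\end{align*}
Since $y$ is arbitrary, Lemma~\ref{lemma: norm is sup of IPs} then forces $T(x_1+\lambda x_2)=Tx_1+\lambda Tx_2$ and $T(x_1a)=(Tx_1)a$.

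The substantive step is boundedness. Now that $T$ is known to be linear between the Banach spaces $X$ and $Y$, I would invoke the closed graph theorem, so it suffices to show that if $x_n\to x$ in $X$ and $Tx_n\to z$ in $Y$ then $z=Tx$. Using the estimate $\normof{\IP{u,y}}\le\normof{u}\normof{y}$ from Lemma~\ref{lemma: norm is sup of IPs} to pass limits through each slot of the inner product, for every $y\in Y$ we obtain
\begin{equation*}
\IP{z,y}=\lim_{n}\IP{Tx_n,y}=\lim_{n}\IP{x_n,T^{*}y}=\IP{x,T^{*}y}=\IP{Tx,y},
\end{equation*}
and a final application of Lemma~\ref{lemma: norm is sup of IPs} gives $z=Tx$, closing the graph. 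This closed-graph argument is the crux: the algebraic statements are formal consequences of the inner-product axioms, but boundedness genuinely uses completeness of $X$ and $Y$. As a coda I would note that $T^{*}$ satisfies $\IP{T^{*}y,x}=\IP{y,Tx}$, so it is itself adjointable with adjoint $T$, and therefore also linear, $A$-linear and bounded --- a fact we will use freely later.
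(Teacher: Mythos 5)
Your proposal is correct and follows essentially the same route as the paper's proof: the algebraic properties (uniqueness of $T^*$, linearity, $A$-linearity) are derived by comparing inner products against arbitrary test vectors and invoking Lemma~\ref{lemma: norm is sup of IPs}, and boundedness is obtained from the closed graph theorem by passing limits through both slots of the inner product. The only differences are cosmetic (you treat uniqueness before linearity, and add the harmless observation that $T^*$ is itself adjointable), so nothing further is needed.
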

\begin{proof}
	This proof is taken from \cite[Lemma 2.18]{RaeburnWilliams}. If $T$ is adjointable, let $T^*$ be an adjoint for $T$. Then for all $x,y\in X$, $z\in Y$ and $a,b\in A$
	\begin{align*}
	\IP{T(x\cdot a+y\cdot b),z}&=\IP{x\cdot a+y\cdot b,T^*z}=\IP{x\cdot a,T^*z}+\IP{y\cdot b,T^*z}\\
	&=a^*\IP{x,T^*z}+b^*\IP{y,T^*z}=\IP{(Tx)\cdot a+(Ty)\cdot b,z}.\\
	\end{align*}
	By Lemma~\ref{lemma: norm is sup of IPs} we deduce that $T$ is $A$-linear. A similar calculation shows that $T$ is complex-linear. To see uniqueness, suppose that $R,S:Y\to X$ satisfy 
	\begin{equation*}
		\IP{x,Ry}=\IP{Tx,y}=\IP{x,Sy}
	\end{equation*}
	for all $x\in X$ and $y\in Y$. Then we have
	\begin{equation*}
	0=\IP{Tx,y}-\IP{Tx,y}=\IP{x,Ry}-\IP{x,Sy}=\IP{x,Ry-Sy}.
	\end{equation*}
	So we determine $Ry=Sy$ for all $y\in Y$, hence $T^*=S$. So the adjoint of $T$ is unique. Now we wish to show that $T$ is bounded. If $x_n\to x$ and $T(x_n)\to z$, then $\IP{T(x_n),y}\to \IP{z,y}$ and 
	\begin{equation*}
		\IP{T(x_n),y}=\IP{x_n,T^*(y)}\to \IP{x,T^*(y)}=\IP{T(x),y}.
	\end{equation*} 
	Hence we have $T(x)=z$. Thus, $T$ has closed graph, and so by the Closed Graph Theorem \cite[I.2.1.5]{Encyclopedia}, $T$ is bounded.
\end{proof}

It can be shown that the usual operator norm on $\LL(X)$ obeys the $C^*$-identity and that $\LL(X)$ is complete with respect to the operator norm, hence $\LL(X)$ is a $C^*$-algebra. It may be tempting to think that the right action of $A$ on $X_A$ defines adjointable operators $R_a(x)=xa$ on $X_A$ with adjoint $(R_a)^*=R_{a^*}$. But this is not the case: Let $A$ be any non-commutative, unital $C^*$-algebra and fix $a,b\in A$ with $ab\neq ba$. Define $R_b:A_A\to A_A$ by $R_b(x)=xb$. Then $R_b(1a)=(1a)b=ab$, whereas $R_b(1)a=1ba=ba$. So $R_a$ is not $A$-linear, and it follows from Lemma~\ref{Rang} that $R_a$ is not adjointable.
However, the next example shows that left multiplication is adjointable.\\
\begin{example}\label{left multn is adjointable}
	Consider the Hilbert $A$-module $A_A$ of Example~\ref{dang}. For each $a\in A$, define $L_a:A_A\to A_A$ by $L_a(b)=ab$. Then $L_a$ is adjointable with adjoint $L_a^*=L_{a^*}$. To see this we calculate
	\begin{equation*}
		\IP{L_a(b),c}=\IP{ab,c}=(ab)^*c=b^*(a^*c)=\IP{b,a^*c}=\IP{b,L_{a^*}c}.
	\end{equation*}
	We claim that the operator norm
	\begin{equation}\label{norm}
		\normof{L_a}=\sup_{\normof{b}\leq 1}\normof{ab}
	\end{equation}
	of $L_a$ is the $C^*$-norm $\normof{a}$. We saw in Example~\ref{dang} that the Hilbert module norm on $A_A$ is the $C^*$-norm on $A$, so sub-multiplicativity applied to Equation~\eqref{norm} shows that $\normof{L_a}\leq \normof{a}$. Letting $b$ equal $a^*/\normof{a}$ in Equation~\eqref{norm} then gives the reverse inequality.\\
	More generally, if $B$ is a $C^*$-algebra and $\phi:B\to A$ is a homomorphism, then $\phi$ can be considered a homomorphism $\widetilde{\phi}:B\to\LL(A_A)$ that takes $b$ to the left multiplication operator $L_{\phi(a)}$. We then obtain $\normof{\widetilde{\phi}(b)}\leq\normof{b}$ since $\widetilde{\phi}$ is a $C^*$-homomorphism.
\end{example}
	Recall that an element $p$ of a $C^*$-algebra $A$ is a \emph{projection} if $p^2=p=p^*$, and an element $u$ of a unital $C^*$-algebra $A$ is \emph{unitary} if $u$ is invertible with $u^*=u^{-1}$. The following proposition gives us useful criteria for checking whether elements of $\LL(X,Y)$ are unitary.
\begin{proposition}\label{Jeddabub}
	Let $X$ and $Y$ be Hilbert $A$-modules and let $U:X\to Y$ be complex-linear. Then $U$ is unitary if and only if it is surjective and satisfies
	\begin{equation}
	\IP{Ux,Ux}=\IP{x,x}\label{Jedda}
	\end{equation}
	for all $x\in X$.
\end{proposition}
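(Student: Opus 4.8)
The plan is to treat the two implications separately; here, that $U\colon X\to Y$ is \emph{unitary} means $U$ is adjointable with $U^*U=1_X$ and $UU^*=1_Y$. The forward implication is essentially a one-line computation: if $U$ is unitary then $U$ is surjective because $U(U^*y)=y$ for every $y\in Y$, and for each $x\in X$ the adjoint relation gives $\IP{Ux,Ux}=\IP{x,U^*Ux}=\IP{x,x}$, which is \eqref{Jedda}.

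For the converse, assume $U$ is complex-linear, surjective, and satisfies \eqref{Jedda}. The first step is to promote the ``diagonal'' identity \eqref{Jedda} to the full identity $\IP{Ux,Uy}=\IP{x,y}$ for all $x,y\in X$. This follows from the polarization identity, which expresses $\IP{x,y}$ as a fixed $\C$-linear combination of the four quantities $\IP{x+i^ky,x+i^ky}$ with $k\in\{0,1,2,3\}$; since $U$ is complex-linear we have $U(x+i^ky)=Ux+i^kUy$, so applying \eqref{Jedda} to each of these four vectors and taking the same linear combination yields $\IP{Ux,Uy}=\IP{x,y}$. The second step is to observe that $U$ is then isometric, since $\normof{Ux}^2=\normof{\IP{Ux,Ux}}=\normof{\IP{x,x}}=\normof{x}^2$; in particular $U$ is injective, and being also surjective it is a bijection whose inverse $U^{-1}\colon Y\to X$ is again complex-linear and isometric, hence bounded.

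The crux of the argument is the third step: showing that $U$ is adjointable with $U^*=U^{-1}$. Given $x\in X$ and $y\in Y$, use surjectivity to write $y=Uz$ for some $z\in X$; then $\IP{Ux,y}=\IP{Ux,Uz}=\IP{x,z}=\IP{x,U^{-1}y}$, which is exactly the defining relation for $U^{-1}$ to be an adjoint of $U$. Hence $U\in\LL(X;Y)$ with $U^*=U^{-1}$, and therefore $U^*U=U^{-1}U=1_X$ and $UU^*=UU^{-1}=1_Y$, so $U$ is unitary.

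The subtlety worth flagging --- in light of Example~\ref{Wang}, where a bounded injective map fails to be adjointable for want of surjectivity --- is that adjointability genuinely requires the surjectivity hypothesis: it is surjectivity that allows the candidate adjoint $U^{-1}$ to be defined on all of $Y$ and the inner-product identity to be verified there. Without it one obtains only an isometry, not a unitary. Everything else is routine.
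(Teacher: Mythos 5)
Your proof is correct and follows essentially the same route as the paper's: polarisation upgrades \eqref{Jedda} to $\IP{Ux,Uy}=\IP{x,y}$, injectivity plus surjectivity gives an inverse, and the identity $\IP{Ux,y}=\IP{Ux,UU^{-1}y}=\IP{x,U^{-1}y}$ exhibits $U^{-1}$ as the adjoint. Your closing remark correctly identifies where surjectivity is indispensable, but the argument itself is the paper's.
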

\begin{proof}
	Suppose that $U$ is surjective and satisfies Equation \eqref{Jedda}. We first claim that Equation \eqref{Jedda} implies $\IP{Ux,Uy}=\IP{x,y}$ for all $x,y\in X$. Indeed, using the polarisation identity, we have
	\begin{align*}
	\IP{Ux,Uy}&=\frac{i-1}{2}\IP{Uy,Uy}+\frac{i-1}{2}\IP{Ux,Ux}-\frac{i}{2}\IP{U(x+iy),U(x+iy)}\\
	&\qquad\qquad+\frac{1}{2}\IP{U(x+y),U(x+y)}\\
	&=\frac{i-1}{2}\IP{y,y}+\frac{i-1}{2}\IP{x,x}-\frac{i}{2}\IP{x+iy,x+iy}+\frac{1}{2}\IP{x+y,x+y}\\
	&=\IP{x,y}.\\
	\end{align*}
	If $Ux=0$ then
	\begin{equation*}
	0=\normof{\IP{Ux,Ux}}=\normof{\IP{x,x}}=\normof{x}^2
	\end{equation*}
	and so $x=0$. Hence $\Ker(U)=\{0\}$. Since $U$ is linear by Lemma~\ref{Rang}, we deduce that $U$ is injective. As $U$ is surjective by hypothesis, $U$ has an inverse $U^{-1}$. To see that $U^*=U^{-1}$ it suffices by Lemma~\ref{Rang} to show that
	\begin{equation*}
	\IP{Ux,y}=\IP{x,U^{-1}y}
	\end{equation*}
	for all $x,y\in X$, which follows from the calculation
	\begin{align*}
	\IP{Ux,y}&=\IP{Ux,UU^{-1}y}\\
	&=\IP{x,U^{-1}y}\qquad\text{by hypothesis \eqref{Jedda}.}\\
	\end{align*}
	So we deduce that $U$ is unitary. Conversely suppose that $U$ is unitary. Then $U$ must be surjective since it has an inverse and we may compute
	\begin{equation*}
	\IP{Ux,Ux}=\IP{x,U^*Ux}=\IP{x,U^{-1}Ux}=\IP{x,x}
	\end{equation*}
	and so $U$ satisfies Equation \eqref{Jedda} and bob's your uncle.
\end{proof}

\section{Compact operators}

Next we would like to extend our notion of compact operators to Hilbert modules. Whilst it is true for Hilbert spaces that the compact operators can be equivalently defined both as the ideal generated by rank one operators and as the operators under which the image of the closed unit ball is compact, this does not hold in the Hilbert module setting. Our definition is that of the closure of the finite rank operators.
\begin{definition}
	Let $X_A$ be a Hilbert $A$-module. For each $x,y\in X$, define the operator\\ $\Theta_{x,y}:X\to X$ by
	\begin{equation*}
	\Theta_{x,y}(z)=x\cdot \IP{y,z}.
	\end{equation*}
	For $x,y\in X$, we call $\Theta_{x,y}$ a \emph{rank one operator}. 
	We define
	\begin{equation*}
	\KK(X)=\overline{\Span}\{\Theta_{x,y}:x,y\in X\}
	\end{equation*}
	in $\LL(X)$ and refer to $\KK(X)$ as the \emph{compact operators} on $X$. It is important to note that this closure is taken with respect to the operator norm on $\LL(X)$. We call the subspace
	\begin{equation*}
	\Span\{\Theta_{x,y}:x,y\in X\}
	\end{equation*}
	of $\LL(X)$ the \emph{finite rank} operators.
\end{definition}
\begin{remark}
	In some texts the operators $\Theta_{x,y}$ are called generalised rank one operators. We have chosen to simply refer to them as rank one operators for brevity.
\end{remark}
\begin{lemma}
	The operators $\Theta_{x,y}$ are adjointable with adjoint $\Theta_{y,x}$. For all $T\in\LL(X)$ we have 
	\begin{equation*}
		T\Theta_{x,y}=\Theta_{Tx,y}\qquad\text{ and }\qquad \Theta_{x,y}T=\Theta_{x,T^*y}.
	\end{equation*}
	In particular, the compact operators $\KK(X)$ are in ideal in $\LL(X)$.
\end{lemma}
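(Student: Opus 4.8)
The plan is to verify each assertion by a direct computation against the $A$-valued inner product, using the identity $\IP{xa,y}=a^*\IP{x,y}$ from \eqref{Nifty} and the axiom \eqref{IP2}, together with the $A$-linearity of adjointable operators (Lemma~\ref{Rang}); once the formulas are established for rank-one (hence finite-rank) operators, I would bootstrap the ideal statement to $\KK(X)$ by a norm-continuity argument.

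First I would establish adjointability. For $z,w\in X$ I would expand $\IP{\Theta_{x,y}z,w}=\IP{x\IP{y,z},w}$ and pull the coefficient $\IP{y,z}\in A$ out of the first variable using \eqref{Nifty}, obtaining $\IP{z,y}\IP{x,w}$; on the other side, $\IP{z,\Theta_{y,x}w}=\IP{z,y\IP{x,w}}=\IP{z,y}\IP{x,w}$ by \eqref{IP2}. Since these agree for all $z,w\in X$, the operator $\Theta_{y,x}$ is an adjoint for $\Theta_{x,y}$, and uniqueness of the adjoint (Lemma~\ref{Rang}) gives $\Theta_{x,y}^*=\Theta_{y,x}$.

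Next, for $T\in\LL(X)$ I would check the two composition formulas pointwise. For $T\Theta_{x,y}$, applying $T$ to $x\IP{y,z}$ and using that $T$ is $A$-linear (Lemma~\ref{Rang}) yields $T(x\IP{y,z})=(Tx)\IP{y,z}=\Theta_{Tx,y}(z)$. For $\Theta_{x,y}T$, I would write $\Theta_{x,y}(Tz)=x\IP{y,Tz}$ and then rewrite $\IP{y,Tz}=\IP{Tz,y}^*=\IP{z,T^*y}^*=\IP{T^*y,z}$ using \eqref{IP3} and the defining property of $T^*$, giving $x\IP{T^*y,z}=\Theta_{x,T^*y}(z)$.

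For the ideal statement, the two composition formulas show that the finite-rank operators $\Span\{\Theta_{x,y}\}$ are carried into themselves by left and right multiplication by any $T\in\LL(X)$, hence form a two-sided ideal of $\LL(X)$ in the algebraic sense, which is moreover self-adjoint since $\Theta_{x,y}^*=\Theta_{y,x}$. Passing to closures, I would invoke norm-continuity of left multiplication, right multiplication, and the adjoint on the $C^*$-algebra $\LL(X)$ to conclude $T\KK(X)\subseteq\KK(X)$ and $\KK(X)T\subseteq\KK(X)$; as $\KK(X)$ is closed by definition, it is a closed two-sided ideal. The computations are entirely routine; the only points demanding care are the bookkeeping with conjugate-linearity of the inner product in the first variable, so that the coefficient $\IP{y,z}$ emerges as an adjoint, and remembering that pulling $T$ inside $\Theta_{x,y}T$ uses $A$-linearity of $T$ rather than mere complex-linearity — both furnished by Lemma~\ref{Rang}.
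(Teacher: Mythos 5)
Your proposal is correct and follows essentially the same route as the paper: the same inner-product computation for $\Theta_{x,y}^*=\Theta_{y,x}$, the same pointwise verification of the two composition formulas, and the same passage from the finite-rank ideal to its closure (the paper spells out the $\ep/\normof{T}$ estimate explicitly where you invoke norm-continuity of multiplication, but these are the same argument).
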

\begin{proof}
	For $x,y,w,z\in X$ the first assertion follows from the computation
	\begin{align*}
	\IP{\Theta_{x,y}w,z}&=\IP{x\IP{y,w},z}=\IP{y,w}^*\IP{x,z}\\
	&=\IP{w,y}\IP{x,z}=\IP{w,y\IP{x,z}}=\IP{w,\Theta_{y,x}z}.\\
	\end{align*}
	Now we wish to show that $\KK(X)$ is an ideal.
	For any $T\in\LL(X)$ we compute
	\begin{equation*}
		\Theta_{x,y}Tz=x\IP{y,Tz}=\Theta_{x,T^*y}z\qquad\text{ and }\qquad T\Theta_{x,y}z=Tx\IP{y,z}=\Theta_{Tx,y}z,
	\end{equation*}
	which shows that $\Span\{\Theta_{x,y}:x,y\in X\}$ is a (not closed) ideal in $\LL(X)$. If $T=0$ then clearly $TK\in\KK(X)$ for all $K\in\KK(X)$. Fix $\ep>0$ and $T\in\LL(X)$ with $T\neq 0$. If $S$ is a norm limit of finite rank operators $K_n=\sum_{i=1}^{k_n}\Theta_{x^i_n,y^i_n}$, then choosing $n$ large enough such that $\normof{S-K_n}<\ep/\normof{T}$ we have
	\begin{align*}
	\normof{TS-TK_n}&=\Bignormof{TS-\sum_{i=1}^n\Theta_{Tx_{n}^i,y^i_n}}\\
	&=\Bignormof{T(S-\sum_{i=1}^n\Theta_{x_{n}^i,y^i_n})}\\
	&\leq\normof{T}\normof{S-K_n}<\ep.\\
	\end{align*}
	Hence $TS$ is the norm limit of finite rank operators $TK_n$ and $\KK(X)$ is a left ideal. Taking adjoints shows that $\KK(X)$ is a two sided ideal of $\LL(X)$.
\end{proof}
\begin{example}\label{Fang}
	Consider the Hilbert $A$-module $A_A$ of Example~\ref{dang}. The rank one operators $\Theta_{a,b}$ look like
	\begin{equation*}
	\Theta_{a,b}c=ab^*c,
	\end{equation*}
	which is the left multiplication operator $(ab^*)\cdot$ from Example~\ref{left multn is adjointable}. We show that the left multiplication operators by $A$ on $A_A$ are exactly the compact operators $\KK(A_A)$. In Example~\ref{dang} we saw that the Hilbert module norm of $a$ is the $C^*$-norm of $a$, and in Example~\ref{left multn is adjointable} we saw that the operator norm of $a\cdot$ is also the $C^*$-norm of $A$. That these norms are all the same can hide what we are doing in the following calculations, so we will denote the operator norm of $a\cdot$ and the $C^*$-norm of $a$ by $\normof{a}_{\LL(A_A)}$ and $\normof{a}_A$. We compute that
	\begin{equation}\label{isom}
	\Bignormof{\sum_i\Theta_{a_i,b_i}}_{\LL(A_A)}=\Bignormof{\sum_i(a_ib_i^*)\cdot}_{\LL(A_A)}=\Bignormof{\sum_ia_ib_i^*}_{A},
	\end{equation}
	so if $\sum_i\Theta_{a_i,b_i}=\sum_j\Theta_{c_j,d_j}$ then $\sum_ia_ib_i^*=\sum_jc_jd_j^*$. There is then a well defined map 
	\begin{equation*}
	\psi:\Span\{\Theta_{a,b}:a,b\in A\}\to A
	\end{equation*} 
	such that $\psi(\sum_i\Theta_{a_i,b_i})=\sum_ia_ib_i^*$. This map is isometric by Equation \eqref{isom}, and it is a homomorphism since 
	\begin{align*}
	\psi\Big(\big(\sum_i\Theta_{a_i,b_i}\big)\big(\sum_j\Theta_{c_j,d_j}\big)\Big)&=\psi\big(\sum_{i,j=0}\Theta_{a_ib^*_ic_j,d_j}\big)\\
	&=\sum_{i,j}a_ib^*_ic_jd_j^*=\psi\big(\sum_i\Theta_{a_i,b_i}\big)\psi\big(\sum_j\Theta_{c_j,d_j}\big)\\
	\end{align*}
	and
	\begin{align*}
	\psi\Big(\big(\sum_i\Theta_{a_i,b_i}\big)^*\Big)&=\psi\Big(\sum_i\Theta_{b_i,a_i}\Big)\\
	&=\sum_ib_ia_i^*=\psi\big(\sum_i\Theta_{a_i,b_i}\big)^*.\\
	\end{align*}
	By Theorem~\ref{Sang}, $\psi$ extends as an injective homomorphism from the compacts $\KK(A_A)$ to $A$ such that if $K_n$ is a convergent sequence of finite rank operators then
	\begin{equation*}
	\psi(\lim_{n\to\infty}K_n)=\lim_{n\to\infty}\psi(K_n).
	\end{equation*}
	Since products af the form $ab^*$ are dense in $A$ (take $b$ to be an approximate identity) we deduce that $\psi$ is surjective, and hence we have an isomorphism $\KK(A_A)\cong A$.  We will see in Section~\ref{Multiplier} that $\LL(A_A)$ is the `maximal unitization' of $A$.
\end{example}
Similar to the orthogonal compliment of a subspace of a Hilbert space, given a sub-module $Y\subset X$, we define
\begin{equation*}
Y^{\perp}=\{x\in X:\IP{x,y}=0\;\forall\; y\in Y\}.
\end{equation*}
The set $Y^{\perp}$ is always a closed sub-module of $X$, but unlike for Hilbert spaces, even if $Y$ is closed we need not have $(Y^{\perp})^{\perp}=Y$, nor do we always have $X=Y\oplus Y^{\perp}$. For example, if $Y$ and $Z$ are as in Example~\ref{Wang}, then $Z^{\perp}$ is the trivial module $\{0\}$, so $(Z^{\perp})^{\perp}=Y\neq Z$, and $Z^{\perp}\oplus Z=Z\neq Y$. This example is also due to \cite{Lance}.\\
\vskip 2pt
We have been working solely with the operator norm
\begin{equation*}
\normof{T}=\sup_{\normof{x}\leq 1}\normof{Tx}
\end{equation*}
on $\LL(X)$, under which it is a $C^*$-algebra. There are other topologies on $\LL(X)$ which can be useful in various situations.
\begin{definition}
	Let $X$ be a Hilbert $A$-module and $T\in\LL(X)$. We say a net $(T_{\lambda})_{\lambda\in\Lambda}$ of operators in $\LL(X)$ converges to $T$ \emph{strongly} if
	\begin{equation*}
	\lim_{\lambda\in\Lambda}\normof{T_{\lambda}x-Tx}\to 0
	\end{equation*}
	for every $x\in X$. We say that $(T_{\lambda})$ converges to $T$ \emph{weakly} if
	\begin{equation*}
	\lim_{\lambda\in\Lambda}\normof{\IP{T_{\lambda}x,y}-\IP{Tx,y}}\to 0
	\end{equation*}
	for every $x,y\in X$.
	We say that $(T_{\lambda})$ converges to $T$ \emph{strictly} if
	\begin{equation*}
	\lim_{\lambda\in\Lambda}\normof{T_{\lambda}S-TS}= 0
	\end{equation*}
	for all $S\in\KK(X)$.
\end{definition}
The inequality $\normof{Tx}\leq \normof{T}\normof{x}$ shows that norm convergence implies strong convergence and the Cauchy-Schwartz inequality shows that strong convergence implies weak convergence.
By \cite[Proposition 2.31]{RaeburnWilliams} every $x\in X$ can be written in the form $x=y\IP{y,y}=\Theta_{y,y}y$ for some $y\in X$. This is called Cohen Factorisation. If $T_{\lambda}\to T$ strictly then $T_{\lambda}\Theta_{y,y}\to T\Theta_{y,y}$ in norm. Since norm convergence implies strong convergence, $T_{\lambda}\Theta_{y,y}(y)\to T\Theta_{y,y}(y)$, so
\begin{equation*}
\normof{T_{\lambda}x}=\normof{T_{\lambda}\Theta_{y,y}(y)}\to \normof{T\Theta_{y,y}(y)}=\normof{Tx},
\end{equation*}
so strict convergence implies strong convergence. Thus we have the hierarchy
\begin{align*}
\text{Norm convergence}&\implies\text{Strict convergence}\\
&\implies\text{Strong convergence}\\
&\implies\text{Weak convergence}.
\end{align*}
\begin{example}
	Let $A$ be a $C^*$-algebra and consider the Hilbert module $A_A$. We saw in Example~\ref{Fang} that $\KK(A_A)\cong A$. If we instead consider the strong closure of the finite rank operators, then for $\{e_{\lambda}\}$ an approximate identity, the we have
	\begin{equation*}
	\lim_{\lambda\to\infty}\Theta_{e_{\lambda},e_{\lambda}}a=a
	\end{equation*}
	for all $a\in A$, so $\Theta_{e_{\lambda},e_{\lambda}}\to 1$ strongly. Hence the strong closure of the finite rank operators is an ideal of $\LL(A_A)$ containing 1, and so is all of $\LL(A_A)$.
\end{example}
\begin{example}\label{ralph}
	If $\HH$ is a Hilbert space (that is, a Hilbert $\C$-module) then for an orthonormal basis $\{e_i\}$ the sums $\sum_{i=1}^N\Theta_{e_i,e_i}$ converge strongly to the identity. Thus the strong (and hence weak) closure of the finite rank operators is an ideal containing 1 so is all of $\BB(\HH)$. It is not difficult to show that the closure of the finite ranks in strict topology is $\BB(\HH)$ too.
\end{example}
Indeed in any Hilbert module $X$, the strict closure of the finite rank operators is $\LL(X)$. We'd like to pick an `orthonormal basis' for $X$ and show that we can approximate the identity with projections onto this basis as in Example~\ref{ralph}, but the concept of orthonormal basis doesn't quite carry over to Hilbert modules. It is still possible to do this using frames, which we will only very briefly touch on, but is described in detail in \cite[1.4.2]{Kasnotes}.

\section{Kasparov's stabilisation theorem}\label{section: Kasparovs stabilisation theorem}

\begin{definition}\label{Heidi Klum}
	We say that two Hilbert modules $X_A$ and $Y_B$ are \textit{isomorphic} if there exists a linear isomorphism $t:X\to Y$ and an isomorphism $\pi:A\to B$ such that
	\begin{enumerate}
		\item[(1)]
		$t(xa)=t(x)\pi(a)$\\
		\item[(2)]
		$\normof{t(x)}=\normof{x}$\\
	\end{enumerate}
	for all $x\in X$ and $a\in A$. We say that $X$ is \textit{countably generated} if there exists a countable set of elements $\{x_j\}_{j\geq 1}$ in $X$ such that
	\begin{equation*}
		X=\overline{\Span}\{x_ja:j\geq 1, a\in A\}.
	\end{equation*}
\end{definition}
\begin{remark}
	Given two right $A$-modules $X_A$ and $Y_A$ over the same algebra it suffices to show that there exists a unitary operator between $X$ and $Y$ to show that $X$ and $Y$ are isomorphic.
\end{remark}
\begin{example}
	Let $A$ be a $C^*$-algebra with $I\triangleleft A$ a non-trivial ideal. Then we may consider $I$ as a Hilbert $I$-module $I_I$ with $I$ acting on the right by multiplication and we may consider $I$ as a right $A$-module $I_A$ with $A$ acting on the right by multiplication. The Hilbert module $I_I$ is full whilst $I_A$ is not. According to Definition~\ref{Heidi Klum} these modules cannot be isomorphic since $I$ is not isomorphic to $A$, however the identity map $\text{id}:I_I\to I_A$ satisfies all other conditions of Definition~\ref{Heidi Klum}.
\end{example}
\begin{theorem}[Kasparov's Stabilisation theorem]\label{stab theorem}
	Let $A$ be a $C^*$-algebra and $X_A$ a countably generated Hilbert $A$-module. There exists an injective adjointable map $V:X_A\hookrightarrow\HH_A$ such that $V^*V=1_X$, and in particular $X_A\oplus\HH_A\cong \HH_A$.
\end{theorem}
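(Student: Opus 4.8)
The plan is to reduce the whole theorem to the single statement that $X$ is (unitarily) isomorphic to an orthogonally complemented submodule of $\HH_A$; equivalently, that there is an adjointable $V\colon X\to\HH_A$ with $V^*V=1_X$. Such a $V$ is exactly the map asked for: it is automatically injective, since $V^*V=1_X$ forces $\normof{Vx}=\normof{x}$. Granting this, put $M=(1-VV^*)\HH_A$, so $\HH_A\cong X\oplus M$, and combine with the evident isomorphism $\HH_A\cong\bigoplus_{n\ge1}\HH_A$: by an Eilenberg-swindle-type regrouping,
\[
\HH_A\;\cong\;\bigoplus_{n\ge1}(X\oplus M)\;\cong\;X\oplus\bigoplus_{n\ge1}(M\oplus X)\;\cong\;X\oplus\HH_A,
\]
where the first and last isomorphisms again use $\HH_A\cong\bigoplus_{n\ge1}\HH_A$; this is the ``in particular''. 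So everything comes down to constructing $V$.

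First I would manufacture a compact operator to serve as an engine. Countable generation gives a sequence $(\xi_n)_{n\ge1}$ with $X=\overline{\Span}\{\xi_n a:n\ge1,\ a\in A\}$ and, after rescaling, $\normof{\xi_n}\le1$. Set $K=\sum_{n\ge1}2^{-n}\Theta_{\xi_n,\xi_n}$; the series converges in norm, so $K\in\KK(X)$, $K\ge0$ and $\normof{K}\le1$. Two features of $K$ matter. First, $\overline{KX}=X$: since $\Theta_{\xi_m,\xi_m}\le2^{m}K$, the operator $\Theta_{\xi_m,\xi_m}$ lies in the hereditary $C^*$-subalgebra generated by $K$, so $\Theta_{\xi_m,\xi_m}X\subseteq\overline{KX}$; as $\xi_m\in\overline{\Theta_{\xi_m,\xi_m}X}$ (a Cohen factorisation argument) and the $\xi_m$ generate $X$, this forces $\overline{KX}=X$. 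Second, $K=TT^*$ for $T\colon\HH_A\to X$, $T\big((a_n)_n\big)=\sum_n2^{-n/2}\xi_na_n$, which is compact (write $\xi_n=\eta_nb_n$ by Cohen factorisation to realise each summand as a genuine rank-one operator on $\HH_A$, so $T$ is a norm-convergent sum of such) and has dense range, since $\Range T\supseteq\{\xi_na:n\ge1,\ a\in A\}$.

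The hard part is the next step: cutting the spectrum of $K$ away from $0$, which repairs the failure of $T$ to have closed range. Since $\operatorname{sp}(K)$ is countable, choose $1=t_0>t_1>t_2>\cdots\to0$ outside $\operatorname{sp}(K)$; on $\operatorname{sp}(K)$ the function $\chi_{(t_n,t_{n-1}]}$ agrees with a continuous function vanishing at $0$, so $Q_n:=\chi_{(t_n,t_{n-1}]}(K)\in\KK(X)$, and the $Q_n$ are mutually orthogonal projections. Here $\overline{KX}=X$ is used: $\sum_{n=1}^NQ_n=\chi_{(t_N,1]}(K)\to 1_X$ strongly, because on $\Range K$ the complementary projection $\chi_{[0,t_N]}(K)$ has norm at most $t_N\to0$, and one extends to all of $X$ by density. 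Thus $X=\bigoplus_{n\ge1}Y_n$ as a Hilbert module direct sum, with $Y_n:=Q_nX$. On $Y_n$ the restriction $K|_{Y_n}$ has spectrum in $[t_n,t_{n-1}]$, hence is invertible in $\LL(Y_n)$; since $(T^*|_{Y_n})^*(T^*|_{Y_n})=K|_{Y_n}$, the operator $T^*|_{Y_n}\colon Y_n\to\HH_A$ is bounded below, and $V_n:=T^*|_{Y_n}\,(K|_{Y_n})^{-1/2}$ satisfies $V_n^*V_n=1_{Y_n}$, realising $Y_n$ as an orthogonally complemented submodule of $\HH_A$.

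Finally I would assemble these: $V:=\bigoplus_nV_n\colon X=\bigoplus_nY_n\to\bigoplus_n\HH_A\cong\HH_A$. Mutual orthogonality of the $Y_n$ makes the defining formula converge and behave well — for $x=\bigoplus_nx_n$ one has $\IP{Vx,Vx}=\sum_n\IP{V_nx_n,V_nx_n}=\sum_n\IP{x_n,x_n}=\IP{x,x}$ — so $V$ is a well-defined adjointable isometry with $V^*V=1_X$; composing with a fixed isomorphism $\bigoplus_n\HH_A\cong\HH_A$ gives the $V$ of the statement, and the first paragraph then delivers $X\oplus\HH_A\cong\HH_A$. The main obstacle, as signalled, is that a dense-range adjointable map of Hilbert modules need not have closed or complemented range (contrast Example~\ref{Wang} and the remarks there on $Y^\perp$), so $T$ cannot simply be polar-decomposed; the spectral chopping is the workaround, and it is available only because $K$ was built so that $\overline{KX}=X$. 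The other point demanding genuine care is the non-unital case, handled by the Cohen factorisations above and by arguing in the strong topology when summing the $Q_n$.
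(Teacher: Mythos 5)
You should first note that the thesis does not actually prove Theorem~\ref{stab theorem}: it defers to \cite[Theorem~5.49]{RaeburnWilliams}. So your argument has to stand on its own, and it does not — the spectral-cutting step fails. You assert that $\sigma(K)$ is countable for $K=\sum_n2^{-n}\Theta_{\xi_n,\xi_n}\in\KK(X)$. That is a fact about compact operators on Hilbert \emph{spaces}, and it is false for generalised compact operators on Hilbert modules. Take $A=X=C[0,1]$ and $\xi_1(t)=\sqrt t$ (all other $\xi_n=0$): then $K=\tfrac12\Theta_{\xi_1,\xi_1}$ is the rank-one operator of multiplication by $t/2$, whose spectrum is the entire interval $[0,1/2]$. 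There is then no sequence $t_n\downarrow 0$ avoiding $\sigma(K)$, the functions $\chi_{(t_n,t_{n-1}]}$ are not continuous on $\sigma(K)$, and the would-be projections $Q_n$ do not exist in $\LL(X)$ (in this example $\LL(X)\cong C[0,1]$ has no nontrivial projections at all). Everything downstream collapses: there is no decomposition $X=\bigoplus_nY_n$ on whose pieces $K$ is invertible. This is not a repairable detail. The impossibility of forming spectral projections — equivalently, of polar-decomposing an adjointable map whose range is dense but not closed — is exactly the obstruction that makes the stabilisation theorem nontrivial, so an argument that reinstates spectral projections of a compact operator is assuming the very thing that has to be overcome.

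For contrast, the standard Mingo--Phillips proof (the one in \cite[Theorem~5.49]{RaeburnWilliams}) avoids this as follows. List the generators so that \emph{each occurs infinitely often}, and define $T\in\KK(\HH_A,X\oplus\HH_A)$ by $T(e_na)=2^{-n}\xi_na\oplus4^{-n}e_na$. The infinite repetition makes $\Range(T)$ dense in $X\oplus\HH_A$ (one shrinks the $\HH_A$-component of $2^nT(e_na)$ by passing to a later occurrence of the same generator), while the $4^{-n}e_n$ summand makes $\Range(T^*)$, and hence $\Range(|T|)$ with $|T|=(T^*T)^{1/2}$, dense in $\HH_A$. The key lemma is then: when $T$ and $|T|$ both have dense range, the assignment $|T|y\mapsto Ty$ preserves inner-products on the dense submodule $\Range(|T|)$ and so extends to a unitary $W\colon\HH_A\to X\oplus\HH_A$ (your Lemma~\ref{lemma: densely defined unitaries} is exactly the tool for this); $V=W^*|_{X\oplus0}$ is the required isometry, and $X\oplus\HH_A\cong\HH_A$ drops out directly without any swindle. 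Your opening reduction (that an isometry $V$ with $V^*V=1_X$ yields $X\oplus\HH_A\cong\HH_A$ by an Eilenberg swindle) and your verification that $\overline{KX}=X$ are both correct, but they are not where the content of the theorem lies.
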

The proof does not require too much more machinery than we have built, and can be found in \cite[Theorem 5.49]{RaeburnWilliams}. Kasparov's stabilisation theorem tells us that $\HH_A$ is in some sense a `universal' Hilbert $A$-module, in that every right $A$-module may be embedded in $\HH_A$. This allows to pull back a frame on $\HH_A$ to a frame on any Hilbert $A$ module.
\begin{definition}
	Let $X$ be a Hilbert $A$-module. A \emph{frame} for $X$ is a sequence $(x_n)$ of elements in $X$ such that
	\begin{equation*}
	\sum_{i=1}^n\Theta_{x_i,x_i}
	\end{equation*}
	is an approximate unit for $\KK(X)$. We say that a $C^*$-algebra $A$ is \emph{$\sigma$-unital} if there exists a sequential approximate identity.
\end{definition}
\begin{lemma}\label{frames exist}
	Let $X$ be a countably generated Hilbert $A$ module with $A$ $\sigma$-unital, and let $V:X\to\HH_A$ be the stabilisation map of Kasparov's Stabilisation Theorem~\ref{stab theorem}. Then there is frame $e_{ij}$ of $\HH_A$ such that $V^*e_{ij}$ is a frame for $X$.
\end{lemma}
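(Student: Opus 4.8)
The plan is to manufacture an explicit frame for the standard module $\HH_A$ out of a sequential approximate identity for $A$, and then transport it to $X$ through the stabilisation isometry $V$ of Theorem~\ref{stab theorem}.

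First I would produce the frame for $\HH_A$. Fix a sequential approximate identity $(u_j)_{j\ge1}$ for $A$ (an increasing sequence of positive contractions, which exists precisely because $A$ is $\sigma$-unital), set $u_0=0$, and put $v_j=(u_j-u_{j-1})^{1/2}\ge0$, so that $\sum_{j=1}^N v_j^2=u_N$ for every $N$. For $i,j\ge1$ let $e_{ij}\in\HH_A$ be the sequence whose $i$-th coordinate is $v_j$ and whose other coordinates vanish; a one-line computation using $\IP{e_{ij},a}=v_ja_i$ shows that $\Theta_{e_{ij},e_{ij}}$ is the diagonal operator sending $(a_n)$ to the sequence with $(u_j-u_{j-1})a_i$ in position $i$ and zeros elsewhere. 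I would then enumerate the pairs $(i,j)$ along finite anti-diagonals, arranging the order so that after $n$ steps the set of second indices used with any fixed first index $i$ is an initial segment $\{1,\dots,m_i(n)\}$ of $\N$, with each $m_i(n)\to\infty$ as $n\to\infty$. Since the contributions to each fixed coordinate $i$ then telescope, the partial sum $S_n$ of the first $n$ of the operators $\Theta_{e_{ij},e_{ij}}$ is the diagonal operator $\diag\bigl(u_{m_1(n)},u_{m_2(n)},\dots\bigr)$; in particular each $S_n$ is positive, the $S_n$ increase, and $\normof{S_n}\le1$. To see they form an approximate unit for $\KK(\HH_A)$ it suffices, the $S_n$ being uniformly bounded, to test against the dense set of finite-rank operators $\Theta_{\xi,\eta}$ with $\xi$ and $\eta$ finitely supported: there $\normof{S_n\Theta_{\xi,\eta}-\Theta_{\xi,\eta}}\le\normof{S_n\xi-\xi}\normof{\eta}\to0$, since $\xi$ has only finitely many nonzero coordinates and $\normof{u_ja-a}\to0$ for all $a\in A$. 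Self-adjointness of $S_n$ gives the approximation on the other side too, so $(e_{ij})$, in this enumeration, is a frame for $\HH_A$.

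Then I would pull the frame back along $V$: put $f_{ij}:=V^*e_{ij}\in X$. Using that $V$ is adjointable, hence $A$-linear (Lemma~\ref{Rang}), and that $V^*V=1_X$, one checks the identity $\Theta^X_{f_{ij},f_{ij}}=V^*\Theta^{\HH_A}_{e_{ij},e_{ij}}V$, so the partial sums of the sequence $(f_{ij})$ are exactly $T_n:=V^*S_nV$ — again positive, increasing and of norm $\le1$. It remains to show $(T_n)$ is an approximate unit for $\KK(X)$; since the finite-rank operators are dense in $\KK(X)$ and $(T_n)$ is bounded, it is enough to check $\normof{T_n\Theta^X_{x,y}-\Theta^X_{x,y}}\to0$ for $x,y\in X$. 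Here $T_n\Theta^X_{x,y}=\Theta^X_{T_nx,y}$, and, using $V^*V=1_X$, $\normof{T_nx-x}=\normof{V^*(S_n-1_{\HH_A})Vx}\le\normof{(S_n-1_{\HH_A})Vx}\to0$; this last limit holds because an approximate unit of $\KK(\HH_A)$ acts approximately as the identity on every vector of $\HH_A$ (write $Vx=\Theta_{\eta,\eta}\eta$ by Cohen factorisation and use $\normof{S_n\Theta_{\eta,\eta}-\Theta_{\eta,\eta}}\to0$). Self-adjointness of $T_n$ again handles the right-hand approximation, so $(f_{ij})$ is a frame for $X$, as required.

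I expect the first step to be the only genuine obstacle. A careless choice of vectors yields partial sums that are neither norm-bounded nor increasing; what makes everything work is the telescoping device $v_j=(u_j-u_{j-1})^{1/2}$ together with an enumeration of $\N\times\N$ compatible with the order in the second index, so that every partial sum collapses to a diagonal operator whose entries are among the $u_j$. Once that is in place, the rest is routine manipulation with the ideal property of $\KK$ and the relation $V^*V=1_X$.
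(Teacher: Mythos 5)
Your proof is correct and follows essentially the same route as the paper: build the frame of $\HH_A$ from the telescoping square roots $v_j=(u_j-u_{j-1})^{1/2}$ of a sequential approximate identity, verify that the partial sums of $\Theta_{e_{ij},e_{ij}}$ form an approximate unit for $\KK(\HH_A)$, and transport it to $X$ via $\Theta_{V^*e_{ij},V^*e_{ij}}=V^*\Theta_{e_{ij},e_{ij}}V$ together with $V^*V=1_X$. The only (harmless) divergences are cosmetic: you swap the roles of the two indices, make the enumeration of $\N\times\N$ explicit so the partial sums are visibly positive, increasing and contractive, and establish strictness by testing against finitely supported rank-one operators rather than first proving strong convergence on all of $\HH_A$ as the paper does.
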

\begin{proof}
	This proof is adapted from \cite[1.4.2]{Kasnotes}.
	We begin by constructing a frame $e_{ij}$ of $\HH_A$. Let $u_i$ be an approximate identity for $A$, and define $v_i=\sqrt{u_i-u_{i-1}}$ for $i>1$ and $v_1=\sqrt{u_1}$ so that $\sum_{i=1}^nv_i^2=u_n$. For each $i$ and $j$ define $(e_{ij})\in \HH_A$ to be the sequence
	\begin{equation*}
	(e_{ij})_n=\begin{cases}
	v_i&n=j\\
	0&n\neq j\\
	\end{cases}.
	\end{equation*}
	Fix $x\in\HH_A$. We compute
	\begin{align*}
	\sum_{i,j=1}^n\Theta_{e_{ij},e_{ij}}x&=\sum_{i,j=1}^ne_{ij}\IP{e_{ij},x}\\
	&=\sum_{i,j=1}^ne_{ij}\sum_{k=1}^{\infty}(e_{ij})^*_kx_k=\sum_{i,j=1}^ne_{ij}v_ix_j.\\
	\end{align*}
	Thus taking the $k$th entry for $k\leq n$, we find
	\begin{align*}
	\left(\sum_{i,j}^n\Theta_{e_{ij},e_{ij}}x\right)_k=\left(\sum_{i,j=1}^ne_{ij}v_ix_j\right)_k=\sum_{i,j=1}^n(e_{ij})_kv_ix_j=\sum_{i=1}^nv_i^2x_k=u_nx_k,\\
	\end{align*}
	and for $k>n$ we have
	\begin{equation*}
	\left(\sum_{i,j}^n\Theta_{e_{ij},e_{ij}}x\right)_k=0.
	\end{equation*}
	Now fix $\ep>0$. 
	Since $x-\sum_{i,j=1}^n\Theta_{e_{ij},e_{ij}}x\in\HH_A$, there is a $n_0$ such that for $n>n_0$ we have
	\begin{equation*}
		\Bignormof{\sum_{k=n_0+1}^{\infty}(x-\sum_{i,j=1}^n\Theta_{e_{ij},e_{ij}}x)^*(x-\sum_{i,j=1}^n\Theta_{e_{ij},e_{ij}}x)}<\ep/2.
	\end{equation*}
	Since $u_i$ is an approximate identity we may choose $m_0$ such that for $n>m_0$
	\begin{equation*}
	\Bignormof{\sum_{i=1}^{n_0}(x_i-u_nx_i)^*(x_i-u_nx_i)}<\ep/2.
	\end{equation*}
	Then for $n\geq N_0=\max\{m_0,n_0\}$ we see that
	\begin{align*}
	\Bignormof{x-\sum_{i,j=1}^n\Theta_{e_{ij},e_{ij}}x}&=\Bignormof{\sum_{k=1}^{\infty}(x-\sum_{i=1}^n\Theta_{e_{ij},e_{ij}}x)_k^*(x-\sum_{i=1}^n\Theta_{e_{ij},e_{ij}}x)_k}\\
	&\leq\Bignormof{\sum_{k=n_0+1}^{\infty}(x-\sum_{i=1}^n\Theta_{e_{ij},e_{ij}}x)_k^*(x-\sum_{i=1}^n\Theta_{e_{ij},e_{ij}}x)_k}\\
	&\quad\;\;+\Bignormof{\sum_{k=1}^{n_0}(x-\sum_{i=1}^n\Theta_{e_{ij},e_{ij}}x)_k^*(x-\sum_{i=1}^n\Theta_{e_{ij},e_{ij}}x)_k}\\
	&<\ep/2+\Bignormof{\sum_{k=1}^{n_0}(x-u_nx)_k^*(x-u_nx)_k}<\ep.\\
	\end{align*}
	Thus we deduce that $\sum_{i=1}^n\Theta_{e_{ij},e_{ij}}$ converges strongly to the identity. We now wish to show that this convergence is in fact strict. Fix a rank one operator $\Theta_{x,y}$. Since we have
	\begin{align*}
	\normof{\Theta_{x,y}-\sum_{i,j=1}^n\Theta_{e_{ij},e_{ij}}\Theta_{x,y}}&=\sup_{\normof{z}\leq 1}\normof{x\IP{y,z}-\sum_{i,j=1}^n\Theta_{e_{ij},e_{ij}}x\IP{x,z}}\\
	&\leq\sup_{\normof{z}\leq 1}\normof{x-\sum_{i,j=1}^n\Theta_{e_{ij},e_{ij}}x}\normof{\IP{y,z}}\\
	&=\normof{x-\sum_{i,j=1}^n\Theta_{e_{ij},e_{ij}}x}\normof{y},\\
	\end{align*}
	and since $\sum_{i,j=1}^n\Theta_{e_{ij},e_{ij}}x$ converges to $x$, we see that $\sum_{i,j=1}^n\Theta_{e_{ij},e_{ij}}\Theta_{x,y}$ converges to $\Theta_{x,y}$. Since $\Theta_{x,y}$ generate $\KK(\HH_A)$ we deduce that $\sum_{i,j=1}^n\Theta_{e_{ij},e_{ij}}$ converges strictly to the identity, that is, $\sum_{i,j=1}^n\Theta_{e_{ij},e_{ij}}$ is an approximate unit for $\KK(\HH_A)$.\\
	Now lastly, since $V$ is an isometry, defining $x_{ij}=V^*e_{ij}$ gives a frame for $X$. We compute for $\Theta_{u,v}$ a rank one operator in $\KK(X)$
	\begin{align*}
	\sum_{i,j=1}^n\Theta_{x_{ij},x_{ij}}\Theta_{u,v}&=V^*\sum_{i,j=1}^n\Theta_{e_{ij},e_{ij}}V\Theta_{u,v}\rightarrow V^*V\Theta_{u,v}=\Theta_{u,v}.\\
	\end{align*}
	Since the rank one operators generate $\KK(X)$ we deduce that $\sum_{i,j=1}^n\Theta_{x_{ij},x_{ij}}$ is an approximate identity for $\KK(X)$.
\end{proof}
\section{$C^*$-correspondences}

\begin{definition}\label{def: correspondence}
	Let $A$ and $B$ be $C^*$-algebras. An $A$--$B$ correspondence is a Hilbert $B$-module $X$ together with a homomorphism $\phi:A\to \LL(X)$ of $A$ into the adjointable operators on $X$. We can think of $\phi$ as a left action of $A$ on $X$ by $a\cdot x=\phi(a)x$, and will often refer to $\phi$ as a left action. We often write $_{\phi}X$, $_\phi X_B$ or $_AX_B$ to emphasise $\phi$, $A$ or $B$.
\end{definition}
\begin{remark}
	We will usually refer to an $A$--$B$ correspondence just as a $C^*$-\emph{correspondence} when we have no reason to emphasise the algebras $A$ and $B$.
\end{remark}
\begin{example}\label{Marnie}
	Given any homomorphism $\phi:B\to A$ of a $C^*$-algebra $A$ onto a $C^*$-algebra $B$, we have already seen in Example~\ref{left multn is adjointable} that $\phi$ determines a homomorphism $\widetilde{\phi}:B\to\LL(A_A)$ by $\widetilde{\phi}(b)=L_{\phi(b)}$. This left action $\widetilde{\phi}$ makes $A_A$ into a $B$--$A$ correspondence. We will stop distinguishing between $\widetilde{\phi}$ and $\phi$ when referring to this example later, and will call both maps $\phi$.
\end{example}
\begin{example}\label{Wahay!}
	Every Hilbert space $\HH$ is a $\KK(\HH)$--$\C$ correspondence with left action given by the inclusion of $\KK(\HH)$ into $\BB(\HH)=\LL(\HH)$. Every Hilbert $A$-module $X$ is a $\KK(X)$--$A$ correspondence in the same way.
\end{example}
\begin{example}\label{Marns}
	Recall that a directed graph $(E^0,E^1,r,s)$ is a quadruple consisting of two countable sets $E^0$ and $E^1$ with two functions $r,s:E^1\to E^0$. For $v,w\in E^0$ we write
	\begin{equation*}
		vE^1w=\{e\in E^1:r(e)=v,s(e)=w\}
	\end{equation*}  
	and we write $E^1w$ and $vE^1$ for the sets $\{e\in E^1:s(e)=w\}$ and $\{e\in E^1:r(e)=v\}$ respectively.
	Let $(E^0,E^1,r,s)$ be a directed graph and give $E^0$ and $E^1$ the discrete topologies. Give $C_c(E^1)$ the $C_0(E^0)$ valued inner-product
	\begin{equation*}
	\IP{f,g}(v)=\sum_{e\in E^1v}\overline{f(e)}g(e)
	\end{equation*}
	with right action of $C_0(E^0)$ given by
	\begin{equation*}
	(f\cdot \xi)(e)=f(e)\xi(r(e))
	\end{equation*}
	and left action $\phi:C_0(E^0)\to \LL(C_c(E^1))$
	\begin{equation*}
	(\phi(\xi)f)(e)=\xi(s(e))f(e).
	\end{equation*}
	The left action $\phi(\xi)$ is adjointable with adjoint $\phi(\bar{\xi})$ because for $f,g\in C_c(E^1)$
	\begin{equation*}
		\IP{\phi(\xi)f,g}(v)=\sum_{e\in E^1v}\overline{\phi(s(e))}\bar{f}(e)g(e)=\IP{f,\phi(\bar{\xi})g}(v).
	\end{equation*}
	Let $X$ be the completion of $C_c(E^1)$ with respect to the norm coming from the inner-product. Then $X$ is a $C_0(E^0)$-$C_0(E^0)$ correspondence. We call $X$ the \emph{graph correspondence} of $E$.
\end{example}
At this stage since we have introduced more structure to go from Hilbert modules to correspondences we must say what it means for correspondences to be isomorphic. 
\begin{definition}\label{def:isomorphic and direct sum as correspondences}
	Let $_{\phi}X$ and $_{\psi}Y$ be $A$--$B$ correspondences. We say that $X$ and $Y$ are isomorphic if there exists a unitary operator $U:X\to Y$ such that $U\phi(a)=\psi(a)U$ for all $a\in A$. If $X=X_0\oplus X_1$ as Hilbert modules (see Definition~\ref{def: Direct sum of Hilbert modules}), then we say that $X$ is the direct sum of $X_0$ and $X_1$ if $\phi(A)X_i\subseteq X_i$.
\end{definition}
\begin{remark}\label{Sam}
Note that this definition of isomorphism is saying that $X$ and $Y$ must be isomorphic as Hilbert modules and that the isomorphism must intertwine the left actions.
It is not at first obvious how the left and right actions differ on a $C^*$-correspondence. We have seen that one difference is that the left action must be by adjointable operators, whilst the right action need not be. Cohen factorisation tells us that if $X$ is an $A$--$B$ correspondence then $X\cdot B=X$. 
Given the left action $\phi:A\to \LL(X)$ there is no reason for elements of the form $\phi(a)x$ to be dense in $X$. If products of this form are dense in $X$, then we call $\phi$ \textit{non-degenerate}. We will refrain from calling a left action that is not non-degenerate a `degenerate' action because the term degenerate will be used later in a different context. A map $\phi$ between $C^*$-algebras $A$ and $B$ is said to be non-degenerate if $\phi$ maps an approximate identity for $A$ to an approximate identity for $B$. It can be shown that this definition is consistent with our definition for homomorphisms between $A$ and $\LL(X)$.
\end{remark}
\begin{example}
	Let $A$ be a $C^*$-algebra and $I\triangleleft A$ a non-trivial ideal in $A$. Then we can define a left action of $I$ on $A_A$ by left multiplication. We then have $\phi(I)A\subseteq I$ which is not dense in $A$, thus we do not have a non-degenerate action.
\end{example}
\begin{example}\label{Fish}
	Since Cohen factorisation implies that every $x\in X$ is of the form $x=y\IP{y,y}=\Theta_{y,y}(y)$ for some $y\in X$, we see that the $\KK(X)$--$A$ correspondence of Example~\ref{Wahay!} is non-degenerate. In particular, when $X=A_A$ and $\phi:A\to\KK(A_A)$ is the left multiplication representation as in Example~\ref{Fang}, then $\phi$ is non-degenerate.
\end{example}

\section{Tensor products}

At the beginning of this section we said that $C^*$-correspondences can be considered as `generalised homomorphisms', so we will need a way to compose them. Given an $A$--$B$ correspondence and a $B$--$C$ correspondence, we have different ways of forming their tensor product, the \textit{internal} (or `balanced') tensor product and the \textit{external} tensor product. We will focus only on the internal tensor product.
\begin{proposition}\label{Trang}
	Let $A$, $B$ and $C$ be $C^*$-algebras and let $_AX_B$ and $_BY_C$ be $C^*$-correspondences with $\phi:A\to\LL(X)$ and $\psi:B\to \LL(Y)$ the left actions of $A$ and $B$ respectively. There exists a sesquilinear map $\IP{\cdot,\cdot}:X\odot Y\times X\odot Y\to C$ on the vector space tensor product $X\odot Y$ such that
	\begin{equation*}
	\IP{x\otimes y,z\otimes w}_C=\IP{y,\psi(\IP{x,z}_B)w}_C
	\end{equation*}
	on simple tensors. Let $N$ be the subspace
	\begin{equation*}
	N=\Span\{xb\otimes y-x\otimes\psi(b)y:x\in X, y\in Y, b\in B\}.
	\end{equation*}
	Then $\IP{\cdot,\cdot}$ descends to a positive definite inner-product on $(X\odot Y)/N$. We define the internal tensor product $X\otimes_BY$ to be the completion of $(X\odot Y)/N$ in the norm coming from the inner-product. Then $X\otimes_BY$ is an $A$--$C$ correspondence with right action
	\begin{equation}\label{right action}
		(x\otimes y)\cdot c=x\otimes (y\cdot c)
	\end{equation}
	and left action $\widetilde{\phi}:A\to\LL(X\otimes_BY)$
	\begin{equation}\label{left action}
		\widetilde{\phi}(a)(x\otimes y)=(\phi(a)x)\otimes y.
	\end{equation}
\end{proposition}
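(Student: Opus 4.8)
The plan is to follow the classical construction of the internal tensor product, as in \cite[Chapter~4]{Lance}, in four stages: make sense of the form on $X\odot Y$ and check it kills $N$; prove positivity; pass to the completion; and install the two module actions. For the form, given $\xi=\sum_{i=1}^{n}x_i\otimes y_i$ and $\eta=\sum_{j=1}^{m}z_j\otimes w_j$ in $X\odot Y$, I set $\langle\xi,\eta\rangle_C=\sum_{i,j}\langle y_i,\psi(\langle x_i,z_j\rangle_B)w_j\rangle_C$; linearity in the second variable, conjugate-linearity in the first, and $\langle\xi,\eta\rangle^*=\langle\eta,\xi\rangle$ (using \eqref{IP3} and that $\psi$ is a $*$-homomorphism) all read straight off the formula. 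The one real point is independence of representatives: I would put $x_1,\dots,x_n,z_1,\dots,z_m$ inside a finite-dimensional subspace of $X$ with basis $e_1,\dots,e_p$, write $x_i=\sum_k a_{ik}e_k$ and $z_j=\sum_l b_{jl}e_l$, and use conjugate-linearity of $\langle\cdot,\cdot\rangle_B$ in its first slot to rewrite the sum as $\sum_{k,l}\langle\sum_i a_{ik}y_i,\ \psi(\langle e_k,e_l\rangle)\sum_j b_{jl}w_j\rangle$, which depends only on the elements $\sum_k e_k\otimes(\sum_i a_{ik}y_i)=\xi$ and $\sum_l e_l\otimes(\sum_j b_{jl}w_j)=\eta$. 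That $N$ lies in the null space is then a short computation: using the earlier lemma $\langle xb,z\rangle_B=b^{*}\langle x,z\rangle_B$ and $\psi(b^*)=\psi(b)^{*}$,
\[
\langle xb\otimes y,\,z\otimes w\rangle=\langle y,\psi(b)^{*}\psi(\langle x,z\rangle)w\rangle=\langle\psi(b)y,\psi(\langle x,z\rangle)w\rangle=\langle x\otimes\psi(b)y,\,z\otimes w\rangle,
\]
so each generator of $N$ pairs to zero against every element of $X\odot Y$.

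The crux is positivity: $\langle\xi,\xi\rangle_C\ge 0$ for every $\xi=\sum_{i=1}^n x_i\otimes y_i$. Here I use that the Gram matrix $\Phi=(\langle x_i,x_j\rangle_B)_{i,j}$ is a positive element of $M_n(B)$ --- for instance $\Phi=RR^{*}$ where $R\colon X\to B^n$, $R\zeta=(\langle x_k,\zeta\rangle)_k$, has adjoint $\mathbf b\mapsto\sum_k x_k b_k$. Choosing a square root $\Phi^{1/2}=(c_{kl})\in M_n(B)$ gives $\langle x_i,x_j\rangle_B=\sum_k c_{ki}^{*}c_{kj}$, so that, $\psi$ being a $*$-homomorphism,
\[
\langle\xi,\xi\rangle_C=\sum_{i,j,k}\langle y_i,\psi(c_{ki})^{*}\psi(c_{kj})y_j\rangle_C=\sum_{k}\Big\langle\sum_i\psi(c_{ki})y_i,\ \sum_j\psi(c_{kj})y_j\Big\rangle_C\ \ge\ 0 .
\]
Hence the form is positive semi-definite; by the Cauchy--Schwarz inequality \eqref{C-S} (valid for $C$-valued semi-inner-products) its null vectors form a subspace containing $N$, the form descends to an inner product on $(X\odot Y)/N$ (after discarding any remaining null vectors), the quantity in \eqref{act} is the associated norm, and $X\otimes_B Y$ is the resulting completion, which is then a Hilbert $C$-module.

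Finally the actions. For the right action, $x\otimes y\mapsto x\otimes(yc)$ visibly carries $N$ into $N$ and satisfies $\langle x\otimes y,(z\otimes w)c\rangle=\langle x\otimes y,z\otimes w\rangle c$ by \eqref{IP2}, so it descends to $(X\odot Y)/N$, is compatible with the inner product (whence bounded), and extends to $X\otimes_B Y$; the module identities are inherited from those on $Y_C$. For the left action $\widetilde\phi(a)\colon x\otimes y\mapsto(\phi(a)x)\otimes y$, I check that it preserves $N$ (using $\phi(a)(xb)=(\phi(a)x)b$), that $\langle\phi(a)x,z\rangle_B=\langle x,\phi(a^*)z\rangle_B$ gives it a formal adjoint $\widetilde\phi(a^*)$ on $(X\odot Y)/N$, and that it is bounded: the same $RR^{*}$ computation yields $(\langle\phi(a)x_i,\phi(a)x_j\rangle)_{i,j}\le\|a\|^2\Phi$ in $M_n(B)$ (since $\phi(a^*a)\le\|a\|^2 1_X$), and applying the positive map $\psi$ and repeating the factorisation above gives $\langle\widetilde\phi(a)\xi,\widetilde\phi(a)\xi\rangle\le\|a\|^2\langle\xi,\xi\rangle$; so $\widetilde\phi(a)$ extends to an adjointable operator on $X\otimes_B Y$, and $\widetilde\phi$ is plainly multiplicative and $*$-preserving, hence a homomorphism $A\to\LL(X\otimes_B Y)$, which completes the correspondence structure. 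I expect positivity to be the only genuinely non-routine step; the well-definedness reduction and the norm estimate for $\widetilde\phi(a)$ are where one actually has to exploit conjugate-linearity and positivity rather than just push symbols around.
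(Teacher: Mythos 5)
Your construction follows the same route as the paper (and \cite[Chapter~4]{Lance}): sesquilinear form on $X\odot Y$, Gram-matrix positivity, quotient, completion, then the two actions. Several of your steps are in fact more careful than the paper's --- the finite-dimensional-basis argument for well-definedness of the form, and the estimate $(\IP{\phi(a)x_i,\phi(a)x_j})_{i,j}\leq\normof{a}^2\Phi$ in $M_n(B)$ giving $\IP{\widetilde\phi(a)\xi,\widetilde\phi(a)\xi}\leq\normof{a}^2\IP{\xi,\xi}$, which the paper glosses over. Your factorisation $\Phi^{1/2}=(c_{kl})$ and the resulting identity $\IP{\xi,\xi}=\sum_k\bigIP{\sum_i\psi(c_{ki})y_i,\sum_j\psi(c_{kj})y_j}\geq 0$ is a clean and correct proof of positive \emph{semi}-definiteness, essentially unpacking the complete positivity of $\psi$ that the paper invokes.

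However, there is a genuine gap at exactly the point the paper works hardest. The proposition asserts that the form is positive \emph{definite} on $(X\odot Y)/N$, i.e.\ that the null space $M=\{z\in X\odot Y:\IP{z,z}=0\}$ is contained in $N$, so that $X\otimes_BY$ really is the completion of $(X\odot Y)/N$ and not of some further quotient. Your parenthetical ``after discarding any remaining null vectors'' concedes this point rather than proving it: if $M\supsetneq N$ you would be completing $(X\odot Y)/M$, which is not the object defined in the statement. The inclusion $M\subseteq N$ does not follow from Cauchy--Schwarz; it requires an argument. The paper's proof takes $z=\sum_i x_i\otimes y_i$ with $\IP{z,z}=0$, sets $T=\psi^{(n)}(U)$ for $U$ the Gram matrix, deduces $T^{1/4}y=0$ from $\IP{y,Ty}=0$, and then --- viewing $X^n$ as a Hilbert $M_n(B)$-module and applying the factorisation Lemma~\ref{Swang} to write $x=wU^{1/4}$ --- exhibits $z=\sum_{i,j}\big(w_jc_{ij}\otimes y_i-w_j\otimes\psi(c_{ij})y_i\big)$ explicitly as an element of $N$. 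Some version of this functional-calculus-plus-factorisation step (or an appeal to Cohen factorisation in the module $X^n$) is needed to close your argument; without it the statement as written is not established.
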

Before presenting a proof we will need some technical lemmas about positivity.
\begin{lemma}\label{Panang}
	Let $X$ be a Hilbert $A$-module and $T\in\LL(X)$. Then $T\geq 0$ in $\LL(X)$ if and only if $\IP{x,Tx}\geq 0$ in $A$ for all $x\in X$.
\end{lemma}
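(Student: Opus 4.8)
The plan is to prove the two implications separately; the forward direction is a one-line computation, while the reverse direction carries the real content.

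For the forward implication, suppose $T\geq 0$ in the $C^*$-algebra $\LL(X)$. Then $T=S^*S$ for some $S\in\LL(X)$ (for instance $S=T^{1/2}$), and for each $x\in X$ we compute $\IP{x,Tx}=\IP{x,S^*Sx}=\IP{Sx,Sx}$, which is positive in $A$ by \eqref{IP4}.

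For the reverse implication, assume $\IP{x,Tx}\geq 0$ in $A$ for all $x\in X$. First I would check that $T$ is self-adjoint: since positive elements of $A$ are self-adjoint, $\IP{x,Tx}=\IP{x,Tx}^*=\IP{Tx,x}=\IP{x,T^*x}$ for every $x$, so $\IP{x,(T-T^*)x}=0$ for all $x$; applying the polarisation identity to the sesquilinear form $(x,y)\mapsto\IP{x,(T-T^*)y}$ exactly as in the proof of Proposition~\ref{Jeddabub} yields $\IP{x,(T-T^*)y}=0$ for all $x,y\in X$, and then Lemma~\ref{lemma: norm is sup of IPs} forces $Ty=T^*y$ for all $y$, i.e.\ $T=T^*$. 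Now, working inside the $C^*$-algebra $\LL(X)$, decompose $T=T_+-T_-$ into its positive and negative parts by continuous functional calculus, so that $T_+,T_-\geq 0$, $T_+T_-=0$, and $T_-$ (being a continuous function of $T$) commutes with $T$; let $T_-^{1/2}$ denote the positive square root of $T_-$. The key step is to apply the hypothesis to the vector $T_-^{1/2}y$ for an arbitrary $y\in X$: using that $T_-^{1/2}$ is self-adjoint in $\LL(X)$ and commutes with $T$, together with $TT_-=-T_-^2$ (which follows from $T_+T_-=0$), I obtain
\[
0\leq\IP{T_-^{1/2}y,\,T(T_-^{1/2}y)}=\IP{y,\,T_-^{1/2}TT_-^{1/2}y}=\IP{y,-T_-^2y}=-\IP{T_-y,T_-y}.
\]
Hence $\IP{T_-y,T_-y}$ is both positive (by \eqref{IP4}) and negative in $A$, so $\IP{T_-y,T_-y}=0$, and \eqref{IP5} gives $T_-y=0$. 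As $y$ was arbitrary, $T_-=0$, whence $T=T_+\geq 0$, as required.

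The main obstacle is not any single hard computation but getting the logical order of the reverse direction right: one must establish self-adjointness of $T$ \emph{before} invoking the functional calculus that produces $T_\pm$, and then carefully justify the identities $T_-^{1/2}TT_-^{1/2}=TT_-=-T_-^2$ and $\IP{y,T_-^2y}=\IP{T_-y,T_-y}$ inside $\LL(X)$. Beyond that, the argument relies only on the module axioms \eqref{IP4}--\eqref{IP5}, the cancellation statement in Lemma~\ref{lemma: norm is sup of IPs}, and standard facts about the $C^*$-algebra $\LL(X)$.
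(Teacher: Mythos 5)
Your proof is correct and follows essentially the same route as the paper: forward direction via $T=S^*S$, self-adjointness of $T$ by polarisation, then the functional-calculus decomposition $T=T_+-T_-$ and an application of the hypothesis to a vector built from the negative part. The only (minor, and arguably cleaner) difference is that you test against $T_-^{1/2}y$, which yields $0\leq -\IP{T_-y,T_-y}$ and kills $T_-$ immediately via \eqref{IP4} and \eqref{IP5}, whereas the paper tests against $g(T)x$, obtains $-\IP{x,g(T)^3x}\geq 0$, and needs a second polarisation step to conclude $g(T)=0$.
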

\begin{proof}
	This proof is taken from \cite[Proposition 4.2]{Lance}.
	If $T\geq 0$ in $\LL(X)$ then $T=S^*S$ for some $S\in\LL(X)$ and
	\begin{equation*}
		\IP{x,Tx}=\IP{Sx,Sx}\geq 0
	\end{equation*}
	for all $x\in X$. Now for the reverse direction suppose that $\IP{x,Tx}\geq 0$ for all $\in X$. Then in particular $\IP{x,Tx}$ is a self adjoint element of $A$ and
	\begin{equation*}
	\IP{Tx,x}=\IP{x,Tx}^*=\IP{x,Tx}
	\end{equation*}
	for all $x\in X$. Polarising then gives
	\begin{equation*}
	\IP{x,Ty}=\IP{Tx,y}
	\end{equation*}
	for all $x,y\in X$ and so $T=T^*$. Since $T$ is self adjoint we may apply functional calculus to the functions $f(t)=\max(t,0)$ and $g(t)=\max(-t,0)$ to obtain $T=f(T)-g(T)$ with $f(T)g(T)=0$ and $f(T),g(T)\geq 0$. We know that
	\begin{equation*}
	\IP{g(T)x,Tg(T)x}=-\IP{x,g(T)^3x}\geq 0.
	\end{equation*}
	Since $g(T)$ is positive, so is $g(T)^3$ and so $\IP{x,g(T)^3x}\geq 0$. Hence we conclude that $\IP{x,g(T)x}=0$ for all $x\in X$. Polarisation again implies $\IP{x,g(T)y}=0$ for all $x,y\in X$, and so by Lemma~\ref{lemma: norm is sup of IPs} this determines $g(T)y=0$ for all $x\in X$ and so $g(T)=0$ whence $T=f(T)$ is positive.
\end{proof}
It is convenient at this stage to introduce the concept of completely positive maps.
\begin{definition}\label{Sprang}
	If $A$ and $B$ are $C^*$-algebras we say that a map $\rho:A\to B$ is \textit{completely positive} if the maps $\rho^{(n)}:M_n(A)\to M_n(B)$ defined by $\rho^{(n)}(S)_{ij}=\rho(S_{ij})$ are all positive maps for each $n\in\N$. That is, $\rho^{(n)}(S)\geq 0$ in $M_n(B)$ whenever $S\geq 0$ in $M_n(A)$.
\end{definition}
\begin{remark}
	If $\rho:A\to B$ is a homomorphism then for any $S,T\in M_n(A)$
	\begin{align*}
	\rho^{(n)}(ST)_{ij}=\rho\Big(\sum_{k=1}^nS_{ik}T_{kj}\Big)=\sum_{k=1}^n\rho(S_{ik})\rho(T_{kj})=(\rho(S)^{(n)}\rho(T)^{(n)})_{ij}
	\end{align*}
	and
	\begin{equation*}
	(\rho^{(n)}(S)^*)_{ij}=(\rho^{(n)}(S)_{ji})^*=\rho(S_{ji})^*=\rho^{(n)}(S^*)_{ji}
	\end{equation*}
	so the maps $\rho^{(n)}$ are also homomorphisms from $M_n(A)$ to $M_n(B)$. We see then that $\rho^{(n)}(S^*S)=\rho^{(n)}(S)^*\rho^{(n)}(S)$, so all homomorphisms are completely positive. In particular we will need to know that since it is a homomorphism, the left action for a $C^*$-correspondence is completely positive.
\end{remark}
We need one more technical lemma before proving Proposition~\ref{Trang}.
\begin{lemma}\label{Swang}
	If $X$ is a Hilbert $A$-module then for every $x\in X$ and $0<\alpha<1$ there exists some $w\in X$ such that
	\begin{equation*}
	x=w\IP{x,x}^{\alpha/2}.
	\end{equation*}
\end{lemma}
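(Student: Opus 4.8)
The plan is to imitate the proof of Cohen factorisation (the identity $x=y\IP{y,y}$ used in Example~\ref{dang} and Remark~\ref{Sam}) by ``dividing'' $x$ by $\IP{x,x}^{\alpha/2}$, using a regularisation to make the division legitimate. Write $a=\IP{x,x}\ge 0$ and pass to a unitization $A^{\sim}$ of $A$ (see Section~\ref{Multiplier}); regard $X$ as a Hilbert $A^{\sim}$-module in the obvious way. For each $\ep>0$ the element $a+\ep$ is positive and invertible in $A^{\sim}$, so $(a+\ep)^{-\alpha/2}\in A^{\sim}$ is defined by continuous functional calculus, and since it differs from a scalar multiple of the identity by an element of $A$, the formula $w_{\ep}\coloneqq x\cdot(a+\ep)^{-\alpha/2}$ defines an element of $X$. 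Because $(a+\ep)^{-\alpha/2}(a+\ep)^{\alpha/2}=1$ in $A^{\sim}$ we have
\[
x=w_{\ep}\cdot(a+\ep)^{\alpha/2}\qquad\text{for every }\ep>0 .
\]
If we can show that $w_{\ep}$ converges in $X$ as $\ep\to 0$, say to $w$, then we are done: the functions $t\mapsto(t+\ep)^{\alpha/2}$ converge uniformly to $t\mapsto t^{\alpha/2}$ on $\sigma(a)$ (using $(t+\ep)^{\alpha/2}-t^{\alpha/2}\le\ep^{\alpha/2}$, valid since $0<\alpha/2<1$), so $(a+\ep)^{\alpha/2}\to a^{\alpha/2}$ in norm, and since the right action is norm-continuous and $\normof{w_{\ep}}\le\normof{x}^{1-\alpha}$ is bounded, passing to the limit in the displayed identity gives $x=w\cdot a^{\alpha/2}=w\IP{x,x}^{\alpha/2}$.

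To see that $(w_{\ep})$ is Cauchy, compute using \eqref{IP2} and \eqref{Nifty} that for $\ep,\delta>0$
\[
\IP{w_{\ep},w_{\delta}}=(a+\ep)^{-\alpha/2}\,a\,(a+\delta)^{-\alpha/2}=F_{\ep}(a)\,F_{\delta}(a),
\]
where $F_{\ep}$ is the continuous function $F_{\ep}(t)=t^{1/2}(t+\ep)^{-\alpha/2}$ on $\sigma(a)\subseteq[0,\normof{a}]$; note that all of these elements are functions of $a$ and hence commute. Expanding $\IP{w_{\ep}-w_{\delta},w_{\ep}-w_{\delta}}$ therefore gives $(F_{\ep}(a)-F_{\delta}(a))^{2}$, so
\[
\normof{w_{\ep}-w_{\delta}}^{2}=\bignormof{(F_{\ep}(a)-F_{\delta}(a))^{2}}=\normof{F_{\ep}(a)-F_{\delta}(a)}^{2}\le\sup_{t\in\sigma(a)}\lvert F_{\ep}(t)-F_{\delta}(t)\rvert^{2}.
\]
Thus it suffices to prove that $F_{\ep}$ converges uniformly on $[0,\normof{a}]$ as $\ep\to 0$. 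Its pointwise limit is $F_{0}(t)=t^{(1-\alpha)/2}$, which is continuous on $[0,\normof{a}]$ precisely because $\alpha<1$; since $\ep\mapsto F_{\ep}(t)$ is monotone for each fixed $t$ and each $F_{\ep}$ is continuous, Dini's theorem yields uniform convergence, hence $(w_{\ep})$ is Cauchy and converges in $X$.

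I expect the uniform-convergence step to be the only real obstacle: the functions $F_{\ep}$ are delicate near $t=0$ (the factor $(t+\ep)^{-\alpha/2}$ becomes large there as $\ep\to0$), and it is exactly the factor $t^{1/2}$, coming from $\IP{x,x}^{1/2}$, together with the hypothesis $\alpha<1$, that controls them and makes the limit $F_{0}$ bounded and continuous at $0$. If one would rather not invoke Dini's theorem, the same conclusion follows by estimating $\lvert F_{\ep}(t)-F_{0}(t)\rvert$ directly, splitting into the cases $t\le\ep$ and $t>\ep$ and using $(t+\ep)^{\alpha/2}-t^{\alpha/2}\le\ep^{\alpha/2}$ again; either way the estimate is routine. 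All remaining points — that $w_{\ep}\in X$, that the right action of $A$ extends to $A^{\sim}$, the bound $\normof{w_{\ep}}\le\normof{x}^{1-\alpha}$, and the joint continuity of the right action — are immediate from the definitions and Lemma~\ref{lemma: norm is sup of IPs}.
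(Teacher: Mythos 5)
Your proof is correct and takes essentially the same route as the paper's: both regularise the unbounded function $t\mapsto t^{-\alpha/2}$ near $0$ (the paper truncates it at $1/n$, you shift to $(t+\ep)^{-\alpha/2}$ in a unitization), both reduce the Cauchy estimate for the approximants of $w$ to the sup-norm of $t^{1/2}f(t)$ on $\sigma(\IP{x,x})$ via the same inner-product computation, and both pass to the limit using norm convergence of $(a+\ep)^{\alpha/2}$ (resp.\ $g_n(\IP{x,x}^{1/2})\IP{x,x}^{\alpha/2}$) — with the hypothesis $\alpha<1$ entering exactly where you identify it. No gaps.
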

\begin{proof}
	Since $\IP{x,x}\geq 0$ the element $\IP{x,x}^{1/2}$ of $A$ is self adjoint and so the sequence of functions $g_n:\R\to\R$ defined by
	\begin{equation*}
	g_n(\lambda)=\begin{cases}
	n^{\alpha}&\lambda<1/n\\
	\lambda^{-\alpha}&\lambda\geq 1/n\\
	\end{cases}
	\end{equation*}
	consists of continuous functions on the spectrum $\sigma(\IP{x,x}^{1/2})$. For any continuous function $f\in C(\sigma(\IP{x,x}^{1/2}))$, we have the norm equality
	\begin{align}
	\normof{xf(\IP{x,x}^{1/2})}_X&=\normof{f^*(\IP{x,x}^{1/2})\IP{x,x}f(\IP{x,x}^{1/2})}_A^{1/2}\nonumber\\
	&=\normof{f^*(\IP{x,x}^{1/2})(\IP{x,x}^{1/2})^*\IP{x,x}^{1/2}f(\IP{x,x}^{1/2})}_A^{1/2}\nonumber\\
	&=\normof{\IP{x,x}^{1/2}f(\IP{x,x}^{1/2})}_A=\label{temp}\normof{\lambda f(\lambda)}_{\infty}.\\\nonumber
	\end{align}
	Let $h_{n}(\lambda)=\lambda g_n(\lambda)$. We have
	\begin{equation*}
		\normof{h_{n}-h_m}_{\infty}\leq \max\{n^{\alpha-1},(1-\alpha)^{1/\alpha}m^{\alpha-1}\}
	\end{equation*}
	for $n\geq m$ and for $0<\alpha<1$ we have
	\begin{equation*}
		\max\{n^{\alpha-1},(1-\alpha)^{1/\alpha}m^{\alpha-1}\}\to 0
	\end{equation*}
	as $n,m\to 0$. Hence $h_n$ is a Cauchy sequence in $C(\sigma(\IP{x,x}^{1/2}))$. Equation \eqref{temp} tells us that $xg_n(\IP{x,x}^{1/2})$ is a Cauchy sequence in $X$ and by completeness converges to some element $w\in X$. Note that $g_n(\lambda)\lambda^{\alpha}\to 1$ as $n\to \infty$. Using Equation \eqref{temp} again, we obtain
	\begin{align*}
	\normof{w\IP{x,x}^{\alpha/2}-x}&=\lim_{n\to\infty}\normof{xg_n(\IP{x,x}^{1/2})\IP{x,x}^{1/2}-x}\\
	&=\lim_{n\to\infty}\normof{x(g_n(\IP{x,x}^{1/2})\IP{x,x}^{\alpha/2}-1)}\\
	&=\lim_{n\to\infty}\normof{\lambda(g_n(\lambda)\lambda^{\alpha}-1)}_{\infty}=0,\\
	\end{align*}
	so $w\IP{x,x}^{\alpha/2}=x$ as required.
\end{proof}

\begin{proof}[Proof of Proposition~\ref{Trang}]
	This proof is taken from \cite[Proposition 4.5]{Lance}. 
	We first wish to that if we define
	\begin{equation*}
	M=\{z\in X\odot Y:\IP{z,z}=0\},
	\end{equation*}
	then $M\subseteq N$.
	We compute for any $b\in B$, $x\in X$ and $y\in Y$
	\begin{align*}
		&\IP{xb\otimes y-x\otimes\psi(b)y,xb\otimes y-x\otimes\psi(b)y}\\
		&=\IP{y,\psi(\IP{xb,xb})y}-\IP{\psi(b)y,\psi(\IP{x,xb})y}-\IP{y,\psi(\IP{xb,x})\psi(b)y}+\IP{\psi(b)y,\psi(\IP{x,x})\psi(b)y}\\
		&=\IP{y,\psi(b^*)\psi(\IP{x,x})\psi(b)y}-\IP{\psi(b)y,\psi(\IP{x,x})\psi(b)y}-\IP{y,\psi(b^*)\psi(\IP{x,x})\psi(b)y}\\
		&\quad+\IP{\psi(b)y,\psi(\IP{x,x})\psi(b)y}=0.\\
	\end{align*}
	So if 
	\begin{equation*}
		z\in N=\overline{\Span}\{xb\otimes y-x\otimes\psi(b)y:x\in X,y\in Y,b\in B\}
	\end{equation*}
	then $\IP{z,z}=0$. We deduce that $N\subseteq M$. This means that $\IP{\cdot,\cdot}$ preserves $N$ and so descends to a well defined bilinear map on $(X\odot Y)/N$. To show that $\IP{\cdot,\cdot}$ is positive definite on $(X\odot Y)/N$ we wish to show that if $\IP{z,z}=0$ then $z$ represents the zero element of $(X\odot Y)/N$, that is $z\in N$.
	Fix $z=\sum_{i=1}^nx_i\otimes y_i\in M$. 
	Let $U\in M_n(B^n)$ be the matrix with entries $U_{ij}=\IP{x_i,x_j}$, so that $\psi^{(n)}(U)$ is an element of $M_n(\LL(Y))$. We identify $M_n(\LL(Y))$ with $\LL(Y^n)$. 
	For all $b=(b_1,\dots,b_n)^T\in B^n$ we compute
	\begin{equation*}
		\IP{b,Ub}=\sum_{i,j=1}^nb_i^*,\IP{x_i,x_j}b_j=\sum_{i,j=1}^n\BigIP{x_ib_i,x_jb_j}=\BigIP{\sum_{i=1}^nx_ib_i,\sum_{i=1}^nx_ib_i}
	\end{equation*}
	which is precisely the inner-product of the vector $(x_1b_1,\dots x_nb_n)^T$ in $X^n$ with itself. Hence $\IP{b,Ub}\geq 0$ in $B$, and so by Lemma~\ref{Panang} we have $U\geq 0$ in $\LL(B^n)$. Since $\psi$ is a homomorphism it is completely positive so $\psi^{(n)}(U)\geq 0$ in $\LL(Y^n)$.
	We have
	\begin{align*}
		\IP{z,z}_{X\odot Y}&=\sum_{i,j=1}^n\IP{y_i,\psi(\IP{x_i,x_j})y_j}_{X\odot Y}\\
		&=\IP{y,\psi^{(n)}(U)y}_{Y^n}\\
	\end{align*}
	where $y=(y_1,\dots, y_n)$. Since $\psi^{(n)}(U)\geq 0$ in $\LL(Y^n)$, Lemma~\ref{Panang} tells us that
	\begin{equation*}
		\IP{z,z}_{X\odot Y}=\IP{y,\psi^{(n)}(U)y}_{Y^n}\geq 0.
	\end{equation*}
	Let $T=\psi^{(n)}(U)$. Since $T\geq 0$, we may apply functional calculus to obtain the self-adjoint operators $T^{1/2}$ and $T^{1/4}$. By positive definiteness of the inner-product on $Y^n$, we then have $T^{1/2}y=0$. Then
	\begin{equation*}
		0=\IP{y,T^{1/2}y}=\IP{T^{1/4}y,T^{1/4}y}
	\end{equation*}
	so $T^{1/4}y=0$. Now we may view $X^n$ as a Hilbert $M_n(B)$-module by defining for $x\in X^n$ and $M\in M_n(B)$
	\begin{equation*}
		x\cdot M=Mx\qquad\text{and}\qquad \IP{x,x}_{ij}=\IP{x_i,x_j}.
	\end{equation*}
	In the above notation we have $U^{1/2}=\IP{x,x}^{1/2}$, so applying Lemma~\ref{Swang} there exists $w=(w_1,\dots,w_n)$ such that $U^{1/4}w=x$. By properties of functional calculus we know that $\psi^{(n)}(U^{1/4})=T^{1/4}$, so if $U^{1/4}$ has matrix entries $c_{ij}$ then $T^{1/4}$ has entries $\psi(c_{ij})$. Since $T^{1/4}y=0$ we have $\sum_{i,j=1}^n\psi(c_{ij})y_j=0$, hence
	\begin{equation*}
		\sum_{i,j=1}^nw_jc_{ij}\otimes y_i-w_j\otimes\psi(c_{ij})y_i=\sum_{i=1}^nx_i\otimes y_i
	\end{equation*}
	is an element of $N$. Thus $M\subseteq N$ and we have a positive definite inner-product on $(X\odot Y)/N$.\\ 
	Lastly we wish to show that the right and left actions of $A$ and $C$ induce well defined actions on $X\otimes_BY$ defined by the formulas \eqref{right action} and \eqref{left action}. Clearly the left and right actions preserve $N$, so they descend to well defined actions on $(X\odot Y)/N$.
	For the right action, for $c\in C$ we compute
	\begin{align*}
	\normof{x\otimes (yc)}&=\normof{c^*\IP{y,\psi(\IP{x,x})y}c}^{1/2}\\
	&=\normof{(c^*\IP{y,\psi(\IP{x,x})y}^{1/2})^*\IP{y,\psi(\IP{x,x})y}^{1/2}c}^{1/2}\\
	&=\normof{\IP{y,\IP{x,x}y}^{1/2}c}\\
	&\leq \normof{\IP{y,\IP{x,x}y}}^{1/2}\normof{c}\\
	&=\normof{x\otimes y}\normof{c}.\\
	\end{align*}
	Thus the right action of $C$ on $(X\odot Y)/N$ is a uniformly continuous map for each $c$ and so extends to the completion $X\otimes_BY$. Now we wish to show that our formula \eqref{left action} defined a left action on $X\otimes_B Y$.
	For any $a\in A$, sub-multiplicativity for implies $\normof{\phi(a)x\otimes y}\leq \normof{\phi(a)}\normof{x\otimes y}$ so each $\phi(a)$ is a uniformly continuous operator and extends to the completion $X\otimes_BY$.
	We see that
	\begin{equation*}
	\IP{\phi(a)x\otimes y,w\otimes z}=\IP{y,\psi(\IP{\phi(a)x,w})z}=\IP{y,\psi(\IP{x,\phi(a^*)w})z}=\IP{x\otimes y,\phi(a^*)w\otimes z},
	\end{equation*}
	so $\phi$ is indeed a homomorphism into $\LL(X\otimes_B Y)$.
\end{proof}
\begin{remark}\label{remark: tensor product quotient N,M}
	We have just shown that the subspaces
	\begin{align*}
		N&=\Span\{x\otimes\psi(b)y-xb\otimes y:x\in X,y\in Y,b\in B\}\quad\text{ and }\\
		M&=\{x\otimes y\in X\odot Y:\IP{y,\phi(\IP{x,x}y)}=0\}\\
	\end{align*}
	of $X\odot Y$ are equal. Thus we may write $X\otimes_B Y$ interchangeably as the norm closure of $(X\odot Y)/N$ or $(X\odot Y)/M$. 
\end{remark}
In what follows we will compute examples which elucidate the nature of balanced tensor products. The following two lemmas will give us criteria for a formula to define a unitary operator on balanced tensor products.
\begin{lemma}\label{lemma: densely defined unitaries}
	Let $A$ be a $C^*$-algebra and let $X$ and $Y$ be Hilbert $A$-modules. Suppose that $S\subseteq X$ satisfies
	\begin{equation*}
		X=\overline{\Span}S.
	\end{equation*}
	Suppose that $U_S:\Span S\to Y$ is a function such that $\IP{U_Sx,U_Sy}=\IP{x,y}$ for all $x,y\in S$. Then there is a unique, continuous, linear, inner-product preserving operator $U:X\to Y$ such that $U\big|_S=U_S$. If in addition we have
	\begin{equation*}
		\overline{\Span}\{U_Sx:x\in S\}=Y,
	\end{equation*}
	then $U\in\LL(X\otimes_BY)$ with $U^*=U^{-1}$.
\end{lemma}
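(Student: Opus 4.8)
The plan is to produce $U$ by linearly extending $U_S$ off the spanning set $S$, to check that inner-product preservation on $S$ alone forces this extension to be single-valued, and then to pass to the completion $X=\overline{\Span}S$ by uniform continuity; the unitarity in the second part will be a one-line appeal to Proposition~\ref{Jeddabub}.

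Concretely, I would first define a map $U_0$ on $\Span S$ by $U_0\bigl(\sum_i\lambda_i x_i\bigr)=\sum_i\lambda_i U_S x_i$ for $\lambda_i\in\C$ and $x_i\in S$, using only the values of $U_S$ on $S$ itself. The one point that must be verified is that $U_0$ is well defined, i.e.\ that $\sum_k\nu_k w_k=0$ with $w_k\in S$ forces $\sum_k\nu_k U_S w_k=0$; this follows from
\[
\Bigl\langle \sum_k\nu_k U_S w_k,\ \sum_l\nu_l U_S w_l\Bigr\rangle=\sum_{k,l}\overline{\nu_k}\nu_l\langle U_S w_k,U_S w_l\rangle=\sum_{k,l}\overline{\nu_k}\nu_l\langle w_k,w_l\rangle=\Bigl\langle\sum_k\nu_k w_k,\sum_l\nu_l w_l\Bigr\rangle=0
\]
together with axiom~\eqref{IP5}. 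Running the same sesquilinear computation on arbitrary finite combinations shows $\langle U_0\xi,U_0\eta\rangle=\langle\xi,\eta\rangle$ for all $\xi,\eta\in\Span S$, so in particular $U_0$ is isometric, hence uniformly continuous on the dense subspace $\Span S$ of $X$, and therefore extends uniquely to a continuous linear map $U\colon X\to Y$; continuity of the inner product then propagates $\langle Ux,Uy\rangle=\langle x,y\rangle$ to all of $X$. Uniqueness is automatic, since any continuous linear map agreeing with $U_S$ on $S$ must agree with $U_0$ on $\Span S$ by linearity, hence with $U$ on $X$ by density. It is worth flagging that $U$ need not agree with $U_S$ on the rest of $\Span S$; the statement only asks for $U\big|_S=U_S$.

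For the second part, suppose in addition $\overline{\Span}\{U_S x:x\in S\}=Y$. Then $U$ has dense range, and since $U$ is isometric and $X$ is complete, the image $U(X)$ is complete, hence closed in $Y$; a closed dense subspace equals the whole space, so $U$ is surjective. Thus $U$ is a surjective complex-linear map with $\langle Ux,Ux\rangle=\langle x,x\rangle$ for all $x\in X$, and Proposition~\ref{Jeddabub} applies to give that $U$ is unitary --- that is, $U\in\LL(X;Y)$ with $U^*=U^{-1}$, with $A$-linearity coming for free from Lemma~\ref{Rang}.

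The only genuinely delicate point is the well-definedness of the linear extension: one must use nothing about $U_S$ beyond inner-product preservation on $S$, and observe that this single hypothesis already rules out multivaluedness via axiom~\eqref{IP5}. Everything after that is the standard ``an isometry on a dense subspace extends to an isometry'' argument, followed by the unitary criterion already established in Proposition~\ref{Jeddabub}.
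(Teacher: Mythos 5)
Your proof is correct, and it takes a genuinely different route from the paper's on the one nontrivial step, namely how linearity is obtained. The paper treats $U_S$ as a map already defined on $\Span S$ and \emph{proves} it is linear, via a variant of Lemma~\ref{lemma: norm is sup of IPs} ($\normof{U_S(x)}=\sup_{\normof{U_S(z)}\le 1}\normof{\IP{U_S(x),U_S(z)}}$) applied to the identity $\IP{U_S(x+ya),U_Sz}=\IP{x+ya,z}$; note that this identity is invoked for $x,y,z\in\Span S$, so the paper's argument tacitly uses inner-product preservation on all of $\Span S$, which is stronger than the stated hypothesis ``for all $x,y\in S$''. You instead discard the values of $U_S$ off $S$, define the linear extension $U_0$ by fiat, and verify single-valuedness by the sesquilinear expansion of $\IP{\sum_k\nu_kU_Sw_k,\sum_l\nu_lU_Sw_l}$ together with axiom~\eqref{IP5}; this uses only the hypothesis as literally stated and still delivers $U\big|_S=U_S$, which is all the conclusion asks for. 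After that the two arguments coincide in substance: an isometry on a dense subspace extends uniquely by uniform continuity (Theorem~\ref{Sang}), inner-product preservation passes to the closure by continuity, and in the second part surjectivity follows from density of the range plus completeness of the isometric image (the paper phrases this with an explicit Cauchy sequence, you with ``complete, hence closed, hence everything''--- these are the same argument), whereupon Proposition~\ref{Jeddabub} gives unitarity and Lemma~\ref{Rang} gives $A$-linearity for free. In short, your version is a correct proof that in fact repairs a small imprecision in the paper's treatment of the linearity step; the paper's version is the natural one to use at the call sites of this lemma, where the inner-product identity is always verified on all finite linear combinations anyway.
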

\begin{proof}
	Suppose that we have $\IP{U_Sx,U_Sy}=\IP{x,y}$ for all $x,y\in S$. We wish to prove a modified version of Lemma~\ref{lemma: norm is sup of IPs}: we wish to show that for all $x\in S$, we have 
	\begin{equation}\label{WOT}
		\normof{U_S(x)}=\sup_{\normof{U_S(z)}\leq 1}\normof{\IP{U_S(x),U_S(z)}}.
	\end{equation}
	By the same argument as in Lemma~\ref{lemma: norm is sup of IPs} we see that
	\begin{equation*}
		\sup_{\normof{U_S(z)}\leq 1}\normof{\IP{U_S(x),U_S(z)}}\leq\sup_{\normof{U_S(z)}}\normof{U_S(x)}\normof{U_S(z)}=\normof{U_S(x)}.
	\end{equation*}
	We see that
	\begin{equation*}
		\normof{U_S(x/\normof{x}),U_S(x/\normof{x})}=\IP{x/\normof{x},x/\normof{x}}=1,
	\end{equation*}
	so we may in particular choose $z=U_S(x/\normof{x})$ to obtain
	\begin{align*}
		\sup_{\normof{U_S(z)}\leq 1}\normof{\IP{U_S(x),U_S(z)}}&\geq\normof{\IP{U_S(x),U_S(x/\normof{x})}}=\normof{x,x/\normof{x}}\\
		&=\normof{\IP{x,x}}^{1/2}=\normof{\IP{U_S(x),U_S(x)}}^{1/2}=\normof{U_S(x)}.
	\end{align*}
	So we deduce that $\normof{U_S(x)}=\sup_{\normof{U_S(z)}\leq 1}\normof{\IP{U_S(x),U_S(z)}}$ for all $x\in S$.
	Then for $x,y,z\in \Span S$ and $a\in A$ we compute
	\begin{equation*}
		\IP{U_S(x+ya),U_Sz}=\IP{x+ya,z}=\IP{x,z}+a^*\IP{y,z}=\IP{U_Sx+U(y)a,U_Sz},
	\end{equation*}
	so by Equation~\ref{WOT} we deduce that $U_S$ is linear.
	The formula $\IP{U_Sx,U_Sy}=\IP{x,y}$ for all $x,y\in S$ shows that $\normof{U_Sx}=\normof{x}$ for all $x\in S$. Hence $U_S$ is uniformly continuous and so Theorem~\ref{Sang} gives a unique continuous, linear map $U:X\to Y$ such that $U\big|_S=U_S$. The map $U$ is defined such that if $s_n$ is a Cauchy sequence in $S$, then
	\begin{equation*}
		U(\lim_{n\to\infty}s_n)=\lim_{n\to\infty}U_S(s_n).
	\end{equation*}
	Continuity of the inner-product gives $\IP{Ux,Uy}=\IP{x,y}$ for all $x,y\in X$.
	Now suppose that we also have
	\begin{equation}
		\overline{\Span}\{U_Sx:x\in S\}=Y.\label{Lenny Henry}
	\end{equation}
	We wish to show that $U$ is surjective. Fix $y\in Y$. By Equation~\ref{Lenny Henry} we may pick a sequence $U_Sx_n$ converging to $Y$ such that $x_n\in S$. Since $U_S$ preserves inner-products, $x_n$ is a convergent sequence in $X$, so by definition of $U$ we have
	\begin{equation*}
		U(\lim_{n\to\infty}x_n)=\lim_{n\to\infty}U_S(x_n)=y.
	\end{equation*}
	Thus $U$ is surjective and satisfies the hypotheses of Proposition~\ref{Jeddabub}, so $U$ is unitary.
\end{proof}
\begin{lemma}\label{lemma: Well defined unitaries on tensor product}
	Let $A$ and $B$ be a $C^*$-algebras, let $X$ be a Hilbert $A$-module and let $Y$ be an $A$--$B$ correspondence with left action $\phi$. Let $Z$ be a Hilbert $B$-module and $U:X\odot Y\to Z$ be a linear map defined on the vector space tensor product satisfying
	\begin{equation}
		\IP{U(x\otimes y),U(u\otimes v)}=\IP{y,\phi(\IP{x,u})v}\label{bad liar}
	\end{equation}
	for all $x,u\in X$ and $y,v\in Y$. Then if $q:X\odot Y\to X\otimes_A Y$ is the quotient map, there is a unique, linear, continuous, inner-product preserving operator $\widetilde{U}\in\LL(X\otimes_A Y,Z)$ such that $\widetilde{U}(x)=q\circ U(x)$ for all $x\in X\odot Y$. If $U$ is has dense range then $\widetilde{U}$ is adjointable and unitary.
\end{lemma}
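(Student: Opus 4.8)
The plan is to reduce the statement to Lemma~\ref{lemma: densely defined unitaries} by pushing $U$ down through the quotient map $q$. The first step is to upgrade the relation \eqref{bad liar} from simple tensors to all of $X\odot Y$. The left-hand side $\IP{U(\zeta),U(\eta)}_Z$ is a well-defined sesquilinear form on $X\odot Y$, because $U$ is a genuine linear map on the vector-space tensor product (hence respects the tensor relations) and $\IP{\cdot,\cdot}_Z$ is sesquilinear; the right-hand side is exactly the pre-inner-product $\IP{\cdot,\cdot}$ on $X\odot Y$ produced by Proposition~\ref{Trang} (with the ``$B$'' there taken to be our $A$, the ``$\psi$'' there our $\phi$, and the ``$C$'' there our $B$). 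Two sesquilinear forms on $X\odot Y$ that agree on simple tensors agree everywhere, so $\IP{U(\zeta),U(\eta)}_Z=\IP{\zeta,\eta}$ for all $\zeta,\eta\in X\odot Y$, where $\IP{\cdot,\cdot}$ now denotes that pre-inner-product.

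The second step shows that $U$ annihilates $N$ and therefore descends. In the proof of Proposition~\ref{Trang} it is shown that every $\zeta\in N$ satisfies $\IP{\zeta,\zeta}=0$, so the first step gives $\IP{U(\zeta),U(\zeta)}_Z=0$, and axiom \eqref{IP5} for the $Z$-valued inner-product forces $U(\zeta)=0$. Hence $U$ factors as $U=U_0\circ q$ for a well-defined linear map $U_0\colon(X\odot Y)/N\to Z$, and by the first step $U_0$ satisfies $\IP{U_0(\xi),U_0(\eta)}_Z=\IP{\xi,\eta}$ for all $\xi,\eta$ in the subspace $(X\odot Y)/N$, the inner-product on the right being the genuine one of $X\otimes_A Y$ restricted to the dense subspace $(X\odot Y)/N$.

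The final step is to apply Lemma~\ref{lemma: densely defined unitaries} with $S=(X\odot Y)/N\subseteq X\otimes_A Y$, a linear subspace with $\overline{\Span}S=X\otimes_A Y$, and $U_S:=U_0$. That lemma yields a unique continuous, linear, inner-product preserving operator $\widetilde{U}\colon X\otimes_A Y\to Z$ extending $U_0$; in particular $\widetilde{U}(q(\zeta))=U(\zeta)$ for all $\zeta\in X\odot Y$, with uniqueness inherited from density of $S$. If moreover $U$ has dense range, then $\{U_0(\xi):\xi\in S\}=U(X\odot Y)$ is dense in $Z$, so the ``in addition'' clause of Lemma~\ref{lemma: densely defined unitaries} applies and gives $\widetilde{U}^*=\widetilde{U}^{-1}$, i.e.\ $\widetilde{U}$ is adjointable and unitary. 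There is no genuinely hard step here: the only point that needs care is the first one---correctly matching the right-hand side of \eqref{bad liar} with the Proposition~\ref{Trang} pre-inner-product and checking that the identity does extend by sesquilinearity off the simple tensors---after which everything follows formally from positive-definiteness and from the already-established Lemma~\ref{lemma: densely defined unitaries}.
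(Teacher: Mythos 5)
Your proof is correct and follows essentially the same route as the paper's: descend $U$ through the quotient by $N$ and then invoke Lemma~\ref{lemma: densely defined unitaries} on the dense subspace of (images of) simple tensors. Your first step --- extending \eqref{bad liar} to all of $X\odot Y$ by sesquilinearity before killing $N$ --- is in fact a slightly more careful justification of the well-definedness than the paper's appeal to Remark~\ref{remark: tensor product quotient N,M} on simple tensors alone.
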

\begin{proof}
	If $\IP{y,\phi(\IP{x,x})y}=0$ then by Equation \eqref{bad liar} we have
	\begin{equation*}
		\IP{U(x\otimes y),U(x\otimes y)}=\IP{y,\phi(\IP{x,x})y}=0.
	\end{equation*}
	So Remark~\ref{remark: tensor product quotient N,M} shows that there is a well defined operator $\bar{U}:\Span\{q(x\otimes y):x\in X,y\in Y\}\subset X\otimes_B Y\to Z$ such that $\bar{U}(q(x\otimes y))=U(x\otimes y)$. Equation \eqref{bad liar} then gives
	\begin{align*}
		\IP{\bar{U}(q(x\otimes y)),\bar{U}(q(u\otimes v))}&=\IP{U(x\otimes y),U(u\otimes v)}\\
		&=\IP{y,\phi(\IP{x,u})v}=\IP{q(x\otimes y),q(u\otimes v)}\\
	\end{align*}
	for all $x,u\in X$ and $y,v\in Y$. Lemma~\ref{lemma: densely defined unitaries} gives a unique unitary operator $\widetilde{U}\in\LL(X\otimes_AY,Z)$ satisfying the required criteria. If $U$ is has dense range then
	\begin{equation*}
		\overline{\Span}\{\bar{U}(q(x\otimes y)):x\in X,y\in Y\}=\overline{\Span}\{U(x\otimes y):x\in X,y\in Y\}=Z,
	\end{equation*}
	so~\ref{lemma: densely defined unitaries} implies $\widetilde{U}$ is adjointable and unitary.
\end{proof}
\begin{remark}
	In the above proof we described spanning elements of $X\otimes_B Y$ as $q(x\otimes y)$ for $x\in X$ and $y\in Y$. When we do not need to distinguish between a simple tensor $x\otimes y\in X\odot Y$ and its image in $X\otimes_B Y$, we will simply write $x\otimes y$ for spanning elements of $X\otimes_BY$. Such elements then satisfy $xb\otimes y=x\otimes\phi(b)y$.
\end{remark}
\begin{example}\label{Gen hom balance}
	Let $A$, $B$ and $C$ be $C^*$-algebras. Consider the Hilbert modules $B_B$ and $C_C$ and suppose that $\phi:A\to \LL(B)$ and $\psi:B\to \LL(C)$ make $_{\phi}B_B$ and $_{\psi}C_C$ into $C^*$-correspondences. Suppose that $\psi$ is non-degenerate. Then we have an isomorphism $B\otimes_{\psi}C\cong\; _{\psi\circ\phi}C_C$. To see this we calculate
	\begin{align*}
	\IP{U(b\otimes c),U(d\otimes e)}_C&=\IP{\psi(b)c,\psi(d)e}_C\\
	&=\IP{c,\psi(b^*d)e}_C\\
	&=\IP{c,\psi(\IP{b,d}_B)e}_C,\\
	\end{align*}
	so Lemma~\ref{lemma: Well defined unitaries on tensor product} gives an inner-product preserving operator $U:B\otimes_BC\to C_C$ such that $U(b\otimes c)=\phi(b)c$.
	Since $\phi$ is non-degenerate, $U$ is surjective so Lemma~\ref{lemma: Well defined unitaries on tensor product} implies $U$ is unitary. That is, $U:B\otimes_{\psi}C\to\;_{\psi\circ\phi}C_C$ is an isomorphism of Hilbert $C$-modules, to check that they are isomorphic as $A$--$C$ correspondences we must show that $\psi\circ\phi(a)U=U\phi(a)$ for all $a\in A$. By linearity and continuity of $U$, it suffices to show $\psi\circ\phi(a)U(b\otimes c)=U(\phi(a)b\otimes c)$ for all $a\in A$, $b\in B$ and $c\in C$. We compute
	\begin{align*}
	U(\phi(a)b\otimes c))&=\psi(\phi(a)b)c=\psi\circ\phi(a)(\psi(b)c)=\psi\circ\phi(a)U(b\otimes c)\\
	\end{align*} 
	so we deduce that $_{\phi}(B\otimes_{\psi}C)\cong\; _{\psi\circ\phi}C_C$.
\end{example}
\begin{example}
	Let $E$ be a directed graph and consider the $C_0(E^0)$--$C_0(E^0)$ correspondence $X$ of Example~\ref{Marns}. So $X$ is the completion of $C_c(E^1)$ in the inner-product
	\begin{equation*}
	\IP{f,g}(v)=\sum_{e\in E^1v}\overline{f(e)}g(e)
	\end{equation*}
	with left action $\phi(\xi)f(e)=\xi(s(e))f(e)$ and right action $(f\cdot \xi)(e)=f(e)\xi(r(e))$.
	We define a $C_0(E^0)$--$C_0(E^0)$ correspondence $Y$ as follows.
	For $f\in C_c(E^2)$ and $\xi\in C_0(E^0)$, we equip $C_c(E^2)$ with the right action
	\begin{equation*}
	(f\cdot \xi)(e)=f(e)\xi(r(e))
	\end{equation*}
	and left action $\psi:C_0(E^0)\to\LL(C_c(E^2))$
	\begin{equation*}
	\psi(\xi)f(e)=\xi(s(e))f(e).
	\end{equation*}
	We give $C_c(E^2)$ the inner-product
	\begin{equation*}
	\IP{f,g}(v)=\sum_{e\in E^2v}\overline{f(e)}g(e)
	\end{equation*}
	and define $Y$ to be the completion of $C_c(E^2)$ in the norm coming from this inner-product. We claim that $X\otimes_{C_0(E^0)}X\cong Y$. To see this we compute
	\begin{align*}
	\sum_{e_1e_2\in E^2w}\overline{f(e_1)g(e_2)}u(e_1)v(e_2)&=\sum_{e_1\in E^1s(e_2)}\sum_{e_2\in E^1w}\overline{f(e_1)g(e_2)}u(e_1)v(e_2)\\
	&=\sum_{e_2\in E^1w}\overline{g(e_2)}\IP{f,u}_X\big(s(e_2)\big)v(e_2)\\
	&=\sum_{e_2\in E^1w}\overline{g(e_2)}(\phi(\IP{f,u}_X)v)(e_2)\\
	&=\IP{g,\phi(\IP{f,u}_X)v}_X\\
	&=\IP{f\otimes g,u\otimes v}_{Y},\\
	\end{align*}
	so by Lemma~\ref{lemma: Well defined unitaries on tensor product} there is an inner-product preserving operator $U:X\otimes_{C_0(E^0)}X\to Y$ such that $U(f\otimes g)(e_1e_2)=f(e_1)g(e_2)$. Since the point mass functions $\delta_{e_1e_2}$ span a dense subset in $Y$ and $U(\delta_{e_1}\otimes\delta_{e_2})=\delta_{e_1e_2}$, we deduce that $U$ is surjective. Hence Lemma~\ref{lemma: Well defined unitaries on tensor product} implies that $U$ is unitary and so $X\otimes_{C_0(E^0)}X$ and $Y$ are isomorphic as Hilbert $C_0(E^0)$ modules. Since $U$ is continuous and linear, for $\xi\in C_0(E^0)$ and $f,g\in X$ the computation
	\begin{align*}
		\psi(\xi)U(f\otimes g)(e_1e_2)&=\xi(s(e_1e_2))f(e_1)g(e_2)=\xi(s(e_1))f(e_1)g(e_2)=U((\phi(\xi)f)\otimes g)(e_1e_2)\\
	\end{align*}
	shows that $\psi(\xi)\circ U=U\circ\phi(\xi)$ for all $\xi\in C_0(E^0)$. Hence $X\otimes_{C_0(E^0)}X$ and $Y$ are isomorphic as $C_0(E^0)$--$C_0(E^0)$ correspondences.
\end{example}
\begin{example}\label{left and right}
	Let $_{\phi}X$ be an $A$--$B$ correspondence with $\phi$ non-degenerate. We claim that $A\otimes_{\phi} X\cong X\cong X\otimes_{\iota}B$ where $\iota:B\to\LL(B)$ is the inclusion of $B$ as left multiplication operators. We consider $A$ as an $A$--$A$ correspondence with left action $j:A\to\LL(A)$ left multiplication, so that $j\otimes 1$ is the left action on $A\otimes X$.
	First we show the the left-hand isomorphism. We see that 
	\begin{align*}
	\IP{\phi(a)x,\phi(b)y}_X=\IP{x,\phi(a^*b)y}_X=\IP{x,\phi(\IP{a,b}_A)y}_X=\IP{a\otimes x,b\otimes y}_{A\otimes X},\\
	\end{align*}
	so Lemma~\ref{lemma: Well defined unitaries on tensor product} gives an operator $U:A\otimes_{\phi} X\to X$ such that $U(a\otimes x)=\phi(a)x$. Since $\phi$ is non-degenerate $U$ is surjective. To see this, fix $x\in X$ and suppose that $a_n\in A$ and $x_n\in X$ satisfy $\phi(a_n)x_n\to x$. Then we calculate, using at the last step that $U$ preserves inner-products:
	\begin{align*}
	\normof{\phi(a_n)x_n-\phi(a_m)x_m}&=\normof{U(a_n\otimes x_n)-U(a_m\otimes x_m)}\\
	&=\normof{U(a_n\otimes x_n-a_m\otimes x_m)}\\
	&=\normof{a_n\otimes x_n-a_m\otimes x_m}.\\
	\end{align*}
	Hence $a_n\otimes x_n$ is a Cauchy sequence in $A\otimes_{\phi}X$. By completeness it has a limit $L$ and
	\begin{equation*}
	U(L)=U(\lim_{n\to\infty} a_n\otimes x_n)=\lim_{n\to\infty}U(a_n\otimes x_n)=x,
	\end{equation*}
	hence $U$ is surjective and so by Lemma~\ref{lemma: Well defined unitaries on tensor product} $U$ is unitary. Lastly we see that  
	\begin{equation*}
	U(j(a)\otimes 1(b\otimes x))=U(ab\otimes x)=\phi(ab)x=\phi(a)U(b\otimes x)
	\end{equation*}
	and so $A\otimes_{\phi} X\cong X$ as $A$--$B$ modules.\\
	\vskip 1pt
	Now for the second isomorphism the calculation
	\begin{align*}
	\IP{xb,yc}_X&=b^*\IP{x,y}_Xc=b^*\iota(\IP{x,y}_X)c=\IP{b,\iota(\IP{x,y}_X)c}_B=\IP{x\otimes b,y\otimes c}_{X\otimes_{\iota}B}\\
	\end{align*}
	shows that there is a well defined inner-product preserving operator $V:X\otimes_{\iota} B\to X$ such that $V(x\otimes b)=x\cdot b$. Then \cite[Proposition 2.31]{RaeburnWilliams} tells us that for every $x\in X$ there exists some $y\in X$ such that $x=y\cdot \IP{y,y}$, which shows that $V$ is surjective, and hence unitary. Lastly we see that
	\begin{equation*}
	V(\phi(b)\otimes 1(x\otimes b))=V(\phi(b)x\otimes c)=\phi(b)xc=\phi(b)V(x\otimes c),
	\end{equation*}
	hence $X\cong X\otimes_{\iota}B$ as $A$--$B$ correspondences.
\end{example}
\begin{remark}
	Note that we only required $\phi$ to be non-degenerate for the first isomorphism in Example~\ref{left and right}. It is interesting to see that for $A$--$A$ correspondences $X$ and $Y$, there can be a significant difference between $X\otimes_A Y$ and $Y\otimes_A X$. 
	For example, consider the $\C$--$\C$ correspondences $X=\C$ and $Y=\C^2$. We equip $X$ with the right action $x\cdot z=xz$, left action $\phi(z)x=zx$ and usual dot product $\IP{x,y}=\bar{x}y$. For $Y$ We define a left action of $\C$ by $\psi(z)= \begin{psmallmatrix}
	0&0\\
	0&z\\
	\end{psmallmatrix}$, a right action by $(x,y)\cdot z=(xz,yz)$ and we equip $Y$ with the usual dot product $\IP{x,y}=\bar{x}_1y_1+\bar{x}_2y_2$. Then it is easy to check using Lemma \eqref{lemma: Well defined unitaries on tensor product} that $X\otimes_{\psi}Y\cong X$ and $Y\otimes_{\phi}X\cong Y$.
\end{remark}
\begin{lemma}\label{ONE}
	Let $A$ be a $C^*$-algebra with $I\triangleleft A$ an ideal. The set
	\begin{equation*}
	X_I\coloneqq\{x\in X:\IP{x,x}\in I\}
	\end{equation*}
	is a Hilbert $I$-module under the same operations as $X$, equal to the set
	\begin{equation*}
	X\cdot I=\{xi:x\in X, i\in I\}.
	\end{equation*}
	If $\iota:A\to\LL(I)$ denotes the left action of $A$ by left multiplication on $I$, then we have an isomorphism $X_I\cong X\otimes_{\iota}I$. If $X$ carries a (possibly degenerate) left action $\phi:A\to \LL(X)$, then $X_I\cong X\otimes_{\iota}I$ as $A$--$I$ correspondences, and we also have an isomorphism $\overline{\phi(I)X}\cong I\otimes_{\phi}X$ as Hilbert $A$ modules so that the left action $\phi$ of $A$ on $\overline{\phi(I)X}$ corresponds to left multiplication on $I\otimes_{\phi}X$.
\end{lemma}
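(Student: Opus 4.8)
The plan is to first pin down the set $X_I$ concretely and then obtain the three isomorphisms as applications of Lemma~\ref{lemma: Well defined unitaries on tensor product}. The inclusion $X\cdot I\subseteq X_I$ is immediate, since $\IP{xi,xi}=i^*\IP{x,x}i\in I$ whenever $x\in X$ and $i\in I$. For the reverse inclusion, given $x$ with $\IP{x,x}\in I$ I would apply Lemma~\ref{Swang} with $\alpha=1/2$ to write $x=w\IP{x,x}^{1/4}$ for some $w\in X$; because $I$ is a $C^*$-subalgebra of $A$ and $t\mapsto t^{1/4}$ vanishes at $0$, the element $\IP{x,x}^{1/4}$ lies in $I$, so $x\in X\cdot I$. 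The same factorisation together with \eqref{Nifty} gives $\IP{x,y}=\IP{x,x}^{1/4}\IP{w,y}\in I$ for every $x\in X_I$ and every $y\in X$; expanding $\IP{x+y,x+y}$ for $x,y\in X_I$ then shows $X_I$ is a linear subspace, it is closed by continuity of the inner product and closedness of $I$ in $A$, and with the restricted operations and the $I$-valued inner product $\IP{\cdot,\cdot}$ it is a Hilbert $I$-module whose norm agrees with that of $X$.

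For $X_I\cong X\otimes_\iota I$ I would apply Lemma~\ref{lemma: Well defined unitaries on tensor product} to the linear map $U\colon X\odot I\to X_I$ with $U(x\otimes i)=xi$, where $I$ carries the $A$--$I$ correspondence structure given by $\iota$. For $x,u\in X$ and $i,v\in I$ one computes $\IP{xi,uv}=i^*\IP{x,u}v=\IP{i,\iota(\IP{x,u})v}_I$, which is the required identity, and the range of $U$ is all of $X_I$ (hence dense) because $X_I=X\cdot I$; hence the induced operator $\widetilde U\in\LL(X\otimes_\iota I,X_I)$ with $\widetilde U(x\otimes i)=xi$ is unitary. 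If $X$ carries a left action $\phi\colon A\to\LL(X)$, then $X_I=X\cdot I$ is $\phi(A)$-invariant because $\phi(a)(xi)=(\phi(a)x)i$, so $\phi$ restricts to a left action of $A$ on $X_I$, and $\widetilde U$ intertwines it with the left action $a\mapsto\phi(a)\otimes 1$ of Proposition~\ref{Trang} on $X\otimes_\iota I$, since $\widetilde U((\phi(a)x)\otimes i)=(\phi(a)x)i=\phi(a)\widetilde U(x\otimes i)$. Thus $X_I\cong X\otimes_\iota I$ as $A$--$I$ correspondences.

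For the last isomorphism I would regard $I$ as the Hilbert $A$-module $I_A$ and apply Lemma~\ref{lemma: Well defined unitaries on tensor product} to $V\colon I\odot X\to\overline{\phi(I)X}$ with $V(i\otimes x)=\phi(i)x$. Since $\phi$ is a homomorphism, $\IP{\phi(i)x,\phi(j)y}=\IP{x,\phi(i^*j)y}=\IP{x,\phi(\IP{i,j}_A)y}$, which matches the inner product of $I\otimes_\phi X$, and $V$ has dense range by the definition of $\overline{\phi(I)X}$; so $\widetilde V\in\LL(I\otimes_\phi X,\overline{\phi(I)X})$ with $\widetilde V(i\otimes x)=\phi(i)x$ is unitary. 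The submodule $\overline{\phi(I)X}$ is $\phi(A)$-invariant because $\phi(a)\phi(i)x=\phi(ai)x$ with $ai\in I$, and $\widetilde V$ intertwines the restriction of $\phi$ there with the left-multiplication action $a\mapsto(i\mapsto ai)\otimes 1$ on $I\otimes_\phi X$, since $\widetilde V((ai)\otimes x)=\phi(ai)x=\phi(a)\widetilde V(i\otimes x)$.

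The only step that is not a formal manipulation is the first: recognising that the a priori $A$-valued inner product on $X_I$ actually takes values in $I$ and that $X_I$ coincides with $X\cdot I$. Once Lemma~\ref{Swang} is invoked this is short, and everything else reduces to checking the hypotheses of Lemma~\ref{lemma: Well defined unitaries on tensor product} and a one-line intertwining identity; the point requiring care is keeping track of which Hilbert-module structure on $I$ is in play (over $I$ for the first two statements, over $A$ for the last) and which left action on the tensor product is being matched.
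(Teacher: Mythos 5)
Your proposal is correct and follows the same overall architecture as the paper's proof: characterise $X_I$ as $X\cdot I$, then obtain both tensor-product isomorphisms from Lemma~\ref{lemma: Well defined unitaries on tensor product} together with a one-line intertwining check. The only substantive difference is in the containment $X_I\subseteq X\cdot I$: the paper uses Cohen factorisation to write $x=y\IP{y,y}$ and then extracts $\IP{y,y}=(\IP{y,y}^3)^{1/3}\in I$ by functional calculus, whereas you invoke Lemma~\ref{Swang} to write $x=w\IP{x,x}^{1/4}$ and observe that $\IP{x,x}^{1/4}\in I$ because the ideal $I$ is a $C^*$-subalgebra closed under functional calculus by functions vanishing at $0$. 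Both are valid; your version has the small added benefit that the same factorisation immediately yields $\IP{x,y}=\IP{x,x}^{1/4}\IP{w,y}\in I$ for arbitrary $y\in X$, which makes linearity and the $I$-valuedness of the inner product on $X_I$ transparent. Your handling of the surjectivity of $V$ onto $\overline{\phi(I)X}$ (dense range by definition of the closure) is also cleaner than the paper's appeal to non-degeneracy, which sits awkwardly with the hypothesis that $\phi$ may be degenerate.
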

\begin{proof}
	We begin by showing that $X_I$ is equal to $X\cdot I$. If $xi\in X\cdot I$ then
	\begin{equation*}
	\IP{xi,xi}=i^*\IP{x,x}i\in I,
	\end{equation*}
	so clearly $X\cdot I\subseteq X_I$. Now suppose that $x\in X_I$. Cohen factorisation allows us to find some $y\in X$ such that $x=y\IP{y,y}$, so if we can show that $\IP{y,y}\in I$ then we will be done. Since $x\in X_I$ we have
	\begin{equation*}
	I\ni \IP{x,x}=\IP{y,y}^3.
	\end{equation*}
	Since $\IP{y,y}$ is a positive element, so is $\IP{y,y}^3$, and applying functional calculus we have
	\begin{equation*}
	\IP{y,y}=(\IP{y,y}^3)^{1/3}\in C^*(y).
	\end{equation*}
	Since $C^*(y)$ is generated by $\IP{y,y}^3$ which is an element of $I$, $C^*(y)$ is a sub-algebra of $I$ whence we deduce that $\IP{y,y}\in I$, and so $X_I=X\cdot I$.\\
	Now we show that $X_I$ is a Hilbert $I$ module. If $i\in I$ then for any $x\in X$
	\begin{equation*}
	\IP{xi,xi}=i^*\IP{x,x}i\in I
	\end{equation*}
	so $I$ preserves $X_I$, and we have a well defined right multiplication. Any $x,y\in X_I$ we have just seen can be written as $x=x_0i$ and $y=y_0j$, so we see that
	\begin{equation*}
	\IP{x,y}=i^*\IP{x_0,y_0}j\in I,
	\end{equation*}
	and so the inner-product takes values in $I$.
	We need only then check that $X_I$ is complete in the Hilbert module norm. Let $x_i$ be a Cauchy sequence in $X_I$. Since $X_i\subseteq X$ and $X$ is complete, $x_i$ has a limit $x\in X$. We aim to show that $x\in X_I$. By continuity of inner-products and since $I$ is closed we see
	\begin{equation*}
	\IP{x,x}=\lim_{i\to\infty}\IP{x_i,x_i}\in I,
	\end{equation*}
	so $X_I$ is complete with respect to the Hilbert module norm and $X_I$ is a Hilbert $I$-module.\\
	Now to see $X_I\cong X\otimes_{\iota}I$ we compute 
	\begin{align*}
	\IP{xi,yj}=i^*\IP{x,y}_Xj=i^*\iota(\IP{x,y})j=\IP{i,\iota(\IP{x,y}_X)j}=\IP{x\otimes i,y\otimes j},\\
	\end{align*}
	so by Lemma~\ref{lemma: Well defined unitaries on tensor product} there is a well defined inner-product preserving operator $U:X\otimes_{\iota} I\to X\cdot I$ such that $U(x\otimes i)=x\cdot i$. Then $U$ is obviously surjective, hence $U$ is unitary and $X_I\cong X\otimes_{\iota}I$ as Hilbert modules. If $\phi:A\to\LL(X)$ is a left action of $A$ on $X$ then
	\begin{equation*}
	U(\phi(a)\otimes 1(x\otimes i))=U(\phi(a)x\otimes i)=\phi(a)xi=\phi(a)U(x\otimes i),
	\end{equation*}
	so $U$ intertwines the left actions and so $X\otimes_{\iota}I\cong X_I$ as $A$--$I$ correspondences.\\
	We lastly show that $I\otimes_{\phi}X\cong \overline{\phi(I)X}$. Since adjointables are $A$-linear we see that $\phi(i)(xa)~=~(\phi(i)x)a$ so the right action of $A$ preserves $\overline{\phi(I)X}$ and $\overline{\phi(I)X}$ is a Hilbert $I$-module. Now we compute
	\begin{align*}
	\IP{\phi(i)x,\phi(j)y}=\IP{x,\phi(i^*j)y}=\IP{i\otimes x,j\otimes y},\\
	\end{align*}
	so there is a well defined inner-product preserving operator $U:X\otimes_{\iota} I\to X\cdot I$ such that $U(x\otimes i)=x\cdot i$. By the same argument as Example~\ref{left and right}, since $\phi$ is non-degenerate, $V$ is surjective and hence unitary and we conclude that $I\otimes_{\phi}X\cong \overline{\phi(I)X}$.		
	Now lastly we check that if $\iota:A\to\LL(I)$ is the inclusion of $a$ as left multiplication operators, then
	\begin{equation*}
	V\big((\iota(a)\otimes 1)(i\otimes x)\big)=V(ai\otimes x)=\phi(a)\phi(i)x=\phi(a)V(i\otimes x)
	\end{equation*}
	so $\overline{\phi(I)X}$ and $I\otimes_{\phi}X$ are isomorphic as $A$--$A$ correspondences.
\end{proof}
\begin{lemma}\label{T}
	Let $X_A$ be a right $A$-module and $Y$ an $A$--$B$ correspondence with $\phi:A\to\LL(Y)$ the left action. For each $T\in\LL(X)$ there exists $(T\otimes 1)\in\LL(X\otimes_AY)$ such that $(T\otimes 1)(x\otimes y)=Tx\otimes y$ for all $x\in X$ and $y\in Y$. The map $T\mapsto T\otimes 1$ is a homomorphism and it is injective if $\phi$ is injective.
\end{lemma}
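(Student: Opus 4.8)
The plan is to build $T\otimes 1$ in three stages: define it on the algebraic tensor product $X\odot Y$, bound its norm there, and then descend to $(X\odot Y)/N$ and extend by continuity; afterwards the homomorphism and injectivity claims are short density arguments. So first I would set $(T\otimes 1)\big(\sum_i x_i\otimes y_i\big)=\sum_i Tx_i\otimes y_i$ on $X\odot Y$ and aim to prove the estimate $\normof{(T\otimes 1)z}\leq\normof{T}\,\normof{z}$ for all $z\in X\odot Y$. Once this is in hand, $(T\otimes 1)$ sends $N=\{z\in X\odot Y:\IP{z,z}=0\}$ (which by Remark~\ref{remark: tensor product quotient N,M} is the subspace $N$ used to define $X\otimes_A Y$) into itself, so it descends to a bounded operator on $(X\odot Y)/N$ and extends uniquely by continuity to an operator on $X\otimes_A Y$ of norm at most $\normof{T}$ with $(T\otimes 1)(x\otimes y)=Tx\otimes y$ on spanning tensors.

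The main obstacle is this norm estimate, and I would obtain it by reusing the matrix/positivity machinery from the proof of Proposition~\ref{Trang}. Fix $z=\sum_{i=1}^n x_i\otimes y_i$ and let $U,V\in M_n(A)$ be given by $U_{ij}=\IP{x_i,x_j}$ and $V_{ij}=\IP{Tx_i,Tx_j}$. Since $T^*T\leq\normof{T}^2 1_X$, functional calculus lets us write $\normof{T}^2 1_X-T^*T=S^*S$ with $S\in\LL(X)$, and then
\begin{equation*}
\normof{T}^2 U_{ij}-V_{ij}=\IP{x_i,(\normof{T}^2 1_X-T^*T)x_j}=\IP{Sx_i,Sx_j},
\end{equation*}
so $\normof{T}^2 U-V$ is a Gram matrix and hence positive in $M_n(A)\cong\LL(A^n)$, by the same Lemma~\ref{Panang} computation as in Proposition~\ref{Trang}. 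As $\phi$ is a homomorphism it is completely positive, so $\phi^{(n)}(\normof{T}^2 U-V)\geq 0$ in $M_n(\LL(Y))\cong\LL(Y^n)$; pairing with $y=(y_1,\dots,y_n)^T\in Y^n$ and invoking Lemma~\ref{Panang} once more gives
\begin{equation*}
\normof{T}^2\IP{z,z}_{X\odot Y}-\IP{(T\otimes 1)z,(T\otimes 1)z}_{X\odot Y}=\IP{y,\phi^{(n)}(\normof{T}^2 U-V)y}_{Y^n}\geq 0
\end{equation*}
in $B$, which yields $\normof{(T\otimes 1)z}\leq\normof{T}\,\normof{z}$.

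For adjointability I would run the same construction for $T^*$ to get $T^*\otimes 1$ and then check, on spanning tensors,
\begin{equation*}
\IP{(T\otimes 1)(x\otimes y),u\otimes v}=\IP{y,\phi(\IP{Tx,u})v}=\IP{y,\phi(\IP{x,T^*u})v}=\IP{x\otimes y,(T^*\otimes 1)(u\otimes v)},
\end{equation*}
so that $(T\otimes 1)^*=T^*\otimes 1$ by density of spanning tensors and continuity of the inner product; hence $T\otimes 1\in\LL(X\otimes_A Y)$. Linearity of $T\mapsto T\otimes 1$ and the identity $(ST)\otimes 1=(S\otimes 1)(T\otimes 1)$ hold on spanning tensors and therefore everywhere, so $T\mapsto T\otimes 1$ is a $*$-homomorphism $\LL(X)\to\LL(X\otimes_A Y)$.

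Finally, for injectivity assume $\phi$ is injective and $T\otimes 1=0$. Then $\IP{y,\phi(\IP{Tx,u})v}=\IP{(T\otimes 1)(x\otimes y),u\otimes v}=0$ for all $x,u\in X$ and $y,v\in Y$, so Lemma~\ref{lemma: norm is sup of IPs} forces $\phi(\IP{Tx,u})=0$ in $\LL(Y)$; injectivity of $\phi$ then gives $\IP{Tx,u}=0$ for all $x,u\in X$, and taking $u=Tx$ yields $\IP{Tx,Tx}=0$, so $Tx=0$ for every $x$ and $T=0$. The only step carrying real content is the norm estimate in the second paragraph; the rest is density and continuity.
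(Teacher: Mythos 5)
Your proof is correct, and its core is the same inequality the paper uses, namely $\normof{T}^2\IP{x,x}-\IP{Tx,Tx}\geq 0$ pushed through the completely positive map $\phi$ and Lemma~\ref{Panang}. The difference is in the execution of the norm bound, and your version is actually the more careful one. The paper estimates $\normof{T\otimes 1}$ by taking a supremum over \emph{simple} tensors $x\otimes y$ in the unit ball, which does not by itself control the operator norm on all of $X\otimes_A Y$; your Gram-matrix argument, writing $\normof{T}^2 1_X-T^*T=S^*S$ so that $\normof{T}^2 U-V=\big(\IP{Sx_i,Sx_j}\big)_{ij}\geq 0$ in $M_n(A)$ and then applying $\phi^{(n)}$, bounds $\normof{(T\otimes 1)z}$ for an arbitrary finite sum $z=\sum_i x_i\otimes y_i$ and so genuinely yields $\normof{T\otimes 1}\leq\normof{T}$. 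You also verify adjointability explicitly via $(T\otimes 1)^*=T^*\otimes 1$, which the paper leaves implicit, and your use of the characterisation $N=\{z:\IP{z,z}=0\}$ from Remark~\ref{remark: tensor product quotient N,M} to see that $T\otimes 1$ descends is a clean alternative to the paper's direct check on the spanning elements $xa\otimes y-x\otimes\phi(a)y$. The injectivity arguments are interchangeable: the paper exhibits a single $y$ with $\phi(\IP{Tx,Tx}^{1/2})y\neq 0$, while you deduce $\phi(\IP{Tx,u})=0$ for all $u$ and specialise to $u=Tx$; both are fine.
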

\begin{proof}
	First we wish to show that given $T\in\LL(X)$, the formula $(T\otimes 1)(x\otimes y)=Tx\otimes y$ is well defined on $X\otimes_AY$. Since for $x\in X$, $y\in Y$ and $a\in A$ we have
	\begin{equation*}
		(T\otimes 1)(xa\otimes y-x\otimes\phi(a)y)=T(x)a\otimes y-T(x)\otimes\phi(a)y,  
	\end{equation*}
	we see that $(T\otimes 1)$ is well defined on $(X\odot Y)/N$, where $N$ is subspace
	\begin{equation*}
		N=\Span\{xa\otimes y-x\otimes\phi(a)y:x\in X,y\in Y,a\in A\}.
	\end{equation*}
	For any $x\in X$ \cite[Corollary~1.25]{Crossedandunitizations} implies
	\begin{equation*}
		\normof{T}^2\IP{x,x}-\IP{Tx,Tx}\geq 0,
	\end{equation*}
	so since homomorphisms preserve positivity, employing~\ref{Panang} gives for all $y\in Y$
	\begin{equation*}
		\normof{T}\IP{y,\phi(\IP{x,x})y}-\IP{y,\phi(\IP{Tx,Tx})y}=\IP{y,\phi(\normof{T}^2\IP{x,x}-\IP{Tx,Tx})y}\geq 0.
	\end{equation*}
	Hence we obtain
	\begin{align*}
		\normof{T\otimes 1}^2&=\sup_{\normof{x\otimes y}\leq 1}\normof{Tx\otimes y}^2=\sup_{\normof{x\otimes y}\leq 1}\normof{\IP{y,\phi(\IP{Tx,Tx})y}}\\
		&\leq \sup_{\normof{x\otimes y}\leq 1}\normof{T}^2\normof{\IP{y,\phi(\IP{x,x})y}}=\normof{T}^2
	\end{align*}
	and so $T\otimes 1$ is uniformly continuous. Theorem~\ref{Sang} states there is an unique continuous extension of $T\otimes 1$ to $X\otimes_A Y$ such that if $S_n$ is a Cauchy sequence in $(X\dot Y)/N$ then
	\begin{equation*}
		(T\otimes 1)(\lim_{n\to\infty}S_n)=\lim_{n\to\infty}(T\otimes 1)(S_n).
	\end{equation*}
	That $T\mapsto T\otimes 1$ is a homomorphism is a straightforward computation, following \cite[Lemma~2.47]{Crossedandunitizations} we show that if $\phi$ is injective that then so is $T\mapsto T\otimes 1$. We do this by showing that the kernel of this map is zero. Suppose that $T\in\LL(X)$ is non-zero, that is, we may pick $x\in X$ such that $Tx\neq 0$. Then for any $y\in Y$
	\begin{align*}
	\normof{(T\otimes 1)(x\otimes y)}&=\normof{\IP{y,\phi(\IP{Tx,Tx})y}}^{1/2}\\
	&=\normof{\IP{\phi(\IP{Tx,Tx})^{1/2}y,\phi(\IP{Tx,Tx})^{1/2}y}}^{1/2}\\
	&=\normof{\phi(\IP{Tx,Tx}^{1/2})y}.\\
	\end{align*}
	Since $\phi$ is injective, there exists some $y\in Y$ such that $\phi(\IP{Tx,Tx}^{1/2})y\neq 0$, and so $(T\otimes 1)(x\otimes y)\neq 0$. Hence $T\otimes 1\neq 0$ so $T\mapsto T\otimes 1$ has zero kernel and is injective.
\end{proof}
\begin{lemma}\label{compact otimes 1}
	Let $X_A$ be a right $A$-module and $Y$ an $A$--$B$ correspondence with $\phi:A\to\LL(F)$ the left action. Suppose that $\phi$ is injective, and let $I=\phi^{-1}(\KK(Y))$. Then $I$ is an ideal in $A$, so that $X_I$ is a right $A$-module as in Lemma~\ref{ONE}, and
	\begin{equation*}
	\KK(X_I)=\{k\in\KK(X):k\otimes 1\in\KK(X\otimes_{\phi}Y)\}.
	\end{equation*}
\end{lemma}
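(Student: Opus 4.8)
The plan is to prove the two inclusions separately, with the ``creation'' operators $\rho_x\colon Y\to X\otimes_\phi Y$, $\rho_x(y)=x\otimes y$, as the main tool. These are adjointable with $\rho_x^*(x'\otimes y')=\phi(\IP{x,x'})y'$ and $\normof{\rho_x}\le\normof{x}$, and a direct computation gives $\rho_x^*(k\otimes 1)\rho_x=\phi(\IP{x,kx})$ for $k\in\LL(X)$ and $\rho_x^*\Theta_{\alpha,\beta}\rho_x=\Theta_{\rho_x^*\alpha,\rho_x^*\beta}$ for $\alpha,\beta\in X\otimes_\phi Y$. First I would clear the preliminaries: $I=\phi^{-1}(\KK(Y))$ is an ideal since $\KK(Y)$ is an ideal of $\LL(Y)$ and preimages of ideals under $*$-homomorphisms are ideals, and $X_I=X\cdot I$ by Lemma~\ref{ONE}. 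Writing an element of $X_I$ as $\eta_0 q$ with $q=\IP{\eta_0,\eta_0}\in I$ (Cohen factorisation, as in the proof of Lemma~\ref{ONE}), the identity $\Theta_{x,\eta_0 q}=\Theta_{xq^{1/2},\,\eta_0 q^{1/2}}$ (valid since $q^{1/2}\in I$) and its mirror image show that $\Theta_{\xi,\eta}$ already lies in $\KK(X_I)$ whenever at least one of $\xi,\eta$ is in $X_I$; hence, viewing $\KK(X_I)$ inside $\LL(X)$, $\KK(X_I)=\overline{\Span}\{\Theta_{\xi,\eta}:\xi\in X_I\}=\overline{\Span}\{\Theta_{\xi,\eta}:\eta\in X_I\}$, so $\KK(X_I)$ is a closed two-sided ideal of $\KK(X)$ and the inclusion $\KK(X_I)\hookrightarrow\KK(X)$ is isometric (approximate $Tx$ by $T(xu_\lambda)$ for an approximate identity $(u_\lambda)$ of $I$, using $xu_\lambda\in X_I$ and $\normof{xu_\lambda}\le\normof{x}$).

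For $\KK(X_I)\subseteq\{k\in\KK(X):k\otimes 1\in\KK(X\otimes_\phi Y)\}$ it suffices, $\KK(X\otimes_\phi Y)$ being closed, to show $\Theta_{\xi,\eta}\otimes 1\in\KK(X\otimes_\phi Y)$ for $\xi,\eta\in X_I$. Write $\eta=\eta_0 q$ with $q=\IP{\eta_0,\eta_0}\in I$, so $\phi(q)\in\KK(Y)$, and pick finite-rank $T_n=\sum_m\Theta_{r^n_m,s^n_m}$ on $Y$ with $T_n\to\phi(q)$ in norm. A computation on elementary tensors then gives $\Theta_{\xi,\eta}\otimes 1-\sum_m\Theta_{\xi\otimes r^n_m,\,\eta_0\otimes s^n_m}=\rho_\xi\,(\phi(q)-T_n)\,\rho_{\eta_0}^*$, of norm at most $\normof{\xi}\,\normof{\eta_0}\,\normof{\phi(q)-T_n}\to 0$, so $\Theta_{\xi,\eta}\otimes 1$ is a norm limit of finite-rank operators on $X\otimes_\phi Y$.

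For the reverse inclusion, let $k\in\KK(X)$ with $k\otimes 1\in\KK(X\otimes_\phi Y)$. Since $\rho_x^*\Theta_{\alpha,\beta}\rho_x\in\KK(Y)$, the compression $S\mapsto\rho_x^*S\rho_x$ sends $\KK(X\otimes_\phi Y)$ into $\KK(Y)$; applying it to $(k\otimes 1)^*(k\otimes 1)=(k^*k)\otimes 1\in\KK(X\otimes_\phi Y)$ yields $\phi(\IP{kx,kx})=\rho_x^*\big((k^*k)\otimes 1\big)\rho_x\in\KK(Y)$, hence $\IP{kx,kx}\in\phi^{-1}(\KK(Y))=I$ and $kx\in X_I$, for every $x\in X$; the same reasoning applied to $(k\otimes 1)^*=k^*\otimes 1$ gives $k^*x\in X_I$ for every $x$. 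Now choose finite-rank $F_n\to k$, say $F_n=\sum_m\Theta_{a^n_m,b^n_m}$; then $k^*F_n=\sum_m\Theta_{k^*a^n_m,b^n_m}$ has all left legs in $X_I$, so $k^*F_n\in\KK(X_I)$, whence $k^*k=\lim_n k^*F_n\in\KK(X_I)$. Finally, $\KK(X_I)$ is a closed ideal of $\KK(X)$ containing the positive element $k^*k$, so the standard fact that $a^*a\in J$ implies $a\in J$ for a closed ideal $J$ (via $k=\lim_n k(k^*k)^{1/n}$) gives $k\in\KK(X_I)$, completing the proof.

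I expect the crux to be the reverse inclusion --- in particular, the idea of compressing $k\otimes 1$ by the creation operators $\rho_x$ to push compactness down into $\KK(Y)$, thereby forcing $\IP{kx,kx}\in I$; after that, the passage from ``$k$ and $k^*$ map $X$ into $X_I$'' to ``$k\in\KK(X_I)$'' is routine $C^*$-algebra. The other point needing some care, though it is essentially bookkeeping, is identifying $\KK(X_I)$ with a closed two-sided ideal of $\KK(X)$ --- the well-definedness and isometry of the embedding $\KK(X_I)\hookrightarrow\KK(X)$ --- which rests on $X_I=X\cdot I$ and an approximate identity of $I$.
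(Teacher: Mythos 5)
Your proof is correct, and while it shares the paper's central tool --- the creation operators $\rho_x=T_x\colon Y\to X\otimes_\phi Y$ with $T_xT_y^*=\Theta_{x,y}\otimes 1$ and $T_x^*T_y=\phi(\IP{x,y})$ --- it diverges genuinely in the reverse inclusion. The paper argues rank-one by rank-one: it shows $T_xT_x^*\in\KK(X\otimes_\phi Y)$ iff $T_x^*T_x\in\KK(Y)$ (via $(T_x^*T_x)^2=T_x^*(T_xT_x^*)T_x$ and functional calculus), polarises, and then tries to pass from finite-rank operators to general compacts by a limiting argument invoking Lemma~\ref{T}; that last passage is the delicate point of the paper's proof, since an arbitrary finite-rank approximant of $k$ need not lie in $\KK(X_I)$ and an arbitrary compact approximant of $k\otimes 1$ need not be of the form $k_n\otimes 1$. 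Your compression trick sidesteps this entirely: from $(k^*k)\otimes 1\in\KK(X\otimes_\phi Y)$ you get $\phi(\IP{kx,kx})=\rho_x^*\big((k^*k)\otimes 1\big)\rho_x\in\KK(Y)$ for every $x$, hence $kX\cup k^*X\subseteq X_I$, and then the observation that $\KK(X_I)$ sits inside $\KK(X)$ as a closed two-sided ideal generated by rank-ones with \emph{one} leg in $X_I$ (via $\Theta_{x,\eta_0q}=\Theta_{xq^{1/2},\eta_0q^{1/2}}$) lets you conclude $k^*k\in\KK(X_I)$ and hence $k\in\KK(X_I)$ by the standard ideal fact. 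This treats arbitrary compacts in one stroke and is the more robust argument; the cost is the extra bookkeeping you rightly flag, namely checking that the embedding $\KK(X_I)\hookrightarrow\KK(X)$ is isometric with closed range, which the paper instead absorbs into its isomorphisms $X\cong\widetilde{X}$ and $X_I\cong\widetilde{X}_{\phi(I)}$. Your forward inclusion (factor $\eta=\eta_0 q$ with $q=\IP{\eta_0,\eta_0}\in I$, approximate $\phi(q)$ by finite ranks on $Y$, and write $\Theta_{\xi,\eta}\otimes 1=\rho_\xi\,\phi(q)\,\rho_{\eta_0}^*$) is essentially the paper's, just phrased as a direct norm approximation rather than via the identity $K=\overline{\Span}\{T_xkT_y^*\}=\KK(X\otimes_\phi Y)$.
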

\begin{proof}
	Since $\KK(X)$ is an ideal in $\LL(X)$ we may form the quotient $\LL(X)/\KK(X)$. Let $q$ be the quotient homomorphism. Then $I$ is precisely the kernel of $q\circ\phi$, hence $I$ is an ideal in $A$.
	Now to prove the second claim we expand of the proof of \cite[Lemma~3.6]{FreeProbTheory}. Consider the map $T_x:Y\to X\otimes_{\phi} Y$ defined by $T_x(y)=x\otimes y$. The calculation 
	\begin{align*}
		\IP{T_xy,u\otimes v}&=\IP{x\otimes y,u\otimes v}=\IP{y,\phi(\IP{x,u})v}\\
	\end{align*}
	shows that $T_x$ is adjointable	with adjoint $T^*_x(y\otimes z)=\phi(\IP{x,y})z$. Direct computation shows that
	\begin{align*}
	T_xT^*_y(u\otimes v)&=x\otimes \phi(\IP{y,u})v\\
	&=x\cdot\IP{y,u}\otimes v\\
	&=\Theta_{x,y}(u)\otimes v.\\
	\end{align*}
	Hence
	\begin{equation}
	T_xT^*_y=\Theta_{x,y}\otimes 1.\label{ZZZ}
	\end{equation}
	We also see for any $v\in Y$ that
	\begin{equation*}
		T_x^*T_y(v)=T_xy\otimes v=\phi(\IP{x,y})v, 
	\end{equation*}
	so we have
	\begin{equation*}
	T_x^*T_y=\phi(\IP{x,y}).
	\end{equation*}
	Since $\phi$ is an injective homomorphism, $A\cong \phi(A)$. Let $\widetilde{X}$ be the right $\phi(A)$ module
	\begin{equation*}
	\widetilde{X}\coloneqq\{T_x:x\in X\}\subseteq \LL(X\otimes_{\phi}Y)
	\end{equation*}
	with inner-product
	\begin{equation*}
	\IP{T_x,T_y}=T^*_xT_y=\phi(\IP{x,y})
	\end{equation*}
	and right action
	\begin{equation}\label{A}
	T_x\cdot\phi(a)=T_x\phi(a)=T_{x\cdot a}.
	\end{equation}
	The linear map $\rho:X\to\widetilde{X}$, $\rho(x)=T_x$ is clearly surjective and preserves inner-products since
	\begin{equation*}
	\IP{\rho(x),\rho(y)}=T_x^*T_y=\phi(\IP{x,y}),
	\end{equation*}
	so given Equation \eqref{A}, the maps $\rho$ and $\phi$ satisfy the conditions of Definition~\ref{Heidi Klum} and $X\cong\widetilde{X}$. This induces an isomorphism of $\KK(X)$ onto $\KK(\widetilde{X})$ that takes $\Theta_{x,y}$ to $\rho\Theta_{x,y}\rho^{-1}=\Theta_{T_x,T_y}$. By Lemma~\ref{ONE} the right $A$-module $X_I$ is equal to $X\cdot I=\{x\cdot i:x\in X,i\in I\}$. Thus for $x\in X_I$ we may write $x=yi$ for some $y\in X$ and $i\in I$, so
	\begin{align*}
	\rho(x)&=\rho(y\cdot i)=T_{y\cdot i}=T_y\phi(i)\in\widetilde{X}\cdot\phi(I)=\widetilde{X}_{\phi(I)}.\\
	\end{align*}
	Since every element of the form $T_x\phi(i)$ for $i\in\phi(I)$ arises this way, $\rho$ restricts to an isomorphism of $X_I$ onto $\widetilde{X}_{\phi(I)}$. Similarly, $\rho$ induces an isomorphism of $\KK(X_I)$ onto $\KK(\widetilde{X}_{\phi(I)})$ that takes $\Theta_{xi,yj}$ to $\Theta_{T_{xi},T_{yj}}$. Next we would like to show that $T_xT^*_y\in \KK(X\otimes_{\phi}Y)$ if and only if $T_x^*T_y\in\KK(Y)$. To this end we wish to show that if we define
	\begin{equation*}
		K\coloneqq\overline{\Span}\{T_xkT_y^*:k\in\KK(Y),x,y\in X\},
	\end{equation*}
	then $\KK(X\otimes_{\phi}Y)=K$.
	It is clear that $K$ is an ideal in $\KK(X\otimes_{\phi}Y)$, so we need only show that $K$ contains $\KK(X\otimes_{\phi}Y)$. But this is clear since $\Theta_{x\otimes y,u\otimes v}=T_x\Theta_{y,v}T^*_u$ for all $x,u\in X$ and $y,v\in Y$. Hence $K$ contains the generating elements of $\KK(X\otimes_{\phi}Y)$, so we must have $K=\KK(X\otimes_{\phi}Y)$. Now suppose that $T^*_xT_x\in\KK(Y)$. Then since $T^*_xT_x=\phi(\IP{x,x})$ is compact, $\IP{x,x}\in \phi(I)$, and so $T_x\in\widetilde{X}_{\phi(I)}$. This means that $T_x=T_y\phi(i)$ for some $y\in X$, $i\in I$, hence
	\begin{equation*}
	T_xT^*_x=T_y\phi(ii^*)T^*_y
	\end{equation*}
	is an element of $K=\KK(X\otimes_{\phi}Y)$. Now suppose instead that $T_xT^*_x\in \KK(X\otimes_{\phi}Y)$. Then by definition $T^*_x\Big(T_xT^*_x\Big)T_x=\Big(T^*_xT_x\Big)^2$ is an element of $K$ and hence $\KK(Y)$. Now $T^*_xT_x$ is a positive element of $\LL(Y)$, so $\Big(T^*_xT_x\Big)^2$ is too, and we may apply functional calculus to obtain
	\begin{equation*}
	\Big(\big(T^*_xT_x\big)^2\Big)^{1/2}=T^*_xT_x.
	\end{equation*}
	Since all functions generated by non-unital functional calculus arise from products and sums of $\big(T^*_xT_x\big)^2$ (and its adjoint but $\big(T^*_xT_x\big)^2$ is self adjoint), and since $\big(T^*_xT_x\big)^2$ is in the ideal $\KK(Y)$, all elements obtained using non-unital functional calculus must also be in $\KK(Y)$, so we deduce that $T^*_xT_x\in\KK(Y)$. Now polarisation gives us
	\begin{equation*}
	T_xT_y^*\in \KK(X\otimes_{\phi}Y)\iff T_x^*T_y\in \KK(Y)
	\end{equation*}
	as we wanted. Now suppose that $\Theta_{x,y}\in X_I$. Then $\Theta_{T_x,T_y}=T^*_xT_y\in\KK(\widetilde{X}_{\phi(I)})$ and so by definition $T_x^*T_x$ and $T_y^*T_y$ are both in $\KK(Y)$. Polarisation shows that this is exactly equivalent to $T_x^*T_y$ being an element of $\KK(Y)$, which in turn is equivalent to $T_xT_y^*$ being an element of $\KK(X\otimes_{\phi}Y)$ as we have shown above. Since each $T_xT_y^*=\Theta_{x,y}\otimes 1$ by Equation \eqref{ZZZ}, this is equivalent to $\Theta_{x,y}\otimes 1$ being in $\KK(X\otimes_{\phi}Y)$. Since each of these statements are if and only ifs, we see that
	\begin{equation*}
	\Theta_{x,y}\in\KK(X_I)\iff \Theta_{x,y}\otimes 1\in\KK(X\otimes_{\phi}Y).
	\end{equation*}
	Bilinearity of the tensor product shows that $K=\sum_n\Theta_{x_n,y_n}$ is a finite rank operator, then
	\begin{equation*}
	K\in\KK(X_I)\iff K\otimes 1\in\KK(X\otimes_{\phi}Y).
	\end{equation*}
	So now finally to pass to a general compact operator: let $k_n$ be a sequence of finite rank operators converging to $k$. By Lemma~\ref{T}, $k_n\to k$ if and only if $k_n\otimes 1\to k\otimes 1$. Thus, $k_n$ is a convergent sequence in $\KK(X_I)$ if and only if $k_n\otimes 1$ is a convergent sequence in $\KK(X\otimes_{\phi}Y)$, hence we deduce
	\begin{equation*}
	k\in\KK(X_I)\iff k\otimes 1\in\KK(X\otimes_{\phi}Y)
	\end{equation*}
	as required.
\end{proof}

\chapter{Tensor products, gradings and unitizations}
\label{C^*-algebras}

\section{Tensor products and norms}

Given two Hilbert spaces $\HH$ and $\KK$ we may consider their vector space tensor product $\HH\otimes \KK$. There is a natural choice of inner-product on $\HH\otimes\KK$ determined by
\begin{equation}\label{Vaughn}
\IP{h_1\otimes k_1,h_2\otimes k_2}_{\HH\otimes\KK}=\IP{h_1,h_2}_{\HH}\IP{k_1,k_2}_{\KK}
\end{equation}
which makes $\HH\otimes \KK$ into an inner-product space. The vector space tensor product is not guaranteed to be complete with respect to the inner-product norm that comes from this definition. When we take the Hilbert space tensor product of $\HH$ and $\KK$ we take the vector space tensor product equipped with the inner-product in equation \eqref{Vaughn} and we complete it with respect to the inner-product norm.

When taking the tensor product of two $C^*$-algebras there are more considerations concerning norms and completion. Unlike the case for Hilbert spaces, there are multiple natural norms with which may equip the algebraic tensor product that give non-isomorphic $C^*$-completions. 
Given two $C^*$-algebras $A$ and $B$ we may form the `algebraic' tensor product $A\otimes B$, which is the vector space inner-product equipped with the multiplication and involution determined by
\begin{equation*}
(a\otimes b)\cdot (c\otimes d)=ac\otimes bd\qquad\text{and}\qquad (a\otimes b)^*=a^*\otimes b^*.
\end{equation*}
For clarity, from here on out we will denote by $A\odot B$ the algebraic tensor product, and we reserve the notation $A\otimes _{\normof{\cdot}}B$ for the completion of $A\odot B$ with respect to a particular norm $\normof{\cdot}$. There are two main norms that we consider on the tensor product: the minimal and maximal norms. Let $A$ and $B$ be faithfully represented by $\pi:A\to\BB(\HH)$ and $\rho:B\to\BB(\KK)$ on Hilbert spaces $\HH$ and $\KK$. We may consider the algebraic tensor product $\BB(\HH)\odot\BB(\KK)$ of the $C^*$-algebras $\BB(\HH)$ and $\BB(\KK)$ as being included into the $C^*$-algebra $B(\HH\otimes\KK)$ of bounded operators on the Hilbert space tensor product of $\HH$ with $\KK$. If $T\in\BB(\HH)$ and $S\in\BB(\KK)$ then there is a unique operator $T\otimes S\in \BB(\HH\otimes \KK)$ such that
\begin{equation*}
	T\otimes S(h\otimes k)=Th\otimes Sk
\end{equation*}
for all $h\in\HH$ and $k\in\KK$. The map $T\odot S\mapsto T\otimes S$ is an inclusion of $\BB(\HH)\odot\BB(\KK)$ in $\BB(\HH\otimes \KK)$.
We can then define the \textit{minimal tensor product norm} by
\begin{equation*}
\normof{a\otimes b}_{\min}\coloneqq \normof{\pi(a)\otimes\rho(b)}_{\BB(\HH\otimes\KK)}.
\end{equation*}
One can show that this norm is independent of choice of representations.
The second natural norm on the tensor product is called the \textit{maximal tensor product norm} and is defined by
\begin{equation*}
\normof{a\otimes b}_{\max}\coloneqq \sup\{\normof{a\otimes b}_{N}:\normof{\cdot}_{N} \text{ is a $C^*$-semi-norm on }A\odot B\}.
\end{equation*}
By $C^*$-semi-norm we mean a semi-norm such that $\normof{a^*a}=\normof{a}^2$ and $\normof{ab}\leq\normof{a}\normof{b}$.
As the names of these norms suggest, if $\normof{\cdot}$ is another norm on $A\odot B$ then we must have
\begin{equation*}
\normof{a}_{\text{min}}\leq\normof{a}\leq\normof{a}_{\text{max}}
\end{equation*}
for all $a\in A\odot B$.
Each of these norms has its advantages and disadvantages, and in general one must be careful about which is being used. Luckily, there is a large class of $C^*$-algebras $A$ for which there is a unique norm on $A\odot B$ for every $C^*$-algebra $B$ under which the completion is a $C^*$-algebra. This definition is taken from \cite[II.9.4.1]{Encyclopedia}
\begin{definition}
	Let $A$ be a $C^*$-algebra. We say that $A$ is nuclear if $A\otimes_{\text{min}}B\cong A\otimes_{\text{max}}B$ for all $C^*$-algebras $B$.
\end{definition}
Nuclear $C^*$-algebras include all finite dimensional $C^*$-algebras, all commutative $C^*$-algebras and lots more. There is another characterisation of nuclear $C^*$-algebras as algebras which are `approximately finite dimensional'. The following theorem can be found in \cite[Theorem IV 3.1.5]{Encyclopedia}. Recall the definition of a completely positive map from Definition~\ref{Sprang}.
\begin{theorem}\label{Whut}
	If $A$ is a $C^*$-algebra, then $A$ is nuclear if and only if for every finite subset $F\subset A$ and every $\ep>0$ there exist a natural number $n$ and completely positive maps $\phi:A\to M_n(\C)$ and $\psi:M_n(\C)\to A$ such that $\normof{\phi},\normof{\psi}\leq 1$ and
	\begin{equation*}
	\normof{a-\psi\circ\phi(a)}<\ep
	\end{equation*}
	for all $a\in F$.
\end{theorem}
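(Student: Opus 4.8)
The statement is the Choi--Effros--Kirchberg characterisation of nuclearity via the completely positive approximation property, and I would prove the two implications separately; the forward implication is routine and the converse is where all the difficulty lies.

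\textbf{From the approximation property to nuclearity.} The plan is to combine two facts. First, every $M_n(\C)$ is nuclear: $M_n(\C)\odot B\cong M_n(B)$, and a faithful representation of $B$ induces a faithful representation of $M_n(B)$, so $M_n(B)$ carries a unique $C^*$-norm and the minimal and maximal norms on $M_n(\C)\odot B$ coincide. Second, the minimal and maximal tensor products are functorial for completely positive contractions: if $\theta\colon C\to D$ is completely positive (Definition~\ref{Sprang}) with $\normof{\theta}\le 1$, then $\theta\odot\mathrm{id}_B$ extends to contractive maps $C\otimes_{\min}B\to D\otimes_{\min}B$ and $C\otimes_{\max}B\to D\otimes_{\max}B$ --- the maximal case follows from the universal property and the minimal case by dilating $\theta$ through a Stinespring representation and using that the minimal norm is spatial (see \cite{Lance}). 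Granting these, fix $x=\sum_{i}a_i\otimes b_i\in A\odot B$ and $\ep>0$, apply the hypothesis to the finite set $\{a_i\}$ to obtain completely positive contractions $\phi\colon A\to M_n(\C)$ and $\psi\colon M_n(\C)\to A$ with $\max_i\normof{a_i-\psi\phi(a_i)}$ small, and, writing $\delta=\big(\sum_i\normof{b_i}\big)\max_i\normof{a_i-\psi\phi(a_i)}$, estimate
\begin{align*}
\normof{x}_{\max}&\le\normof{(\psi\odot\mathrm{id}_B)(\phi\odot\mathrm{id}_B)x}_{\max}+\delta\\
&\le\normof{(\phi\odot\mathrm{id}_B)x}_{\max}+\delta\\
&=\normof{(\phi\odot\mathrm{id}_B)x}_{\min}+\delta\\
&\le\normof{x}_{\min}+\delta,
\end{align*}
where the central equality holds because $(\phi\odot\mathrm{id}_B)x$ lies in $M_n(\C)\odot B$. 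Since $\ep$, hence $\delta$, is arbitrary and $\normof{x}_{\min}\le\normof{x}_{\max}$ always, the two norms agree on $A\odot B$; as $B$ was arbitrary, $A$ is nuclear.

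\textbf{From nuclearity to the approximation property.} This is the substantial direction. Let $\FF\subseteq\BB(A)$ be the convex set of all compositions $\psi\circ\phi$ with $\phi\colon A\to M_n(\C)$ and $\psi\colon M_n(\C)\to A$ completely positive contractions, over all $n$; the goal is to show $\mathrm{id}_A$ lies in the point-norm closure of $\FF$. Since the point-norm-continuous functionals on $\BB(A)$ are precisely the finite sums $T\mapsto\sum_k\omega_k(Ta_k)$ with $\omega_k\in A^*$, $a_k\in A$ --- the same functionals that are continuous for pointwise-weak convergence --- Hahn--Banach reduces the problem to showing that $\mathrm{id}_A$ cannot be separated from $\FF$ in the pointwise-weak topology. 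Using the Choi-matrix identifications of completely positive maps $M_n(\C)\to A$ with positive elements of $M_n(A)$ and of completely positive maps $A\to M_n(\C)$ with positive functionals on $M_n(A)$, a functional separating $\mathrm{id}_A$ from $\FF$ would exhibit the identity $A\to A$ as failing to be a nuclear map, and this is exactly what $A\otimes_{\min}B\cong A\otimes_{\max}B$ forbids --- for instance via Lance's description of nuclearity by exactness of $0\to A\otimes_{\min}J\to A\otimes_{\min}B\to A\otimes_{\min}(B/J)\to 0$, or equivalently via the semidiscreteness/injectivity of the enveloping von Neumann algebra $A^{**}$, from whose normal approximation property one recovers the approximation property of $A$ by a concluding Hahn--Banach/density step that replaces maps valued in $A^{**}$ by maps valued in $A$.

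\textbf{The main obstacle.} The genuine work sits entirely in the implication ``nuclear $\Rightarrow$ approximation property'': it rests on the deep equivalence of nuclearity, semidiscreteness and injectivity (Choi--Effros, Connes, Kirchberg), a complete proof of which is book-length. For that reason --- and because only the statement is needed in what follows --- I would, as is done here, quote the result from \cite{Encyclopedia} rather than reproduce it.
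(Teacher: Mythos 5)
The paper does not actually prove this theorem: it is quoted verbatim from \cite{Encyclopedia} (Theorem~IV.3.1.5) with no argument supplied, so there is no ``paper's proof'' to match yours against. Your proposal is nonetheless sound as far as it goes, and it goes further than the paper does. The forward implication (approximation property $\Rightarrow$ nuclear) is correctly and completely sketched: nuclearity of $M_n(\C)$, functoriality of $\otimes_{\min}$ and $\otimes_{\max}$ under completely positive contractions, and the three-step estimate $\normof{x}_{\max}\le\normof{(\phi\odot\mathrm{id}_B)x}_{\max}+\delta=\normof{(\phi\odot\mathrm{id}_B)x}_{\min}+\delta\le\normof{x}_{\min}+\delta$ is exactly the standard argument and contains no gap. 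For the converse you correctly identify that the content is the Choi--Effros--Kirchberg equivalence of nuclearity, semidiscreteness and injectivity of $A^{**}$, and your decision to cite it rather than reproduce it is the same decision the paper makes for the entire theorem. The only caution is that your Hahn--Banach reduction in the converse direction is a roadmap rather than a proof --- the passage from ``no separating functional exists'' to the injectivity of $A^{**}$ and back down to maps valued in $A$ is precisely where the book-length work lives --- so if this direction were ever needed in detail, that paragraph would have to be replaced by a genuine argument or a precise citation to each intermediate equivalence. For the purposes of this thesis, where only the statement is used (to observe that finite-dimensional algebras are nuclear), your treatment is strictly more informative than the paper's.
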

It is clear from Theorem~\ref{Whut} why all finite dimensional $C^*$-algebras are nuclear.

\section{Graded algebras}

Let $A$ be a $C^*$-algebra. A grading of $A$ is an automorphism $\alpha$ such that $\alpha^2=1$. Such a map is sometimes called a `$\Z_2$ grading', and we will refer to $\alpha$ as the grading automorphism. A graded $C^*$-algebra $A$ admits a direct sum decomposition $A=A_0\oplus A_1$ where
\begin{equation*}
A_0=\{a\in A:\alpha(a)=a\}\qquad\text{and}\qquad A_1=\{a\in A:\alpha(a)=-a\}.
\end{equation*}
This is because every $a\in A$ can be written as $a=a_0+a_1$ where $a_0=\frac{a+\alpha(a)}{2}$ is in $A_0$ and $a_1=\frac{a-\alpha(a)}{2}$ is in $A_1$. If $a\in A_i$ then we say that $a$ is \textit{homogeneous of degree i} and write $\partial a=i$. We say $a$ is even if $\partial a=0$ and we say $a$ is odd if $\partial a=1$. If $a$ and $b$ are homogeneous elements of degrees $\partial a=i$ and $\partial b=j$ then
\begin{equation*}
\alpha(ab)=\alpha(a)\alpha(b)=(-1)^{i+j}ab
\end{equation*}
and so $A_i\cdot A_j\subseteq A_{i+j}$ where our subscripts are mod 2. If $\alpha$ is the identity automorphism then $A_0=A$ and $A_1=\{0\}$. We call this grading the trivial grading. An algebra may admit multiple gradings, so we write the pair $(A,\alpha)$ for a $C^*$ algebra with a particular grading $\alpha$. We call a graded $C^*$-algebra \textit{inner graded} if there exists a self adjoint unitary $U\in\MM(A)$ such that $\alpha(a)=UaU$ for all $a\in A$ where $\MM(A)$ is the multiplier algebra of $A$ (see Section~\ref{Multiplier}). 
Note that if we let $\alpha_1=\alpha$ and $\alpha_0=$id then $\alpha_n$ is an action of $\Z_2$ in the sense of definition~\ref{denizen}, so every graded $C^*$-algebra carries a canonical action of $\Z_2$.
\begin{example}
	The trivial grading is the only grading on $\C$. This is because for any grading $\alpha$ we have $\alpha(1)=1$ and for any $z\in\C$
	\begin{equation*}
	\alpha(z)=z\alpha(1)=z.
	\end{equation*}
\end{example}
\begin{example}
The operator $\alpha(f)(x)=f(-x)$ on $C_0(\R)$ is a grading. This example motivates terminology: if $f\in C_0(\R)_1$ then $f(-x)=\alpha(f)(x)=-f(x)$ so $f$ is an odd function, and if $f\in C_0(\R)_0$ then $f(-x)=\alpha(f)(x)=f(x)$ so $f$ is an even function. The odd and even functions are then the odd and even subspaces for this grading.
\end{example}\label{Eg: Matrix grading}
\begin{example}
	The matrix algebra $M_2(\C)$ admits a grading
	\begin{equation*}
	\begin{pmatrix}
	a&b\\
	c&d\\
	\end{pmatrix}\mapsto\begin{pmatrix}
	a&-b\\
	-c&d\\
	\end{pmatrix}.
	\end{equation*}
	The even and odd subspaces are then diagonal and off-diagonal matrices. This grading is inner since it is implemented by 
	\begin{equation*}
	\begin{pmatrix}
	a&-b\\
	-c&d\\
	\end{pmatrix}=
	\begin{pmatrix}
	1&0\\
	0&-1\\
	\end{pmatrix}
	\begin{pmatrix}
	a&b\\
	c&d\\
	\end{pmatrix}
	\begin{pmatrix}
	1&0\\
	0&-1\\
	\end{pmatrix}.
	\end{equation*}
\end{example}

\vspace{0.5 cm}
A graded homomorphism $\pi:(A,\alpha)\to (B,\beta)$ is a $C^*$-homomorphism such that
\begin{equation*}
\pi\circ \alpha=\beta\circ \pi.
\end{equation*}
We say that $(A,\alpha)$ and $(B,\beta)$ are graded-isomorphic if there exists a bijective graded homomorphism $\pi:A\to B$. 
Given a graded $C^*$-algebra $(A,\alpha)$ and homogeneous elements $a,b$ of degree $i$ and $j$ we define the graded commutator by
\begin{equation*}
[a,b]^{\gr}=ab-(-1)^{\partial a\partial b}ba.
\end{equation*}
For $a,b\in A$ we write $a=a_0+a_1$ and $b=b_0+b_1$ where $a_0,b_0\in A_0$ and $a_1,b_1\in A_1$ and define
\begin{equation*}
	[a,b]^{\gr}=\sum_{i,j=1}^2[a_i,b_j]^{\gr}.
\end{equation*}
If $A$ is trivially graded then this is the usual commutator $[a,b]=ab-ba$. It is useful to note that if $a$ is odd then writing $b=b_0+b_1$ where $b_0\in A_0$ and $b_1\in A_1$ then
\begin{equation}\label{Neat}
[a,b]^{\gr}=[a,b_0]^{\gr}+[a,b_1]^{\gr}=ab_0-b_0a+ab_1+b_1a=ab-\alpha(b)a.
\end{equation}

Given two graded $C^*$-algebras $(A,\alpha)$ and $(B,\beta)$ we may form a graded $C^*$-algebra $A\Otimes B$. We equip the algebraic tensor product $A\odot B$ with the multiplication and adjoint given on homogeneous elements by
\begin{equation}\label{Def}
a\Otimes b\cdot c\Otimes d=(-1)^{\partial a\partial b}ac\Otimes bd\qquad\text{and}\qquad (a\Otimes b)^*=(-1)^{\partial a\partial b}a^*\Otimes b^*.
\end{equation}
The \textit{graded tensor product} $A\Otimes  B$ is then obtained by completing with respect to the minimal tensor product norm. The grading $\alpha\otimes\beta$ on $A\Otimes  B$ is then determined by
\begin{equation*}
\alpha\otimes\beta(a\otimes b)=\alpha(a)\otimes\beta(b).
\end{equation*}
In terms of the graded components of $A$ and $B$, we have
\begin{align*}
(A\Otimes B)_0&=(A_0\Otimes  B_0)\oplus (A_1\Otimes  B_1),\;\;\text{ and }\\
(A\Otimes B)_1&=(A_1\Otimes B_0)\oplus(A_0\Otimes B_1).\\
\end{align*}
\begin{example}
	Let $B$ be a trivially graded $C^*$-algebra. The Clifford algebras $\Cliff_n$ come with a natural grading $\alpha_{\Cliff_n}$ (see Section~\ref{Cliff}), and we may form the graded tensor product $B\Otimes \Cliff_n$. Since all elements in $B$ are of degree zero, the minus signs appearing in~\eqref{Def} disappear and $B\Otimes \Cliff_n\cong B\otimes\Cliff_n$ as ungraded algebras. Note that since $\Cliff_n$ is finite dimensional, there is no ambiguity about with tensor norm we are using. Even though $B$ is trivially graded, the graded tensor product $B\Otimes \Cliff_n$ is not. We have
	\begin{equation*}
	\alpha_{B\Otimes \Cliff_n}(b\otimes e)=b\otimes\alpha_{\Cliff_n}(e).
	\end{equation*}
	If $B$ is not trivially graded then the structure becomes slightly more convoluted. In Example~\ref{Good} we look at the structure of $B\Otimes \Cliff_1$ when $B$ is not trivially graded.
\end{example}
The following Lemma is from \cite[Proposition~14.5.1]{Blackadar}.
\begin{lemma}\label{lemma: graded tensor isom ungraded}
	Suppose that $(A<\alpha_A)$ and $(B,\alpha_B)$ are graded $C^*$-algebras and that the grading on $B$ is inner. That is, there is some unitary $U\in\MM(B)$ such that $\alpha_B(b)=UbU^*$ for all $b\in B$. Then the graded tensor product $A\Otimes B$ and the ungraded tensor product $A\otimes B$ are isomorphic.
\end{lemma}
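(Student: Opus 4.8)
The plan is to untwist the signs in the graded multiplication by absorbing the grading unitary $U$ of $B$ into the odd part of $A$. Writing each $a\in A$ as $a=a_{0}+a_{1}$ with $a_{i}\in A_{i}$, I would define a map $\Phi$ on the algebraic tensor product $A\odot B$ by $\Phi(a\otimes b)=a_{0}\otimes b+a_{1}\otimes Ub$, extended linearly; since $U\in\MM(B)$ the element $Ub$ lies in $B$, so $\Phi$ maps $A\odot B$ into itself. For homogeneous $a$ I will abbreviate $\Phi(a\otimes b)=a\otimes U^{\partial a}b$, where $U^{0}$ denotes the identity of $\MM(B)$ and $U^{1}=U$. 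The claim is that $\Phi$ is an isomorphism from $A\odot B$, equipped with the graded multiplication and the involution of \eqref{Def}, onto $A\odot B$ with the ordinary multiplication and involution, and that it extends to the $C^{*}$-completions.

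First I would verify, by direct computation on homogeneous elementary tensors, that $\Phi$ is a $*$-isomorphism of these two $*$-algebra structures. The only facts required are $U^{*}=U=U^{-1}$ and the relation $Ub=(-1)^{\partial b}bU$ for homogeneous $b\in B$, which is just a reformulation of $\alpha_{B}(b)=UbU^{*}$ together with $U^{2}=1$. For example $\Phi(a\otimes b)\Phi(c\otimes d)=ac\otimes U^{\partial a}bU^{\partial c}d$, and moving $U^{\partial c}$ past $b$ produces precisely the sign appearing in the graded product $(a\otimes b)(c\otimes d)$, so that this equals $\Phi$ applied to that graded product; the $*$-identity of \eqref{Def} is checked the same way, using $U^{*}=U$. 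The inverse of $\Phi$ is given by the same formula, $\Psi(a\otimes b)=a_{0}\otimes b+a_{1}\otimes Ub$, now read as a map of the ordinary $*$-algebra onto the graded one, and $\Psi\circ\Phi=\Phi\circ\Psi=\mathrm{id}$ since $U^{2}=1$.

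The remaining and main point is to show that $\Phi$ is isometric for the norms defining $A\Otimes B$ and $A\otimes B$, so that it extends to an isomorphism of the completions. I would do this by realising $\Phi$ spatially. Choose a faithful graded representation $\pi_{A}\colon A\to\BB(\HH_{A})$, with grading operator $S_{A}=P_{0}-P_{1}$ for the even/odd projections $P_{0},P_{1}$; such a representation exists, for instance by replacing an arbitrary faithful nondegenerate representation $\pi_{0}$ of $A$ by $\pi_{0}\oplus(\pi_{0}\circ\alpha_{A})$ with grading operator the coordinate flip. Choose also a faithful nondegenerate representation $\pi_{B}\colon B\to\BB(\HH_{B})$. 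Because the grading of $B$ is inner, the extension of $\pi_{B}$ to $\MM(B)$ sends $U$ to a self-adjoint unitary $W$ with $W\pi_{B}(b)W=\pi_{B}(\alpha_{B}(b))$, so $\pi_{B}$ is automatically a graded representation with grading operator $W$. Then the norm of $A\Otimes B$ is computed by the graded tensor-product representation $\pi:=\pi_{A}\Otimes\pi_{B}$ on $\HH_{A}\otimes\HH_{B}$, and the norm of $A\otimes B$ by $\pi_{A}\otimes\pi_{B}$ on the same space; this is the graded counterpart, as in \cite{Blackadar}, of the fact that a spatial tensor norm is realised by any faithful representations. Now put $V=P_{0}\otimes 1+P_{1}\otimes W$, which is a self-adjoint unitary on $\HH_{A}\otimes\HH_{B}$. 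A short computation using $W\pi_{B}(b)W=(-1)^{\partial b}\pi_{B}(b)$ and $W^{2}=1$ shows that $V\,\pi(a\Otimes b)\,V=\pi_{A}(a)\otimes\pi_{B}(U^{\partial a}b)=(\pi_{A}\otimes\pi_{B})(\Phi(a\Otimes b))$ for homogeneous $a,b$, hence $V\pi(x)V=(\pi_{A}\otimes\pi_{B})(\Phi(x))$ for all $x\in A\odot B$. Since conjugation by the unitary $V$ is isometric, $\Phi$ preserves the two norms and extends to the desired isomorphism $A\Otimes B\cong A\otimes B$. (One can alternatively obtain the analogous statement for the maximal graded tensor product directly from its universal property, since $a\mapsto a_{0}\otimes 1+a_{1}\otimes U$ and $b\mapsto 1\otimes b$ are graded $*$-homomorphisms of $A$ and $B$ into $\MM(A\otimes B)$ with graded-commuting ranges.)

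The step I expect to be the main obstacle is exactly this last one. The algebraic isomorphism $\Phi$ is essentially forced and routine to check, but a $*$-isomorphism of the underlying $*$-algebras need not respect the $C^{*}$-completions, so one must produce $\Phi$ concretely on Hilbert space — equivalently, invoke the universal property or the spatial-norm machinery for graded tensor products — to conclude that the completed $C^{*}$-algebras themselves are isomorphic.
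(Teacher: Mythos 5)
Your proof is correct and is essentially the argument behind the source the paper cites for this lemma (\cite[Proposition~14.5.1]{Blackadar}, which the paper states without reproducing a proof): the untwisting map $a\otimes b\mapsto a\otimes U^{\partial a}b$ together with a spatial realisation showing it is isometric for the graded and ungraded minimal norms. The only point worth flagging is that your verification genuinely uses $U^{*}=U=U^{-1}$ (e.g.\ for $\Psi\circ\Phi=\mathrm{id}$ and for the $*$-compatibility), which is fine because the paper's definition of an inner grading requires the implementing unitary to be self-adjoint, even though the wording of the lemma itself only says ``some unitary $U$.''
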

\begin{remark}\label{remark: graded and ungraded tensor product}
	In Example~\ref{Eg: Matrix grading} we saw that the grading on $M_2(\C)$ determined by 
	\begin{equation*}
		\alpha_{M_2}\Bigg(\begin{pmatrix}
			a&b\\
			c&d\\
		\end{pmatrix}\Bigg)=
		\begin{pmatrix}
			a&-b\\
			-c&d\\
		\end{pmatrix}
	\end{equation*}
	is inner. Thus by Lemma \ref{lemma: graded tensor isom ungraded}, the graded tensor product of any $C^*$-algebra $A$ with $M_2(\C)$ is isomorphic to the ungraded tensor product. We will need to make use of this fact in Chapter~\ref{KK-theory} when proving periodicity of $KK$-theory.
\end{remark}
If $X_B$ is a Hilbert $B$ module $B$ is graded by $\alpha_B$, a grading on $X$ is a linear map $\alpha_X:X\to X$ such that for all $x\in X$ and $b\in B$ we have the following: 
\begin{align*}
	\alpha_X^2(x)&=x\\
	\alpha_X(xb)&=\alpha_X(x)\alpha_B(b)\\
	\alpha_B(\IP{x,y})&=\IP{\alpha_X(x),\alpha_X(y)}.\\
\end{align*}
Define the subspaces
\begin{align*}
X_0&=\{x\in X:\alpha_X(x)=x\}=\{\alpha_X(x)+x:x\in X\}\\
X_1&=\{x\in X:\alpha_X(x)=-x\}=\{\alpha_X(x)-x:x\in X\}.\\
\end{align*}
As vector spaces $X$ decomposes as $X=X_0\oplus X_1$, though $X_0$ and $X_1$ are not sub-modules unless $\alpha_B$ is trivial (this is proved in Lemma~\ref{lemma: trivially graded A gives direct sum}).\\
Let $(A,\alpha_A)$ and $(B,\alpha_B)$ be graded $C^*$-algebras and let $X$ be an $A$--$B$ correspondence with left action $\phi$. We say that $X$ is graded by $\alpha_X$ if $\alpha_X$ defines a Hilbert $B$ module grading and $\alpha_X$ has the additional properties that for all $x\in X$ and $a\in A$ we have $\alpha_X(\phi(a)x)=\phi(\alpha_A(a))\alpha_X(x)$.

We can re-express the conditions on $\alpha_X$ as $X_iB_j\subseteq X_{i+j}$, $\phi(A_i)X_j\subseteq X_{i+j}$ and $\IP{X_i,X_j}\subseteq B_{i+j}$. Since $\alpha_A$ is an automorphism of $A$ we have $\normof{\alpha_A(a)}=\normof{a}$ for all $a\in A$. Hence we have
\begin{align*}
	\normof{\alpha_X}=\sup_{\normof{x}\leq 1}\normof{\alpha_X(x)}&=\sup_{\normof{x}\leq 1}\normof{\IP{\alpha_X(x),\alpha_X(x)}}^{1/2}=\sup_{\normof{x}\leq 1}\normof{\alpha_A(\IP{x,x})}^{1/2}\\
	&=\sup_{\normof{x}\leq 1}\normof{\IP{x,x}}^{1/2}=\sup_{\normof{x}\leq 1}\normof{x}=1\\
\end{align*}

A grading $\alpha_X$ on a correspondence (or Hilbert module) induces a grading $\widetilde{\alpha}_X$ on $\LL(X)$ given by
\begin{equation*}
\widetilde{\alpha}_X(T)=\alpha_X\circ T\circ\alpha_X.
\end{equation*}
In general $\alpha_X$ is not an adjointable operator on $X$ unless $B$ is trivially graded, since $\alpha_X(xb)=\alpha_X(x)\alpha_B(b)$ shows that $\alpha_X$ is not $B$-linear. So it is not obvious a priori that $\alpha_X T\alpha_X$ is in $\LL(X)$. However the computation
\begin{equation*}
\IP{\alpha_X(T\alpha_X(x)),y}=\alpha_B(\IP{T\alpha_X(x),\alpha_X(y)})=\alpha_B(\IP{\alpha_X(x),T^*\alpha_X(y)})=\IP{x,\alpha_XT^*\alpha_X(y)}
\end{equation*}
shows that $\alpha_X\circ T^*\circ\alpha_X$ is an adjoint for $\alpha_X\circ T\circ \alpha_X$. If $\phi:A\to\LL(X)$ is a left action of $A$ then
\begin{equation*}
\widetilde{\alpha}_X(\phi(a))x=\alpha_X(\phi(a)\alpha_X(\alpha(x)))=\phi(\alpha_A(a))x,
\end{equation*}
so $\widetilde{\alpha}_X(\phi(a))=\phi(\alpha_A(a))$. Thus we may interpret the property $\alpha_X(\phi(a)x)=\phi(\alpha_A(a))\alpha_X(x)$ as requiring that $\phi$ be a graded homomorphism for the gradings $\alpha_A$ on $A$ and $\widetilde{\alpha}_X$ on $\LL(X)$.

\begin{example}\label{Graded compact isomorphism}
	Recall in Example~\ref{Fang} we saw that $\KK(A_A)\cong A$ via the map determined by $\theta_{a,b}\mapsto a^*b$. If $A$ is graded by $\alpha_A$ then $\alpha_A$ is also a grading for $A_A$ because for all $a,b\in A$ we have
	\begin{align*}
	\alpha_A(a\cdot b)&=\alpha_A(a)\alpha_A(b)\quad\text{ and }\\
	\alpha_A(\IP{a,b})&=\alpha_A(a^*b)=\alpha(a)^*\alpha(b)=\IP{\alpha(a),\alpha(b)}.
	\end{align*}
	This induces the grading on $\widetilde{\alpha_A}$ on $\LL(A_A)$. We see that
	\begin{align*}
		\phi\widetilde{\alpha_A}((\Theta_{a,b}))&=\phi(\alpha_A\Theta_{a,b}\alpha_A)=\phi(\Theta_{\alpha_A(a),\alpha_A(b)})\\
		&=\alpha(a)^*\alpha(b)=\alpha_A(a^*b)=\alpha_A(\phi(\Theta_{a,b})),\\
	\end{align*}
	so $\phi$ is in fact a graded isomorphism between $\KK(A_A)$ and $A$.
\end{example}
\begin{example}\label{eg: direct sum grading}
	Given a graded $C^*$-algebra $(A,\alpha_A)$ and a finite amount of graded Hilbert $A$-modules $(X_i,\alpha_{X_i})$ we may define a grading $\alpha_{\oplus X_i}$ on the direct sum $\bigoplus_{i=1}^n X_i$ such that if $x_i\in X_i$ then 
	\begin{equation*}
	\alpha_{\oplus X_i}\Big(\sum_{i=1}^n x_i\Big)=\sum_{i=1}^n\alpha_{X_i}(x_i).
	\end{equation*}
	Suppose we have a countably infinite amount of graded Hilbert modules $(X_i,\alpha_{X_i})$. Then since $\normof{\alpha_{X_i}}=1$ for each $i$, if we have a sequence $x_i\in X_i$ such that $\sum_{i=1}^{\infty} \IP{x_i,x_i}$ converges in $A$, then
	\begin{equation*}
		\sum_{i=1}^{\infty}\IP{\alpha_{X_i}(x_i),\alpha_{X_i}(x_i)}\leq \sum_{i=1}^{\infty}\IP{x_i,x_i}.
	\end{equation*}
	Hence there is a well defined grading $\alpha_{\oplus X_i}$ on $\bigoplus_{i=1}^{\infty}X_i$ such that
	\begin{equation*}
		\alpha_{\oplus X_i}\Big(\sum_{i=1}^{\infty} x_i\Big)=\sum_{i=1}^{\infty}\alpha_{X_i}(x_i).
	\end{equation*}
	In particular, there is a grading $\alpha_A^{\infty}$ on $\HH_A$ (see Definition~\ref{def: standard moduel}) such that for $(a)\in\HH_A$, we have
	\begin{equation*}
		\alpha_A^{\infty}(a)_i=\alpha_A(a_i).
	\end{equation*}
	If $X_i^0$ and $X_i^1$ denote the even and odd subspaces of $X_i$ under $\alpha_X^i$ then the even and odd subspaces of $\oplus_i X_i$ are given by
	\begin{equation*}
		\Big(\bigoplus_i X_i\Big)^0=\bigoplus_iX_i^0\qquad\text{ and }\qquad\Big(\bigoplus_i X_i\Big)^1=\bigoplus_iX_i^1.
	\end{equation*}
\end{example}

If $_AX_B$ is a graded $C^*$-correspondence with $\alpha$ the grading operator, and if $X$ decomposes as $X_1\oplus X_2$, then we say that this direct sum is graded if $\alpha(X_i)\subseteq X_i$, so that each $X_i$ is then a graded correspondence with grading operator $\alpha\big|_{X_i}$.
\begin{example}
	If $(A,\alpha)$ and $(B,\beta)$ are graded $C^*$-algebras and $\phi:B\to A$ is a graded homomorphism then the $B$-$A$ correspondence $_BA_A$ of Example~\ref{Marnie} has grading $\alpha$ since
	\begin{equation*}
	\alpha(\phi(b)xa)=\phi(\beta(b))\alpha(x)\alpha(a)
	\end{equation*}
	for all $b\in B$ and $a,x\in A$.
\end{example}
If $\alpha_X$ and $\alpha_Y$ are two gradings on $A$--$B$ and $B$--$C$ correspondences $_{\phi}X_B$ and $_{\psi}Y_C$ then there is a well defined operator $\alpha_X\otimes\alpha_Y$ on $X\otimes_BY$ such that
$\alpha_X\otimes\alpha_Y(x\otimes y)=\alpha_X(x)\otimes\alpha_Y(y)$. This is because for all $x\in X$, $y\in Y$ and $b\in B$ we have
\begin{align*}
\alpha_X\otimes\alpha_Y(x\otimes\psi(b)y-xb\otimes y)&=\alpha_X(x)\otimes\alpha_Y(\psi(b)y)-\alpha_X(xb)\otimes\alpha_Y(y)\\
&=\alpha_X(x)\otimes\psi(\alpha_B(b))\alpha_Y(y)-\alpha_X(x)\alpha_B(b)\otimes\alpha_Y(y),\\
\end{align*}
so $\alpha_X\otimes\alpha_Y$ preserves the subspace
\begin{equation*}
\Span\{x\otimes\phi(b)y-bx\otimes y:x\in X,y\in Y,b\in B\}.
\end{equation*}
\section{Unitizations}\label{Multiplier}
In studying $C^*$-algebras the first thing one does is study unital $C^*$-algebras. The tools developed for unital $C^*$-algebras like the spectrum and functional calculus can still be used for non-unital algebras, but one needs to study how to embed non-unital algebras into unital ones where these tools make sense.

The following definitions and results come from \cite[Section 2.3]{Raeburn Williams}.
\begin{definition}
	Let $A$ be a $C^*$-algebra and $I$ be an ideal in $A$. We say that $I$ is an essential ideal if for every ideal $J\subset A$, $I\cap J\neq \emptyset$.
\end{definition}
\begin{lemma}\label{I cap J}
	Let $A$ be a $C^*$-algebra and let $I$ and $J$ be ideals in $A$. Let $I\cdot J$ be the closed span of elements of the form $ij$ with $i\in I$ and $j\in J$. Then $I\cap J=I\cdot J$.
\end{lemma}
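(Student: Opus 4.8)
The plan is to prove the two inclusions $I\cdot J\subseteq I\cap J$ and $I\cap J\subseteq I\cdot J$ separately.

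For the first inclusion, it suffices to check that every product $ij$ with $i\in I$ and $j\in J$ lies in $I\cap J$: indeed, the linear span of such products then sits inside $I\cap J$, and $I\cap J$ is closed (being the intersection of the closed sets $I$ and $J$), so the closed span $I\cdot J$ is contained in $I\cap J$. But $ij\in I$ because $I$ is a right ideal, and $ij\in J$ because $J$ is a left ideal, so $ij\in I\cap J$. This direction is immediate.

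For the reverse inclusion I would use an approximate identity. The intersection $I\cap J$ is again a closed two-sided ideal of $A$, hence a $C^*$-subalgebra, so it possesses an approximate identity $(e_\lambda)$ with each $e_\lambda\in I\cap J$. Fix $a\in I\cap J$; then $\|a-e_\lambda a\|\to 0$. For each $\lambda$ we have $e_\lambda\in I$ and $a\in J$, so $e_\lambda a\in\Span\{ij:i\in I,\ j\in J\}\subseteq I\cdot J$. Since $I\cdot J$ is by definition closed, passing to the limit gives $a=\lim_\lambda e_\lambda a\in I\cdot J$. As $a\in I\cap J$ was arbitrary, $I\cap J\subseteq I\cdot J$, and combining the two inclusions finishes the proof.

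There is essentially no obstacle here; the only point requiring a moment's thought is that $I\cap J$, being a closed two-sided ideal, has its own approximate identity, which is the standard fact that every $C^*$-algebra has an approximate identity applied to the $C^*$-subalgebra $I\cap J$. One can avoid even this by a functional-calculus argument: for $a\in I\cap J$ one has $\|a-a(a^*a)^{1/n}\|^2=\sup_{t\in\sigma(a^*a)}t(1-t^{1/n})^2\to 0$, while $a\in I$ and $(a^*a)^{1/n}\in C^*(a^*a)\subseteq J$, so again $a(a^*a)^{1/n}\in I\cdot J$ and $a$ lies in the closure $I\cdot J$.
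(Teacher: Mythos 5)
Your proof is correct and follows essentially the same route as the paper: the easy inclusion via closedness of $I\cap J$, and the reverse inclusion by an approximate identity argument. The only cosmetic difference is that the paper takes an approximate identity for $J$ and observes it also serves for $I\cap J$, whereas you take one for $I\cap J$ directly (and your functional-calculus variant is a fine alternative); both are equally valid.
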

\begin{proof}
	Since $I$ and $J$ are ideals, linear combinations of elements of the form $ij$ with $i\in I$ and $j\in J$ are elements of $I\cap J$. Since $I$ and $J$ are closed so is $I\cap J$, so taking the closure of said linear combinations gives $I\cdot J\subseteq I\cap J$. For reverse containment let $e_{\lambda}$ be an approximate identity for $J$. Then since $I\cap J\subseteq J$, $e_{\lambda}$ is also an approximate identity for $I\cap J$, and so every $x\in I\cap J$ can be written
	\begin{equation*}
		x=\lim_{\lambda\in\Lambda}xe_{\lambda}.
	\end{equation*}
	The net $xe_{\lambda}$ consists of elements in $I\cdot J$, so since $I\cdot J$ is closed, the limit $x$ is an element of $I\cdot J$. We conclude that $I\cap J\subseteq I\cdot J$ and so $I\cap J= I\cdot J$.
\end{proof}
\begin{lemma}\label{Intern}
	An ideal $I$ is essential if and only if $aI=\{0\}$ implies $a=0$ for all $a\in A$.
\end{lemma}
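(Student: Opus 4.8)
The plan is to prove the two implications separately. The only genuine input is that in a $C^*$-algebra every closed two-sided ideal is self-adjoint; Lemma~\ref{I cap J} can be used as a convenience but is not really needed.

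For the direction ``$aI=\{0\}\implies a=0$ for all $a$'' $\Longrightarrow$ ``$I$ essential'', I would argue by contradiction. Suppose $J$ is a nonzero ideal with $I\cap J=\{0\}$, and choose $a\in J$ with $a\neq 0$. For every $x\in I$ we have $ax\in I$ (since $I$ is an ideal) and $ax\in J$ (since $J$ is an ideal), so $ax\in I\cap J=\{0\}$; hence $aI=\{0\}$, and the hypothesis forces $a=0$, a contradiction. Thus $I\cap J\neq\{0\}$ for every nonzero ideal $J$, i.e.\ $I$ is essential.

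For the converse, I would introduce the annihilator $J\coloneqq\{b\in A:bI=\{0\}\}$ and first check that it is a closed two-sided ideal of $A$: it is plainly a norm-closed subspace; it is a left ideal because $(cb)I=c(bI)=\{0\}$; and it is a right ideal because $(bc)I=b(cI)\subseteq bI=\{0\}$, using $cI\subseteq I$. Next I would show $I\cap J=\{0\}$: if $x\in I\cap J$ then $xy=0$ for all $y\in I$, and since $I$ is self-adjoint we have $x^*\in I$, so $xx^*=0$ and therefore $\normof{x}^2=\normof{xx^*}=0$, giving $x=0$. Since $I$ is essential, $I\cap J=\{0\}$ forces $J=\{0\}$. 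Finally, if $a\in A$ satisfies $aI=\{0\}$ then $a\in J=\{0\}$, so $a=0$, as required.

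The only point requiring any care is the routine verification that $J$ is a genuine closed two-sided ideal and the appeal to self-adjointness of $I$ in showing $I\cap J=\{0\}$; neither presents a real obstacle, so this is a short lemma.
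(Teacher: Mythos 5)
Your proof is correct, and it diverges from the paper's in the direction ``essential $\Rightarrow$ annihilation''. The paper fixes a single $a$ with $aI=\{0\}$, passes to the ideal $J_a=\overline{\Span}AaA$ generated by $a$, and invokes Lemma~\ref{I cap J} (that $I\cap J=I\cdot J$, proved with an approximate identity) to conclude $J_a\cap I=J_a\cdot I=\{0\}$, whence $J_a=\{0\}$ and $a=0$ by essentiality. You instead work with the full left annihilator $J=\{b\in A:bI=\{0\}\}$, verify it is a closed two-sided ideal, and kill $I\cap J$ using self-adjointness of $I$ together with the $C^*$-identity $\normof{x}^2=\normof{xx^*}$. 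Both routes lean on one genuinely $C^*$-algebraic input: the paper on the approximate-identity argument buried in Lemma~\ref{I cap J}, you on the (likewise approximate-identity-derived) fact that closed two-sided ideals are self-adjoint. Yours is the more standard textbook argument and bypasses Lemma~\ref{I cap J} entirely; the paper's buys a little economy by reusing a lemma it has already established. The other implication is essentially identical in both: you argue by contradiction and observe $aI\subseteq I\cap J$ directly, where the paper phrases the same computation through $J_a$ (and in fact your version is cleaner, since the paper's displayed chain of implications there contains a typographical slip).
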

\begin{proof}
	This is directly taken from \cite[Lemma 2.36]{RaeburnWilliams}. For $a\in A$ let $J_a$ be the ideal $\overline{\Span}AaA$ generated by $a$. Since $A\cdot I=I$ we have $J_a\cdot I=\{0\}$ if and only if $a\cdot I=\{0\}$. Hence if $I$ is an essential ideal and $aI=\{0\}$ then by Lemma~\ref{I cap J} we have $J_a\cap I=\{0\}$ and so $J_a=\{0\}$ whence $a=0$. Conversely, suppose that $aI=\{0\}$ implies $a=0$. If $J$ is a non-zero ideal and $a\in J$ is not zero then
	\begin{equation*}
	a\neq 0\implies aI\neq\{0\}\implies J_a\cap I\implies J\cap I\neq 0.
	\end{equation*}
\end{proof}
The two characterisations of essential ideals above will both be necessary in what follows.
\begin{definition}
	Let $A$ be a $C^*$-algebra. A unitization of $A$ is a unital $C^*$-algebra $B$ and an embedding $\iota:A\hookrightarrow B$ such that $\iota(A)$ is an essential ideal of $B$.
\end{definition}
\begin{example}\label{Mang}
	Recall in Example~\ref{Fang} we deduced that $A\cong \KK(A_A)$ where the left action $L_a:A\to \KK(A_A)\subset \LL(A_A)$ of $A$ on $A_A$ is defined by $L_a(b)=ab$ (we have changed notation to $L_a$ here to match convention). Since the identity operator is adjointable (it is self-adjoint), $\LL(A_A)$ is unital. If $T\in\LL(A_A)$ satisfies $TK=0$ for all $K\in \KK(A_A)$ then $T\Theta_{a,b}c=Tab^*c=0$ for all $a,b,c\in A$. Since products of the form $ab^*c$ are dense in $A$, we deduce that $T=0$. Thus $T\KK(A_A)=\{0\}$ implies $T=0$ and $\KK(A_A)$ is an essential ideal in $\LL(A_A)$. So $L_a:A\to\LL(A_A)$ is a unitization of $A$.
\end{example}
\begin{example}\label{minimal unitization}
	Suppose that $A$ is a non-unital $C^*$-algebra. Consider the vector space direct sum $A\oplus \C$. The operations $(a,\lambda)(b,\mu)\coloneqq (ab+\mu a+\lambda b,\lambda\mu)$ and $(a,\lambda)^*\coloneqq(a^*,\overline{\lambda})$ define a $*$-algebra structure on $A\oplus \C$. Since for all $(a,\lambda)\in A\oplus\C$ we have
	\begin{equation*}
		(a,\lambda)(0,1)=(a,\lambda)=(0,1)(a,\lambda),
	\end{equation*}
	the $*$-algebra $A\oplus\C$ is unital. In \cite[Lemma~1.3]{Crossedandunitizations} it is shown that the formula $\normof{(a,\lambda)}=\sup_{\normof{b}\leq 1}\normof{ab+\lambda b}$ defines a $C^*$-norm on $A\oplus\C$ under which it is a $C^*$-algebra. We claim that the map $\iota:A\to A\oplus\C$ defined by $\iota(a)=(a,0)$ is a unitization of $A$. The map $\iota$ is clearly a homomorphism, and we have
	\begin{equation*}
		\normof{\iota(a)}=\sup_{\normof{b}\leq 1}\normof{ab}\leq\sup_{\normof{b}\leq 1}\normof{a}\normof{b}=\normof{a}.
	\end{equation*}
	Letting $b=a^*/\normof{a^*}$ gives the reverse inequality and we deduce that $\normof{\iota(a)}=\normof{a}$. Hence $\iota$ is injective. The image of $\iota$ is clearly in ideal in $A\oplus\C$, so we need only show that it is essential. Suppose that for some $(b,\lambda)$ we have $(ba+\lambda a,0)=(b,\lambda)(a,0)=(0,0)$ for all $a\in A$. Then if $\lambda\neq 0$ we have $-ba/\lambda=a$ for all $a\in A$, and so $-b/\lambda$ is a unit for $A$. Since by assumption $A$ is non-unital by hypothesis we have a contradiction and deduce that $\lambda=0$. Then we have $ba=0$ for all $a\in A$ and so $b=0$. Hence $\iota(A)$ is an essential ideal in $A\oplus\C$ and so $A\oplus\C$ is a unitization of $A$.
\end{example}
If $A$ is a unital $C^*$-algebra then $A$ together with the identity map is a unitization. This is the only unitization of $A$. If $A$ could be embedded a an essential ideal in a larger unital algebra $B$ then for $b\in B\bs A$, $b1_A$ is an element of $A$, and so $b-b1_A\neq 0$. However $(b-b1_A)a=0$ for all $a\in A$, and $b-b1_a\neq 0$, so Lemma~\ref{Intern} shows that $A$ is not essential. If we did not require that $A$ be an essential ideal, then we could keep embedding $A$ in larger and larger $C^*$-algebras to continue creating larger unitizations. If we require that $\iota(A)$ be an essential ideal, then there exists a `maximal' unitization, which turns out to be unique up to isomorphism.
\begin{definition}\label{licence}
	We say that a unitization $\iota:A\hookrightarrow B$ is maximal if for every embedding $j:A\hookrightarrow C$ into a $C^*$-algebra $C$ as an essential ideal, there exists a homomorphism $\phi:C\to B$ such that the diagram
	\begin{equation}\label{commutative diagram}
	\begin{tikzcd}[row sep=tiny]
	& B \\
	A \arrow[ru, "\iota", hook]\arrow[rd, swap, "j", hook]
	& \\
	&C\arrow{uu}[swap]{\phi}\\ 
	\end{tikzcd}
	\end{equation}
	commutes.
\end{definition}
\begin{remark}
	One may it unusual not to require that the map $\phi$ in Definition~\ref{licence} be injective, but we will see in Corollary~\ref{corollary} that injectivity of $\phi$ is in fact automatic.
\end{remark}
\begin{proposition}\label{props}
	Suppose that $X_A$ is a right-Hilbert module, that $C$ is a $C^*$-algebra with $B$ an ideal in $C$ and that $\phi:B\to \LL(X)$ is non-degenerate in the sense of Remark~\ref{Sam}. Then there exists a unique homomorphism $\widetilde{\phi}:C\to\LL(X)$ such that $\widetilde{\phi}|_{B}=\phi$. We call $\widetilde{\phi}$ the extension of $\phi$ to $C$. If $B$ is essential in $C$ and $\phi$ is injective then $\widetilde{\phi}$ is also injective. 
\end{proposition}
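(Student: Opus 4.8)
The plan is to construct $\widetilde\phi$ as a strong limit, using an approximate identity of $B$ together with the non-degeneracy of $\phi$ (as in Remark~\ref{Sam}). Fix an approximate identity $(e_\lambda)_{\lambda\in\Lambda}$ for $B$. For $c\in C$ and each $\lambda$ the element $ce_\lambda$ lies in the ideal $B$, so $\phi(ce_\lambda)\in\LL(X)$ and $\normof{\phi(ce_\lambda)}\leq\normof{ce_\lambda}\leq\normof{c}$. First I would show that for each $x\in X$ the net $\big(\phi(ce_\lambda)x\big)_\lambda$ is Cauchy: on the dense subspace $X_0=\Span\{\phi(b)y:b\in B,y\in X\}$ this is immediate, since $\phi(ce_\lambda)\phi(b)y=\phi(ce_\lambda b)y$ and $\normof{ce_\lambda b-cb}\leq\normof{c}\normof{e_\lambda b-b}\to 0$, and the uniform bound $\normof{\phi(ce_\lambda)}\leq\normof{c}$ upgrades convergence from $X_0$ to all of $X$ by a routine $3\varepsilon$ estimate. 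Define $\widetilde\phi(c)x:=\lim_\lambda\phi(ce_\lambda)x$; this is linear in $x$, linear in $c$ by construction, and satisfies $\normof{\widetilde\phi(c)}\leq\normof{c}$.

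Next I would verify that $\widetilde\phi(c)$ is adjointable with adjoint $\widetilde\phi(c^*)$. From $\IP{\phi(ce_\lambda)x,y}=\IP{x,\phi(e_\lambda c^*)y}$ it suffices to identify $\lim_\lambda\phi(e_\lambda c^*)y$ with $\widetilde\phi(c^*)y$. The key observation is that both $\big(\phi(e_\lambda c^*)y\big)_\lambda$ and the net $\big(\phi(c^*e_\mu)y\big)_\mu$ defining $\widetilde\phi(c^*)y$ are uniformly bounded by $\normof{c}\normof{y}$ and, writing $y=\phi(b)y'$ with $b\in B$, both converge to $\phi(c^*b)y'$ (because $e_\lambda(c^*b)\to c^*b$ and $c^*(e_\mu b)\to c^*b$); hence they have the same strong limit on $X_0$, and therefore on all of $X$. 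This yields $\IP{\widetilde\phi(c)x,y}=\IP{x,\widetilde\phi(c^*)y}$, so $\widetilde\phi(c)\in\LL(X)$ with $\widetilde\phi(c)^*=\widetilde\phi(c^*)$.

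Then $\widetilde\phi$ is multiplicative: on $X_0$ one computes $\widetilde\phi(c)\widetilde\phi(d)\phi(b)y=\widetilde\phi(c)\phi(db)y=\phi(cdb)y=\widetilde\phi(cd)\phi(b)y$, and since both $\widetilde\phi(cd)$ and $\widetilde\phi(c)\widetilde\phi(d)$ are bounded, they agree on $X$. For $b_0\in B$ we have $\widetilde\phi(b_0)x=\lim_\lambda\phi(b_0e_\lambda)x=\phi(b_0)x$ since $b_0e_\lambda\to b_0$, so $\widetilde\phi|_B=\phi$. For uniqueness, any homomorphism $\psi:C\to\LL(X)$ extending $\phi$ satisfies $\psi(c)\phi(b)y=\psi(cb)y=\phi(cb)y=\widetilde\phi(c)\phi(b)y$ on $X_0$, whence $\psi=\widetilde\phi$ by density and boundedness. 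Finally, assume $B$ essential in $C$ and $\phi$ injective; if $\widetilde\phi(c)=0$ then for every $b\in B$ we get $\phi(cb)=\widetilde\phi(c)\widetilde\phi(b)=0$, so $cb=0$ by injectivity of $\phi$, i.e. $cB=\{0\}$, and Lemma~\ref{Intern} forces $c=0$. Thus $\widetilde\phi$ is injective.

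I expect the main obstacle to be the adjointability step: one must pin down $\lim_\lambda\phi(e_\lambda c^*)y$, whereas $\widetilde\phi(c^*)$ was defined via the net $\phi(c^*e_\mu)$ multiplying the approximate identity on the opposite side, and reconciling these two one-sided nets is the only place genuine care is needed. Establishing existence of the defining limit and the routine homomorphism/uniqueness/injectivity checks are then straightforward.
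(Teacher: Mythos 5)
Your proof is correct and follows essentially the same route as the paper's: both constructions rest on the approximate identity of $B$ together with the density of $\Span\phi(B)X$ in $X$, and both ultimately characterise $\widetilde\phi(c)$ by the formula $\widetilde\phi(c)\phi(b)x=\phi(cb)x$. The only difference is presentational — you define $\widetilde\phi(c)$ as the strong limit of $\phi(ce_\lambda)$ on all of $X$ and then check it acts correctly on the dense span, whereas the paper defines the operator on $\Span\phi(B)X$ first and extends by uniform continuity, using the approximate identity only as an auxiliary tool.
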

\begin{proof}
	Since $\phi$ is non-degenerate, for $x\in X$ we may find $b_n\in B$ and $x_n\in X$ such that $x=\sum_n\phi(b_n)x_n$. For $c\in C$ we aim to define
	\begin{equation*}
		\widetilde{\phi}(c)x=\sum_n\phi(cb_n)x_n.
	\end{equation*}
	Fix an approximate identity $e_{\lambda}$ for $B$ with $0\leq e_{\lambda}\leq 1$. For finite sums, if $\sum_n\phi(a_n)y_n=\sum_n\phi(b_n)x_n$ then we have
	\begin{align*}
	\Bignormof{\sum_n\phi(ca_n)y_n-\phi(cb_n)x_n}&=\lim_{\lambda\to\infty}\Bignormof{\sum_n\phi(ce_{\lambda}a_n)y_n-\phi(ce_{\lambda}b_n)x_n}\\
	&=\lim_{\lambda\to\infty}\Bignormof{\phi(ce_{\lambda})\sum_n\left[\phi(a_n)y_n-\phi(b_n)x_n\right]}\\
	&=0.\\
	\end{align*}
	Thus there exists a linear map $M_c:\Span\phi(B)X\to X$ such that $M_c(\phi(b)x)=\phi(cb)x$ for all $b\in B$ and $x\in X$. Since $\normof{\phi(ce_{\lambda})}\leq\normof{c}$ for all $\lambda$, for any finite linear combination $\sum_n\phi(b_n)x_n\in\Span\phi(B)X$,
	\begin{align*}
	\Bignormof{M_c\sum_n\phi(b_n)x_n}&=\lim_{\lambda\in\Lambda}\Bignormof{\sum_n\phi(ce_{\lambda}b_n)x_n}\\
	&=\lim_{\lambda\in\Lambda}\Bignormof{\phi(ce_{\lambda})\sum_n\phi(b_n)x_n}\\
	&=\leq \normof{c}\Bignormof{\sum_n\phi(b_n)x_n}.\\
	\end{align*}
	Thus $M_c$ is bounded and hence uniformly continuous. Theorem~\ref{Sang} implies that there is a uniformly continuous linear map $\widetilde{\phi}(c):\overline{\Span}\phi(B)X\to X$ such that $\widetilde{\phi}(c)$ restricted to $\Span\phi(B)X$ is equal to $M_c$. Non-degeneracy of $\phi$ gives $\overline{\Span}\phi(B)X=X$, so $\widetilde{\phi}(c)$ is a well defined operator on $X$. Since $c\in C$ was arbitrary, we have a map $\widetilde{\phi}$ that takes an element $c\in C$ to an operator $\widetilde{\phi}(c)$ on $X$. Let $\sum_n\phi(b_n)x_n$ and $\sum_n\phi(a_n)y_n$ be in $\Span\phi(B)X$. We compute
	\begin{align*}
		\sum_{n,m=1}\IP{\widetilde{\phi}(c)\phi(b_n)x_n,\phi(a_m)y_m}&=\sum_{n,m=1}\IP{\phi(cb_n)x_n,\phi(a_m)y_m}=\sum_{n,m=1}\IP{x_n,\phi(b_n^*c^*a_m)y_m}\\
		&=\sum_{n,m=1}\IP{\phi(b_n)x_n,\phi(c^*a_m)y_m}=\sum_{n,m=1}\IP{\phi(b_n)x_n,\widetilde{\phi}(c^*)\phi(a_m)y_m}.\\
	\end{align*}
	Since $\widetilde{\phi}$ is determined by its values on $\Span\phi(B)X$, we deduce that $\widetilde{\phi}(c^*)$ is an adjoint for $\widetilde{\phi}(c)$. We deduce that each $\widetilde{\phi}(c)$ is an element of $\LL(X)$. For all $c,d\in C$ and $\sum_n \phi(b_n)x_n\in\Span\phi(B)X$ we have
	\begin{equation*}
		\widetilde{\phi}(c)\widetilde{\phi}(d)\sum_n\phi(b_n)x_n=\sum_n\phi(cdb_n)x_n=\widetilde{\phi}(cd)\sum_n\phi(b_n)x_n.
	\end{equation*}
	Since both $\widetilde{\phi}(cd)$ and $\widetilde{\phi}(c)\widetilde{\phi}(d)$ are continuous and agree on $\Span\phi(B)X$ we deduce that $\widetilde{\phi}(cd)=\widetilde{\phi}(c)\widetilde{\phi}(d)$. Thus, since $\widetilde{\phi}(c)^*=\widetilde{\phi}(c^*)$, we deduce that $\widetilde{\phi}$ is a homomorphism from $C$ into $\LL(X)$.\\	
	If $b\in B$ then for all $\sum_n\phi(b_n)x_n\in\Span\phi(B)X$,
	\begin{equation*}
	\widetilde{\phi}(b)\sum_n\phi(b_n)x_n=\sum_n\phi(bb_n)x_n=\phi(b)\sum_n\phi(b_n)x_n
	\end{equation*}
	so $\widetilde{\phi}|_{B}=\phi$. If $\overline{\phi}$ is another continuous extension of $\phi$ to $C$ such that $\widetilde{\phi}|_B=\overline{\phi}|_B=\phi$, then
	\begin{equation*}
	\left(\widetilde{\phi}(c)-\overline{\phi}(c)\right)\sum_n\phi(b_n)x_n=\left(\phi(cb_n)-\phi(cb_n)\right)x_n=0.
	\end{equation*}
	Since $\Span\phi(B)X$ is dense in $X$ and $\widetilde{\phi}$ and $\overline{\phi}$ are continuous, we deduce that $\widetilde{\phi}=\overline{\phi}$. If $\phi$ is injective and $B$ is essential then $\Ker(\widetilde{\phi})\cap B=\Ker(\phi)\cap B=\{0\}$. Since $B$ is essential, we deduce that $\Ker(\widetilde{\phi})=\{0\}$ whence $\widetilde{\phi}$ is injective.
\end{proof}
\begin{corollary}\label{corollary}
	If $i:A\hookrightarrow B$ is a maximal unitization of $A$ and $j:A\hookrightarrow C$ is an embedding of $A$ in $C$ as an essential ideal then there is only one homomorphism $\phi:C\to B$ such that \eqref{commutative diagram} commutes, and it is injective.
\end{corollary}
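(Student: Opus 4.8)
The plan is to realise both $B$ and $C$ as concrete subalgebras of $\LL(A_A)$ by means of Proposition~\ref{props}, and then finish with a short diagram chase. First I would recall from Example~\ref{left multn is adjointable} that the left-multiplication representation $\lambda\colon A\to\LL(A_A)$, $\lambda(a)b=ab$, is isometric (hence injective), and from Example~\ref{Fish} that it is non-degenerate. Since $i\colon A\hookrightarrow B$ is a unitization, $i(A)$ is an essential ideal of $B$, so Proposition~\ref{props} (applied with ambient algebra $B$, ideal $i(A)$, and the non-degenerate injective map $\lambda\circ i^{-1}$) yields a \emph{unique} injective homomorphism $\widetilde\lambda\colon B\to\LL(A_A)$ with $\widetilde\lambda\circ i=\lambda$. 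Likewise, since $j\colon A\hookrightarrow C$ exhibits $A$ as an essential ideal of $C$, the same proposition yields a unique injective homomorphism $\mu\colon C\to\LL(A_A)$ with $\mu\circ j=\lambda$.

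Next I would chase the diagram \eqref{commutative diagram}. Suppose $\phi\colon C\to B$ is any homomorphism with $\phi\circ j=i$. Then $\widetilde\lambda\circ\phi\colon C\to\LL(A_A)$ satisfies $(\widetilde\lambda\circ\phi)\circ j=\widetilde\lambda\circ i=\lambda$, so $\widetilde\lambda\circ\phi$ and $\mu$ are two homomorphisms $C\to\LL(A_A)$ whose restrictions to the essential ideal $j(A)$ both equal the non-degenerate map $\lambda\circ j^{-1}$. The uniqueness clause of Proposition~\ref{props} forces $\widetilde\lambda\circ\phi=\mu$. Since $\widetilde\lambda$ is injective, $\phi$ is completely determined: if $\phi'$ is another homomorphism making the diagram commute, then $\widetilde\lambda\circ\phi=\mu=\widetilde\lambda\circ\phi'$, and injectivity of $\widetilde\lambda$ gives $\phi=\phi'$. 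This proves uniqueness. For injectivity of $\phi$, note that if $\phi(c)=0$ then $\mu(c)=\widetilde\lambda(\phi(c))=0$, so $c=0$ because $\mu$ is injective.

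The only real content is the first paragraph: the key idea is that Proposition~\ref{props} lets one embed \emph{every} essential-ideal extension of $A$ into the \emph{fixed} algebra $\LL(A_A)$ in a canonical way compatible with the inclusions of $A$, after which both claims are formal. The step to be careful about is verifying the hypotheses of Proposition~\ref{props} in each application, namely that $\lambda$ (equivalently $\lambda\circ i^{-1}$ and $\lambda\circ j^{-1}$) is non-degenerate and injective, and that $i(A)$ and $j(A)$ are essential ideals; these are supplied respectively by Examples~\ref{left multn is adjointable} and~\ref{Fish} and by the definition of a unitization.
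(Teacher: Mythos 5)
Your proof is correct, but it takes a genuinely different route from the one in the paper. The paper argues entirely inside $B$ and $C$ using only the elementary characterisation of essential ideals: for injectivity it notes that $\Ker(\phi)\cap j(A)=\{0\}$ because $\phi\circ j=i$ is injective, so essentiality of $j(A)$ forces $\Ker(\phi)=\{0\}$; for uniqueness it computes $(\phi(c)-\psi(c))\,i(a)=\phi(c\,j(a))-\psi(c\,j(a))=0$ since $c\,j(a)\in j(A)$ where $\phi$ and $\psi$ agree, and then invokes Lemma~\ref{Intern} for the essential ideal $i(A)\triangleleft B$. You instead embed both $B$ and $C$ into the fixed algebra $\LL(A_A)$ via Proposition~\ref{props} and reduce both claims to the uniqueness and injectivity clauses of that proposition; your verification of its hypotheses (non-degeneracy and injectivity of left multiplication, essentiality of $i(A)$ and $j(A)$) is complete, and the diagram chase $\widetilde\lambda\circ\phi=\mu$ is valid because both sides restrict to $\lambda\circ j^{-1}$ on the ideal $j(A)$. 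What each approach buys: the paper's argument is shorter and self-contained, needing nothing beyond Lemma~\ref{Intern}; yours is heavier but more conceptual, since it exhibits $\LL(A_A)$ as a canonical receptacle for every essential extension of $A$ and thereby anticipates the identification $\MM(A)\cong\LL(A_A)$ proved immediately afterwards. Note also that neither argument actually uses maximality of $B$ beyond guaranteeing that at least one $\phi$ exists.
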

\begin{proof}
	If $c\in \Ker(\phi)\cap j(A)$ then $0=\phi(c)=\phi(j(a))=i(a)$ for some $a\in A$. Since $i$ is injective we have $a=0$, so since $j$ is injective $c=j(a)=0$. Thus $\Ker(\phi)\cap j(A)=\{0\}$ whence $\Ker(\phi)=\{0\}$ since $j(A)$ is essential. So $\phi$ is injective. If $\psi:C\to B$ is another homomorphism such that \eqref{commutative diagram} commutes, then for $c\in C$
	\begin{align*}
	\left(\phi(c)-\psi(c)\right)i(a)&=\phi(c)\phi(j(a))-\psi(c)\psi(j(a))=\phi(cj(a))-\psi(cj(a))=0\\
	\end{align*}
	because $cj(a)\in j(A)$ and $\phi\big|_{j(A)}=\psi\big|_{j(A)}$. Since $i(A)$ is essential in $B$, Lemma~\ref{Intern} shows that $\phi(c)-\psi(c)=0$. So $\phi\cong\psi$.
\end{proof}
\begin{remark}\label{WHAT}
	Corollary~\ref{corollary} tells us that if $i:A\to B$ is a maximal unitization then every other unitization of $A$ must be contained in $B$. In particular the maximal unitization of $A$ is a maximal element in the set of all unitizations of $A$.
\end{remark}
\begin{proposition}
	Let $A$ be a $C^*$-algebra. There exists a unique maximal unitization $(\MM(A),i_A)$ of $A$, moreover $(\MM(A),i_A)\cong(\LL(A_A),L_a)$ from Example~\ref{Mang}. We call $\MM(A)$ the multiplier algebra of $A$.
\end{proposition}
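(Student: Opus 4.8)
The plan is to show that the unitization $L\colon A\to\LL(A_A)$, $a\mapsto L_a$, constructed in Example~\ref{Mang} is \emph{maximal} in the sense of Definition~\ref{licence}, and then to show that any two maximal unitizations of $A$ are isomorphic via an isomorphism intertwining the embeddings. Taking $(\MM(A),i_A):=(\LL(A_A),L)$ then proves both the existence and uniqueness assertions at once, and the final identification $(\MM(A),i_A)\cong(\LL(A_A),L_a)$ becomes tautological.

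To prove maximality of $(\LL(A_A),L)$, let $j\colon A\hookrightarrow C$ be an arbitrary embedding of $A$ into a $C^*$-algebra $C$ as an essential ideal, so that $j(A)$ is an ideal of $C$. The composite $L\circ j^{-1}\colon j(A)\to\LL(A_A)$ is a homomorphism which is non-degenerate in the sense that $\overline{\Span}\{(L\circ j^{-1})(b)\cdot x:b\in j(A),\,x\in A_A\}=A_A$: since $j^{-1}$ maps $j(A)$ bijectively onto $A$ this is equivalent to non-degeneracy of $L$, which holds by Example~\ref{Fish} because $\overline{\Span}\{L_ab:a,b\in A\}=\overline{\Span}\{ab:a,b\in A\}=A$ upon choosing $b$ in an approximate identity. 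Hence Proposition~\ref{props}, applied with $X=A_A$, $B=j(A)$ and $\phi=L\circ j^{-1}$, yields a homomorphism $\widetilde{L}\colon C\to\LL(A_A)$ with $\widetilde{L}|_{j(A)}=L\circ j^{-1}$, and in particular $\widetilde{L}\circ j=L$. Thus the diagram~\eqref{commutative diagram} of Definition~\ref{licence} commutes with $\phi:=\widetilde{L}$, so $(\LL(A_A),L)$ is a maximal unitization of $A$. (Since $j(A)$ is essential in $C$ and $L\circ j^{-1}$ is injective, Proposition~\ref{props} moreover gives that $\widetilde{L}$ is injective, in agreement with Corollary~\ref{corollary}.)

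For uniqueness, suppose $(B_1,i_1)$ and $(B_2,i_2)$ are maximal unitizations of $A$. Applying the defining property of $(B_1,i_1)$ to the embedding $i_2\colon A\hookrightarrow B_2$ of $A$ as an essential ideal produces a homomorphism $\phi\colon B_2\to B_1$ with $\phi\circ i_2=i_1$, and symmetrically a homomorphism $\psi\colon B_1\to B_2$ with $\psi\circ i_1=i_2$. Then $\psi\circ\phi\colon B_2\to B_2$ satisfies $(\psi\circ\phi)\circ i_2=\psi\circ i_1=i_2$, and so does $\operatorname{id}_{B_2}$; applying the uniqueness clause of Corollary~\ref{corollary} to the maximal unitization $i_2\colon A\hookrightarrow B_2$ together with the embedding $j=i_2$ (so $C=B_2$) forces $\psi\circ\phi=\operatorname{id}_{B_2}$. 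Symmetrically $\phi\circ\psi=\operatorname{id}_{B_1}$, so $\phi$ is an isomorphism with $\phi\circ i_2=i_1$. Hence the maximal unitization is unique up to an isomorphism intertwining the embeddings, and $(\MM(A),i_A):=(\LL(A_A),L)$ is as claimed.

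I do not expect a substantial obstacle here: the argument is essentially a bookkeeping exercise assembling Proposition~\ref{props} and Corollary~\ref{corollary}. The one point needing a little care is verifying the hypotheses of Proposition~\ref{props} when building the connecting map $\widetilde{L}$ — namely the non-degeneracy of $L\circ j^{-1}$, which I reduce to the elementary identity $\overline{\Span}\{ab:a,b\in A\}=A$ — and reading Corollary~\ref{corollary} carefully so that its uniqueness statement upgrades $\psi\circ\phi$ and $\phi\circ\psi$ to the respective identity maps rather than merely to some homomorphisms.
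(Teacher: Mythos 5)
Your proof is correct and follows essentially the same route as the paper: maximality of $(\LL(A_A),L)$ via Proposition~\ref{props} applied to the non-degenerate left-multiplication action on $A_A$, and uniqueness by playing the two connecting homomorphisms off against each other on the essential ideal. The only cosmetic differences are that you invoke the uniqueness clause of Corollary~\ref{corollary} where the paper redoes the essentiality computation directly, and that you are more explicit about identifying $A$ with $j(A)$ before applying Proposition~\ref{props}.
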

\begin{proof}
	If $i:A\to B$ and $j:A\to C$ are two maximal unitizations of $A$ then there exist homomorphisms $\phi:B\to C$ and $\psi:C\to B$ such that $i(a)=\psi(j(a))=\psi(\phi(i(a)))$ for all $a\in A$. For $b\in B$, since $bi(a)\in i(A)$, we have
	\begin{equation*}
	(\psi(\phi(b))-b)i(a)=\psi(\phi(bi(a)))-bi(a)=0,
	\end{equation*}
	whence $(\psi(\phi(b))-b)i(A)=\{0\}$. Since $i(A)$ is essential we deduce that $\psi(\phi(b))=b$ whence $\psi\circ\phi$ is the identity on $B$. Repeating this argument on the essential idea $j(A)$ shows $\phi\circ\psi$ is the identity on $C$ whence $B\cong C$. It remains to show that the unitization $\LL(A_A)$ is maximal. Let $j:A\hookrightarrow C$ be a unitization of $C$ and let $\phi:A\to \LL(A_A)$ be the embedding of $A$ as left multiplication operators. We saw in Example~\ref{Fish} that $\phi$ is non-degenerate, so Proposition~\ref{props} applied to $X=A_A$ gives us an extension $\widetilde{\phi}:C\to \LL(A)$ such that $\widetilde{\phi}\circ j=\widetilde{\phi}|_A=\phi$.
\end{proof}
It is worth quickly restating Proposition~\ref{props} in terms of Multiplier algebras.
\begin{proposition}\label{REF}
	If $A$ and $B$ are $C^*$-algebras and $A\to\LL(B_B)\cong\MM(B)$ is a non-degenerate homomorphism, then there exists a unique extension $\widetilde{\phi}:\MM(A)\to\MM(B)$ that agrees with $\phi$ on $A$. If $\phi$ is injective then so is $\widetilde{\phi}$.
\end{proposition}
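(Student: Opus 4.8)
The plan is to deduce this directly from Proposition~\ref{props} by specialising the Hilbert module there to $X = B_B$. Recall from the preceding proposition that $(\MM(B), i_B) \cong (\LL(B_B), L_b)$, so a non-degenerate homomorphism $\phi \colon A \to \MM(B)$ is precisely the same data as a non-degenerate homomorphism $\phi \colon A \to \LL(B_B)$; throughout I would silently identify the two via this isomorphism.

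First I would take the ambient algebra ``$C$'' in Proposition~\ref{props} to be $\MM(A)$ and the ideal ``$B$'' there to be $A$ itself, viewed inside $\MM(A)$ via $i_A$. By definition of a unitization, $i_A(A)$ is an essential ideal of $\MM(A)$ (see also Remark~\ref{WHAT}). Since $\phi \colon A \to \LL(B_B)$ is non-degenerate by hypothesis, Proposition~\ref{props} applied with $X = B_B$ produces a unique homomorphism $\widetilde{\phi} \colon \MM(A) \to \LL(B_B)$ with $\widetilde{\phi}\big|_A = \phi$. Transporting back along $\LL(B_B) \cong \MM(B)$ yields the desired extension $\widetilde{\phi} \colon \MM(A) \to \MM(B)$, and its uniqueness is inherited verbatim from the uniqueness clause of Proposition~\ref{props} (any two continuous extensions agreeing with $\phi$ on $A$ agree on the dense subspace $\overline{\Span}\,\phi(A)B_B = B_B$).

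For the injectivity statement, suppose $\phi$ is injective. Since $i_A(A)$ is essential in $\MM(A)$, the final assertion of Proposition~\ref{props} applies directly: $\Ker(\widetilde{\phi}) \cap A = \Ker(\phi) = \{0\}$, and then essentiality of $A$ in $\MM(A)$ forces $\Ker(\widetilde{\phi}) = \{0\}$ by Lemma~\ref{Intern}.

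There is no genuine obstacle here: the entire content is the bookkeeping of matching the hypotheses of Proposition~\ref{props} to the present situation. The only point deserving a moment's care is the observation that $A$ embeds in $\MM(A)$ as an \emph{essential} ideal, which is immediate once one recalls that $\MM(A)$ is by construction a unitization of $A$; after that, the statement is a one-line invocation.
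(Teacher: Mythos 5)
Your proposal is correct and is precisely the argument the paper intends: Proposition~\ref{REF} is stated there as a direct restatement of Proposition~\ref{props} (with $X=B_B$, the ambient algebra $C=\MM(A)$, and $A$ sitting inside $\MM(A)$ as an essential ideal), and the paper offers no further proof. Your bookkeeping of the hypotheses, including the identification $\MM(B)\cong\LL(B_B)$ and the use of essentiality for the injectivity claim, matches this exactly.
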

Now that we know about unitizations we can talk about the spectrum of non-unital algebras.
\begin{definition}\label{L}
	Let $A$ be a non-unital $C^*$-algebra. We define the spectrum $\sigma(a)$ of an element $a\in A$ to be the spectrum $\sigma_{A^+}$ of $a$ in the minimal unitization $A^+$ of $A$ in the sense of Example~\ref{minimal unitization}.
\end{definition}
\begin{remark}
	Since every unitization of $A$ is (unitally) contained in the Multiplier algebra $\MM(A)$ (recall Remark~\ref{WHAT}), the Spectral Permanence Theorem says that the spectrum of $a$ inside every unitization is the same as the spectrum of $a$ inside $\MM(A)$. So Definition~\ref{L} is independent of choice of unitization. It can be useful to compute the spectrum of an element in the minimal unitization since all elements of $A^+$ are quite tractable.
\end{remark}
By considering non-unital algebras $A$ embedded inside their minimal unitizations where we have a well defined spectrum, the usual continuous functional calculus holds. That is, for $a\in A$ there is a $*$-isomorphism between $C(\sigma(a)_{A^+})$ and $C^*(a,1)$. The algebra $C^*(a,1)$ is not however contained in the original algebra $A$, so we may ask if there is a restriction of this isomorphism onto the $C^*$-algebra $C^*(a)$. Since $A$ is an ideal in $A^+$, every element of $A$ is not invertible in $A^+$ and so contains 0 in the spectrum. The pre-image of $C^*(a)$ under the functional calculus consists of continuous functions of the spectrum spanned by polynomials without constant term. Every function of this form vanishes at zero, so we determine for $a\in A$ that functional calculus restricts to an isomorphism of $\{f\in C(\sigma(a)):f(0)=0\}$ onto $C^*(a)$. In particular the function $x^{\alpha}$ vanishes at zero for all $\alpha>0$ and so for self-adjoint $a\in A$, $a^{\alpha}$ is well defined and is an element of $A$ rather than an element of $A^+$.

\chapter{Special $C^*$-algebras}
\label{Special Funky C^*-algebras}

\section{Clifford algebras}\label{Cliff}
Essential in the study of $K$-theory are Clifford algebras. In this section we carefully construct and analyse these fascinating algebras.
\begin{definition}
	Let $V$ be a finite dimensional real vector space with an inner-product $\IP{\cdot,\cdot}$.
	Let $T(V)\subset \oplus_{n=0}^{\infty}V^{\otimes n}$ be the span of all finite sums of elements $v_1\otimes\dots\otimes v_n$ with multiplication given by $a\cdot b=a\otimes b$ and declaring that this multiplication is distributive. We make the convention that $V^0=\R$ and that multiplication by elements in $V^0$ is scalar multiplication. We may define an inner-product on tensor powers by $\IP{a\otimes b,c\otimes d}\coloneqq \IP{a,c}\IP{b,d}$ which then lifts to an inner-product on $T(V)$ by declaring $\IP{a,b}=0$ when $a,b$ are in differing tensor powers. Let $J(V)\subset T(V)$ be the ideal generated by elements in of the form $v\cdot v+\IP{v,v}1$. Then the Clifford algebra $\Cl(V)$ defined as the quotient space $T(V)/J(V)$.
\end{definition}
This definition leaves us with a vector space having a natural multiplication satisfying the fundamental relation $v\cdot v=-\IP{v,v}1$. By applying this to $\IP{v+w,v+w}$ we learn that $v\cdot w+w\cdot v=-2\IP{v,w}$. We have a natural inclusion $V\hookrightarrow \text{Cl}(V)$ by including $v\in V$ into $T(V)$ and composing with the quotient map. Thus we may make sense of the expression $v\cdot w$ in the Clifford algebra for elements $v,w\in V$. A basis of $\Cl(V)$ is then given by
\begin{equation*}
\{v_{i_1}\cdot v_{i_2}\cdot ... \cdot v_{i_k}:1\leq i_1<\dots<i_k\leq \dim(V)\}\cup \{1\}
\end{equation*}
where $\{v_i\}$ is a basis for $V$ and $1\in\R$. We usually abbreviate  $v_1\cdot v_2$ to just $v_1v_2$.\\
We will be interested in the algebras $\Cliff_n$. The algebra Cliff$(V)$ is a real algebra for $V$ a real vector space, so one may think that to form complex Clifford algebras we take $V$ to be a complex vector space. It turns out however that there is a more natural definition for complex Clifford algebras as
\begin{equation*}
\Cliff(V)=\text{Cliff}(V)\otimes_{\R}\C.
\end{equation*}
We write $\otimes_{\R}$ to denote that we are considering Cliff$(V)$ and $\C$ real vector spaces and taking the tensor product with coefficients in $\R$.
Note that since Cliff$(V)$ is a real vector space, we are not able to make sense of the tensor product with coefficients in $\C$. We are however able to make sense of $\Cliff(V)$ as a complex vector space with this definition by defining for $y,z\in \C$ and $v\in V$
\begin{equation*}
z(v\otimes y)=v\otimes zy.
\end{equation*}
Often we will be concerned with the case when $V=\R^n$ with the usual dot product. We abbreviate Cliff$(\R^n)$ and $\Cliff(\R^n)$ to just Cliff$_n$ and $\Cliff_n$. Some interesting properties of Cliff$_n$ and $\Cliff_n$ are their `periodicity'. For the real case we have that 
\begin{equation*}
\text{Cliff}_{n+8}\cong \text{Cliff}_n\otimes\text{Cliff}_8\cong \text{Cliff}_n\otimes M_{16}(\R) 
\end{equation*}
and for complex algebras
\begin{equation}\label{Cliff tensor 2 period}
\Cliff_{n+2}\cong \Cliff_n\otimes \Cliff_2\cong \Cliff_n\otimes M_2(\C).
\end{equation}
A proof for these facts can be obtained from \cite[Theorem 8.11]{AlgTop}. These facts reflect `Bott periodicity' in K-theory. 

Clifford algebras are also naturally graded: The map $-1\in $End$(V)$ preserves the ideal $J$ because
\begin{equation*}
(-v)\otimes (-v)+\IP{-v,-v}=v\otimes v+\IP{v,v}
\end{equation*}
and so we may define a map $\widetilde{-1}$ on Cliff$(V)$ by
\begin{equation*}
\widetilde{-1}(e_{i_1}\dots e_{i_k})=(-1e_{i_1})\dots(-1e_{i_k}),\quad \widetilde{-1}(1)=1.
\end{equation*}
and extending linearly. Clearly this map is a self inverse automorphism and so defines a grading on Cliff$(V)$ where the even components are products and sums of even numbers of basis vectors of $V$ and the odd component being products and sums of odd numbers of basis vectors of $V$. One can check that this map is independent of basis chosen. We may extend $\widetilde{-1}$ to a grading of $\Cliff(V)$ by giving $\C$ the identity grading. That is,
\begin{equation*}
\widetilde{-1}(v\otimes z)=\widetilde{v}\otimes z.
\end{equation*}

We can define an adjoint on $\Cliff_n$ making it into a $*$-algebra. We define for $\{e_i\}$ an orthonormal basis of $\R^n$
\begin{equation*}
(e_{i_1}\dots e_{i_k}\otimes z)^*\coloneqq (-1)^{k}e_{i_k}\dots e_{i_1}\otimes \overline{z},\quad (1\otimes z)^*=1\otimes \overline{z}
\end{equation*}
and extend this adjoint by linearity. Clearly this definition gives a self-inverse conjugate linear mapping so we do indeed have an adjoint. It can be shown that there is a unique norm on $\Cliff_n$ under which it is a $C^*$-algebra. Defining this norm in terms of our above constructions is surprisingly difficult. We will show that $\Cliff_n$ is isomorphic to a direct sum of matrix $C^*$-algebras, and if required we may define the norm on $\Cliff_n$ as the pull-back metric under this isomorphism.

\subsection{Real Clifford algebras}

Okay, lets jump in and get a solid handle on what these Clifford algebras really are.
\begin{example}
	The first real Clifford algebra Cliff$_1$ is just $\C$. An orthonormal basis of $\R$ is $\{e\}$ where $e=1$. We are calling this basis vector $e$ to differentiate it from the scalar 1. The vectors $1$ and $e$ then are generators for Cliff$_1$ and satisfy the relations $1^2=1$ and $e^2=-\IP{e,e}=-1$. We then have an algebra isomorphism between Cliff$_1$ and $\C$ by mapping $1\in$Cliff$_1$ to $1\in\C$ and $e$ to $i\in\C$. 
\end{example}
\begin{example}
	Taking $\{e_1,e_2\}$ to be an orthonormal basis of $\R^2$ we obtain the generators
	\begin{equation*}
	1\qquad e_1\qquad e_2\qquad e_1e_2
	\end{equation*}
	of Cliff$_2$. They satisfy the relations
	\begin{align*}
	e_1^2&=-\IP{e_1,e_1}=-1\\
	e_2^2&=-\IP{e_2,e_2}=-1\\
	e_1e_2+e_2e_1&=-2\IP{e_1,e_2}=0\implies e_1e_2=-e_2e_1\\
	(e_1e_2)^2&=e_1e_2e_1e_2=-e_1^2e_2^2=-1\\
	(e_1e_2)e_2&=-e_1^2e_2=e_2\\
	(e_1e_2)e_1&=-e_1\\
	\end{align*}
	Which are exactly the generating relations of the quarternions. Thus by mapping
	\begin{align*}
	\text{Cliff}_2\ni 1&\mapsto 1\in\h\\
	e_1&\mapsto i\\
	e_2&\mapsto j\\
	e_1e_2&\mapsto k\\
	\end{align*}
	we obtain an algebra isomorphism of Cliff$_2$ into $\h$. 
\end{example}

\subsection{Complex Clifford algebras}

\begin{example}
	The complex Clifford algebras $\Cliff_1$ is isomorphic to $\C\oplus\C$ via the map 
	\begin{equation*}
	\phi(a\otimes z+be\otimes w)=(za+iwb,za-iwb).
	\end{equation*} 
	To see that $\phi$ is injective, suppose that 
	\begin{equation*}
	\phi(a\otimes z + be\otimes w)=\phi(c\otimes u+de\otimes v).
	\end{equation*} 
	Then we obtain the equations
	\begin{align*}
	za+iwb&=uc+ivd\\
	za-iwb&=uc-ivd\\
	\end{align*}
	which gives
	\begin{align*}
	za&=uc\\
	wb&=vd\\
	\end{align*}
	This then gives
	\begin{align*}
	a\otimes z + be\otimes w&=1\otimes az+e\otimes bw\\
	&=1\otimes uc+e\otimes vd\\
	&=c\otimes u+de\otimes v\\
	\end{align*}
	whence $\phi$ is injective. Since $\phi$ is clearly linear and multiplicative, rank nullity says, since we are in finite dimensions, that $\phi$ is surjective and we have an algebra isomorphism. In fact, $\phi$ is $*$-preserving since
	\begin{equation*}
	\phi(((a+be)\otimes z)^*)=\phi((a-be)\otimes z)=(z(a-ib),z(a+ib))=(\phi((a+be)\otimes z))^*.
	\end{equation*}
	Hence we have a $*$-isomorphism. The inverse $\phi^{-1}$ is then given by $\phi^{-1}(z,w)=1\otimes\frac{z+w}{2} + e\otimes\frac{z-w}{2}$. This then induces a grading $\beta$ on $\C\oplus \C$ by
	\begin{align*}
	\beta(z,w)&=\phi\circ\widetilde{-1}\circ\phi^{-1}(z,w)\\
	&=\phi\circ\widetilde{-1}\left(1\otimes\frac{z+w}{2} + e\otimes\frac{z-w}{2}\right)\\
	&=\phi\left(1\otimes\frac{z+w}{2} - e\otimes\frac{z-w}{2}\right)\\
	&=(w,z).\\
	\end{align*}
	
\end{example}
\begin{example}\label{Eg: Cliff 2}
	The complex Clifford algebra $\Cliff_2$ is isomorphic to $M_2(\C)$. We implement this isomorphism explicitly using the `Pauli Matrices'. Let
	\begin{align*}
	M_1&=\begin{pmatrix}
	1&0\\
	0&1\\
	\end{pmatrix}
	&&M_i=\begin{pmatrix}
	0&1\\
	-1&0\\
	\end{pmatrix}\\
	M_j&=\begin{pmatrix}
	0&i\\
	i&0\\
	\end{pmatrix}
	&&M_k=\begin{pmatrix}
	i&0\\
	0&-i\\
	\end{pmatrix}.\\
	\end{align*}
	The assignment of $1,e_1,e_2$ and $e_1e_2$ to $M_1,M_i,M_j$ and $M_k$ then is an isomorphism of Cliff$_2$ into the sub-algebra of matrices of the form $\begin{pmatrix}
	\alpha&\beta\\
	-\overline{\beta}&\overline{\alpha}
	\end{pmatrix}$. 
	When we tensor by $\C$ to form the complex Clifford algebra we lose this neat sub-algebra structure and end up with the entirety of $M_2(\C)$.
	We define the algebra homomorphism $\phi:\Cliff_2\to M_2(\C)$ by
	\begin{equation*}
	\phi(1\otimes z_1+e_1\otimes z_2+e_2\otimes z_3+e_1e_2\otimes z_4)=z_1M_1+z_2M_i+z_3M_j+z_4M_k.
	\end{equation*}
	To see surjectivity, fix
	\begin{equation*}
	A=\begin{pmatrix}
	a&b\\
	c&d\\
	\end{pmatrix}
	\end{equation*}
	in $M_2(\C)$. Then
	\begin{align*}
	\phi(1\otimes\frac{a+d}{2}+e_1e_2\otimes i\frac{d-a}{2}+&e_1\otimes\frac{b-c}{2}+e_2\otimes-i\frac{(b+c)}{2})\\
	&=\frac{1}{2}\begin{pmatrix}
	a+d+a-d&b-c+b+c\\
	c-b+b+c&a+d-a+d\\
	\end{pmatrix}\\
	&=\begin{pmatrix}
	a&b\\
	c&d\\
	\end{pmatrix}.
	\end{align*}
	Hence $\phi$ is surjective. Since we're in finite dimensions, rank nullity then says that $\phi$ is injective whence we have an algebra isomorphism. To see that $\phi$ is also $*$-preserving we compute
	\begin{align*}
	\phi((1\otimes z_1+e_1\otimes z_2+e_2\otimes z_3+e_1e_2\otimes z_4)^*)&=\phi(1\otimes \overline{z_1}-e_1\otimes \overline{z_2}-e_2\otimes \overline{z_3}-e_1e_2\otimes \overline{z_4})\\
	&=\overline{z_1}M_1-\overline{z_2}M_k-\overline{z_3}M_i-\overline{z_4}M_j\\
	&=(z_1M_1+z_2M_k+z_3M_i+z_4M_j)^*\\
	&=\phi(1\otimes z_1+e_1\otimes z_2+e_2\otimes z_3+e_1e_2\otimes z_4)^*\\
	\end{align*}
	so we in fact have a $*$-algebra isomorphism. If $A$ is as above, then the induced grading $\beta$ on $M_2(\C)$ is given by
	\begin{align*}
	\beta(A)&=\phi\widetilde{-1}\phi^{-1}(A)\\
	&=\phi\widetilde{-1}\left(1\otimes\frac{a+d}{2}+e_1e_2\otimes i\frac{d-a}{2}+e_1\otimes\frac{b-c}{2}+e_2\otimes-i\frac{(b+c)}{2}\right)\\
	&=\phi\left(1\otimes\frac{a+d}{2}+e_1e_2\otimes i\frac{d-a}{2}-e_1\otimes\frac{b-c}{2}-e_2\otimes-i\frac{(b+c)}{2}\right)\\
	&=\begin{pmatrix}
	a&-b\\
	-c&d\\
	\end{pmatrix}\\
	\end{align*}
	which is the usual grading of on and off diagonal matrices on $M_2(\C)$. 
\end{example}

\section{The Cuntz-Krieger algebra $C^*(E)$}
In this section we give an outline of the construction of graph algebras following the content of \cite{GraphAlgebras}. Note that our conventions for range and source are chosen to be the opposite to that of \cite{GraphAlgebras}. Given a graph $E$, a Cuntz-Krieger $E$ family is a family of mutually orthogonal projections and partial isometries in a $C^*$-algebra obeying multiplication relations that reflect the structure of $E$. A Cuntz-Krieger family can be thought of as a representation of $E$ in a $C^*$-algebra. The Cuntz-Krieger algebra $C^*(E)$ (often referred to simply as the graph algebra $C^*(E)$) is a $C^*$-algebra generated by a universal Cuntz-Krieger $E$-family. We will be interested in giving graph algebras a grading and computing the graded $K$-theory in Chapter~\ref{Graph algebras}. The (ungraded) $K$-theory of graph algebras is entirely known and quite intuitive: the $K$-groups of $C^*(E)$ are generated elements satisfying analogous relations to the Cuntz-Krieger relations of the Cuntz-Krieger family which generates $C^*(E)$ (see Chapter~\ref{ch: closing remarks}). We also state some results about the $C^*$-correspondence of Example~\ref{Marns} and how it relates to $C^*(E)$.

\begin{definition}
	Let $E$ be a directed graph. A Cuntz-Krieger $E$-family $\{S,P\}$ is consists of a set $P=\{P_v:v\in E^0\}$ of mutually orthogonal projections in a $C^*$-algebra $A$ and a set $S=\{S_e:e\in E^1\}$ of partial isometries in $A$ such that
	\begin{itemize}
		\item[(CK1)]\label{CK1}
			$S^*_eS_e=P_{r(e)}$ for all $e\in E^1$, and
		\item[(CK2)]\label{CK2}
			$P_v=\sum_{s(e)=v}S_eS^*_e$ whenever $0<|s^{-1}(v)|<\infty$.
	\end{itemize}
	We write $C^*(S,P)$ for the $C^*$-algebra generated in $A$ by the projections and isometries $\{S,P\}$. 
\end{definition}
The following Theorem states that there is a universal Cuntz-Krieger $E$-family. The statement is that of \cite[Proposition~1.21]{GraphAlgebras} and the proof uses the existence of an action of $\T$ on the $C^*$-algebra generated by the universal Cuntz-Krieger family, the details of which can be found in \cite[Chapter~2]{GraphAlgebras}.
\begin{theorem}
	For any graph $E$ there is a $C^*$-algebra $C^*(E)$ generated by a Cuntz-Krieger family $\{s,p\}$ such that for every Cuntz-Krieger $E$-family $\{T,Q\}$ there is a homomorphism $\pi_{T,Q}:C^*(E)\to C^*(T,Q)$ satisfying $\pi_{T,Q}(s_v)=T_v$ for every $v\in E^0$ and $\pi_{T,Q}(S_e)=Q_e$ for every $e\in E^1$. We call $C^*(E)$ the \emph{Cuntz-Krieger algebra} of $E$, or often simply the graph algebra of $E$.
\end{theorem}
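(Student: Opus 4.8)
The plan is to realise $C^*(E)$ as the universal $C^*$-algebra of the Cuntz-Krieger relations. There are three ingredients: a concrete non-trivial Cuntz-Krieger $E$-family (so that the universal object does not collapse), a construction of the universal algebra from a free $*$-algebra carrying a supremum seminorm, and a factorisation argument for the universal property.

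First I would exhibit one concrete Cuntz-Krieger $E$-family. The standard model is the representation on $\ell^2$ of the \emph{boundary path space} of $E$, in which each $S_e$ acts on basis vectors by concatenation $\xi_\mu\mapsto\xi_{e\mu}$ (and by $0$ when $e\mu$ is not a path), each $P_v$ is the projection onto the basis vectors indexed by paths based at $v$, and $S_e^*$ strips a leading $e$. One checks directly that $S_e^*S_e=P_{r(e)}$ and that the $P_v$ are mutually orthogonal projections, and the reason one works with the boundary path space rather than the space of all finite paths is precisely that at a vertex $v$ with $0<|s^{-1}(v)|<\infty$ every boundary path based at $v$ must continue through an edge emitted by $v$, which yields $\sum_{s(e)=v}S_eS_e^*=P_v$, i.e.\ (CK2). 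This family has every $P_v\neq0$; the details are in \cite{GraphAlgebras}.

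Next I would build the universal object. Let $F$ be the free complex $*$-algebra on generators $\{s_e:e\in E^1\}\cup\{p_v:v\in E^0\}$, and let $A_0=F/J$, where $J$ is the $*$-ideal generated by $p_v-p_v^*$, $p_v-p_v^2$, $p_vp_w$ for $v\neq w$, $s_e^*s_e-p_{r(e)}$, and $p_v-\sum_{s(e)=v}s_es_e^*$ for each $v$ with $0<|s^{-1}(v)|<\infty$; write $s_e,p_v$ also for the images in $A_0$. A $*$-representation of $A_0$ on a Hilbert space is the same thing as a Cuntz-Krieger $E$-family of operators, and in any such family $\normof{S_e}^2=\normof{S_e^*S_e}=\normof{P_{r(e)}}\leq1$ and $\normof{P_v}\leq1$, so every word in the generators maps to an operator of norm at most $1$; hence for fixed $a\in A_0$ the quantity $\normof{a}_u:=\sup\{\normof{\rho(a)}:\rho\text{ a }*\text{-representation of }A_0\}$ is a finite real number (bounded by the sum of the absolute values of the coefficients of a polynomial lift of $a$), and $\normof{\cdot}_u$ is a $C^*$-seminorm on $A_0$, being a supremum of $C^*$-seminorms. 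Define $C^*(E)$ to be the completion of $A_0/\{a:\normof{a}_u=0\}$ in $\normof{\cdot}_u$, with $\{s,p\}$ the images of the generators. The Cuntz-Krieger relations pass to the quotient and completion, so $\{s,p\}$ is a Cuntz-Krieger $E$-family generating $C^*(E)$, and applying $\normof{\cdot}_u$ to the concrete family of the previous paragraph shows $p_v\neq0$, so $C^*(E)\neq0$.

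Finally, for the universal property, let $\{T,Q\}$ be a Cuntz-Krieger $E$-family in a $C^*$-algebra $A$ and represent $C^*(T,Q)$ faithfully on a Hilbert space $\HH$. Sending the generators of $A_0$ to $T_e$ and $Q_v$ gives a $*$-representation $\rho\colon A_0\to\BB(\HH)$, and by definition of $\normof{\cdot}_u$ we have $\normof{\rho(a)}\leq\normof{a}_u$ for all $a\in A_0$; hence $\rho$ descends to $A_0/\{a:\normof{a}_u=0\}$ and extends by continuity to a homomorphism $\pi_{T,Q}\colon C^*(E)\to\BB(\HH)$ with $\pi_{T,Q}(s_e)=T_e$ and $\pi_{T,Q}(p_v)=Q_v$, whose range is $C^*(T,Q)$. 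I expect the only genuinely delicate point to be the first step: verifying that a concrete family on the boundary path space really does satisfy (CK2), since this is exactly where the side-condition $0<|s^{-1}(v)|<\infty$ and the choice of the boundary path space interact, and hence that the universal construction is non-trivial. The set-theoretic flavour of the supremum defining $\normof{\cdot}_u$ is harmless, as that supremum is taken over a bounded set of real numbers.
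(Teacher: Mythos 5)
Your construction is correct. Note first that the thesis does not actually prove this theorem: it is stated as \cite[Proposition~1.21]{GraphAlgebras} with the proof deferred to that reference, so there is no in-paper argument to compare against. Your route --- a nontrivial concrete Cuntz--Krieger family on the boundary path space, the free $*$-algebra modulo the relation ideal, the supremum $C^*$-seminorm (finite because (CK1) forces $\normof{S_e}\leq 1$ and $\normof{P_v}\leq 1$ in every representation), and factorisation of an arbitrary family's representation through the completion --- is the standard universal-$C^*$-algebra construction, and is essentially what the cited reference does; Raeburn merely implements the supremum as a direct sum over a suitably large set of representative families on Hilbert spaces rather than as a seminorm on the free algebra. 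Two remarks. First, the surrounding text of the thesis says the proof ``uses the existence of an action of $\T$''; in fact the gauge action is a \emph{consequence} of the universal property (apply it to the family $\{zT_e,Q_v\}$ for $z\in\T$) and is what drives the uniqueness theorems of \cite[Chapter~2]{GraphAlgebras}, not the existence statement, so your proof is right not to need it. Second, you correctly isolate the two points that actually require checking: that (CK1) alone forces each $s_e$ to be a partial isometry (so no extra relation is needed in the ideal $J$), and that (CK2) holds for the boundary-path-space family precisely at vertices with $0<|s^{-1}(v)|<\infty$, which is what makes the universal algebra nonzero and each $p_v\neq 0$.
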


In \cite[Proposition~12]{CuntzKreigerIsCuntzPimsner} it is shown that if $E$ has no sinks then $C^*(E)$ is isomorphic to the $C^*$-algebra $\OO_{X}$ where $X$ is the graph correspondence of Example~\ref{Marns} and $\OO_X$ is the Cuntz-Pimsner algebra of $X$ (see Definition~\ref{def: Cuntz-Pimsner}). In Chapter~\ref{The Exact Sequence!} we construct a method of computing the graded $K$-theory of $\OO_X$. In particular when $X$ is the graph correspondence of $E$, this will enable us to compute the graded $K$-theory of $C^*(E)$. We devote Chapter~\ref{Graph algebras} to making this computation.

\begin{definition}\label{def: graph lingo}
	Let $E$ be a directed graph. We say that a vertex $v\in V$ is a \emph{source} if $|r^{-1}(v)|=0$, and we say that $v$ is a \emph{sink} if $|s^{-1}(v)|=0$. We say that $v$ is an \emph{infinite emitter} if $|s^{-1}(v)|=\infty$ and we say that $E$ is \emph{row-finite} if $E^0$ contains no infinite emitters.
\end{definition}

It can be shown that algebraic properties of the graph correspondence $X$ correspond directly to the properties defined in Definition~\ref{def: graph lingo} of the graph $E$. The following table of such properties is taken from \cite[Chapter~3]{AddingTails}:
\begin{table}[h!]
\begin{center}
	\begin{tabular}{c|ccccc}\label{Table}
	\textbf{Property of $X$}	& \textbf{Property of $E$}\\  
	\hline
		$\phi(\delta_v)\in\KK(X)$& $v$ emits a finite number of edges\\  
		$\phi(C_0(E^0))\subseteq\KK(X)$& $E$ is row-finite\\  
		$\phi$ is injective & $E$ has no sinks\\
		$X$ is full & $E$ has no sources\\
		$\phi$ is non-degenerate & True of all graphs\\  
	\end{tabular}
\end{center}
\end{table}

The following Lemma is a proof of the first two entries of this table.
\begin{lemma}\label{Graph compact left action}
	Let $(E^0,E^1,r,s)$ be a directed graph with discrete topology, and let $X$ be the $C_0(E^0)$--$C_0(E^0)$ correspondence of Example~\ref{Marns}. For $f\in C_0(E^0)$, the operator $\phi(f)\in\LL(X)$ is compact if and only if $f(v)=0$ whenever $|s^{-1}(v)|=\infty$. That is, if $F$ is the set of all vertices that emit finitely many edges, then
	\begin{equation*}
	\phi^{-1}(\KK(X))=C_0(F).
	\end{equation*}
\end{lemma}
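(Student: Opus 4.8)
The plan is to characterize when $\phi(f)$ is compact by first handling the point masses $\delta_v$ and then using continuity and density to pass to general $f \in C_0(E^0)$. The key object is the graph correspondence $X$, which is the completion of $C_c(E^1)$; a natural candidate for a compact operator associated to the vertex $v$ is $\Theta_{\delta_e,\delta_e}$ for edges $e$ with $s(e)=v$. First I would compute, for an edge $e\in E^1$, that $\Theta_{\delta_e,\delta_e}(\delta_f) = \delta_e\IP{\delta_e,\delta_f}$, which is $\delta_e$ when $f=e$ and $0$ otherwise, so $\Theta_{\delta_e,\delta_e}$ is the rank-one projection onto the span of $\delta_e$. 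Summing over $e\in s^{-1}(v)$, when this set is finite, gives exactly $\phi(\delta_v)$: indeed $(\phi(\delta_v)f)(e) = \delta_v(s(e))f(e)$ picks out those $e$ with $s(e)=v$, which matches $\sum_{e\in s^{-1}(v)}\Theta_{\delta_e,\delta_e}$. Hence $\phi(\delta_v)\in\KK(X)$ whenever $|s^{-1}(v)|<\infty$.

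Next I would treat the converse: if $|s^{-1}(v)|=\infty$, then $\phi(\delta_v)$ is not compact. Here the idea is that $\phi(\delta_v)$ acts as the identity on the closed submodule spanned by $\{\delta_e : s(e)=v\}$, which is infinite-dimensional (in the appropriate Hilbert-module sense), so $\phi(\delta_v)$ cannot be approximated in operator norm by finite-rank operators. Concretely, suppose $K = \sum_{i=1}^n \Theta_{x_i,y_i}$ is finite rank with $\normof{\phi(\delta_v) - K} < 1/2$. Each $x_i$ lies within $\ep$ of a finitely-supported function, so all the $x_i$ together are "supported" (up to $\ep$) on a finite set of edges $E_0\subseteq E^1$; pick an edge $e_0$ with $s(e_0)=v$ and $e_0\notin E_0$. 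Then $\phi(\delta_v)\delta_{e_0} = \delta_{e_0}$ while $\normof{K\delta_{e_0}}$ is small, contradicting $\normof{(\phi(\delta_v)-K)\delta_{e_0}} < 1/2$ since $\normof{\delta_{e_0}} = 1$. This shows $\phi(\delta_v)\notin\KK(X)$.

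For a general $f\in C_0(E^0)$, I would argue as follows. If $f\in C_0(F)$ where $F = \{v : |s^{-1}(v)|<\infty\}$, then $f$ is a norm-limit of finitely-supported functions on $F$, each of which is a finite linear combination of point masses $\delta_v$ with $v\in F$; by the first paragraph each $\phi(\delta_v)$ is compact, so $\phi(f)$ is a norm-limit of compacts, hence compact (using that $\phi$ is a contractive homomorphism and $\KK(X)$ is closed). Conversely, if $f(v)\neq 0$ for some $v$ with $|s^{-1}(v)|=\infty$, I would show $\phi(f)$ is not compact: since $\phi(\delta_v)\phi(f) = f(v)\phi(\delta_v)$ and $\KK(X)$ is an ideal in $\LL(X)$, compactness of $\phi(f)$ would force $f(v)\phi(\delta_v)\in\KK(X)$, hence $\phi(\delta_v)\in\KK(X)$, contradicting the second paragraph. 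Combining the two directions gives $\phi^{-1}(\KK(X)) = C_0(F)$.

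The main obstacle I anticipate is making the non-compactness argument in the second paragraph fully rigorous in the Hilbert-module setting rather than leaning on Hilbert-space intuition: one must be careful that "approximation by finite-rank operators" genuinely fails, which requires the observation that finitely-supported elements are norm-dense in $X$ and that testing against a single unit vector $\delta_{e_0}$ orthogonal to the (finitely many) generators of $K$ suffices. Everything else is a routine unwinding of the definitions of the graph correspondence, the rank-one operators $\Theta_{x,y}$, and the ideal property of $\KK(X)$ established earlier.
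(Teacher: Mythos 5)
Your proof is correct and follows essentially the same route as the paper: both reduce to the point masses $\delta_v$, realise $\phi(\delta_v)$ as the finite sum $\sum_{e\in s^{-1}(v)}\Theta_{\delta_e,\delta_e}$ when $v$ emits finitely many edges, and defeat any finite-rank approximant of $\phi(\delta_v)$ by testing against $\delta_{e_0}$ for an edge $e_0$ with $s(e_0)=v$ lying outside the finite set of edges on which the approximant is essentially supported (the paper handles the reduction from general $f$ to point masses by observing that $\phi^{-1}(\KK(X))$ is an ideal of $C_0(E^0)$ and hence equals $C_0(F)$ for some $F\subseteq E^0$, whereas you argue both inclusions directly via density and the ideal property of $\KK(X)$ --- an immaterial difference). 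One small correction to your non-compactness step: since $K\delta_{e_0}=\sum_i x_i\IP{y_i,\delta_{e_0}}$, the quantity you must control is $\IP{y_i,\delta_{e_0}}$, so the finite edge set $E_0$ should be chosen so that the $y_i$ (not only the $x_i$) are $\ep$-close to functions supported on $E_0$; with that adjustment $\normof{K\delta_{e_0}}\leq \ep\sum_i\normof{x_i}$ and the contradiction goes through.
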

\begin{proof}
	This wonderful proof is taken from \cite[Proposition~4.4]{FowlerRaeburn}.\\
	The set $I=\phi^{-1}(\KK(X))\subseteq C_0(E^0)$ is an ideal because $\KK(X)$ is an ideal and if $q:\LL(X)\to \LL(X)/\KK(X)$ is the quotient homomorphism then $I$ is the kernel of the homomorphism $q\circ \phi$. We then know that $I$ is of the form
	\begin{equation*}
	I=\{f\in C_0(E^0):f(v)=0 \  \forall \ v\notin F\}=\overline{\Span}\{\delta_v:v\in F\}
	\end{equation*}
	for some subset $F\subset E^0$. So it suffices to show that $\delta_v\in I$ if and only if $v$ emits finitely many edges. If $v$ does emit finitely many edges then
	\begin{equation*}
	\phi(\delta_v)=\sum_{f\in E^1v}\Theta_{\delta_f,\delta_f}
	\end{equation*}
	so $\delta_v\in I$. Now suppose then that $v$ emits infinitely many edges. Since $\Span \{\delta_f\}$ is dense in $X$ and $(x,y)\mapsto \Theta_{x,y}$ is continuous, we may approximate any compact operator on $X$ by a finite linear combination of the form $K=\sum_{e,f\in F}c_{e,f}\Theta_{\delta_e,\delta_f}$. Since $v$ emits infinitely many edges, there exists an edge $g\notin F$ such that $r(g)=v$. So then \begin{equation*}
	\sum_{e,f\in F}c_{e,f}\Theta_{\delta_e,\delta_f}(\delta_g)=\delta_e\IP{\delta_f,\delta_g}=0
	\end{equation*}
	for all $e,f\in F$. Thus
	\begin{align*}
	\normof{\phi(\delta_v)-K}&=\sup_{\normof{x}\leq 1}\normof{(\phi(\delta_v)-K)x}\\
	&\geq \normof{\phi(\delta_v)(\delta_g)-K(\delta_g)}\\
	&=\normof{\delta_g-0}=1,
	\end{align*}
	and hence $\phi(\delta_v)$ is not compact.
\end{proof}

\section{The discrete crossed product $A\times_{\alpha}G$}
In this section we provide a thorough construction of the crossed product of a $C^*$-algebra by a discrete group $G$. We will be interested in crossed products by the group $\Z$, so we restrict our attention to discrete groups for which the construction simplifies considerably. This section follows the construction in \cite[Chapter 2]{Crossed and unitizations}. We start with some definitions. Note that the following definitions can be made for non-discrete groups, with some added topological constraints. 
\begin{definition}\label{denizen}
	Let $G$ be a discrete group and $A$ be a $C^*$-algebra. An action of $G$ on $A$ is a group homomorphism
	\begin{equation*}
	\alpha: G\to \text{Aut}(A),\qquad g\mapsto \alpha_g.
	\end{equation*}
	A $C^*$-dynamical system $(A,G,\alpha)$ is a $C^*$-algebra $A$, a discrete group $G$ and an action $\alpha$ of $G$ on $A$.
\end{definition}
\begin{example}\label{mankey}
	If $\alpha$ is an automorphism of a $C^*$-algebra $A$ then we may define an action of $\Z$ on $A$ by $\alpha_n(a)=\alpha^n(a)$ with the convention that $\alpha^0$ is the identity map. Clearly we have $\alpha^n(\alpha^m(a))=\alpha^{n+m}(a)$ so $n\mapsto \alpha^n$ is a homomorphism. Conversely, if $\alpha_n$ is an action of $\Z$ on $A$ then $\alpha_1$ is an automorphism of $A$ which generates the action, so every action of $\Z$ arises this way.
\end{example}
\begin{definition}
	Let $(A,G,\alpha)$ be a $C^*$-dynamical system and let $B$ be a $C^*$-algebra. A covariant homomorphism from $(A,G,\alpha)$ to $B$ is a pair of maps $(\pi,u)$ where $\pi:A\to B$ is a homomorphism of $C^*$-algebras, and $u:G\to \UU\MM(A)$ is a group homomorphism such that
	\begin{equation*}
	u_s\pi(a)u_s^*=\pi(\alpha_s(a))
	\end{equation*}
	for all $s\in G$ and $a\in A$. We say that $(\pi,u)$ is non-degenerate if $\pi$ is non-degenerate when viewed as a map into $\LL(B_B)$.
\end{definition}

Now we begin constructing the crossed product $A\times_{\alpha} G$. Explicitly, $A\times_{\alpha}G$ is the completion of $C_c(G,A)$ with multiplication given by convolution under a norm that we construct out of the collection of all covariant homomorphisms.
The crossed product is by definition the $C^*$-algebra generated by a distinguished universal covariant homomorphism of $(A,G,\alpha)$. We will make this precise below. We first need to show that $C_c(G,A)$ is a $*$-algebra.
\begin{proposition}\label{Bub}
	Let $(A,G,\alpha)$ be a $C^*$-dynamical system and suppose that $G$ is discrete. Then the vector space $C_c(G,A)$ of finitely supported functions from $G$ into $A$ is a complex $*$-algebra under the operations
	\begin{equation*}
	(f*g)(s)=\sum_{t\in G}f(t)\alpha_t(g(t^{-1}s))\qquad\text{ and }\qquad f^*(s)=\alpha_s(f(s^{-1}))^*.
	\end{equation*}
	If $(\pi,u)$ is a covariant homomorphism from $(A,G,\alpha)$ to $B$ then the map
	\begin{equation*}
	i_{\pi,u}:C_c(G,A)\to \MM(B)
	\end{equation*}
	given by $i_{\pi,u}(f)=\sum_{s\in G}\pi(f(s))u_s$ is a homomorphism.
\end{proposition}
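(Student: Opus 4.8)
The plan is to verify directly that the stated operations make $C_c(G,A)$ into a $*$-algebra, and then check that $i_{\pi,u}$ respects multiplication and involution. Since $G$ is discrete, every $f \in C_c(G,A)$ is a finite sum $\sum_{t} f(t)\delta_t$ (writing $a\delta_t$ for the function supported at $t$ with value $a$), and all the sums appearing in the formulas are finite, so no convergence issues arise and the whole argument is algebraic bookkeeping.

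First I would treat the $*$-algebra structure. Bilinearity of $*$ in the convolution product is immediate from the formula, so it suffices to verify associativity, the involution identities $(f*g)^* = g^* * f^*$ and $f^{**} = f$, and conjugate-linearity of $f \mapsto f^*$. The cleanest way is to reduce everything to the generators: on elements $a\delta_s$ and $b\delta_t$ the formulas give $(a\delta_s)*(b\delta_t) = a\,\alpha_s(b)\,\delta_{st}$ and $(a\delta_s)^* = \alpha_{s^{-1}}(a^*)\,\delta_{s^{-1}}$. Then associativity follows from $\alpha$ being a homomorphism into $\operatorname{Aut}(A)$: both $\big((a\delta_s)*(b\delta_t)\big)*(c\delta_u)$ and $(a\delta_s)*\big((b\delta_t)*(c\delta_u)\big)$ compute to $a\,\alpha_s(b)\,\alpha_{st}(c)\,\delta_{stu}$. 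Similarly $\big((a\delta_s)*(b\delta_t)\big)^* = \alpha_{(st)^{-1}}\big(\alpha_s(b)^* a^*\big)\delta_{(st)^{-1}} = \alpha_{t^{-1}s^{-1}}(\alpha_s(b^*))\,\alpha_{t^{-1}s^{-1}}(a^*)\,\delta_{t^{-1}s^{-1}}$, which one checks equals $(b\delta_t)^* * (a\delta_s)^* = \alpha_{t^{-1}}(b^*)\,\alpha_{t^{-1}}\big(\alpha_{s^{-1}}(a^*)\big)\,\delta_{t^{-1}s^{-1}}$, using $\alpha_{t^{-1}}\circ\alpha_{s^{-1}} = \alpha_{t^{-1}s^{-1}}$. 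The identities $f^{**} = f$ and conjugate-linearity are direct from the formula and the fact that each $\alpha_g$ is a $*$-automorphism. I would state the general-element versions as the conclusion, obtained by bilinear/conjugate-linear extension.

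Second, for the map $i_{\pi,u}$: since $u$ takes values in the unitary group of $\mathcal{M}(B)$ and $\pi$ maps into $\mathcal{M}(B)$ (via $B \hookrightarrow \mathcal{M}(B)$), the expression $i_{\pi,u}(f) = \sum_s \pi(f(s)) u_s$ is a well-defined element of $\mathcal{M}(B)$, and $i_{\pi,u}$ is clearly linear. For multiplicativity, compute on generators: $i_{\pi,u}(a\delta_s)\, i_{\pi,u}(b\delta_t) = \pi(a) u_s \pi(b) u_t = \pi(a)\pi(\alpha_s(b)) u_s u_t = \pi(a\,\alpha_s(b)) u_{st} = i_{\pi,u}\big((a\delta_s)*(b\delta_t)\big)$, where the second equality is the covariance relation $u_s \pi(b) u_s^* = \pi(\alpha_s(b))$ and the third uses that $u$ is a group homomorphism; the general case follows by bilinearity. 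For $*$-preservation, $i_{\pi,u}(a\delta_s)^* = u_s^* \pi(a)^* = u_{s^{-1}}\pi(a^*)$; inserting $u_{s^{-1}}u_s = 1$ and using covariance again gives $u_{s^{-1}}\pi(a^*)u_s u_{s^{-1}} = \pi(\alpha_{s^{-1}}(a^*)) u_{s^{-1}} = i_{\pi,u}((a\delta_s)^*)$.

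The computations are entirely routine; the only thing one needs to be careful about is that the multiplicativity of $i_{\pi,u}$ genuinely uses the covariance condition rather than just $\pi$ and $u$ being homomorphisms, so I expect the mild ``obstacle'' (really just the step requiring attention) to be applying $u_s\pi(b)u_s^* = \pi(\alpha_s(b))$ correctly inside the product and matching it with the twist $\alpha_t$ in the convolution formula. I would present the argument by first recording the behavior of all three operations on the generators $a\delta_s$, noting that these span $C_c(G,A)$, and then extending by (conjugate-)linearity, which keeps the write-up short.
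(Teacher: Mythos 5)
Your proposal is correct, and every computation checks out: the formulas $(a\delta_s)*(b\delta_t)=a\,\alpha_s(b)\,\delta_{st}$ and $(a\delta_s)^*=\alpha_{s^{-1}}(a^*)\,\delta_{s^{-1}}$ are right, the associativity and anti-multiplicativity verifications on generators are correct, and the use of covariance in both the multiplicativity and $*$-preservation of $i_{\pi,u}$ is exactly where it is needed. The route differs from the paper's in organization rather than substance: the paper works with arbitrary finitely supported functions and carries out the verifications by re-indexing the convolution sums (substitutions like $k=t^{-1}s$ and $r=k^{-1}t$ appear in the associativity and $(f*g)^*=g^**f^*$ computations), whereas you reduce everything to the point-mass generators $a\delta_s$ and extend by bilinearity or conjugate-bilinearity. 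Your version keeps each identity to a one-line computation and makes it transparent that associativity is just $\alpha_s\circ\alpha_t=\alpha_{st}$ and that multiplicativity of $i_{\pi,u}$ is just the covariance relation; the cost is that each identity needs the small observation that both sides are (conjugate-)bilinear so that checking on a spanning set suffices, which you do flag. The paper's version yields the general formulas directly with no extension step and rehearses the change-of-variables manipulations that reappear later in the construction of $A\times_\alpha G$ (for instance in the analysis of the translation operators $\lt_s$), which is presumably why it is written that way. Either write-up is complete.
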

\begin{proof}
	This proof is due to \cite[Proposition 2.22]{Crossed and unitizations}.
	We begin by checking that convolution is associative and distributive. For $f,g,h\in C_c(G,A)$ we compute
	\begin{align*}
	(f*g)*h(s)&=\sum_{t\in G}(f*g)(t)\alpha_t h((t^{-1}s))\\
	&=\sum_{t,k\in G}f(k)\alpha_k(g(k^{-1}t))\alpha_t(h(t^{-1}s))\\
	&=\sum_{k,r\in G}f(k)\alpha_{k}(g(r)\alpha_r(h(r^{-1}k^{-1}s)))\\
	&=\sum_{r\in G}f(k)\alpha_k(g*h(k^{-1}s))\\
	&=f*(g*h)(s)\\
	\end{align*}
	and so convolution is associative. One sees that
	\begin{align*}
	f*(g+h)(s)&=\sum_{t\in G}f(t)\alpha_t(g(t^{-1}s)+h(t^{-1}s))\\
	&=\sum_{t\in G}f(t)\alpha_t(g(t^{-1}s))+\sum_{t\in G}f(t)\alpha_t(h(t^{-1}s))\\
	&=f*g(s)+f*h(s)\\
	\end{align*}
	and similarly $(f+g)*h=f*h+g*h$, so convolution is also distributive. We have for $\lambda\in\C$ the relations
	\begin{align*}
	(\lambda f)^*(s)&=\alpha_s((\lambda f(s^{-1}))^*)=\overline{\lambda}\alpha_s(f(s^{-1})^*)=\overline{\lambda}f^*(s),\\
	(f^*)^*	(s)&=\alpha_s(f^*(s^{-1})^*)=\alpha_s(\alpha_{s^{-1}}(f(s)^*)^*)=f(s), \text{ and }\\
	(f+g)^*(s)&=\alpha_s((f(s^{-1})+g(s^{-1}))^*)=\alpha_s(f(s^{-1})^*)+\alpha_s(g(s^{-1})^*)=f^*(s)+g^*(s).\\
	\end{align*}
	Moreover we see that
	\begin{align*}
	(f*g)^*(s)&=\alpha_s((f*g)(s^{-1})^*)\\
	&=\left(\alpha_s(\sum_{t\in G}f(t)^*\alpha_t(g(t^{-1}s^{-1})^*)\right)\\
	&=\sum_{k\in G}\alpha_s(f(s^{-1}k)^*)\alpha_k(g(k^{-1})^*)\\
	&=\sum_{k\in G}g^*(k)\alpha_k(f^*(k^{-1}s))\\
	&=g^**f^*(s).\\
	\end{align*}
	So $C_c(G,A)$ is a $*$-algebra.
	Note that we have three `$*$' symbols in play in the above computation: the adjoint of the image of $f$ in $A$, the convolution product of $f$ and $g$ and the adjoint operation on $C_c(G,A)$. 
	Now we show that $i_{\pi,u}$ is a homomorphism. Clearly $i_{\pi,u}$ is linear. For $f,g\in C_c(G,A)$ we calculate:
	\begin{align*}
	i_{\pi,u}(f*g)&=\sum_{s\in G}\pi((f*g)(s))u_s\\
	&=\sum_{s,t\in G}\pi(f(t)\alpha_t(g(t^{-1}s)))u_s\\
	&=\sum_{t,k\in G}\pi(f(t))\pi(\alpha_t(g(k)))u_tu_k\\
	&=\sum_{t,k\in G}\pi(f(t))u_t\pi(g(k))u_k\\
	&=i_{\pi,u}(f)*i_{\pi,u}(g)\\
	\end{align*}
	where we have used that $u_s\pi(f(s))u_s^*=\pi(\alpha_s(f(s))$ to move $u_t$ past $\pi$ in the second last line. Lastly, $i_{\pi,u}$ is $*$-preserving because
	\begin{align*}
	i_{\pi,u}(f^*)&=\sum_{t\in G}\pi(f^*(t))u_t\\
	&=\sum_{t\in G}\pi(\alpha_t(f(t^{-1})^*))u_t\\
	&=\sum_{t\in G}u_t\pi(f(t^{-1}))^*\\
	&=\sum_{t\in G}(\pi(f(t^{-1}))u_{t^{-1}})^*\\
	&=\left(\sum_{s\in G}\pi(f(s))u_s\right)^*\\
	&=i_{\pi,u}(f)^*.\\
	\end{align*}
\end{proof}
\begin{remark}
	We will later extend $i_{\pi,u}$ to a map $\pi\times u:A\times_{\alpha} G\to\MM(B)$. We denote by $i_{\pi,u}$ map defined on $C_c(G,A)$ so that we do not overload our notation between $i_{\pi,u}$ and its extension $\pi\times u$.
\end{remark}
Now that we know that $C_c(G,A)$ is a $*$-algebra, the next thing we will need to know in order to build the crossed product is that `nice' semi-norms descend to proper $C^*$-norms when we quotient by elements of length zero.

\begin{lemma}\label{lemma: C star completion}
	If $A$ is a $*$-algebra and $\rho$ is a semi-norm on $A$ such that $\rho(a^*a)=\rho(a)^2$ and $\rho(ab)\leq\rho(a)\rho(b)$ then
	\begin{equation*}
	I=\{a\in A:\rho(a)=0\}
	\end{equation*}
	is an ideal and the closure of the quotient space $A/I$ is a $C^*$-algebra. We call $\overline{A/I}$ the $C^*$-completion of $A$ with respect to the induced norm $\rho$.
\end{lemma}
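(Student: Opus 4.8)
\begin{sketch}
The plan is to show $I$ is a self-adjoint two-sided ideal, transfer $\rho$ to a genuine $C^*$-norm on the $*$-algebra $A/I$, and then observe that completing a normed $*$-algebra satisfying the $C^*$-identity produces a $C^*$-algebra.

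First I would check that $I$ is an ideal. Since $\rho$ is a semi-norm, $\rho(a+b)\leq\rho(a)+\rho(b)$ and $\rho(\lambda a)=|\lambda|\rho(a)$, so $I$ is a linear subspace; and if $\rho(a)=0$ then for any $b\in A$ we have $\rho(ab)\leq\rho(a)\rho(b)=0$ and $\rho(ba)\leq\rho(b)\rho(a)=0$, so $I$ absorbs products on both sides. To see that $I$ is self-adjoint I would first establish $\rho(a^*)=\rho(a)$ for all $a\in A$: submultiplicativity together with the hypothesis $\rho(a^*a)=\rho(a)^2$ gives $\rho(a)^2=\rho(a^*a)\leq\rho(a^*)\rho(a)$, so $\rho(a)\leq\rho(a^*)$ whenever $\rho(a)\neq0$, and replacing $a$ by $a^*$ gives the reverse inequality; when $\rho(a)=0$ the same identity applied to $a^*$ yields $\rho(a^*)^2=\rho(aa^*)\leq\rho(a)\rho(a^*)=0$. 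Hence $\rho(a^*)=\rho(a)$ in all cases, and in particular $a\in I$ implies $a^*\in I$.

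Next I would define $\normof{a+I}\coloneqq\rho(a)$ on $A/I$. This is well defined because $\rho$ is constant on cosets of $I$: for $i\in I$ we have $\rho(a+i)\leq\rho(a)+\rho(i)=\rho(a)$ and symmetrically $\rho(a)\leq\rho(a+i)$. The descended map is genuinely a norm: homogeneity and the triangle inequality are inherited from $\rho$, and $\normof{a+I}=0$ forces $a\in I$, i.e.\ $a+I=0$. Since $I$ is a self-adjoint ideal, $A/I$ carries an induced $*$-algebra structure, and from $\rho(ab)\leq\rho(a)\rho(b)$, $\rho(a^*a)=\rho(a)^2$ and $\rho(a^*)=\rho(a)$ one reads off that on $A/I$ the norm is submultiplicative, isometric under the involution, and satisfies $\normof{x^*x}=\normof{x}^2$. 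Thus $A/I$ is a normed $*$-algebra obeying the $C^*$-identity.

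Finally I would pass to the completion $\overline{A/I}$. As the completion of a normed vector space it is a Banach space; submultiplicativity makes multiplication jointly continuous (on Cauchy sequences $\normof{x_ny_n-x_my_m}\leq\normof{x_n}\normof{y_n-y_m}+\normof{x_n-x_m}\normof{y_m}$ is controlled), so it extends to $\overline{A/I}$, and the involution is isometric, so it too extends. The algebraic identities (associativity, distributivity, the involution being conjugate-linear, involutive and anti-multiplicative) together with the $C^*$-identity $\normof{x^*x}=\normof{x}^2$ hold on the dense subalgebra $A/I$ and therefore persist on $\overline{A/I}$ by continuity. Hence $\overline{A/I}$ is a $C^*$-algebra. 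The only step that takes any thought is the equality $\rho(a^*)=\rho(a)$; everything else is a routine transfer of properties through a quotient and a completion.
\end{sketch}
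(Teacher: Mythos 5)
Your proposal is correct and follows essentially the same route as the paper's proof: show $I$ is a self-adjoint two-sided ideal using submultiplicativity and the identity $\rho(a^*)=\rho(a)$, descend $\rho$ to a genuine $C^*$-norm on $A/I$, and complete. The only differences are cosmetic — you handle the $\rho(a)=0$ case of $\rho(a^*)=\rho(a)$ explicitly and spell out the extension of the operations to the completion, which the paper leaves implicit.
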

\begin{proof}
	The following is an expansion of \cite[20.2]{Marco}.
	First we show that $I$ is an ideal. If $a\in I$ then
	\begin{equation*}
	\rho(ab)\leq \rho(a)\rho(b)=0
	\end{equation*}
	and so $\rho(ab)=0$. Thus $ab\in I$ making $I$ a right ideal. Repeating the argument for $ba$ shows that $I$ is in fact a two sided ideal. Since $\rho(a^*a)=\rho(a)^2$ we have
	\begin{equation*}
	\rho(a)^2=\rho(a^*a)\leq \rho(a^*)\rho(a)
	\end{equation*}
	and so $\rho(a)\leq \rho(a^*)$. Applying this now to $a^*$ gives $\rho(a^*)\leq \rho(a)$ and so $\rho(a^*)=\rho(a)$. So if $a\in I$, then
	\begin{equation*}
	\rho(a^*)=\rho(a)=0
	\end{equation*}
	and so $I$ is $*$-closed. The ideal $I$ is closed because if $a=\lim_{n\to\infty}a_n$ is a limit of elements of $I$, then by continuity of semi-norms
	\begin{equation*}
	\rho(a)=\rho(\lim_{n\to\infty}a_n)=\lim_{n\to\infty}\rho(a_n)=0.
	\end{equation*}
	So $I$ is a two sided closed $*$-ideal and the quotient space $A/I$ is then a $*$-algebra. We define a norm on $A/I$ by $\normof{a+I}=\rho(a)$. To see that this is a well defined norm, suppose that $a=b+i$ for some $i\in I$. Then
	\begin{equation*}
	\rho(a)=\rho(b+i)\leq \rho(b)+\rho(i)=\rho(b).
	\end{equation*}
	Similarly
	\begin{equation*}
	\rho(b)=\rho(a-i)\leq \rho(a)+\rho(i)=\rho(a)
	\end{equation*}
	and so $\rho(a)=\rho(b)$. Hence $\normof{a+I}=\rho(a)$ is well defined. If $\normof{a+I}=0$ then $\rho(a)=0$ and so $a\in I$ which means that $a$ represents zero in $A/I$.
	The relations
	\begin{align*}
	\normof{a+b+I}&=\rho(a+b)\leq\rho(a)+\rho(b)=\normof{a+I}+\normof{b+I}\\
	\normof{\lambda a+I}&=\rho(\lambda a)=|\lambda|\rho(a)=|\lambda|\normof{a+I}\\
	\normof{ab+I}&=\rho(ab)\leq\rho(a)\rho(b)=\normof{a+I}\normof{b+I}\\
	\normof{a^*a+I}&=\rho(a^*a)=\rho(a)^2=\normof{a+I}^2\\
	\end{align*}
	then show that $\normof{\cdot}$ is a $C^*$-norm. Hence $\overline{A/I}$ is a $C^*$-algebra.
\end{proof}

Finally we are ready to construct the crossed product $A\times_{\alpha}G$ of $A$ by $G$.

\begin{theorem}\label{Profit}
	Let $G$ be a discrete group, and let $(A,G,\alpha)$ be a $C^*$-dynamical system. There exists a $C^*$-algebra $A\times_{\alpha}G$ called the crossed product of $A$ by $G$ and a non-degenerate covariant homomorphism $(i_A,i_G)$ from $(A,G,\alpha)$ to $\MM(A\times_{\alpha}G)$ such that
	\begin{enumerate}
		\item 
		$A\times_{\alpha}G=\overline{\Span}\{i_A(a)i_G(g):a\in A,g\in G\}$, and
		\item 
		if $(\pi,u)$ is a non-degenerate covariant homomorphism from $(A,G,\alpha)$ to $B$, then there is a homomorphism $(\pi\times u):A\times_{\alpha}G\to B$ whose extension to $\MM(A\times_{\alpha}G)$ in the sense of Lemma~\ref{REF} satisfies $(\pi\times u)\circ i_A=\pi$ and $(\pi\times u)\circ i_G=u$.
	\end{enumerate}
	The pair $(A\times_{\alpha}G,(i_A,i_G))$ is unique up to isomorphism.
\end{theorem}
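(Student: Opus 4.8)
The plan is to realise $A\times_{\alpha}G$ as a universal $C^*$-completion of $C_c(G,A)$ and then to build $i_A$ and $i_G$ out of left convolution operators. First I would, for $f\in C_c(G,A)$, set
\[
\|f\|_u=\sup\bigl\{\|i_{\pi,u}(f)\|_{\MM(B)}:(\pi,u)\text{ a covariant homomorphism of }(A,G,\alpha)\text{ into some }B\bigr\}.
\]
Because $u_s$ is unitary and $\pi$ is contractive, $\|i_{\pi,u}(f)\|\le\sum_{s\in\operatorname{supp}f}\|\pi(f(s))\|\le\sum_{s\in\operatorname{supp}f}\|f(s)\|$, so the supremum is a finite real number; the apparent set-theoretic difficulty is harmless (one takes a sup of a bounded set of reals, and may restrict to covariant representations on Hilbert spaces of density character at most that of $C_c(G,A)$). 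By Proposition~\ref{Bub} each $i_{\pi,u}$ is a $*$-homomorphism, so $\|\cdot\|_u$ is a $C^*$-seminorm on the $*$-algebra $C_c(G,A)$, and Lemma~\ref{lemma: C star completion} lets me define $A\times_{\alpha}G$ to be the $C^*$-completion $\overline{C_c(G,A)/I}$ with $I=\{f:\|f\|_u=0\}$, viewing $C_c(G,A)$ as a dense subspace.

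Next I would construct $(i_A,i_G)$. For $a\in A$, the left-convolution map $f\mapsto(s\mapsto af(s))$ satisfies $i_{\pi,u}(\delta^a_e\ast f)=\pi(a)\,i_{\pi,u}(f)$, hence is $\|\cdot\|_u$-bounded by $\|a\|$ with formal adjoint given by $a^*$; by Theorem~\ref{Sang} it extends to an element $i_A(a)\in\LL\bigl((A\times_{\alpha}G)_{A\times_{\alpha}G}\bigr)=\MM(A\times_{\alpha}G)$, and $a\mapsto i_A(a)$ is a homomorphism. Likewise $f\mapsto(s\mapsto\alpha_g(f(g^{-1}s)))$ has $i_{\pi,u}$-image $u_g\,i_{\pi,u}(f)$, so it extends to a unitary multiplier $i_G(g)$, and $g\mapsto i_G(g)$ is a group homomorphism. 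The covariance relation $i_G(g)i_A(a)i_G(g)^*=i_A(\alpha_g(a))$ follows by applying $i_{\pi,u}$, using $u_g\pi(a)u_g^*=\pi(\alpha_g(a))$, and invoking density of $C_c(G,A)$. A direct computation shows $i_A(a)i_G(g)$, seen inside $A\times_{\alpha}G$, is exactly $\delta^a_g$; since products $ab$ are dense in $A$, the span of the $\delta^{ab}_g$ is dense, which gives part~(1) and also non-degeneracy of $i_A$ in the sense of Remark~\ref{Sam}.

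For the universal property, given a non-degenerate covariant $(\pi,u)$ into $B$, Proposition~\ref{Bub} supplies the $*$-homomorphism $i_{\pi,u}:C_c(G,A)\to\MM(B)$, whose image lies in $B$ because $\pi(f(s))\in B\triangleleft\MM(B)$ and $u_s\in\MM(B)$; it is $\|\cdot\|_u$-contractive by construction, so it descends and extends to a $*$-homomorphism $\pi\times u:A\times_{\alpha}G\to B$. One checks $\pi\times u$ is non-degenerate (its image contains $\pi(a)u_e=\pi(a)$ and $\overline{\pi(A)B}=B$), so Proposition~\ref{REF} extends it to $\MM(A\times_{\alpha}G)$; evaluating this extension on $i_A(a)$ through the strictly convergent net $i_A(ae_\lambda)=i_A(a)i_A(e_\lambda)$, for $\{e_\lambda\}$ an approximate identity of $A$, returns $\pi(a)$, and the analogous computation on $i_G(g)$ returns $u_g$. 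Uniqueness is then the usual argument: a second pair $(C,(j_A,j_G))$ produces homomorphisms $j_A\times j_G:A\times_{\alpha}G\to C$ and $i_A\times i_G:C\to A\times_{\alpha}G$ whose composites are the respective identities, since any homomorphism out of $A\times_{\alpha}G$ intertwining $i_A,i_G$ is determined on the dense subspace $C_c(G,A)$.

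I expect the main obstacle to be the multiplier-algebra bookkeeping: verifying that left and right convolution genuinely define adjointable operators (hence multipliers) rather than merely bounded maps, that $\pi\times u$ is non-degenerate so Proposition~\ref{REF} applies, and that the strict-limit computations identifying $(\pi\times u)\circ i_A$ with $\pi$ and $(\pi\times u)\circ i_G$ with $u$ are carried out carefully. (If injectivity of $i_A$ were wanted — it is not claimed here — one would additionally exhibit the regular representation induced from a faithful representation of $A$.)
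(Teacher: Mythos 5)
Your proposal is correct and follows essentially the same route as the paper: the universal $C^*$-seminorm $\sup_{(\pi,u)}\|i_{\pi,u}(f)\|$ on $C_c(G,A)$, completion via Lemma~\ref{lemma: C star completion}, construction of $i_A$ and $i_G$ from left multiplication by $a1_e$ and left translation (the paper realises $i_A(a)$ directly as the class $q(a1_e)$ inside the algebra rather than a priori as a multiplier, but these coincide), and the standard density and two-sided-inverse arguments for parts (1), (2) and uniqueness. The only cosmetic difference is that the paper evaluates $(\pi\times u)\circ i_A$ directly on $q(a1_e)\in A\times_\alpha G$ instead of via a strict limit, which slightly shortens the bookkeeping you flag as the main obstacle.
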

\begin{proof}
	This proof is an expansion of that of \cite[Theorem 2.21]{Crossedandunitizations}. Let $(\pi,u)$ be a covariant homomorphism of $(A,G,\alpha)$ into a $C^*$-algebra $B$ and let $i_{\pi,u}:C_c(G,A)\to \MM(B)$ denote the homomorphism $\sum_{s\in G}\pi(f(s))u_s$ defined in Proposition~\ref{Bub}. We wish to define $A\times_{\alpha}G$ as the $C^*$-completion of $C_c(G,A)$ under the semi-norm
	\begin{equation}\label{4.5}
	\rho(f)=\sup\{\normof{i_{\pi,u}(f)}:(\pi,u)\;\text{is a covariant homomorphism}\}.
	\end{equation}
	First we need to show that $\rho$ defines a semi-norm on $C_c(G,A)$. Firstly we note that for any covariant homomorphism
	\begin{equation}\label{master}
	\normof{i_{\pi,u}(f)}=\Bignormof{\sum_{s\in G}\pi(f(s))u_s}\leq \sum_{s\in G}\normof{\pi(f(s))}\normof{u_s}\leq \sum_{s\in G}\normof{f(s)}
	\end{equation}
	where we have used that unitary elements have norm one and that $\pi$ being a homomorphism of $C^*$-algebras is norm decreasing. Hence the supremum of Equation \eqref{4.5} is finite. For all $\lambda\in\C$ and $f,g\in C_c(G,A)$, linearity of $\pi$ ensures we have $\rho(\lambda f)=|\lambda|\rho(f)$, and we calculate 
	\begin{align*}
		\rho(f+g)&=\sup\{\normof{i_{\pi,u}(f+g)}\}=\sup\{\normof{i_{\pi,u}(f)+i_{\pi,u}(g)}\}\leq \sup\{\normof{i_{\pi,u}(f)}+\normof{i_{\pi,u}(g)}\}\\
		&=\sup\{\normof{i_{\pi,u}(f)}\}+\sup\{\normof{i_{\pi,u}(g)}\}=\rho(f)+\rho(g).\\
	\end{align*}
	Hence $\rho$ is a semi-norm. Our semi-norm $\rho$ is sub-multiplicative because
	\begin{align*}
	\rho(f*g)&=\sup\{\normof{i_{\pi,u}(f*g)}\}=\sup\{\normof{i_{\pi,u}(f)i_{\pi,u}(g)}\}\\
	&\leq \sup\{\normof{i_{\pi,u}(f)}\normof{i_{\pi,u}(g)}\}\leq\sup\{\normof{i_{\pi,u}(f)}\}\sup\{\normof{i_{\pi,u}(g)}\}=\rho(f)\rho(g)
	\end{align*}
	and we have the $C^*$ identity since
	\begin{equation*}
	\rho(f^*f)=\sup\{\normof{i_{\pi,u}(f^*f)}\}=\sup\{\normof{i_{\pi,u}(f)^*i_{\pi,u}(f)}\}=\sup\{\normof{i_{\pi,u}(f)}\}^2=\rho(f)^2.
	\end{equation*}
	As in Lemma~\ref{lemma: C star completion}, let
	\begin{equation*}
		I=\{a\in A:\rho(a)=0\}
	\end{equation*}
	be the kernel of $\rho$. Using Lemma~\ref{lemma: C star completion}, we may form the $C^*$-completion of $C_c(G,A)$ which we call the crossed product $A\times_{\alpha}G$. Let $q:C_c(G,A)\to C_c(G,A)/I$ be the quotient homomorphism $q(f)=f+I$. We now define the maps $i_A$ and $i_G$. For $a\in A$, let $a1_s$ be the function $a1_s(t)=\delta_{s,t}a$. We define $i_A(a)=q(a1_e)$. Since $q$ is a homomorphism, $i_A$ will be too. We will need to work a little harder to define $i_G$. We start by defining the `left translation map' $\lt_s:C_c(G,A)\to C_c(G,A)$ by $\lt_s(f)(t)=\alpha_s(f(s^{-1}t))$. We wish to show that $\lt$ extends to a well defined operator on the crossed product. We see that
	\begin{align*}
	i_{\pi,u}(\lt_s(f))&=\sum_{t\in G}\pi(\alpha_s(f(s^{-1}t)))u_t\\
	&=\sum_{k\in G}\pi(\alpha_s(f(k)))u_su_k\\
	&=u_s\sum_{k\in G}\pi(f(k))u_k\\
	&=u_si_{\pi,u}(f).\\
	\end{align*}
	If $i_{\pi,u}(f)=0$ for all covariant homomorphisms $(\pi,u)$ then this shows that $i_{\pi,u}(\lt_s(f))=u_si_{\pi,u}(f)=0$. So $\lt_s$ induces a well defined map $\lt_s(q(f))=q(\lt_s(f))$ on the quotient. This map is norm decreasing since
	\begin{equation*}
	\normof{i_{\pi,u}(\lt_s(f))}=\normof{u_si_{\pi,u}(f)}\leq \normof{i_{\pi,u}(f)}.
	\end{equation*}
	So by definition, for each $f\in C_c(G,A)$,
	\begin{equation*}
	\normof{\lt_s(q(f))}=\normof{q(\lt_s(f))}=\normof{\lt_s(f)}\leq \normof{f}=\normof{q(f)}.
	\end{equation*}
	We deduce that $\lt_s$ is uniformly continuous on the quotient whence it extends to the $C^*$-completion $A\times_{\alpha}G$ of $C_c(G,A)$. We are then ready to define 
	\begin{equation*}
	i_G(s):A\times_{\alpha}G\to A\times_{\alpha}G
	\end{equation*}
	by 
	\begin{equation*}
	i_G(s)(q(f))=q(\lt_s(f)).
	\end{equation*}
	We now want to show that $i_G(s)$ is unitary in $\MM(A\times_{\alpha}G)$; that is, that each $i_G(s)$ is a unitary operator on $A\times_{\alpha}G$ viewed as a right Hilbert module with $A\times_{\alpha}G$-valued inner-product $\IP{f,g}=f^**g$. We calculate for $f,g\in C_c(G,A)$
	\begin{align*}
	\lt_s(f)^**g(t)&=\sum_{k\in G}(\lt_s(f)^*(k))\alpha_k(g(k^{-1}t))\\
	&=\sum_{k\in G}\alpha_k(\lt_s(f)(k^{-1})^*)\alpha_k(g(k^{-1}t))\\
	&=\sum_{k\in G}\alpha_{k}(\alpha_s(f(s^{-1}k^{-1})^*))\alpha_k(g(k^{-1}t))\\
	&=\sum_{l\in G}\alpha_{l}f(l^{-1})^*\alpha_{l}(\alpha_{s^{-1}}(g(sl^{-1}t)))\\
	&=\sum_{l\in G}f^*(l)\alpha_{l}(\lt_{s^{-1}}(g)(l^{-1}t))\\
	&=f^**\lt_{s^{-1}}(g)(t).\\
	\end{align*}
	Since $q$ is a homomorphism it follows that for all $f,g\in C_c(G,A)$
	\begin{align*}
	\IP{i_G(s)(q(f)),q(g)}&=\IP{q(f),i_G(s^{-1})(q(g))},\\
	\end{align*}
	and so $i_G(s)$ is a unitary operator on the quotient space. By continuity $i_G(s)$ extends to the completion $A\times_{\alpha}G$ as a unitary operator.\\
	Next we check that $i_A$ is non-degenerate. Fix $f\in C_c(G,A)$, fix $\ep>0$ and let $\{e_{\lambda}\}$ be an approximate identity for $A$. Since $f\in C_c(G,A)$, we know that $f(s)\neq 0$ for only finitely many $s$. Let $N=|\{s\in G: f(s)\neq 0\}|$. Then there exists $\lambda$ large enough so that $\normof{e_{\lambda}f(s)-f(s)}<\ep/N$ for all $s\in G$. By Equation \eqref{master} we have
	\begin{equation*}
	\normof{i_A(e_{\lambda})f-f}=\normof{e_{\lambda}1_e*f-f}\leq \sum_{s\in G}\normof{e_{\lambda}f(s)-f(s)}<\ep.
	\end{equation*}
	Thus $i_A(e_{\lambda})f\to f$ in $C_c(G,A)$. Since $q$ is continuous, $i_A(e_{\lambda})q(f)\to q(f)$ in the quotient. Since $C_c(G,A)$ is dense in $A\times_{\alpha}G$, we have $\overline{i_A(A)A\times_{\alpha}G}=A\times_{\alpha}G$ and $i_A$ is non-degenerate.\\
	We must show that $(i_A,i_G)$ is a covariant representation of $(A,G,\alpha)$ on $A\times_{\alpha}G$. That is, we need to check that
	$$i_G(s)i_A(a)i_G(s^{-1})=i_A(\alpha_s(a)).$$
	For $f\in C_c(G,A)$ we have
	\begin{align*}
	\lt_g(a1_e*\lt_{g^{-1}}(f))(s)&=\alpha_g\left(a1_e*\lt_{g^{-1}}(f)(g^{-1}s)\right)\\
	&=\alpha_g\Big(\sum_{t\in G}a1_e(t)\alpha_t(\lt_{g^{-1}}(f)(t^{-1}g^{-1}s))\Big)\\
	&=\alpha_g\left(a\lt_{g^{-1}}(f)(g^{-1}s)\right)=\alpha_g\left(a\alpha_{g^{-1}}(f(s))\right)\\
	&=\alpha_g(a)f(s)=(\alpha_g(a1_e)*f )(s)\\
	\end{align*}
	so that $\lt_g(a1_e*\lt_{g^{-1}})(f)=\alpha_g(a)1_e*f$. It then follows that
	\begin{equation*}
		i_G(g)i_A(a)i_G(g^{-1})(q(f))=i_A(\alpha_g(a))(q(f))
	\end{equation*}
	and so $(i_A,i_G)$ is covariant on $q(C_c(G,A))$. Since this set is dense in $A\times_{\alpha}G$, it follows that $(i_A,i_G)$ is a covariant homomorphism.\\
	
	Now we wish to show that property (1) holds. We note that
	\begin{align*}
	(a1_{hg}*f)(s)&=\sum_{t\in G}a1_{hg}(t)\alpha_{t}(f(t^{-1}s))\\
	&=a\alpha_{hg}f(g^{-1}h^{-1}s)=a1_h(h)\alpha_h(\lt_g(f(h^{-1}s)))\\
	&=\sum_{t\in G}a1_h(t)\alpha_t(\lt_g(t^{-1}s))=(a1_h*\lt_g(f))(s),
	\end{align*}
	so
	\begin{equation}
	a1_{hg}*f=a1_h*\lt_g(f).\label{Yanaginagi}
	\end{equation}
	Fix $a\in A$ and $g\in G$. Since $A\times_{\alpha}G$ is an ideal in $\MM(A\times_{\alpha}G)$ and since $i_A(a)\in A\times_{\alpha}G$ we have $i_A(a)i_G(g)\in A\times_{\alpha}G$. In particular, for $f\in C_c(G,A)$,
	\begin{align*}
	i_A(a)*i_G(g)q(f)&=q(a1_e)*q(\lt_g(f))=q(a1_e*\lt_g(f))\qquad\text{by }\eqref{Yanaginagi}\\
	&=q(a1_g*f)=q(a1_g)*q(f).\\
	\end{align*}
	Thus $i_A(a)i_G(g)=q(a1_{g})$ as left multiplication operators. So $\overline{\Span}\{i_A(a)i_G(g):a\in A,g\in G\}\subseteq A\times_{\alpha}G$. Choosing an approximate identity $e_{\lambda}$ in $A$ and letting $a=e_{\lambda}$ and $g=e$ then gives reverse containment and we have shown (1). 
	
	Now we wish to show that $(A\times_{\alpha}G,i_A,i_G)$ has the universal property (2), that is. Fix a covariant homomorphism of $(A,G,\alpha)$ into $B$. For $f\in C_c(G,A)$ we aim to define $\pi\times u(q(f))=i_{\pi,u}(f)$ on the quotient space. If $f=g+h$ with $\normof{h}=0$ then $i_{\pi,u}(h)=0$ and $i_{\pi,u}(f)=i_{\pi,u}(g)$. So there is a linear map $\pi\times u:C_c(G,A)/I\to B$ such that $\pi\times u(f+I)=i_{\pi,u}(f)$ for all $f$. Since $i_{\pi,u}$ is a homomorphism, so is $\pi\times u$. By definition of the norm on $C_c(G,A)$, for each $f\in C_c(G,A)$ we have
	\begin{equation*}
	\normof{\pi\times u(q(f))}=\normof{i_{\pi,u}(f)}\leq \normof{f}=\normof{q(f)}.
	\end{equation*}
	Hence $\pi\times u$ is norm decreasing, and therefore it extends to a map $\pi\times u:A\times_{\alpha}G\to B$. 
	Now we wish to check the universal properties of $\pi\times u$.		
	Recall from Proposition~\ref{props} that if $\phi:A\to \MM(B)\cong \LL(B_B)$ is a non-degenerate homomorphism then its extension to $\MM(A)$ is given by $\phi(m)(b)=\sum_n \phi(ma_n)(b_n)$ where $b=\sum_n \phi(a_n)(b_n)$. Since $\pi$ is non-degenerate so is $\pi\times u$, so fix $b\in B$ and let $b=\sum_n \pi\times u(q(f_n))(b_n)$ for some $f_n\in C_c(G,A)$ and $b_n\in B$. We compute
	\begin{align*}
	(\pi\times u)\circ i_G(s)(b)&=\sum_n \pi\times u(i_G(s)q(f_n))(b_n)\\
	&=\sum_n\sum_{t\in G}\pi(\lt_s(f_n)(t))u_t(b_n)=\sum_n\sum_{k\in G}\pi(\alpha_s(f_n(k)))u_su_k(b_n)\\
	&=\sum_n\sum_{k\in G}u_s\pi(f_n(k))u_k(b_n)=u_s\sum_{n}\pi\times u(f_n)(b_n)=u_s(b)\\
	\end{align*}
	and so $\pi\times u\circ i_G=u$. Since $i_A(A)\subseteq A\times_{\alpha}G$ we do not need to worry about extending $\pi\times u$ to $\MM(A\times_{\alpha}G)$ to compute the composition $(\pi\times u)\circ i_A$:
	\begin{align*}
	(\pi\times u)\circ i_A(a)&=\pi\times u(q(a1_e))=(\pi\times u) (a1_e)\\
	&=\sum_{s\in G}\pi(a1_e(s))u_s=\pi(a)u_e=\pi(a)\\
	\end{align*}
	and so $\pi\times u\circ i_A=\pi$ as required.\\
	Now all that is left is to check that the crossed product is unique. 
	To show uniqueness, suppose $B$ is another $C^*$-algebra with a covariant homomorphism $(\pi,u)$ such that (1) and (2) hold. Then property (2) gives us two homomorphisms
	\begin{equation*}
	\pi\times u:A\times_{\alpha}G\to B\qquad \text{ and }\qquad i_A\times i_G:B\to A\times_{\alpha}G
	\end{equation*}
	such that
	\begin{equation*}
	\pi\times u\circ i_A\times i_G(\pi(a)u_g)=\pi\times u (i_A(a)i_G(g)=\pi(a)u_g.
	\end{equation*}
	Property (1) tells us that these elements $\pi(a)u_g$ generate $B$, so $(\pi\times u)\circ i_A\times i_G$ is the identity on $B$. Similarly, property (1) for $A\times_{\alpha}G$ tells us that $(i_A\times i_G)\circ(\pi\times u)$ is the identity on $A\times_{\alpha}G$. Hence $i_A\times i_G$ is an isomorphism with inverse $\pi\times u$ and so $A\times_{\alpha}G\cong B$.
\end{proof}
In the proof above we quotiented out by functions $f\in C_c(G,A)$ such that $i_{\pi,u}(f)=0$ for all covariant homomorphisms $(\pi,u)$. Our next Lemma shows that for each $f\in C_c(G,A)$ there is always a covariant homomorphism $(\pi,u)$ for which $i_{\pi,u}(f)\neq 0$, so that $C_c(G,A)$ embeds in $A\times_{\alpha}G$. We can show slightly more than this: there is a special covariant representation $(\pi,u)$ called the Reduced representation of $A$ in $\LL(\oplus_{g\in G}A_A)$, for which $i_{\pi,u}(f)=0$ if and only if $f=0$.
\begin{lemma}
	Let $(A,G,\alpha)$ be a $C^*$-dynamical system, and let $i_G:G\to \UU(A\times_{\alpha}G)$ and $i_A:A\to A\times_{\alpha}G$ be the universal representation of $(A,G,\alpha)$ in $A\times_{\alpha}G$. Then there exists a covariant homomorphism $(\pi,u)$ of $A$ in $\LL(\oplus_{g\in G}A_A)$ such that the homomorphism $i_{\pi,u}:C_c(G,A)\to\LL(\bigoplus_{g\in G}A_A)$ is injective. In particular, the homomorphism $i_{i_A,i_G}:C_c(G,A)\to A\times_{\alpha}G$ obtained from the universal property of $A\times_{\alpha}G$ is injective, and consequently $i_A$ is injective. 
\end{lemma}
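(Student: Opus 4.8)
The plan is to build the \emph{regular representation} of $(A,G,\alpha)$ on the Hilbert module $Z\coloneqq\bigoplus_{g\in G}A_A$ (direct sums over an arbitrary index set being as in Appendix~\ref{Direct sums}), and then to prove injectivity of the associated map on $C_c(G,A)$ by compressing to a single coordinate. Write elements of $Z$ as families $\xi=(\xi_s)_{s\in G}$, $\xi_s\in A$, with $\sum_s\xi_s^*\xi_s$ convergent in $A$. First I would define $u\colon G\to\LL(Z)$ by the coordinate shift $(u_t\xi)_s=\xi_{t^{-1}s}$; since $u_t$ merely permutes the summands it preserves the inner product, so it is adjointable with $u_t^*=u_{t^{-1}}=u_t^{-1}$, and $t\mapsto u_t$ is a group homomorphism into $\UU(\LL(Z))$. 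Next I would define $\pi\colon A\to\LL(Z)$ by $(\pi(a)\xi)_s=\alpha_s^{-1}(a)\,\xi_s$. To see $\pi(a)\xi\in Z$, the inequality $\alpha_s^{-1}(a^*a)\le\normof{a}^2 1$ in a unitization of $A$ gives, coordinatewise, $\IP{\alpha_s^{-1}(a)\xi_s,\alpha_s^{-1}(a)\xi_s}=\xi_s^*\alpha_s^{-1}(a^*a)\xi_s\le\normof{a}^2\xi_s^*\xi_s$, so the relevant sum converges; the adjoint of $\pi(a)$ is $\pi(a^*)$, computed coordinatewise, and $\pi$ is a homomorphism because each $\alpha_s^{-1}$ is. A short index computation using $\alpha_{t^{-1}s}^{-1}=\alpha_{s^{-1}}\circ\alpha_t$ then yields the covariance relation $u_t\pi(a)u_t^*=\pi(\alpha_t(a))$, so $(\pi,u)$ is a covariant homomorphism of $(A,G,\alpha)$ into $B\coloneqq\LL(Z)$; as $B$ is unital, $u_t\in\UU(B)=\UU\MM(B)$ automatically.

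For injectivity of $i_{\pi,u}\colon C_c(G,A)\to\LL(Z)$, where $i_{\pi,u}(f)=\sum_{s\in G}\pi(f(s))u_s$ is a finite sum, I would use the coordinate inclusions $\iota_g\colon A_A\to Z$, $\iota_g(a)_s=\delta_{s,g}a$. Each $\iota_g$ is an isometry with adjoint $\iota_g^*(\xi)=\xi_g$. Tracking coordinates gives $u_s\iota_e(a)=\iota_s(a)$ and $\pi(f(s))\iota_s(a)=\iota_s(\alpha_s^{-1}(f(s))a)$, hence $i_{\pi,u}(f)\iota_e(a)=\sum_{s\in G}\iota_s(\alpha_s^{-1}(f(s))a)$, and therefore
\begin{equation*}
\iota_g^*\,i_{\pi,u}(f)\,\iota_e(a)=\alpha_g^{-1}(f(g))\,a\qquad(a\in A),
\end{equation*}
i.e. $\iota_g^*\,i_{\pi,u}(f)\,\iota_e$ is left multiplication by $\alpha_g^{-1}(f(g))$ on $A_A$. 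Since $\normof{\iota_g^*}=\normof{\iota_e}=1$ and the left multiplication operator $L_b$ on $A_A$ has operator norm $\normof{b}$ (Example~\ref{left multn is adjointable}), we get $\normof{i_{\pi,u}(f)}\ge\normof{\alpha_g^{-1}(f(g))}=\normof{f(g)}$ for every $g\in G$, the last equality because $\alpha_g^{-1}$ is isometric. Thus $i_{\pi,u}(f)=0$ forces $f\equiv 0$, and $i_{\pi,u}$ is injective.

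The remaining assertions fall out of the construction in Theorem~\ref{Profit}. Since $(\pi,u)$ is among the covariant homomorphisms occurring in the supremum defining the seminorm $\rho$ on $C_c(G,A)$, we have $\rho(f)\ge\normof{i_{\pi,u}(f)}>0$ whenever $f\neq 0$; hence $\rho$ is a genuine norm, its kernel ideal $I$ is $\{0\}$, and the canonical map $q\colon C_c(G,A)\to A\times_\alpha G$ is injective. From the proof of Theorem~\ref{Profit} we have $i_A(a)i_G(g)=q(a1_g)$, so the homomorphism from the universal property satisfies $i_{i_A,i_G}(f)=\sum_{s}i_A(f(s))i_G(s)=\sum_s q(f(s)1_s)=q(f)$ and is therefore injective; and $i_A(a)=q(a1_e)=i_{i_A,i_G}(a1_e)$ together with injectivity of $a\mapsto a1_e$ shows $i_A$ is injective. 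I expect the only genuine work to be the index bookkeeping in the covariance identity, the convergence estimate for $\pi(a)$, and the coordinate computation of $\iota_g^*\,i_{\pi,u}(f)\,\iota_e$; none of these is conceptually deep, but that last compression identity is the step that drives the whole argument.
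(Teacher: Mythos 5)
Your proposal is correct and follows essentially the same route as the paper: both construct the regular representation $(\pi,u)$ on $\bigoplus_{g\in G}A_A$ with $(\pi(a)\xi)_s=\alpha_{s^{-1}}(a)\xi_s$ and $(u_t\xi)_s=\xi_{t^{-1}s}$, and both detect $f(g)$ by evaluating $i_{\pi,u}(f)$ on vectors supported in a single coordinate. Your packaging of that last step as the compression $\iota_g^*\,i_{\pi,u}(f)\,\iota_e=L_{\alpha_g^{-1}(f(g))}$ is a tidier (and quantitatively sharper, giving $\normof{i_{\pi,u}(f)}\geq\sup_g\normof{f(g)}$) version of the paper's argument, which instead derives a contradiction from a specific test sequence.
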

\begin{proof}
	As discussed in Appendix~\ref{Direct sums}, the direct sum $\bigoplus_{g\in G}A$ is the set of sequences $(a)_{g\in G}$ indexed by $G$ such that $\sum_{g\in G}a_g^*a_g$ converges in norm. For each $a\in A$ we define an operator $\pi(a)$ on $\bigoplus_{g\in G}A_A$ by $(\pi(a)(b))_g=\alpha_{g^{-1}}(a)b_g$. The operator $\pi(a)$ is adjointable with adjoint $\pi(a)^*=\pi(a^*)$, and clearly we have both $\pi(ab)=\pi(a)\pi(b)$ and $\pi(a+b)=\pi(a)+\pi(b)$, so $\pi:A\to\LL(\oplus_{g\in G}A_A)$ is a homomorphism. For each non-zero $a\in A$, we have $aa^*\neq 0$ and if $x$ is any sequence in $\bigoplus_{g\in G}A_A$ with $(x)_e=a^*$, then $\pi(a)x\neq 0$. Thus the kernel of $\pi$ contains only zero and we deduce that $\pi$ is injective. For $g\in G$ we define an operator $U_g$ on $\bigoplus_{h\in G}A_A$ by $(U_g(a))_h=a_{g^{-1}h}$. We compute
	\begin{align*}
	\IP{U_g(a),b}&=\sum_{h\in G}(U_g(a))_hb_h=\sum_{h\in G}a_{g^{-1}h}^*b_h\\
	&=\sum_{k\in G}a_k^*b_{gk}=\IP{a,U_{g^{-1}}(b)},\\
	\end{align*}
	so $U_g$ is adjointable with $U^*_g=U_{g^{-1}}$. Since
	\begin{equation*}
	(U_gU_{g^{-1}}(a))_h=(U_{g^{-1}}(a))_{g^{-1}h}=a_{gg^{-1}h}=a_h
	\end{equation*}
	we see that $U_gU_{g^{-1}}(a)=a$. Similarly $U_{g^{-1}}U_g(a)=a$ and so $U_g$ is invertible with $U_{g^{-1}}=U_g^{-1}$, so $U$ is unitary. Since we also have $U_{gh}=U_gU_h$, we see that $u(g)=U_g$ is a group homomorphism. Now we check that $(\pi,u)$ is a covariant homomorphism. We compute
	\begin{align*}
	(U_g\pi(a)U_g^*(b))_h&=(\pi(a)U_g^*(b))_{g^{-1}h}=\alpha_{h^{-1}g}(U_g^*(b))_{g^{-1}h}\\
	&=\alpha_{h^{-1}}(\alpha_g(b_h))=\pi(\alpha_g(b))_h\\
	\end{align*}
	and so we see that $(\pi,u)$ is covariant. Now suppose that $f\in C_c(G,A)$ satisfies $i_{\pi,u}(f)=0$. Then for each $g\in G$ and all $a\in \bigoplus_{h\in G}A_A$ we have
	\begin{align*}
	\big(\sum_{h\in G}\pi(f(h))u_h(a)\big)_g=\sum_{h\in G}\alpha_{g^{-1}}(f(h))a_{h^{-1}g}=0.\\
	\end{align*}
	Suppose $f\neq 0$, say $f(k)\neq 0$. Let $b=f(k)^*\in A$ so that $f(k)b\neq 0$. Let $a$ be the sequence defined by $a_{k^{-1}g}=\alpha_{g^{-1}}(b)$ and $a_h=0$ for $h\in G\bs\{k^{-1}g\}$. Then
	\begin{equation*}
	\sum_{h\in G}\alpha_{g^{-1}}(f(h))a_{h^{-1}g}=\alpha_{g^{-1}}(f(h)b)=0.
	\end{equation*}
	Since $\alpha_{g^{-1}}$ is an automorphism we deduce that $f(h)b=0$ contracting that $f(h)\neq 0$. Thus $f$ must be the zero function whence $i_{\pi,u}(f)=0$ if and only if $f=0$. We conclude that the quotient mapping $q$ in the proof of Theorem~\ref{Profit} is the identity mapping and so $C_c(G,A)$ embeds into $A\times_{\alpha}G$.
\end{proof}
\begin{remark}
	The universal property of $A\times_{\alpha}G$ gives a homomorphism $\pi\times u:A\times_{\alpha} G\to \LL(\oplus_{g\in G}A_A)$ where $(\pi,u)$ is the reduced representation of $A$ in $\LL(\oplus_{g\in G}A_A)$. This makes $\bigoplus_{g\in G}A_A$ into an $A\times_{\alpha}G$-$A$ correspondence.
\end{remark}
\begin{example}
	We will be interested in crossed products by $\Z$. Example~\ref{mankey} showed that any automorphism $\alpha$ of a $C^*$-algebra $A$ defines an action of $\Z$ on $A$ by sending $n\in\Z$ to the $n$th power $\alpha^n$. Given an automorphism $\alpha$ we write $A\times_{\alpha}\Z$ for the crossed product defined by this action. In fact, given any action $\alpha_n$ of $\Z$ on $A$, the automorphism $\alpha_1$ generates the entire action, so we see that every action of $\Z$ arises this way. For a more concrete example, we look at the \textit{Rotation Algebra} $A_{\theta}$. Let $\theta\in[0,1)$ be a fixed and define an action $\alpha_n^{\theta}$ of $\Z$ on $C(\T)$ by
	\begin{equation*}
	\alpha_n^{\theta}(f)(z)=f(e^{2\pi in\theta}z),
	\end{equation*}
	so that $\alpha_n$ is generated by the automorphism
	\begin{equation*}
	\alpha^{\theta}_1(f)(z)=f(e^{2\pi i n\theta}z).
	\end{equation*}
	We call the crossed product $C(\T)\times_{\alpha^{\theta}}\Z$ the rotation algebra and will often denote it by $A_{\theta}$. It is an exercise in \cite[Theorem~2.31]{Crossedandunitizations} that $A_{\theta}$ is a simple $C^*$-algebra if and only if $\theta$ is irrational.
\end{example}	

\begin{example}
	Let $(A,G,\alpha)$ be a $C^*$-dynamical system and consider the crossed product $A\times_{\alpha}G$. Recall that the dual group of $G$ is the group of homomorphisms
	\begin{equation*}
		\widehat{G}=\{\chi:G\to\T, \chi\text{ is a homomorphism}\}
	\end{equation*}
	with the group operation being multiplication. Even if $G$ is discreet, $\widehat{G}$ may not be. For example we have $\widehat{\Z}=\T$, and $\T$ is not discrete. When we speak of a non-discrete group action, we must make some topological considerations. We will avoid discussion of this since we will only be interested in the case when $G=\Z_2$ so that the dual $\widehat{G}=\Z_2$ is discrete. The crossed product has a natural action $\widehat{\alpha}$ of $\widehat{G}$ such that for any $\chi\in\widehat{G}$, $a\in A$ and $g\in G$ we have
	\begin{equation*}
		\widehat{\alpha}_{\chi}(i_A(a)i_G(g))=i_A(a)\chi(g)i_G(g).
	\end{equation*}
	To see this, we observe that for fixed $\chi\in\widehat{G}$ the pair $(i_A,\chi i_G)$ is a covariant representation of $(A,G,\alpha)$ in $A\times_{\alpha}G$, and so the universal property of $A\times_{\alpha}G$ gives a homomorphism $(i_A\times \chi i_G):A\times_{\alpha}G\to A\times_{\alpha}G$ such that
	\begin{equation*}
		(i_A\times\chi i_G)(i_A(a)i_G(g))=i_(a)\chi(g)i_G(g).
	\end{equation*}
	The homomorphism $i_A\times\chi i_G$ is clearly a bijection, so $\chi\mapsto i_A\times \chi i_G$ is the desired action, which we call $\widehat{\alpha}$. In particular, when $G=\Z_2$ so that $\widehat{G}=\Z_2$, the action $\widehat{\alpha}$ is a self-inverse automorphism of $A\times_{\alpha}G$, and so $\widehat{\alpha}$ is a grading on $A\times_{\alpha}\Z_2$. We call this grading the \emph{dual grading}.
\end{example}
\begin{example}\label{Good}
	Let $(B,\beta)$ be a graded $C^*$-algebra. Then $(B,\Z_2,\beta)$ is a $C^*$-dynamical system and we may form the crossed product $B\times_{\beta}\Z_2$. Let us write $u_0$ and $u_1$ for the unitaries $i_{\Z_2}(0),i_{\Z_2}(1)\in\UU(A\times_{\beta}\Z_2)$. Then $B\times_{\beta}\Z_2$ is the closure of elements of the form $b_0u_0+b_1u_1$ such that $b_0,b_1\in B$. Let $\beta\times\alpha$ be the grading on $\Cliff_1$ and let $\hat{\beta}\circ (\beta\times 1)$ be the dual grading on $B\times_{\beta}\Z_2$. Consider the graded $C^*$-algebras $B\Otimes \Cliff_1$ and $B\times_{\beta}\Z_2$ with gradings $\beta\times\alpha$ and $\widehat{\beta}\circ(\beta\times 1)$ respectively. That is, for $b\otimes (x,y)\in B\Otimes \Cliff_1$ and $b_0u_0+b_1u_1\in B\times_{\beta}\Z_2$, the gradings are given by
	\begin{align*}
	(\beta\times\alpha)(b\otimes (x,y))&=\beta(b)\otimes (y,x)\\ \hat{\beta}\circ(\beta\times 1)(b_0u_0+b_1u_1)&=\beta(b_0)u_0-\beta(b_1)u_1.\\
	\end{align*}
	We show that $(B\Otimes \Cliff_1)$ and $B\times_{\beta}\Z_2$ are graded isomorphic. Let $u$ denote the self-adjoint unitary generator of $\Cliff_1$. If we are viewing $\Cliff_1$ as $\C^2$ then $u$ is the element $(1,-1)$. If we are viewing $\Cliff_1$ as Cliff$_1\otimes \C$ then $u$ is the element $e\otimes i$. Let $1$ denote the element $u^2=(1,1)$ of $\Cliff_1$ (or $1\otimes 1$ from the Cliff$_1\otimes \C$ perspective). We may write every element of $\Cliff_1$ in the form $x\cdot 1+y\cdot u$ for some $x,y\in\C$. That is, $\{1,u\}$ is a basis of $\Cliff_1$ and so every element of $B\Otimes \Cliff_1$ is of the form
	\begin{equation*}
	b\otimes 1+c\otimes u
	\end{equation*}
	for some $b,c\in B$. We define a linear map $\phi:B\Otimes \Cliff_1\to B\times_{\beta}\Z_2$ by
	\begin{equation*}
	\phi(b\otimes 1+c\otimes u)=bu_0+cu_1.
	\end{equation*}
	We show that $\phi$ is multiplicative, $*$-preserving, bijective and intertwines the gradings. We compute
	\begin{align*}
	\phi((b\otimes 1+c\otimes u)(d\otimes 1+e\otimes u))&=\phi((bd+c\beta(e))\otimes 1+(be+c\beta(d))\otimes u)\\
	&=(bd+c\beta(e))u_0+(be+c\beta(d))u_1\\
	&=(bu_0+cu_1)*(du_0+eu_1)\\
	&=\phi(b\otimes 1+c\otimes u)\phi(d\otimes 1+e\otimes u)\\
	\end{align*}
	showing $\phi$ is multiplicative. Next we compute
	\begin{align*}
	\phi((b\otimes 1+c\otimes u)^*)&=\phi(b^*\otimes 1-\beta(c^*)\otimes -u)\\
	&=\phi(b^*\otimes 1+\beta(c^*)\otimes u)\\
	&=b^*u_0+\beta(c)^*u_1\\
	&\phi(b\otimes 1+b\otimes u)^*\\
	\end{align*}
	showing $\phi$ is $*$-preserving. For $bu_0+cu_1\in B\times_{\beta}\Z_2$ we have $\phi(b\otimes 1+c\otimes u)=bu_0+cu_1$, so $\phi$ is surjective. If
	\begin{align*}
	\phi(b\otimes 1+c\otimes u)&=0, \text{then}\\
	bu_0-cu_1&=0, \text{ forcing}\\
	b&=c=0.\\
	\end{align*}
	So $\phi$ is injective. Lastly we show that $\phi$ intertwines gradings. We have
	\begin{align*}
	\phi((\beta\times\alpha)(b\otimes 1+c\otimes c))&=\phi(\beta(b)\otimes 1-\beta(c)\otimes u)\\
	&=\beta(b)u_0-\beta(c)u_1\\
	&=\hat{\beta}\circ(\beta\otimes 1)(bu_0+cu_1)\\
	&=\hat{\beta}\circ(\beta\otimes 1)(\phi(b\otimes 1+c\otimes u)).\\
	\end{align*}
	Thus $(B\Otimes \Cliff_1,\beta\times\alpha)$ and $(B\times_{\beta}\Z_2,\hat{\beta}\circ(\beta\times 1))$ are graded isomorphic.
\end{example}

\section{The Toeplitz algebra $\TT_X$}
Given an $A$-$A$ correspondence $X$, the Toeplitz algebra $\TT_X$ is a $C^*$-algebra which is in some sense the `universal' $C^*$-algebra onto which $X$ can be represented. Whether the left action $\phi:A\to\LL(X)$ is injective will make a significant difference to the standard definition of $\OO_X$ in Section~\ref{OO} but not in the definitions of $\TT_X$. We will follow Pimsner's original definition in \cite{FreeProbTheory} of $\TT_X$ as operators on the Fock space (see Example~\ref{Fock this}), though one can equivalently define $\TT_X$ according to its universal properties. 
All definitions and examples come in this section from \cite{FreeProbTheory}.

\begin{definition}
	Let $A$ and $B$ be $C^*$-algebras and $X$ be an $A$-$A$ correspondence. A representation of $X$ in $B$ is a pair of maps $(\pi,t)$ where $\pi:A\to B$ is a homomorphism and $t:X\to B$ is a linear map such that
	\begin{align*}
	t(\xi)^*t(\eta)&=\pi(\IP{\xi,\eta}_A)\\
	t(\phi(a)\xi)&=\pi(a)t(\xi)\\
	\end{align*}
	for all $\xi,\eta\in X$ and $a\in A$.
\end{definition}

If $(\pi,t)$ is a representation of $X$ in $B$, then
\begin{equation*}
t(\xi a)=t(\xi)\pi(a)\qquad \text{ for all $\xi\in X$ and $a\in A$}.
\end{equation*}
This is because
\begin{align*}
\normof{t(\xi a)-t(\xi)\pi(a)}^2&=\normof{\left(t(\xi a)-t(\xi)\pi(a)\right)^*\left(t(\xi a)-t(\xi)\pi(a)\right)}\\
&=\normof{t(\xi a)^*t(\xi a)-t(\xi a)^*t(\xi)\pi(a)-\pi(a^*)t(\xi)^*t(\xi a)+\pi(a^*)t(\xi)^*t(\xi)\pi(a)}\\
&=\normof{\pi(\IP{\xi a,\xi a})-\pi(\IP{\xi a,\xi}a)-\pi(a^*\IP{\xi,\xi a})+\pi(a^*\IP{\xi,\xi}a)}\\
&=\normof{2\IP{\xi a,\xi a}-2\IP{\xi a,\xi a}}=0.\\
\end{align*}
\begin{remark}
	Given a representation $(\pi,t)$ of $X$ in $B$, if we consider $B$ as an $B$-$B$ correspondence with right action $b\cdot c=bc$, inner-product $\IP{a,b}=a^*b$ and left action $a\cdot b=ab$ then the condition
	$t^*(x)t(y)=\pi(\IP{x,y})$ says that $\pi$ and $t$ intertwine inner-products in the sense that
	\begin{equation*}
	\IP{t(x),t(y)}_B=\pi(\IP{x,y}_X).
	\end{equation*}
	Similarly, the condition $\pi(a)t(x)=t(\phi(a)x)$ and the result that $t(x)\pi(a)=t(xa)$ says that $t$ intertwines the left and right actions of $A$ and $B$ on $X$ and $B$ whenever $b\in B$ is in the range of $\pi$. So $(\pi,t)$ is a morphism of correspondences.
\end{remark}
\begin{example}\label{Fock this}
	This example will be very important. Given an $A$-$A$ correspondence $_{\phi}X$, we define $X^{\otimes n}$ to be the $n$-fold tensor product
	\begin{equation*}
		 X^{\otimes n}\coloneqq \overbrace{X\otimes_A X\otimes_A\dots\otimes_A X}^{n\text{ times}} 
	\end{equation*}
	and we define $X^{\otimes 0}=A$. The Fock space $\FF_X$ is the infinite direct sum $\FF_X=\bigoplus_{n=0}^{\infty}X^{\otimes n}$, which is a Hilbert-$A$ module as discussed in Appendix~\ref{Direct sums}. We may extend the left action $\phi$ on $X$ to a left action $\phi^{\infty}$ of $A$ on $\FF_X$ as follows. Consider for $a\in A$, the operators $\phi_n$ on $X^{\otimes n}$ determined by the formulas
	\begin{align*}
		\phi^{0}(a)x&=ax\quad x\in A\\ \phi^{n}(a)\big(x_1\otimes\dots\otimes x_n\big)&=(\phi(a)x_1)\otimes\dots\otimes x_n\quad x_1\otimes\dots\otimes x_n\in X^{\otimes n}.
	\end{align*}
	Each $\phi^n(a)$ is an adjointable operator in $\LL(X^{\otimes n})$ with adjoint $\phi^n(a^*)$, so by Lemma~\ref{direct sum of operators} there is a well defined operator $\phi^{\infty}(a)\coloneqq \oplus \phi^n(a)$ in $\LL(\FF_X)$ such that for $x\in X^{\otimes n}$, we have $\phi^{\infty}(a)x=\phi^n(a)x$.\\
	For $\xi\in X$, we aim to define an operator $T_{\xi}$ by the following formulas
	\begin{equation*}
		T_{\xi}(x_1\otimes\dots\otimes x_n)=\xi\otimes x_1\otimes\cdots\otimes x_n,\quad T_{\xi}(a)=\xi\cdot a.
	\end{equation*}
	For $x_1\otimes\dots \otimes x_n\in X^{\otimes n}$ we see that
	\begin{align*}
		\IP{T_{\xi} x_1\otimes\dots\otimes x_n,T_{\xi} x_1\otimes\dots\otimes x_n}&=\IP{\xi\otimes x_1\otimes\dots\otimes x_n,\xi\otimes x_1\otimes\dots\otimes x_n}\\
		&=\IP{x_1\otimes\dots\otimes x_n,\phi^{\infty}(\IP{\xi,\xi}) x_1\otimes\dots\otimes x_n}\\
		&=\IP{\phi^{\infty}(\IP{\xi,\xi})^{1/2}x_1\otimes\dots\otimes x_n,\phi^{\infty}(\IP{\xi,\xi})^{1/2} x_1\otimes\dots\otimes x_n}\\
		&\leq\normof{\phi^{\infty}(\IP{\xi,\xi})}^{1/2}\IP{x_1\otimes\dots\otimes x_n,x_1\otimes\dots\otimes x_n}\\
	\end{align*}
	where we have used \cite[Corollary~1.25]{Crossedandunitizations} in the last line. Let  $x=(x_1^n\otimes\dots\otimes x_n^n)^{\infty}_{n=0}$ and $y=(y_1^n\otimes\dots\otimes y_n^n)^{\infty}_{n=0}$ be spanning elements of $\FF_X$, so that 
	\begin{equation}\label{an equation}
		\IP{x_0,x_0}+\sum_{n=1}^{\infty}\IP{x_1^n\otimes\dots\otimes x_n^n,x_1^n\otimes\dots\otimes x_n^n}
	\end{equation}
	converges in $A$, and similarly for $y$. Then for any finite sum we have
	\begin{align*}
		&\IP{\xi\cdot x_0,\xi\cdot x_0}+\sum_{n=1}^{N}\IP{\xi\otimes x_1^n\otimes\dots\otimes x_n^n,\xi\otimes x_1^n\otimes\dots\otimes x_n^n}\\
		&\leq \normof{\phi^{\infty}(\IP{\xi,\xi})}^{1/2}\left(\IP{x_0,x_0}+\sum_{n=1}^{N}\IP{x_1^n\otimes\dots\otimes x_n^n,x_1^n\otimes\dots\otimes x_n^n}\right),\\
	\end{align*}
	which converges by Equation \eqref{an equation}. We deduce that $T_{\xi}$ preserves $\FF_X$. Now we wish to show that $T_{\xi}$ is adjointable. Let $x$ and $y$ be as above.
	\begin{align*}
	\IP{T_{\xi}x,y}&=\sum_{n=1}^{\infty}\IP{\xi\otimes x^n_1\otimes\dots\otimes x^n_n,y^n_1\otimes\dots\otimes y^{n+1}_{n+1}}+\IP{\xi\cdot x_0,y^1}_X\\
	&=\sum_{n=1}^{\infty}\IP{x^n_1\otimes\dots\otimes x^n_n,\phi^{\infty}(\IP{\xi,y^n_1})y^n_2\otimes\dots\otimes y^{n+1}_{n+1}}+\IP{x_0,\IP{\xi,y^1}}_A.\\
	\end{align*}
	Therefore $T_{\xi}$ is adjointable with adjoint determined by
	\begin{equation*}
	T_{\xi}^*y=\begin{cases}
	\phi^{\infty}(\IP{\xi,y_1})y_2\otimes\dots\otimes y_n&y\in X^{\otimes n},n> 1\\
	\IP{\xi,y_1}&y\in X\\
	0&y\in A.\\
	\end{cases}
	\end{equation*}
	We deduce that for every $\xi\in X$, the operator $T_{\xi}$ is a member in $\LL(\FF_X)$.
	The operators $T_{\xi}$ and $T_{\xi}^*$ are known as the \textit{creation} and \textit{annihilation} operators associated to $\xi$.
	We see that
	\begin{align*}
	T_{\xi}^*T_{\eta}x_1\otimes\dots\otimes x_n&=\phi^{\infty}(\IP{\xi,\eta})x_1\otimes\dots\otimes x_n\\
	\end{align*}
	so $T_{\xi}^*T_{\eta}=\phi^{\infty}(\IP{\xi,\eta})$.
	One checks that
	\begin{align*}
	\phi^{\infty}(a)T_{\xi}x_1\otimes\dots x_n&=(\phi(a)\xi)\otimes x_1\otimes\dots\otimes x_n\\
	&=T_{\phi(a)\xi}x_1\otimes\dots\otimes x_n\\
	\end{align*}
	and
	\begin{align*}
	T_{\xi}\phi^{\infty}(a)x_1\otimes\dots\otimes x_n&=\xi\otimes\phi(a)x_1\otimes\dots\otimes x_n\\
	&=\xi\cdot a\otimes x_1\otimes\dots\otimes x_n\\
	&=T_{\xi a}x_1\otimes\dots\otimes x_n\\
	\end{align*}
	where we have used that $xa\otimes y=x\otimes \phi(a)y$ in the balanced tensor product. If we define $T:X\to\LL(\FF_X)$ by $T(\xi)=T_{\xi}$ then these properties show that $(\phi^{\infty},T)$ is a representation of $X$ on $\LL(\FF_X)$. 
\end{example}
\begin{notation}
	To match notation with \cite{Crossedandunitizations} we will now refer to the representation $(\phi^{\infty},T)$ as $(i_A,i_X)$ instead. If $(\pi,t)$ is a representation of $X$ in $B$ then we write $C^*(\pi,t)$ for the $C^*$-algebra generated by the image of $\pi$ and $t$ in $B$.
\end{notation}
\begin{definition}
	Let $A$ be a $C^*$-algebra and $X$ an $A$--$A$ correspondence. Let $\FF_X$ and $(i_A,i_X)$ be the Fock space and representation of $X$ in $\LL(\FF_X)$ of Example~\ref{Fock this}. We define the Toeplitz algebra $\TT_X$ associated to $X$ to be the $C^*$-algebra $C^*(i_A,i_X)$ in $\LL(\FF_X)$.
\end{definition}

It is shown in \cite[Proposition~3.3]{FreeProbTheory} that $\TT_X$ can be characterised by the following universal property.
\begin{theorem}\label{theorem: universal property of Toeplitz}
	Let $X$ be an $A$-$A$ correspondence. Let $\TT_X$ be the Toeplitz algebra and let $(i_A,i_X)$ be the Fock space representation. Then $\TT_X$ has the property that for any representation $(\pi,t)$ of $X$ in a $C^*$-algebra $B$, there exists a homomorphism $\pi\times t:\TT_X\to B$ such that $(\pi\times t)\circ i_X=t$ and $(\pi\times t)\circ i_A=\pi$.
\end{theorem}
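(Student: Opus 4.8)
The plan is to carry out the argument behind \cite[Proposition~3.3]{FreeProbTheory}. First I would extend the linear map $t$ to linear maps $t^{(n)}\colon X^{\otimes n}\to B$, determined on spanning elements by $t^{(n)}(x_1\otimes\dots\otimes x_n)=t(x_1)t(x_2)\cdots t(x_n)$, with $t^{(0)}=\pi$; the representation identities together with the identity $t(\xi a)=t(\xi)\pi(a)$ established just before this theorem show that $t^{(n)}$ respects the balancing relations defining $X^{\otimes n}$, and the induction $t^{(n)}(\zeta)^{*}t^{(n)}(\zeta)=\pi(\IP{\zeta,\zeta}_{X^{\otimes n}})$, built up one factor at a time from $t(x)^{*}t(x)=\pi(\IP{x,x})$, gives $\normof{t^{(n)}(\zeta)}\le\normof{\zeta}$, so each $t^{(n)}$ is contractive. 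Applying the same construction to the Fock representation produces contractive maps $i_X^{(n)}\colon X^{\otimes n}\to\TT_X$ with $i_X^{(0)}=i_A$, and from the explicit formulas for the creation operators one reads off the spanning description
\begin{equation*}
\TT_X=\overline{\Span}\bigl\{\,i_X^{(m)}(\xi)\,i_X^{(n)}(\eta)^{*}\ :\ m,n\ge 0,\ \xi\in X^{\otimes m},\ \eta\in X^{\otimes n}\,\bigr\}.
\end{equation*}

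The goal is then to define $\pi\times t$ on this dense subspace by $i_X^{(m)}(\xi)\,i_X^{(n)}(\eta)^{*}\mapsto t^{(m)}(\xi)\,t^{(n)}(\eta)^{*}$ and extend by continuity. The algebraic part is routine: using the relations $i_X^{(n)}(\eta)^{*}i_X^{(n)}(\zeta)=i_A(\IP{\eta,\zeta})$ and $i_A(a)i_X^{(k)}(\zeta)=i_X^{(k)}(a\cdot\zeta)$ and their $(\pi,t)$-analogues, products and adjoints of spanning elements of $\TT_X$ reduce again to combinations of spanning elements that the formula carries to the matching combinations in $C^{*}(\pi,t)$. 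Hence, \emph{once well-definedness and contractivity of the formula are established}, $\pi\times t$ is a homomorphism onto the dense $*$-subalgebra $\Span\{t^{(m)}(\xi)t^{(n)}(\eta)^{*}\}$ of $C^{*}(\pi,t)\subseteq B$, and $(\pi\times t)\circ i_X=t$ and $(\pi\times t)\circ i_A=\pi$ hold by construction.

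The real content, and the step I expect to be the main obstacle, is to show that the Fock norm on $\TT_X$ dominates (hence equals) the norm coming from an arbitrary representation $(\pi,t)$ --- equivalently, that the Fock representation is faithful on the universal $C^{*}$-algebra $\TT_X^{\mathrm{univ}}$ attached to the representation relations, which exists because those relations force $\normof{t(\xi)}\le\normof{\xi}$ and $\normof{\pi(a)}\le\normof{a}$. I would prove this with a gauge-invariant uniqueness argument: scaling $X^{\otimes n}$ by $z^{n}$ defines a unitary $W_{z}\in\LL(\FF_X)$, and conjugation by $W_{z}$ gives a point-norm continuous action $\gamma\colon\T\to\operatorname{Aut}(\TT_X)$ with $\gamma_{z}(i_X(\xi))=z\,i_X(\xi)$ and $\gamma_{z}(i_A(a))=i_A(a)$; the same formulas define a gauge action on $\TT_X^{\mathrm{univ}}$, and averaging over $\T$ produces faithful conditional expectations onto the two fixed-point subalgebras, intertwined by the canonical surjection $q\colon\TT_X^{\mathrm{univ}}\to\TT_X$. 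A standard argument then reduces injectivity of $q$ to its injectivity on the fixed-point algebra, which is the increasing union over $N$ of the spans of the degree-zero elements $i_X^{(k)}(\xi)i_X^{(k)}(\eta)^{*}$ with $k\le N$; on $\FF_X$ such an element acts on $X^{\otimes m}\cong X^{\otimes k}\otimes_A X^{\otimes(m-k)}$ (for $m\ge k$) as $\Theta_{\xi,\eta}\otimes 1$, and as $0$ for $m<k$. The hard point is to verify that $q$ is faithful here, for which the plan is to show that each finite stage already acts faithfully on the finite block $\bigoplus_{k\le m}X^{\otimes k}$ of $\FF_X$ --- a direct computation, using for the bottom stage the identification $\KK(A_A)\cong A$ of Example~\ref{Fang} --- and then pass to the limit. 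Granting this, $q$ is an isomorphism, $\TT_X$ inherits the universal property, and $\pi\times t$ is the composite $\TT_X\xrightarrow{\ q^{-1}\ }\TT_X^{\mathrm{univ}}\longrightarrow C^{*}(\pi,t)\hookrightarrow B$.
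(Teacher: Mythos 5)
The thesis never proves this statement: it is recorded with a bare citation to \cite[Proposition~3.3]{FreeProbTheory}, so there is no in-text argument to measure yours against. Your proposal is nonetheless a correct and standard route, and it is genuinely different from the cited one: Pimsner obtains the universal property essentially by dilation, exhibiting a given Toeplitz representation as a co-invariant compression of an induced Fock-type representation, whereas you pass through the universal $C^*$-algebra $\TT_X^{\mathrm{univ}}$ (which exists because the relations force $\normof{t(\xi)}\leq\normof{\xi}$) and prove faithfulness of the Fock representation by gauge-invariant uniqueness. This is how Fowler and Raeburn treat the Toeplitz algebra, and it has the advantage of handling representations in an arbitrary $C^*$-algebra $B$ directly, without first representing $B$ on a Hilbert space. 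Your preliminary steps are all sound: $t^{(n)}$ is well defined and contractive because $t^{(n)}(\zeta)^*t^{(n)}(\zeta)=\pi(\IP{\zeta,\zeta})$, the spanning description of $\TT_X$ follows from $i_X(\eta)^*i_X(\xi)=i_A(\IP{\eta,\xi})$ together with Cohen factorisation, and the reduction of injectivity of $q$ to injectivity on the fixed-point algebra via the faithful expectation obtained by averaging over $\T$ is routine.

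The only thin spot is the one you flag: injectivity of $q$ on the cores $B_N=\overline{\Span}\{i_X^{(k)}(\xi)i_X^{(k)}(\eta)^*:k\leq N\}$. Be aware that a general element of $B_N$ is only a \emph{limit} of finite sums, so you cannot literally decompose it into degree-$k$ components and treat them separately; the peeling must be done with the compressions $b\mapsto P_kq(b)P_k$ by the projections $P_k$ of $\FF_X$ onto $X^{\otimes k}$, which annihilate the components of degree greater than $k$ and, once the lower components have been removed, recover the degree-$k$ component. The base case needs injectivity of $a\mapsto P_0q(i_A(a))P_0$, which is left multiplication $A\to\LL(A_A)$ and is faithful by Example~\ref{Fang}; the inductive step needs that the canonical map $\KK(X^{\otimes k})\to\TT_X^{\mathrm{univ}}$, $\Theta_{\xi,\eta}\mapsto i_X^{(k)}(\xi)i_X^{(k)}(\eta)^*$, followed by $P_kq(\cdot)P_k$ is the identity on $\KK(X^{\otimes k})$, so that the first map is isometric. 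With these points made precise the finite block $\bigoplus_{k\leq N}X^{\otimes k}$ does suffice, as you predicted, and your argument closes.
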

\begin{remark}
	If $X$ is graded by $\alpha_X$, then as in Example~\ref{eg: direct sum grading} $\alpha_X$ induces a grading $\alpha^{\infty}_X$ on $\FF_X$ such that for $x_1\otimes\dots\otimes x_n\in X^{\otimes n}$ we have
	\begin{equation*}
		\alpha^{\infty}_X(x_1\otimes\dots\otimes x_n)=\alpha_X(x_1)\otimes\dots\otimes\alpha_X(x_n),
	\end{equation*}
	and for $a\in A$ we have $\alpha^{\infty}_X(a)=\alpha_A(a)$. The Toeplitz algebra then carries the induced grading $\widetilde{\alpha^{\infty}_X}$ on $\LL(\FF_X)$. We compute for $x\in X$ and $x_1\otimes\dots\otimes x_n\in X^{\otimes n}$
	\begin{align*}
		\widetilde{\alpha^{\infty}_X}(T_x)(x_1\otimes\dots\otimes x_n)&=\alpha^{\infty}_X\big(x\otimes \alpha_X(x_1)\otimes\dots\otimes\alpha_X(x_n)\big)\\
		&=\alpha_X(x)\otimes x_1\otimes\dots\otimes x_n\\
		&=T_{\alpha_X(x)}x_1\otimes\dots\otimes x_n.\\
	\end{align*}
	Hence $\widetilde{\alpha^{\infty}_X}(T_x)=T_{\alpha_X(x)}$.
\end{remark}

\begin{example}
	Let $A$ be a $C^*$-algebra and consider the $A$--$A$ correspondence $_AA_A$. Then Example~\ref{Gen hom balance} shows that $A\otimes_AA\cong A$, so we have $A^{\otimes n}\cong A$. The Fock space $\FF_A$ then is simply
	\begin{equation*}
		\FF_A=\bigoplus_{n=0}^{\infty}A_A=\HH_A.
	\end{equation*}
	For $(b)\in\HH_A$, under the isomorphism $A^{\otimes n}\cong A$, the maps $T_a$ are given by
	\begin{equation*}
		(T_a(b))_n=ab_{n-1}.
	\end{equation*}
\end{example}

It turns out that $\TT_X$ contains a copy of the compact operators on $\FF_X$, which we will use of later.
\begin{lemma}\label{comapcts inclusion in Toeplitz}
	Let $X$ be a countably generated $A$-$A$ correspondence, suppose that the left action $\varphi:A\to\LL(X)$ is injective and let $I$ be the ideal $\varphi^{-1}(\KK(X))$. Then there is an embedding $j:\KK(\FF_{X,I})\to \TT_X$, where $\FF_{X,I}$ denotes the Hilbert $I$ module of Lemma~\ref{ONE}.
\end{lemma}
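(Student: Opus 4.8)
The plan is to realise $\KK(\FF_{X,I})$ as a concrete $C^*$-subalgebra of $\TT_X\subseteq\LL(\FF_X)$ by writing down, for each rank-one operator on $\FF_{X,I}$, an explicit element of $\TT_X$ built from creation operators and a ``twisted vacuum projection'', and then invoking the fact that an injective $*$-homomorphism of $C^*$-algebras is isometric. Two preliminary facts will be used. By Lemma~\ref{ONE}, $\FF_{X,I}=\FF_X\cdot I=\bigoplus_{n\ge 0}(X^{\otimes n}\cdot I)$, so $\KK(\FF_{X,I})$ is the closed span of the operators $\Theta_{\xi a,\eta b}$ with $\xi\in X^{\otimes m}$, $\eta\in X^{\otimes n}$, $a,b\in I$, $m,n\ge 0$; here I write $i_X^{(n)}\colon X^{\otimes n}\to\TT_X$ for the creation operator, which on elementary tensors is the product $i_X(\xi_1)\cdots i_X(\xi_n)$ and extends by continuity since $i_X^{(n)}(\xi)^*i_X^{(n)}(\xi)=i_A(\IP{\xi,\xi}_{X^{\otimes n}})$. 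Secondly, since $i_X(u)i_X(v)^*$ acts on $\FF_X$ as $\bigoplus_{k\ge 1}(\Theta_{u,v}\otimes 1_{X^{\otimes(k-1)}})$ (and as $0$ on $X^{\otimes 0}$) and $T\mapsto T\otimes 1$ is a contractive homomorphism by Lemma~\ref{T}, the assignment $\Theta_{u,v}\mapsto i_X(u)i_X(v)^*$ extends to a homomorphism $\psi\colon\KK(X)\to\TT_X$.

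The key step will be to introduce, for $c\in I$ (so that $\varphi(c)\in\KK(X)$), the operator $P_0^c:=i_A(c)-\psi(\varphi(c))\in\TT_X$ and to check, by a direct computation of the restrictions to each $X^{\otimes k}$, that $P_0^c$ acts on $\FF_X$ as left multiplication $L_c$ on $X^{\otimes 0}=A$ and as $0$ on every $X^{\otimes k}$ with $k\ge 1$: indeed $i_A(c)$ restricts on $X^{\otimes k}$ to $\varphi(c)\otimes 1_{X^{\otimes(k-1)}}$ for $k\ge 1$ and to $L_c$ for $k=0$, whereas $\psi(\varphi(c))$ restricts to $\varphi(c)\otimes 1_{X^{\otimes(k-1)}}$ for $k\ge 1$ and to $0$ for $k=0$. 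Then, for homogeneous generators as above, I will set
\[
 \Phi(\Theta_{\xi a,\eta b})\ :=\ i_X^{(m)}(\xi)\,P_0^{ab^*}\,i_X^{(n)}(\eta)^*\ \in\ \TT_X
\]
(note $ab^*\in I$ since $I$ is an ideal), and compute that this operator vanishes on $X^{\otimes k}$ for $k\neq n$ and sends $\zeta\in X^{\otimes n}$ to $\xi\cdot\bigl(ab^*\IP{\eta,\zeta}\bigr)$; since $\eta b\in X^{\otimes n}\cdot I$, its restriction to $\FF_{X,I}$ is exactly the rank-one operator $\Theta_{\xi a,\eta b}$ on $\FF_{X,I}$.

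Finally I will show that $\Phi$ extends to a well-defined, contractive, injective $*$-homomorphism $j\colon\KK(\FF_{X,I})\to\TT_X$. On finite-rank operators, $*$-compatibility and multiplicativity are immediate from the formula (using that the $I$-valued inner product on $\FF_{X,I}$ is the restriction of the $A$-valued one). Well-definedness and contractivity will both come from the approximate-unit identity $\Phi(K)(z)\cdot e_\lambda=K(z\cdot e_\lambda)$, valid for $z\in\FF_X$, $K$ finite-rank, and $(e_\lambda)$ an approximate unit of $I$ with $0\le e_\lambda\le 1$: since $\Phi(K)(z)\in\FF_{X,I}$ we may let $\lambda\to\infty$ to get $\normof{\Phi(K)(z)}=\lim_\lambda\normof{K(z e_\lambda)}\le\normof{z}\,\normof{K}_{\LL(\FF_{X,I})}$, hence $\normof{\Phi(K)}\le\normof{K}$, and taking $K=0$ gives well-definedness. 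Injectivity then follows because, passing to norm-limits, $\Phi(K)$ and $\Phi(K)^*=\Phi(K^*)$ map $\FF_X$ into the closed submodule $\FF_{X,I}$, so $\Phi(K)$ restricts to an operator on $\FF_{X,I}$; this restriction equals $K$ (true on finite ranks, then by continuity of restriction), so $\Phi(K)=0$ forces $K=0$. An injective $*$-homomorphism between $C^*$-algebras is isometric, giving the desired embedding $j=\Phi$.

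The main obstacle — and the conceptual heart of the argument — is the observation that the genuine vacuum projection on $\FF_X$ is \emph{not} an element of $\TT_X$, but its ``twist'' $P_0^c$ by an element $c\in I$ is, and that the two factors $a,b\in I$ supplied by a vector of $\FF_{X,I}$ are exactly what is needed to manufacture such a $c$; this is precisely why the statement must be about $\FF_{X,I}$ rather than $\FF_X$. A secondary point that needs care is that $\FF_{X,I}$ is not complemented in $\FF_X$, so one cannot argue by compressing operators to $\FF_{X,I}$; this is circumvented everywhere by approximating norm-$\le 1$ vectors of $\FF_X$ by the vectors $z\cdot e_\lambda\in\FF_{X,I}$.
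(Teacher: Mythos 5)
Your proof is correct, but it takes a genuinely different route from the one in the text. The proof given here fixes an approximate identity $(e_k)$ for $A$ and a frame $(x_i)$ for $X$ (via Lemma~\ref{frames exist}, which is where the countable-generation and $\sigma$-unitality hypotheses enter), and shows by estimating restrictions to each $X^{\otimes n}$ that $T_a\big(\varphi^{\infty}(e_k)-\sum_{i\le k}T_{x_i}T_{x_i}^*\big)T_b^*\to\Theta_{a,b}$ for $a,b\in I$, before conjugating by $T_x,T_y^*$ to reach general rank-one operators on $\FF_{X,I}$; the rank-one operators are thus obtained as norm limits of elements of $\TT_X$. You instead observe that the element $P_0^c=i_A(c)-\psi(\varphi(c))$ (with $\psi$ the map written $i_A^{(1)}$ elsewhere in the text) is \emph{exactly} the operator $L_c\oplus 0\oplus 0\oplus\cdots$ on $\FF_X$ whenever $\varphi(c)\in\KK(X)$, so that $i_X^{(m)}(\xi)P_0^{ab^*}i_X^{(n)}(\eta)^*$ is on the nose the extension of $\Theta_{\xi a,\eta b}$ to $\FF_X$ --- no frame, no approximation of the individual generators. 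This is the same identity the text rediscovers later in the proof of Theorem~\ref{Cuntz-Pimsner exact sequence} (where $i_A(a)-i_A^{(1)}(\varphi(a))$ is computed to be $\lim_i\Theta_{a,e_i}$), so your argument in effect runs that computation in reverse and makes the two halves of the exact-sequence argument more symmetric. What your approach buys: it dispenses with frames entirely, hence with the implicit $\sigma$-unitality of $A$ and, as far as I can see, with both countable generation of $X$ and injectivity of $\varphi$ (injectivity is used in the text only to reduce the norm estimate on $\bigoplus_{n\ge 1}X^{\otimes n}$ to $X$ via Lemma~\ref{T}, a step you do not need). What the text's approach buys is that the frame identity $\sum_i T_{x_i}T_{x_i}^*\to$ (identity off the vacuum) is reused conceptually elsewhere, and it avoids your one extra piece of bookkeeping: the distinction between $\Theta_{x,y}$ as an operator on $\FF_{X,I}$ and its extension to $\FF_X$, which you handle correctly with the approximate-unit identity $\Phi(K)(z)e_\lambda=K(ze_\lambda)$ and which the text passes over silently. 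Two minor points worth spelling out if you write this up: the decomposition $\FF_{X,I}=\bigoplus_n(X^{\otimes n}\cdot I)$ needs the hereditarity of the positive cone of the ideal $I$ (each $\IP{x_n,x_n}\le\IP{x,x}\in I$ forces $\IP{x_n,x_n}\in I$), and multiplicativity of $\Phi$ on finite-rank operators, while true, deserves the one-line verification via $\Theta_{\xi a,\eta b}\Theta_{\xi'a',\eta'b'}=\Theta_{\xi a\IP{\eta b,\xi'a'},\eta'b'}$ rather than being called immediate.
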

\begin{proof}
	For $x=x_1\otimes\dots\otimes x_n\in X^{\otimes n}$ we define
	\begin{equation*}
		T_x=T_{x_1}\dots T_{x_n}\qquad\text{ and }\qquad T_x^*=T_{x_n}\dots T_{x_1}^*,
	\end{equation*}
	and for $a\in A$ we define $T_a=\varphi^{\infty}(a)$ and $T_a^*=\varphi^{\infty}(a^*)$. Let $e_k$ be an approximate identity for $A$ and let $\{x_i\}$ be a frame for $X$ (such a frame exists by Lemma~\ref{frames exist}).
First we prove that 
\begin{equation*}
	T_a \Big(\varphi^\infty(e_n) - \sum^n_{i=1} T_{x_i} T^*_{x_i}\Big) T^*_b \to \Theta_{a, b},
\end{equation*}
for $a,b\in I$, where we have $T_b^*=\varphi^{\infty}(b^*)$ and $T_a=\varphi^{\infty}(a)$. Fix $\ep>0$.  If $a=0$ then the result follows immediately, so suppose $a\neq 0$. We wish to show that for large $k$
\begin{equation*}
	\Bignormof{T_a \Big(\varphi^\infty(e_k) - \sum^k_{i=1} T_{x_i} T^*_{x_i}\Big) T^*_b - \Theta_{a, b}}<\ep.
\end{equation*}
Employing Lemma~\ref{direct sum of operators}, it suffices to show that
\begin{equation*}
	\Biggnormof{\Bigg(T_a \Big(\varphi^\infty(e_k) - \sum^k_{i=1} T_{x_i} T^*_{x_i}\Big) T^*_b - \Theta_{a, b}\Bigg)\Bigg|_{X^{\otimes n}}}<\ep
\end{equation*}
for all $n\geq 0$, with $X^{\otimes 0}=A$. For any $c\in A$ we have $T_{x_i}^*c=0$, hence we compute
\begin{equation*}
	T_a \Big(\varphi^\infty(e_k) - \sum^k_{i=1} T_{x_i} T^*_{x_i}\Big) T^*_bc - \Theta_{a, b}c=ae_kb^*c-ab^*c.
\end{equation*}
Thus on the subspace $A$ we have
\begin{equation*}
	T_a \Big(\varphi^\infty(e_k) - \sum^k_{i=1} T_{x_i} T^*_{x_i}\Big) T^*_b - \Theta_{a, b}=ae_kb^*-ab^*.
\end{equation*}
Since $e_k$ is an approximate unit for $I$, there exists $K_0$ such that for $k\geq K_0$ we have
\begin{equation*}
	\normof{ae_kb^*-ab^*}<\ep.
\end{equation*}
Hence for $k\geq K_0$ we have
\begin{equation*}
	\Bignormof{\Big(T_a \Big(\varphi^\infty(e_k) - \sum^k_{i=1} T_{x_i} T^*_{x_i}\Big) T^*_b - \Theta_{a, b}\Big)\Big|_{A}}=\normof{ae_kb^*-ab^*}<\ep.
\end{equation*}
Now the operator $\Theta_{a,b}$ when restricted to $X^{\otimes n}$ for $n\geq 1$ is zero. Hence we have
\begin{equation*}
	\Bignormof{\Big(T_a \Big(\varphi^\infty(e_k) - \sum^k_{i=1} T_{x_i} T^*_{x_i}\Big) T^*_b - \Theta_{a, b}\Big)\Big|_{X^{\otimes n}}}=\Bignormof{T_a \Big(\varphi^\infty(e_k) - \sum^k_{i=1} T_{x_i} T^*_{x_i}\Big) T^*_b \Big|_{X^{\otimes n}}}.
\end{equation*}

We compute for simple tensors $x_1\otimes\dots \otimes x_n\in X^{\otimes n}$ with $n\geq 1$
\begin{align*}
	T_a \Big(&\varphi^\infty(e_k) - \sum^k_{i=1} T_{x_i} T^*_{x_i}\Big) T^*_b(x_1\otimes\dots\otimes x_n)\\
	&=(\varphi(ae_kb^*)x_1)\otimes x_2\otimes\dots\otimes x_n-\sum_{i=1}^k(\varphi(a)x_i)\otimes \varphi(\IP{x_i,\varphi(b^*)x_1})x_2\otimes\dots\otimes x_n\\
	&=\varphi(a)\Big(\varphi(e_kb^*)x_1-\sum_{i=1}^kx_i\IP{x_i,\varphi(b^*)x_1}\Big)\otimes x_2\otimes\dots\otimes x_n\\
	&=\varphi(a)\Big(\big(\varphi(e_kb^*)-\sum_{i=1}^k\Theta_{x_i,x_i}\varphi(b^*)\big)x_1\Big)\otimes x_2\otimes\dots\otimes x_n.\\
\end{align*}
Thus on $X^{\otimes n}$, we see that $T_a \Big(\varphi^\infty(e_k) - \sum^k_{i=1} T_{x_i} T^*_{x_i}\Big) T^*_b$ is of the form
\begin{equation*}
	 \varphi(a)\Big(\varphi(e_kb^*)-\sum_{i=1}^k\Theta_{x_i,x_i}\varphi(b^*)\Big)\Otimes 1. 
\end{equation*}
Since the left action $\varphi$ is injective, by Lemma~\ref{T}, in order to compute
\begin{equation*}
	\Bignormof{T_a \Big(\varphi^\infty(e_k) - \sum^k_{i=1} T_{x_i} T^*_{x_i}\Big) T^*_b\Big|_{X^{\otimes n},n\geq 1}},
\end{equation*}
it suffices to calculate
\begin{equation*}
	\Bignormof{T_a \Big(\varphi^\infty(e_k) - \sum^k_{i=1} T_{x_i} T^*_{x_i}\Big) T^*_b\Big|_{X}}=\bignormof{\varphi(a)\big(\varphi(e_kb^*)-\sum_{i=1}^k\Theta_{x_i,x_i}\varphi(b^*)\big)}.
\end{equation*}
Since $e_k$ is an approximate unit for $A$ and since $a\neq 0$, there is an $N_0$ such that for $k\geq N_0$ we have
\begin{equation*}
	\normof{e_kb^*-b^*}<\ep/2\normof{a}.
\end{equation*}
Since $x_i$ is a frame for $X$ and $b\in I$, we have $\varphi(b^*)\in\KK(X)$ and so 
\begin{equation*}
	\lim_{k\to\infty}\sum_{i=1}^k\Theta_{x_i,x_i}\varphi(b^*)\to \varphi(b^*).
\end{equation*}
That is, there is some $N_1$ such that for $k\geq N_1$ we have
\begin{equation*}
	\normof{\varphi(b^*)-\sum_{i=1}^k\Theta_{x_i,x_i}\varphi(b^*)}<\ep/2\normof{a}.
\end{equation*}
Hence for $k\geq N_0,N_1$ we have
\begin{align*}
	\normof{\varphi(a)\big(\varphi(e_kb^*)-\sum_{i=1}^k\Theta_{x_i,x_i}\varphi(b^*)\big)}&\leq\normof{\varphi(a)}\normof{\varphi(e_kb^*-b^*)}+\normof{\varphi(a)}\normof{\varphi(b^*)-\sum_{i=1}^k\Theta_{x_i,x_i}\varphi(b^*)}\\
	&< \normof{a}\normof{e_kb^*-b^*}+\ep/2<\ep.\\
\end{align*}
Thus we conclude that for $k\geq K_0,N_0,N_1$, we have
\begin{align*}
	\Bignormof{T_a&\Big(\varphi^{\infty}(e_k) - \sum^k_{i=1} T_{x_i} T^*_{x_i}\Big) T^*_b-\Theta_{a,b}}\\
	&=\sup_{n=0,1,\dots}\Bignormof{\Big(T_a \Big(\varphi^{\infty}(e_k) - \sum^k_{i=1} T_{x_i} T^*_{x_i}\Big) T^*_b-\Theta_{a,b}\Big)\Big|_{X^{\otimes n}}}<\ep\\
\end{align*}
and so
\begin{equation*}
	T_a \Big(\varphi^{\infty}(e_k) - \sum^n_{i=k} T_{x_i} T^*_{x_i}\Big) T^*_b\to \Theta_{a,b}
\end{equation*}
as claimed. Now we wish to show that for any $x,y\in \FF_{X,I}$, we have
\begin{equation*}
	T_x \Big(\varphi^{\infty}(e_k) - \sum^n_{i=k} T_{x_i} T^*_{x_i}\Big) T^*_y\to \Theta_{x,y}.
\end{equation*}
Consider the operator $T_x\Theta_{a,b}T_y^*$ for $a,b\in A$ and $y\in X^{\otimes n}$. This operator is non-zero only on $X^{\otimes n}$ and if $z\in X^{\otimes n}$ we compute
\begin{equation*}
	T_x\Theta_{a,b}T_y^*(z)=x\dot (ab^*\IP{y,z})=xa\cdot \IP{yb,z}=\Theta_{xa,yb}(z).
\end{equation*}
For $z\in (X^{\otimes n})^{\perp}$ we have
\begin{equation*}
	T_x\Theta_{a,b}T_y^*(z)=\Theta_{xa,yb}(z)=0.
\end{equation*}
Hence we have $T_x\Theta_{a,b}T_y^*=\Theta_{xa,yb}$. Linearity and continuity of $y\mapsto \Theta_{x,y}$ then tells us that for any $x,y\in\FF_X$, we have $T_x\Theta_{a,b}T_y^*=\Theta_{xa,yb}$. Now fix $x,y\in\FF_{X,I}$. Lemma~\ref{ONE} implies that there are $u,v\in\FF_X$ and and $a,b\in I$ such that $x=ua$ and $y=vb$. We then compute
\begin{align*}
	T_x \Big(\varphi^\infty(e_k) - \sum^k_{i=1} T_{x_i} T^*_{x_i}\Big) T^*_y&=  T_{u} \Big(T_a \Big(\varphi^\infty(e_k) - \sum^k_{i=1} T_{x_i} 	T^*_{x_i}\Big) T^*_b\Big)T^*_{v},\\
\end{align*}
which converges to
\begin{equation*}
	T_{u} \Theta_{a,b} T^*_{v}= \Theta_{ua,vb}=\Theta_{x,y}.
\end{equation*}
Hence $\TT_X$ is a $C^*$-algebra containing the finite rank operators on $\FF_{X,I}$, and so we deduce that $\KK(\FF_{X,I})\subseteq \TT_X$.
\end{proof}

\section{The Cuntz-Pimsner algebra $\OO_X$}\label{OO}
 In this section we will look at two definitions of $\OO_X$. When the left action $\phi$ is injective, these definitions coincide, but when $\phi$ is not injective they no longer agree. For our main theorem in Chapter~\ref{The Exact Sequence!} we will require $\phi$ to be injective and so will not need to differentiate.
 \subsection{Pimsner's definition}
 To define $\OO_X$ according to Pimsner we will need a technical lemma which we will not prove.
\begin{lemma}(\cite[Proposition 2.43]{Crossedandunitizations})
	Let $X$ be an $A$-$A$ correspondence and $(\pi, t)$ a representation of $X$ in $B$. There exists a homomorphism $\pi^{(1)}:\KK(X)\to B$ such that $\pi^{(1)}(\theta_{x,y})=t(x)t(y)^*$ for all $x,y\in X$.
\end{lemma}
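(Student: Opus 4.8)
The statement asserts that a representation $(\pi,t)$ of an $A$--$A$ correspondence $X$ in a $C^*$-algebra $B$ induces a homomorphism $\pi^{(1)}\colon\KK(X)\to B$ with $\pi^{(1)}(\Theta_{x,y})=t(x)t(y)^*$. The natural strategy is: first define $\pi^{(1)}$ on finite rank operators by the obvious formula, then check it is well-defined (independent of the way a finite rank operator is written as a sum of rank one operators), then check it is a $*$-homomorphism on the (dense, non-closed) $*$-algebra of finite rank operators, then observe that a $*$-homomorphism between $C^*$-algebras (or pre-$C^*$-algebras) is automatically norm-decreasing, and finally extend by continuity to the closure $\KK(X)$ using Theorem~\ref{Sang}. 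The key technical point, and the main obstacle, is well-definedness, since there is no canonical normal form for a finite rank operator as a sum of $\Theta_{x,y}$'s.

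\textbf{Handling well-definedness.} First I would record the basic algebraic identities that make the formula plausible: using the representation axioms $t(\xi)^*t(\eta)=\pi(\IP{\xi,\eta})$ and $t(\phi(a)\xi)=\pi(a)t(\xi)$ (together with the derived identity $t(\xi a)=t(\xi)\pi(a)$ proved in the excerpt), one computes
\begin{align*}
(t(x_1)t(y_1)^*)(t(x_2)t(y_2)^*)&=t(x_1)\pi(\IP{y_1,x_2})t(y_2)^*=t(x_1\IP{y_1,x_2})t(y_2)^*,
\end{align*}
which matches $\pi^{(1)}$ applied to $\Theta_{x_1,y_1}\Theta_{x_2,y_2}=\Theta_{x_1\IP{y_1,x_2},y_2}$, and similarly $(t(x)t(y)^*)^*=t(y)t(x)^*$ matches $\Theta_{x,y}^*=\Theta_{y,x}$. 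To prove well-definedness I would show: if $\sum_i\Theta_{x_i,y_i}=0$ in $\LL(X)$ then $\sum_i t(x_i)t(y_i)^*=0$ in $B$. The cleanest route is to estimate the norm: for any finite set of $\xi_j,\eta_j\in X$,
\begin{equation*}
\Bignormof{\sum_i t(x_i)t(y_i)^*}=\sup\Big\{\Bignormof{\sum_{i}t(\eta_j)^*t(x_i)t(y_i)^*t(\xi_k)}:\dots\Big\},
\end{equation*}
rewriting $t(\eta)^*t(x_i)t(y_i)^*t(\xi)=\pi(\IP{\eta,x_i}\IP{y_i,\xi})=\pi(\IP{\eta,\Theta_{x_i,y_i}\xi})$, so that $\sum_i t(\eta)^*t(x_i)t(y_i)^*t(\xi)=\pi(\IP{\eta,(\sum_i\Theta_{x_i,y_i})\xi})$ depends only on $\sum_i\Theta_{x_i,y_i}$. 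Combining this with a Cauchy--Schwarz / approximate-identity argument to recover the norm of an element of $B$ from inner products of the form $c^*(\sum_i t(x_i)t(y_i)^*)d$, one concludes $\normof{\sum_i t(x_i)t(y_i)^*}\le\normof{\sum_i\Theta_{x_i,y_i}}$. This simultaneously proves well-definedness \emph{and} that $\pi^{(1)}$ is norm-decreasing on finite rank operators, which is exactly what is needed for the extension.

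\textbf{Finishing.} With the bound $\normof{\pi^{(1)}(K)}\le\normof{K}$ in hand for $K$ finite rank, $\pi^{(1)}$ is a well-defined linear, multiplicative, $*$-preserving, uniformly continuous map on $\Span\{\Theta_{x,y}\}$; Theorem~\ref{Sang} then yields a unique continuous extension to $\KK(X)=\overline{\Span}\{\Theta_{x,y}\}$, and continuity transports the homomorphism identities to the closure. I expect the only genuinely delicate step to be the norm estimate establishing well-definedness, since everything else is a routine continuity-and-density argument; there one must be a little careful about whether $B$ is unital (pass to a unitization, or use an approximate identity for $C^*(\pi,t)$) when extracting $\normof{b}$ from expressions $\normof{c^*bd}$. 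An alternative, if one prefers to avoid the direct estimate, is to realize $B\subseteq\LL(B_B)$ and note that $(\pi,t)$ together with the obvious module structure makes $B$ look like a quotient of a Fock-type construction, but the direct computation above is more self-contained and is the approach I would actually write up.
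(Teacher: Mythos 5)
The paper itself does not prove this lemma (it is introduced as ``a technical lemma which we will not prove'', with a citation to \cite[Proposition~2.43]{Crossedandunitizations}), so there is no in-paper argument to compare against and your proposal must stand on its own. Your architecture --- define $\pi^{(1)}$ on finite-rank operators, prove well-definedness together with a norm bound, extend via Theorem~\ref{Sang} --- is the right one, and the algebraic verifications of multiplicativity and $*$-preservation on rank-one operators are correct. But the step you yourself identify as the crux is not actually proved, and both of your proposed ways of discharging it fail. The identity $t(\eta)^*\big(\sum_i t(x_i)t(y_i)^*\big)t(\xi)=\pi(\IP{\eta,T\xi})$, where $T=\sum_i\Theta_{x_i,y_i}$, is correct and shows that the \emph{matrix coefficients} of $b=\sum_i t(x_i)t(y_i)^*$ depend only on $T$; however, the asserted equality $\normof{b}=\sup\normof{t(\eta)^*bt(\xi)}$ is precisely the hard content of the lemma and is left unjustified. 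The natural attempts to recover $\normof{b}$ from these coefficients (for instance $b^*b=\sum_i t(y_i)t(T^*x_i)^*$, which gives $\normof{b}^2\le\normof{T}\sum_i\normof{x_i}\,\normof{y_i}$) only produce bounds whose constants depend on the chosen decomposition of $T$, which suffices for neither well-definedness of a \emph{bounded} map nor the continuous extension. The fallback claim that ``a $*$-homomorphism between pre-$C^*$-algebras is automatically norm-decreasing'' is false: automatic contractivity needs a complete domain. For example, evaluation at $2$ is an unbounded $*$-homomorphism from the dense $*$-subalgebra $\C[x]\subset C([0,1])$ to $\C$, since $\normof{x^n}_\infty=1$ while $|2^n|\to\infty$. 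So you cannot obtain the norm bound for free after checking well-definedness on the non-closed span of the $\Theta_{x,y}$.

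A correct way to close the gap with tools already in the paper is the following. First, $b\,t(z)=\sum_i t(x_i)\pi(\IP{y_i,z})=t(Tz)$ for all $z\in X$, and since $\IP{Tz,Tz}\le\normof{T}^2\IP{z,z}$ in $A$ (the inequality invoked in the proof of Lemma~\ref{T}) and $\pi$ preserves positivity, one gets $\normof{b\,t(z)c}^2=\normof{c^*\pi(\IP{Tz,Tz})c}\le\normof{T}^2\normof{t(z)c}^2$ for all $c\in B$. Second, $b^*=\sum_i t(y_i)t(x_i)^*$ lies in $Y\coloneqq\overline{t(X)B}$, so $\normof{b}^2=\normof{bb^*}\le\normof{b}\cdot\sup\{\normof{b\zeta}:\zeta\in Y,\ \normof{\zeta}\le1\}$; extending the displayed estimate from elements $t(z)c$ to all of $Y$ via the inner-product-preserving map $U:X\otimes_{\pi}B\to Y$, $U(z\otimes c)=t(z)c$ (Lemma~\ref{lemma: Well defined unitaries on tensor product}) together with Lemma~\ref{T} bounds this supremum by $\normof{T}$. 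This yields $\normof{\sum_i t(x_i)t(y_i)^*}\le\normof{\sum_i\Theta_{x_i,y_i}}$, giving well-definedness and contractivity simultaneously, after which your continuity-and-density finish goes through verbatim. (Equivalently, one may simply define $\pi^{(1)}(k)=U(k\otimes 1)U^*$ and check that this agrees with $t(x)t(y)^*$ on $\Theta_{x,y}$, which is the alternative you gesture at in your last sentence and is, contrary to your assessment, the less delicate route.)
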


\begin{definition}
	Let $X$ be an $A$-$A$ correspondence with $\phi:A\to\LL(X)$ the left action and $I=\phi^{-1}(\KK(X))$ as in Lemma~\ref{compact otimes 1}. Let $(\pi,t)$ be a representation of $X$ in $B$. We say that $(\pi,t)$ is \textit{Cuntz-Pimsner covariant} if for all $a\in I$ we have
	\begin{equation*}
	\pi(a)=\pi^{(1)}(\phi(a)).
	\end{equation*}
\end{definition}
The following definition of the Cuntz-Pimsner algebra $\OO_X$ is slightly different than Pimsner's original definition in \cite{FreeProbTheory}. Defining $\OO_X$ in the following way will save us some work later on.
\begin{definition}\label{def: Cuntz-Pimsner}
	Let $X$ be an $A$-$A$ correspondence with $\phi:A\to\LL(X)$ the left action and $I=\phi^{-1}(\KK(X))$ as in Lemma~\ref{compact otimes 1}. Let $(i_A,i_X)$ be the Fock space representation, let $J$ be the ideal in $\TT_X$ generated by elements of the form $i_A(a)-i_A^{(1)}(\phi(a))$ for $a\in I$, and let $q:\LL(\FF_X)\to \LL(\FF_X)/J$ be the quotient map. The Cuntz-Pimsner algebra $\OO_X$ associated to $X$ is the $C^*$-algebra $C^*(i_A\circ q,i_X\circ q)$ in $\LL(\FF_X)/J$.
\end{definition}
Now we have defined $\OO_X$ we would like to show that it has some nice properties. First we will need a technical lemma.
\begin{lemma}\label{Lem:Technical}
	Let $X$ be an $A$--$A$ correspondence. For each $T\in\KK(\FF_X)$ we have 
	\begin{equation*}
		\normof{T\big|_{X^{\otimes j}}}\to 0\text{ as }j\to \infty.
	\end{equation*}
\end{lemma}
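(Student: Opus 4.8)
The plan is to reduce to the case of a single rank one operator $\Theta_{x,y}$ by a standard density argument, and then to read off the decay from the fact that the ``tail'' of an element of $\FF_X$ goes to zero. The two ingredients that make the reduction work are linearity of $T\mapsto T\big|_{X^{\otimes j}}$ and the uniform bound $\normof{T\big|_{X^{\otimes j}}}\le\normof{T}$, which holds because restriction to a submodule of the domain cannot increase the operator norm. Since $\KK(\FF_X)=\overline{\Span}\{\Theta_{x,y}:x,y\in\FF_X\}$, once the conclusion is known for each $\Theta_{x,y}$ it passes to finite linear combinations by the triangle inequality, and then to an arbitrary compact $T$: given $\ep>0$ pick a finite rank $K$ with $\normof{T-K}<\ep/2$, choose $j_0$ with $\normof{K\big|_{X^{\otimes j}}}<\ep/2$ for $j\ge j_0$, and estimate $\normof{T\big|_{X^{\otimes j}}}\le\normof{(T-K)\big|_{X^{\otimes j}}}+\normof{K\big|_{X^{\otimes j}}}\le\normof{T-K}+\normof{K\big|_{X^{\otimes j}}}<\ep$.

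So the substantive step is the rank one case. I would write $y=(y_n)_{n\ge 0}\in\FF_X$ with $y_n\in X^{\otimes n}$, so that $\sum_{n\ge 0}\IP{y_n,y_n}$ converges in $A$. For $z\in X^{\otimes j}$, regarded as the element of $\FF_X$ supported in degree $j$, the Fock space inner product collapses to $\IP{y,z}_{\FF_X}=\IP{y_j,z}_{X^{\otimes j}}\in A$, hence
\begin{equation*}
\Theta_{x,y}(z)=x\cdot\IP{y_j,z},\qquad\text{so}\qquad \normof{\Theta_{x,y}(z)}\le\normof{x}\,\normof{\IP{y_j,z}}\le\normof{x}\,\normof{y_j}\,\normof{z}
\end{equation*}
by Lemma~\ref{lemma: norm is sup of IPs}. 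Therefore $\normof{\Theta_{x,y}\big|_{X^{\otimes j}}}\le\normof{x}\,\normof{y_j}$. Because $\sum_{n}\IP{y_n,y_n}$ converges in $A$, its terms go to zero, i.e. $\normof{y_j}^2=\normof{\IP{y_j,y_j}}\to 0$ as $j\to\infty$, and the right hand side tends to $0$, giving the claim for $\Theta_{x,y}$.

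I do not expect a genuine obstacle here; the only point requiring a little care is bookkeeping with the $A$-valued (rather than scalar) inner product and the direct-sum structure of $\FF_X$ — in particular noting that evaluating $\Theta_{x,y}$ on a homogeneous element of degree $j$ only sees the degree-$j$ component $y_j$ of $y$, which is exactly what forces the decay. Everything else is the $\ep/2$ approximation described above.
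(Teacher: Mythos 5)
Your proof is correct and follows essentially the same route as the paper: approximate $T$ by a finite sum of rank one operators, observe that restricting $\Theta_{x,y}$ to $X^{\otimes j}$ only sees the degree-$j$ component of $y$ (the paper writes this as $\Theta_{\xi,\eta}\big|_{X^{\otimes j}}=\Theta_{\xi,P_j\eta}$ with $P_j$ the projection onto $X^{\otimes j}$), and conclude from the convergence of $\sum_n\IP{y_n,y_n}$ together with the bound $\normof{(T-K)\big|_{X^{\otimes j}}}\le\normof{T-K}$. No gaps.
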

\begin{proof}
	This proof is taken from \cite[Lemma~2.48]{Crossedandunitizations}. Fix $T\in\KK(\FF_X)$ and fix $\ep>0$. Choose $\xi_i,\eta_i\in\FF_X$ such that $\bignormof{T-\sum_{i=1}^n\Theta_{\xi_i,\eta_i}}<\ep/2$. Let $P_j:\FF_X\to X^{\otimes j}$ be the projection onto $X^{\otimes j}$. By construction of the infinite direct sum, there exists some $J$ such for $j\geq J$ we have $\normof{P_j\eta_i}<\ep/(2n\max_i\normof{\xi_i})$ for all $0\leq i\leq n$. We then see that
	\begin{equation*}
		\Bignormof{\Big(\sum_{i=1}^n\Theta_{\xi_i,\eta_i}\Big)\big|_{X^{\otimes j}}}=\Bignormof{\sum_{i=1}^n\Theta_{\xi_i,P_j\eta_i}}\leq \sum_{i=1}^n\sup_{\normof{z}\leq 1}\normof{\xi_i\IP{P_j\eta_i,z}}\leq\sum_{i=1}^n\normof{\xi_i}\normof{P_j\eta_i}<\ep/2.
	\end{equation*}
	So for $j\geq J$, we have
	\begin{align*}
		\normof{T\big|_{X^{\otimes j}}}&\leq\normof{T\big|_{X^{\otimes j}}-\Big(\sum_{i=1}^n\Theta_{\xi_i,\eta_i}\Big)\big|_{X^{\otimes j}}}+\normof{\Big(\sum_{i=1}^n\Theta_{\xi_i,\eta_i}\Big)\big|_{X^{\otimes j}}}\\
		&< \normof{T-\sum_{i=1}^n\Theta_{\xi_i,\eta_i}}+\ep/2<\ep.\\
	\end{align*}
\end{proof}

\begin{theorem}\label{Cuntz-Pimsner exact sequence}
	With notation as above, let $\FF_{X,I}$ denote the $\FF_X$ as a right $I$ module as in Lemma~\ref{ONE}. Suppose that $\phi$ is injective, let $j:\KK(\FF_{X,I})\to\TT_X$ denote the inclusion of Lemma~\ref{comapcts inclusion in Toeplitz}, and let $q:\LL(\FF_X)\to\LL(F_X)/\KK(\FF_{X,I})$ be the quotient map. Then $J$ is precisely $\KK(\FF_{X,I})$ and so we have a short exact sequence 
	\begin{equation*}
		0\to \KK(\FF_{X,I})\xrightarrow{j}\TT_X\xrightarrow{q}\OO_X\to 0.
	\end{equation*}
	The maps $(j_A,j_X)=(q\circ i_A,q\circ i_X)$ are a Cuntz-Pimsner covariant representation of $X$ in $\OO_X$, and $j_A$ is injective.
\end{theorem}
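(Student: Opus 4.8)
The plan is to establish the four assertions in turn; the heart of the matter is the identification $J=\KK(\FF_{X,I})$.

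For $J\subseteq\KK(\FF_{X,I})$, I would first compute, using the formulas of Example~\ref{Fock this} together with the identity $T_pT_q^{*}=\Theta_{p,q}\otimes 1$ from~\eqref{ZZZ}, that for $a\in I$ the operator $i_A(a)-i_A^{(1)}(\phi(a))$ acts on the summand $X^{\otimes 0}=A$ of $\FF_X$ as left multiplication $L_a$ and vanishes on every $X^{\otimes n}$ with $n\geq 1$ (on such a summand both $i_A(a)=\phi^{n}(a)$ and $i_A^{(1)}(\phi(a))=\lim\sum T_{p_k}T_{q_k}^{*}$ restrict to $\phi(a)\otimes 1_{X^{\otimes(n-1)}}$). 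Lemma~\ref{ONE}, applied to the Hilbert $A$-module $\FF_X$, gives $\FF_{X,I}=\FF_X\cdot I$, so this operator has range in $A\cdot I$ and equals the norm limit of the rank-one operators $\Theta_{au_\lambda,u_\lambda}$ on $\FF_{X,I}$ for an approximate unit $(u_\lambda)$ of $I$; hence $i_A(a)-i_A^{(1)}(\phi(a))\in\KK(\FF_{X,I})$. Since $i_A(A)$ and $i_X(X)$ are $A$-linear they preserve $\FF_X\cdot I$, so $\TT_X$ preserves $\FF_{X,I}$ and $\KK(\FF_{X,I})$ is an ideal of $\TT_X$; therefore the ideal $J$ generated by these operators lies in $\KK(\FF_{X,I})$.

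For $\KK(\FF_{X,I})\subseteq J$, the same computation shows that for $a,b\in I$ the rank-one operator $\Theta_{a,b}$ on the summand $A$ of $\FF_X$ is precisely the generator $i_A(ab^{*})-i_A^{(1)}(\phi(ab^{*}))$ of $J$. By Lemma~\ref{ONE}, an arbitrary pair $\xi,\eta\in\FF_{X,I}$ factors as $\xi=u\cdot a$, $\eta=v\cdot b$ with $u,v\in\FF_X$, $a,b\in I$, and, as in the proof of Lemma~\ref{comapcts inclusion in Toeplitz}, $T_u\Theta_{a,b}T_v^{*}=\Theta_{ua,vb}=\Theta_{\xi,\eta}$. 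As $T_u,T_v^{*}\in\TT_X$ and $J$ is an ideal, $\Theta_{\xi,\eta}=T_u\bigl(i_A(ab^{*})-i_A^{(1)}(\phi(ab^{*}))\bigr)T_v^{*}\in J$; passing to closed spans gives $\KK(\FF_{X,I})\subseteq J$, so $J=\KK(\FF_{X,I})$. Since $J\subseteq\TT_X$, the quotient $q(\TT_X)$ is $\TT_X/\KK(\FF_{X,I})=\OO_X$, and with $j$ the embedding of Lemma~\ref{comapcts inclusion in Toeplitz} the sequence $0\to\KK(\FF_{X,I})\xrightarrow{j}\TT_X\xrightarrow{q}\OO_X\to 0$ is exact.

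Finally, $(j_A,j_X)=(q\circ i_A,q\circ i_X)$ is a representation of $X$ in $\OO_X$ because $(i_A,i_X)$ is one and $q$ is a homomorphism; moreover $j_A^{(1)}=q\circ i_A^{(1)}$ (both are homomorphisms $\KK(X)\to\OO_X$ carrying $\Theta_{x,y}$ to $j_X(x)j_X(y)^{*}$, and a homomorphism on $\KK(X)$ is determined by its values on the finite-rank operators), so $j_A(a)-j_A^{(1)}(\phi(a))=q\bigl(i_A(a)-i_A^{(1)}(\phi(a))\bigr)=0$ for $a\in I$, which is Cuntz--Pimsner covariance. For injectivity of $j_A$: if $\phi^{\infty}(a)=i_A(a)\in J=\KK(\FF_{X,I})\subseteq\KK(\FF_X)$, then Lemma~\ref{Lem:Technical} gives $\normof{\phi^{\infty}(a)\big|_{X^{\otimes j}}}\to 0$; but $\phi^{\infty}(a)\big|_{X^{\otimes j}}=\phi^{j}(a)=\phi(a)\otimes 1_{X^{\otimes(j-1)}}$ for $j\geq 1$, and since $\phi$ is injective an easy induction with Lemma~\ref{T} shows each $\phi^{j-1}$ is injective, so $T\mapsto T\otimes 1$ is an injective, hence isometric, homomorphism and $\normof{\phi^{j}(a)}=\normof{\phi(a)}$ for every $j\geq 1$; therefore $\phi(a)=0$ and, $\phi$ being injective, $a=0$. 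The main obstacle is the equality $J=\KK(\FF_{X,I})$ — matching the generators of $J$ with rank-one operators on the degree-zero part of $\FF_{X,I}$ and then, via $\FF_{X,I}=\FF_X\cdot I$ and $T_u\Theta_{a,b}T_v^{*}=\Theta_{ua,vb}$, upgrading to all of $\KK(\FF_{X,I})$; the covariance of $(j_A,j_X)$ and the injectivity of $j_A$ are then comparatively formal, given Lemmas~\ref{Lem:Technical} and~\ref{T}.
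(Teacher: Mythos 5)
Your proof is correct and follows the paper's argument almost step for step: the same computation showing that $i_A(a)-i_A^{(1)}(\phi(a))$ vanishes on $X^{\otimes n}$ for $n\geq 1$ and acts as left multiplication by $a$ on $X^{\otimes 0}=A$ (giving $J\subseteq\KK(\FF_{X,I})$ via an approximate unit for $I$), the same covariance argument, and the same use of Lemma~\ref{Lem:Technical} together with Lemma~\ref{T} for injectivity of $j_A$ (where your explicit induction showing each $\phi^{j-1}$ is injective is a welcome bit of extra care). The one place you genuinely diverge is the containment $\KK(\FF_{X,I})\subseteq J$: the paper imports it from the approximate-unit limit $T_x\big(\varphi^{\infty}(e_k)-\sum_i T_{x_i}T^*_{x_i}\big)T^*_y\to\Theta_{x,y}$ established in Lemma~\ref{comapcts inclusion in Toeplitz}, whereas you observe the exact identity $\Theta_{a,b}=i_A(ab^*)-i_A^{(1)}(\phi(ab^*))$ for $a,b\in I$ and then conjugate by creation operators via $T_u\Theta_{a,b}T_v^*=\Theta_{ua,vb}$ and $\FF_{X,I}=\FF_X\cdot I$ --- a slightly cleaner, self-contained route that avoids re-running the frame computation, at the small cost of having to note that $ab^*\in I$ and that closedness of $J$ lets you pass from spanning rank-one operators to all of $\KK(\FF_{X,I})$.
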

\begin{proof}
	We wish to show that $J=\KK(\FF_{X,I})$. In Lemma~\ref{comapcts inclusion in Toeplitz} we showed that every element $K\in \KK(\FF_{X,I})$ is a limit of elements in $J$, so we need only show that $J\subseteq \KK(\FF_{X,I})$. Fix a generating element $i_A(a)-i_A^{(1)}(\phi(a))$ with $a\in I$. That is, we wish to write $i_A(a)-i_A^{(1)}(\phi(a))$ as a limit of linear combinations of rank one operators $\Theta_{x,y}$ with $x,y\in\FF_{X,I}$. Since $\phi(a)\in\KK(X)$, we may write
	\begin{equation*}
		\phi(a)=\lim_{n\to\infty}\sum_{i=1}^n\Theta_{x_i^n,y_i^n}
	\end{equation*}
	for some $x_i^n,y_i^n\in X$. Then for $x_1\otimes \dots\otimes x_n\in A^{\otimes n}$, $n\geq 1$ we have
	\begin{align*}
		\big(i_A(a)-i_A^{(1)}(\phi(a))\big)\big(x_1\otimes\dots\otimes x_n\big)&=\phi^{\infty}(a)(x_1\otimes\dots\otimes x_n)-\lim_{n\to\infty}\sum_{i=1}^ni_A^{(1)}(\Theta_{x_i^n,y_i^n})(x_1\otimes\dots\otimes x_n)\\
		&=(\phi(a)x_1)\otimes\dots\otimes x_n-\lim_{n\to\infty}\sum_{i=1}^nT_{x_i^n}T_{y_i^n}^*(x_1\otimes\dots\otimes x_n)\\
		&=(\phi(a)x_1)\otimes\dots\otimes x_n-\lim_{n\to\infty}\sum_{i=1}^n(\Theta_{x_i^n,y_i^n}x_1)\otimes x_2\otimes\dots\otimes x_n\\
		&=(\phi(a)x_1)\otimes\dots\otimes x_n-(\phi(a)x_1)\otimes x_2\otimes\dots\otimes x_n\\
		&=0.\\
	\end{align*}
	For $b\in A$ we compute
	\begin{align*}
	i_A(a)-i_A^{(1)}(\phi(a))\big(b\big)&=ab-\lim_{n\to\infty}\sum_{i=1}^nT_{x_i^n}T_{y_i^n}^*(b)=ab.\\
	\end{align*}
	Thus, letting $e_i$ be an approximate identity for $I$, we deduce that $i_A(a)-i_A^{(1)}$ is precisely the operator $\lim_{i\to\infty}\Theta_{a,e_i}$. Since $a,e_i\in I$, we have $i_A(a)-i_A^{(1)}\in\KK(\FF_{X,I})$. Since elements of the form $i_A(a)-i_A^{(1)}$ generate $J$ and since $\KK(\FF_{X,I})$ is an ideal, we deduce that $J\subseteq \KK(\FF_{X,I})$. Thus we conclude that $J=\KK(\FF_{X,I})$. Now it is clear that the representation $(j_A,j_A)=(q\circ i_A,q\circ i_X)$ is Cuntz-Pimsner covariant, since $q$ takes elements of the form $i_A(a)-i_A^{(1)}(\phi(a))$ to the zero element $\KK(\FF_{X,I})$ of $\LL(\FF_X)/\KK(\FF_{X,I})$. Lastly we show that $j_A$ is injective. Suppose that for some $a\in A$ we have $j_A(a)=0$, that is, $i_A(a)\in J$. Since $\phi$ is injective, Lemma~\ref{T} shows that 
	\begin{equation*}
		\normof{i_A\big|_{X^{\otimes j}}(a)}=\normof{\phi^{\infty}\big|_{X^{\otimes j}}(a)}=\normof{\phi(a)}=\normof{a},
	\end{equation*}
	and so by Lemma~\ref{Lem:Technical} we deduce that $a=0$. We conclude that $j_A$ is injective as required.
\end{proof}
\begin{remark}
	If $\Theta_{x,y}\in \KK(\FF_{X,I})$ then we compute
	\begin{align*}
		\widetilde{\alpha_X^{\infty}}(\Theta_{x,y})z&=\alpha^{\infty}_X(x\IP{y,\alpha^{\infty}_X(z)})\\
		&=\alpha^{\infty}(x)\alpha_A(\IP{y,\alpha_X(z)})=\alpha^{\infty}(x)\alpha_A(\IP{\alpha_X^{\infty}(y),z})=\Theta_{\alpha_X^{\infty}(x),\alpha_X^{\infty}(y)}z.\\
	\end{align*}
	If $x,y\in \FF_{X,I}$ then the characterisation $\FF_{X,I}=\FF_X\cdot I$ shows that $\alpha_X^{\infty},\alpha_X^{\infty}\in\FF_{X,I}$. Hence $\Theta_{\alpha_X^{\infty}(x),\alpha_X^{\infty}(y)}\in\KK(\FF_{X,I})$. Linearity and continuity then imply that $\widetilde{\alpha_X^{\infty}}$ preserves the ideal $\KK(\FF_{X,I})$, and so we deduce that $\widetilde{\alpha_X^{\infty}}$ descends to a grading on $\OO_X$.
\end{remark}
As with $\TT_X$, in \cite[Theorem~3.12]{FreeProbTheory}, Pimsner proved that $\OO_X$ can be characterised by the following universal property.
\begin{theorem}
	Let $X$ be an $A$-$A$ correspondence. Let $\OO_X$ be the Cuntz-Pimsner algebra and let $(j_A,j_X)$ be the canonical representation of $X$ in $\OO_X$. Then $\OO_X$ has the property for any Cuntz-Pimsner covariant representation $(\pi,t)$ of $X$ in a $C^*$-algebra $B$, there exists a homomorphism $\pi\times t:\OO_X\to B$ such that $\pi\times t\circ j_X=t$ and $\pi\times t\circ j_A=\pi$.
\end{theorem}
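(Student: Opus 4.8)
The plan is to deduce the statement from the universal property of the Toeplitz algebra (Theorem~\ref{theorem: universal property of Toeplitz}) together with the description of $\OO_X$ as a quotient of $\TT_X$ in Definition~\ref{def: Cuntz-Pimsner}. Given a Cuntz-Pimsner covariant representation $(\pi,t)$ of $X$ in $B$, the pair $(\pi,t)$ is in particular a representation of $X$, so Theorem~\ref{theorem: universal property of Toeplitz} produces a homomorphism $\rho\colon\TT_X\to B$ with $\rho\circ i_A=\pi$ and $\rho\circ i_X=t$; I will write it as $\rho$ for now to keep it distinct from the sought map $\OO_X\to B$. Since $\TT_X=C^*(i_A,i_X)$ and $q$ is a homomorphism, $q(\TT_X)=C^*(q\circ i_A,q\circ i_X)=\OO_X$, so $q$ restricts to a surjection $q_0\colon\TT_X\to\OO_X$ with kernel $J$, and the canonical representation of $X$ in $\OO_X$ is $(j_A,j_X)=(q_0\circ i_A,q_0\circ i_X)$.

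The heart of the argument is to show $J\subseteq\ker\rho$, so that $\rho$ factors through $q_0$. As $\ker\rho$ is a closed ideal of $\TT_X$, it suffices to check that $\rho$ annihilates each generator $i_A(a)-i_A^{(1)}(\phi(a))$ with $a\in I$. On one hand $\rho(i_A(a))=\pi(a)$. On the other, I would first establish that $\rho\circ i_A^{(1)}=\pi^{(1)}$ as homomorphisms $\KK(X)\to B$, where $\pi^{(1)}$ is the homomorphism with $\pi^{(1)}(\Theta_{x,y})=t(x)t(y)^*$: indeed on a rank one operator $\Theta_{x,y}$ we have $\rho(i_A^{(1)}(\Theta_{x,y}))=\rho(i_X(x)i_X(y)^*)=t(x)t(y)^*=\pi^{(1)}(\Theta_{x,y})$, and since $\rho\circ i_A^{(1)}$ and $\pi^{(1)}$ are homomorphisms agreeing on the dense set of finite rank operators they agree on all of $\KK(X)$. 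Hence $\rho(i_A^{(1)}(\phi(a)))=\pi^{(1)}(\phi(a))$, and the Cuntz-Pimsner covariance relation $\pi(a)=\pi^{(1)}(\phi(a))$ for $a\in I$ yields $\rho(i_A(a)-i_A^{(1)}(\phi(a)))=0$, as needed.

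With $J\subseteq\ker\rho$ in hand, $\rho$ descends to a homomorphism $\pi\times t\colon\OO_X\to B$ with $(\pi\times t)\circ q_0=\rho$; then $(\pi\times t)\circ j_A=(\pi\times t)\circ q_0\circ i_A=\rho\circ i_A=\pi$ and likewise $(\pi\times t)\circ j_X=t$, which completes the proof. I do not anticipate a genuine obstacle: the substantive content lies in the already-established universal property of $\TT_X$ and in the elementary fact that a $C^*$-homomorphism whose kernel contains a closed ideal factors through the corresponding quotient. The only points requiring a little care are the identification of $\OO_X$ with $\TT_X/J$ via $q_0$ and the density/continuity argument upgrading $\rho\circ i_A^{(1)}=\pi^{(1)}$ from the finite rank operators to all of $\KK(X)$.
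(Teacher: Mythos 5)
Your proposal is correct. The paper itself does not prove this theorem --- it is stated with a citation to Pimsner's \cite[Theorem~3.12]{FreeProbTheory} --- so there is no in-text argument to compare against; but your derivation is the canonical one and it meshes exactly with the paper's own Definition~\ref{def: Cuntz-Pimsner} of $\OO_X$ as a quotient of $\TT_X$: pass from $(\pi,t)$ to $\rho\colon\TT_X\to B$ via Theorem~\ref{theorem: universal property of Toeplitz}, check $\rho\circ i_A^{(1)}=\pi^{(1)}$ on rank-one operators and extend by continuity and density of the finite-rank operators, kill the generators $i_A(a)-i_A^{(1)}(\phi(a))$ using covariance, and factor through the quotient. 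The only point deserving a remark is one you already flag implicitly: the paper's definition takes $q$ to be a quotient of $\LL(\FF_X)$ by an ideal $J$ that is generated inside $\TT_X$, so a priori the kernel of $q|_{\TT_X}$ could exceed $J$; your restriction to $q_0\colon\TT_X\to\OO_X$ with kernel exactly $J$ is the correct reading (and, in the injective case the thesis actually uses, Theorem~\ref{Cuntz-Pimsner exact sequence} identifies $J=\KK(\FF_{X,I})$ and confirms the exact sequence $0\to\KK(\FF_{X,I})\to\TT_X\to\OO_X\to 0$, so the factorisation lands precisely in $\OO_X$). No gap.
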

These universal properties seem to be quite similar to those of the crossed product. We show that for certain classes of correspondences, the Cuntz-Pimsner algebra $\OO_X$ is in fact the crossed product by $\Z$.\\
If $\alpha: A\to B$ is a homomorphism then we have seen in Example~\ref{Marnie} how $A$ is a $A$--$B$ correspondence with left action $\alpha$. The conditions that a Cuntz-Pimsner covariant representation $(\pi,t)$ of $_{\alpha}A$ must satisfy can then be written as
\begin{align}
t(a)^*t(b)&=\pi(a^*b)\label{one}\\
\pi(a)t(b)&=t(\alpha(a)b)\label{two}\\
\pi(a)&=\psi^{(1)}(\alpha(a))=\psi^{(1)}(\Theta_{\alpha(a),1})=t(\alpha(a))t(1)^*\label{three}\\\nonumber
\end{align}
\begin{theorem}\label{thm: Crossed by Z is Cuntz Pimsner}
	Let $A$ be a unital $C^*$-algebra and $\alpha$ an automorphism of $A$. If $(j_A,j_{_\alpha A})$ is the canonical representation of $_{\alpha}A$ in $\OO_{_\alpha A}$ and $(i_A,i_{\Z})$ is the canonical representation of $(A,\Z,\alpha)$ in $A\times_{\alpha}\Z$ then there is an isomorphism $\psi:A\times_{\alpha}\Z\to \OO_{_\alpha A}$ such that $\psi(i_A(a)i_{\Z}(n))=j_A(a)(j_{_\alpha A}(1)^*)^n$.
\end{theorem}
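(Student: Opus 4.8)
The plan is to produce homomorphisms $\psi\colon A\times_{\alpha}\Z\to\OO_{_\alpha A}$ and $\theta\colon\OO_{_\alpha A}\to A\times_{\alpha}\Z$ from the universal properties of Theorem~\ref{Profit} and of $\OO_X$, and then to verify on generators that they are mutually inverse. Before doing so I would record the relevant structure of $_\alpha A$. Since $A$ is unital, $_\alpha A$ is $A_A$ with left action $\phi(a)=L_{\alpha(a)}=\Theta_{\alpha(a),1}\in\KK(A_A)$, so $I=\phi^{-1}(\KK(A_A))=A$ and the Cuntz-Pimsner covariance condition \eqref{three} must be imposed for every $a\in A$. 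From the identities defining a representation one gets $j_A(a)j_{_\alpha A}(1)=j_{_\alpha A}(\alpha(a)\cdot 1)=j_{_\alpha A}(\alpha(a))$ and $j_{_\alpha A}(x)=j_A(\alpha^{-1}(x))j_{_\alpha A}(1)$; moreover \eqref{three} with $a=1$ gives $j_A(1)=j_{_\alpha A}(1)j_{_\alpha A}(1)^*$, while $j_{_\alpha A}(1)^*j_{_\alpha A}(1)=j_A(\langle 1,1\rangle)=j_A(1)$. Since $j_A(1)$ acts as a two-sided identity on $j_A(A)\cup j_{_\alpha A}(A)$, it is the unit of $\OO_{_\alpha A}$, and $j_{_\alpha A}(1)$ is accordingly a unitary.

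For $\psi$ I would set $v=j_{_\alpha A}(1)^*\in\UU(\OO_{_\alpha A})$ and $u_n=v^n$. The computation
\begin{equation*}
j_{_\alpha A}(1)^*j_A(a)j_{_\alpha A}(1)=j_{_\alpha A}(1)^*j_{_\alpha A}(\alpha(a))=j_A(\langle 1,\alpha(a)\rangle)=j_A(\alpha(a))
\end{equation*}
shows, by induction on $n$, that $(j_A,u)$ is a covariant homomorphism of $(A,\Z,\alpha)$ into $\OO_{_\alpha A}$, and it is non-degenerate because $j_A$ is unital. Theorem~\ref{Profit} then yields a homomorphism $\psi=j_A\times u\colon A\times_{\alpha}\Z\to\OO_{_\alpha A}$ with $\psi\circ i_A=j_A$ and $\psi\circ i_{\Z}=u$, so $\psi(i_A(a)i_{\Z}(n))=j_A(a)(j_{_\alpha A}(1)^*)^n$, exactly the required formula.

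For $\theta$ I would build a Cuntz-Pimsner covariant representation of $_\alpha A$ in $A\times_{\alpha}\Z$. Using covariance of $(i_A,i_{\Z})$, define $t(x)=i_{\Z}(-1)i_A(x)=i_A(\alpha^{-1}(x))i_{\Z}(-1)$. Short computations (using $i_{\Z}(-1)^*i_{\Z}(-1)=i_A(1)=1$) give $t(x)^*t(y)=i_A(x^*y)=i_A(\langle x,y\rangle)$ and $t(\alpha(a)x)=i_A(a)t(x)$, so $(i_A,t)$ is a representation of $_\alpha A$, and $t(\alpha(a))t(1)^*=i_{\Z}(-1)i_A(\alpha(a))i_{\Z}(1)=i_A(a)$ verifies \eqref{three}. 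The universal property of $\OO_X$ then gives $\theta=i_A\times t\colon\OO_{_\alpha A}\to A\times_{\alpha}\Z$ with $\theta\circ j_A=i_A$ and $\theta\circ j_{_\alpha A}=t$; in particular $\theta(j_{_\alpha A}(1))=t(1)=i_{\Z}(-1)$.

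Finally I would evaluate the two compositions on generators. On one side $\theta\psi(i_A(a))=i_A(a)$ and $\theta\psi(i_{\Z}(1))=\theta(j_{_\alpha A}(1)^*)=i_{\Z}(1)$, so $\theta\circ\psi=\mathrm{id}$ since $A\times_{\alpha}\Z$ is generated by $i_A(A)\cup\{i_{\Z}(1)\}$ (the spanning property in Theorem~\ref{Profit}). On the other side $\psi\theta(j_A(a))=j_A(a)$ and $\psi\theta(j_{_\alpha A}(1))=\psi(i_{\Z}(-1))=v^{-1}=j_{_\alpha A}(1)$, whence, using $j_{_\alpha A}(x)=j_A(\alpha^{-1}(x))j_{_\alpha A}(1)$, $\psi\theta$ fixes every generator of $\OO_{_\alpha A}$ and $\psi\circ\theta=\mathrm{id}$. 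Thus $\psi$ is the desired isomorphism. The argument is entirely formal once the structural facts are in hand; the one point that genuinely needs attention is that $j_{_\alpha A}(1)$ is a \emph{unitary}, not merely a partial isometry — this is precisely where unitality of $A$ (equivalently $1\in I$, so that \eqref{three} applies at $a=1$) is used, and it is what makes $n\mapsto v^n$ a group homomorphism and allows $\psi$ to be inverted. Keeping $\alpha$ and $\alpha^{-1}$ in the right places throughout is the other bookkeeping hazard.
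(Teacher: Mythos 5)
Your proposal is correct and follows essentially the same route as the paper: both arguments rest on the observations that $j_{_\alpha A}(1)$ is unitary (via Cuntz--Pimsner covariance applied to $1=\Theta_{1,1}$, which is where unitality of $A$ enters), that $n\mapsto (j_{_\alpha A}(1)^*)^n$ together with $j_A$ is a covariant pair, and that conversely a covariant pair $(f,g)$ yields a Cuntz--Pimsner covariant representation via $t(x)=g(-1)f(x)$. The only difference is packaging: the paper verifies that $\OO_{_\alpha A}$ satisfies properties (1) and (2) of Theorem~\ref{Profit} and invokes uniqueness of the crossed product, whereas you unroll that uniqueness argument into two explicitly constructed, mutually inverse homomorphisms checked on generators.
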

\begin{proof}
	We follow the proof of \cite[Corollary~2.49]{Crossedandunitizations}.
	Let $(\pi,t)$ be any representation of $_\alpha A$. We aim to show that there is a group homomorphism $\Z\ni n\mapsto t(1)^n$ and that this map together with $\pi$ satisfy conditions (1) and (2) of Theorem~\ref{Profit}. To do this, we first wish to show that $\pi(1)$ is a unit for $C^*(\pi,t)$. Since $\pi$  is a homomorphism,
	\begin{equation*}
	\pi(1)\pi(a)=\pi(a)=\pi(a)\pi(1),
	\end{equation*}
	so $\pi(1)$ acts as a unit on the image of $\pi$. Since $\alpha$ is an automorphism it satisfies $\alpha(1)=1$ which gives
	\begin{equation*}
	\pi(1)t(a)=t(\alpha(1)a)=t(a)=t(a)\pi(1),
	\end{equation*}
	and so $\pi(1)$ satisfies
	\begin{equation*}
	\pi(1)b=b=b\pi(1)
	\end{equation*}
	for all $b\in C^*(\pi,t)$. Hence $\pi(1)$ it is a unit for $C^*(\pi,t)$. Now we wish to show that $t(1)$ is invertible, and in particular unitary. The identity operator is compact with $1=\theta_{1,1}$, so we have
	\begin{equation*}
	1=\pi(1)=\psi^{(1)}(\theta_{1,1})=t(1)t(1)^*.
	\end{equation*}
	We also have
	\begin{equation*}
	1=\pi(1)=t(1)^*t(1),
	\end{equation*}
	so we have shown that $t(1)$ is invertible with inverse $t(1)^*$. So $t(1)$ is unitary. Thus there is a group homomorphism $\phi:\Z\to \UU C^*(\pi,t)$ such that $\phi(n)=t(1)^{-n}=(t(1)^*)^n$. We see that
	\begin{align*}
	\phi(n)\pi(a)\phi(n)^*&=(t(1)^*)^n\pi(a)t(1)^n\\
	&=(t(1)^*)^{n-1}t(1)^*t(\alpha(a))t(1)^{n-1}\\
	&=(t(1)^*)^{n-1}\pi(\alpha(a))t(1)^{n-1}\\
	&\;\vdots\\
	&=\pi(\alpha^n(a))\\
	\end{align*}
	and so the pair $(\pi,\phi)$ is a covariant representation of the $C^*$-dynamical system $(A,\Z,\alpha)$ on $C^*(\pi,t)$ for any representation $(\pi,t)$ of $_\alpha A$. In fact, we can show that every covariant representation of $(A,\Z,\alpha)$ arises in this way. Suppose that $(f,g)$ is a covariant representation of $(A,\Z,\alpha)$. We claim that the pair $(f,t)$ where $t(a)=g(-1)f(a)$ defines a representation of $_\alpha A$ on $C^*(f,t)$. Noting that $g(1)^*=g(-1)$, we check conditions \eqref{one}, \eqref{two} and \eqref{three}
	\begin{align*}
	t^*(a)t(b)&=f(a^*)g(1)g(-1)f(b)=f(a^*b)\\
	f(a)t(b)&=f(a)g(-1)f(b)=g(-1)f(\alpha(a)b)=t(\alpha(a)b)\\
	t(\alpha(a))t(1)^*&=g(-1)f(\alpha(a))g(1)=f(a)\\
	\end{align*} 
	and we determine that $(f,t)$ is a Cuntz-Pimsner covariant representation of $_\alpha A$.\\
	
	For a representation $(\pi,t)$ of $_\alpha A$, properties \eqref{one}, \eqref{two} and \eqref{three} show that
	\begin{equation*}
	\overline{\Span}\{\pi(a)\phi(n):a\in A,n\in\Z\}=C^*(\pi,t),
	\end{equation*}
	so in particular for the representation $(j_A,j_{_\alpha A})$, $\OO_{_\alpha A}$ satisfies property (1) of Theorem~\ref{Profit}. Now suppose that $(f,g)$ is a covariant representation of $A$ on $B$. Then $(f,t)$ is a representation on $C^*(f,t)$ where $t$ is defined as above, and the universal property of $\OO_{_\alpha A}$ gives a map $f\times t:\OO_{_\alpha A}\to C^*(f,t)$ such that $f\times t\circ j_A= f$ and $f\times g\circ j_{_\alpha A}=t$. This implies that
	\begin{equation*}
	(f\times t)\circ\phi(n)=(f\times t)((j_{_\alpha A}(1)^n)^*)=(t(1)^*)^n=(g(-1)^*)^n=g(n)
	\end{equation*}
	and so $f\times t$ satisfies hypothesis (2) of Theorem~\ref{Profit}. Since $\OO_{_\alpha A}$ satisfies both (1) and (2) of Theorem~\ref{Profit}, by uniqueness of the crossed product we deduce that $\OO_{_\alpha A}\cong A\times_{\alpha}\Z$.
\end{proof}

\subsection{Katsura's definition}

In \cite{Katsura}, Katsura provided an alternative definition of the $C^*$-algebra $\OO_X$ based on its universal properties. When the left action $\phi$ on $X$ is injective, Katsura's alternative definition coincides with Pimsner's definition, however when $\phi$ is not injective, the $C^*$-algebras $\OO_X$ under both definitions are not necessarily isomorphic. Katsura showed in \cite[Proposition~3.10]{Katsura} that under this definition, when $E$ is any graph and $X$ is the graph correspondence we have $\OO_X\cong C^*(E)$ which is not true for graphs with sinks under Pimsner's definition. When the left action on $X$ is not injective, the graph $E$ has no sinks by Table~\ref{Table}. Hence, when interested in graphs with sinks it is natural to work with Katsura's definition of $\OO_X$. This short section is included to give background to the discussion of Chapter~\ref{ch: closing remarks}, we will otherwise be working with Pimsner's definition throughout this thesis.

\begin{definition}
	Let $A$ be a $C^*$-algebra, let $X$ be an $A$--$A$ correspondence with left action $\phi$, and let $J\triangleleft A$ be in ideal such that $J\subseteq \phi^{-1}(\KK(X))$. If $(i_A,i_X)$ is the Fock space representation of $X$ in $\LL(\FF_X)$ and $q:\LL(\FF_X)\to \LL(\FF_X)/\KK(\FF_{X,J})$ denotes the quotient map then we define the \emph{relative Cuntz-Pimsner algebra} $\OO(J,X)$ to be the $C^*$-algebra $C^*(i_A\circ q,i_X\circ q)$ generated in $\LL(\FF_X)/\KK(\FF_{X,J})$.
\end{definition}
\begin{remark}
	When $J$ is the ideal $\phi^{-1}(\KK(X))$, we obtain $\OO_X=\OO(J,X)$. In \cite{AddingTails} it is noted that there are Cuntz-Pimsner algebras which are quotients of relative Cuntz-Pimsner algebras, and there are relative Cuntz-Pimsner algebras which arise as quotients of Cuntz-Pimsner algebras. Thus relative Cuntz-Pimsner algebras are quite useful in the study of Cuntz-Pimsner algebras.
\end{remark}
\begin{definition}[Katsura's definition]
	Let $A$ be a $C^*$-algebra and let $X$ be an $A$--$A$ correspondence with left action $\phi$. Let $J_X$ be the ideal $J_X=\Ker(\phi)^{\perp}\cap \phi^{-1}(\KK(X))$. Then Katsura defines the \emph{Cuntz-Pimsner} algebra $\OO_X$ to be the relative Cuntz-Pimsner algebra $\OO(J_X,X)$.
\end{definition}
\begin{remark}
	The ideal $J_X=\Ker(\phi)^{\perp}\cap \phi^{-1}(\KK(X))$ is the largest ideal of $A$ on which $\phi$ is injective onto some subset of $\KK(X)$. In \cite[Chapter~2]{Katsura}, Katsura defines $\OO_X$ as the $C^*$-algebra generated by a universal representation $(\pi,t)$ satisfying $\pi(a)=\pi^{(1)}(\phi(a))$ whenever $a\in J_X$, and remarks in \cite[Section~3.2]{Katsura} that this definition coincides with the relative Cuntz-Pimsner algebra $\OO(J_X,X)$ because of \cite[Theorem~2.19]{RelativeCuntzPimsner}.
\end{remark}
\chapter{K-theory}
\label{K-theory}

K-theory for $C^*$-algebras is an important tool for classifying the structure. To each (complex) $C^*$-algebra $A$ we can associate two abelian groups $K_0(A)$ and $K_1(A)$ which encode information about projections and unitaries in $A$. Isomorphic $C^*$-algebras have isomorphic K-groups, so computing the K-theory of $C^*$-algebras gives us a way of telling when two $C^*$-algebras are different. There is in fact a large class of $C^*$-algebras for which the $K$-theory completely classifies the $C^*$-algebra: for example, any two so-called \emph{Kirchberg algebras} $A$ and $B$ satisfying $K_*(A)\cong K_*(B)$ are automatically stably isomorphic. We are interested in developing a similar tool for graded $C^*$-algebras: associating abelian groups that account for graded information so that graded isomorphic graded $C^*$-algebras have isomorphic associated groups. Before looking for such a thing, we develop some facts about regular K-theory. We will only be scratching the surface of this topic, making basic definitions and setting up some useful tools for computation. The following definitions, discussion and results are taken from \cite{BlueBook} and \cite{Mundey}.\\

\section{Equivalence relations}

For a $C^*$-algebra $A$ we would like to define $K_0(A)$ and $K_1(A)$ to be abelian groups consisting of equivalence classes of projections and unitaries. We would like our groups to be abelian, but generally multiplication in $A$ is not commutative. Addition however is commutative, but two projections only add to give another projection when they are orthogonal. The solution to this problem is to consider matrix algebras over $A$.
If $p$ and $q$ are any two projections in $A$ then we may embed them as two orthogonal projections in $M_2(A)$ via $p\mapsto \diag(p,0)$ and $q\mapsto \diag(0,q)$. Thus when we add $p$ and $q$ together in $M_2(A)$ we obtain another projection. The problem now is that we have introduced new projections in $M_2(A)$, so we must include them in larger matrix algebras. The solution then is to consider projections in matrices of all sizes at once.

\begin{definition}
	Let $A$ be a $C^*$-algebra. We define $\PP(A)$ to be the set of projections in $A$, $\UU(A)$ to be the set of unitary elements in $A$ and $M_{m,n}(A)$ the set of $m$ by $n$ matrices whose entries are elements of $A$. We write $M_n(A)$ for $M_{n,n}(A)$. We define 
	\begin{equation*}
		\PP_n(A)=\PP(M_n(A)),\qquad \PP_{\infty}(A)=\bigcup_{n=1}^{\infty}\PP_n(A),
	\end{equation*}
	as well as
	\begin{equation*}
		\UU_n(A)=\UU(M_n(A)),\qquad \UU_{\infty}(A)=\bigcup_{n=1}^{\infty}\UU_n(A),
	\end{equation*}
	viewing $\PP_n(A)$ as being pairwise disjoint and $\UU_n(A)$ as being pairwise disjoint.
\end{definition}
\begin{remark}
	The set $M_n(A)$ is itself a $C^*$ algebra under the usual matrix multiplication and operator norm. The $C^*$-algebra $M_n(A)$ is isomorphic to the tensor product $M_n(\C)\otimes A$, which is independent of tensor product norm since $M_n(\C)$ is nuclear.	
\end{remark}
Working inside $\PP_{\infty}(A)$, we may define a binary operation $\oplus$ by $p\oplus q=\diag(p,q)$; that is,
\begin{equation*}
	p\oplus q=\begin{pmatrix}
		p&0\\
		0&q\\
	\end{pmatrix}\qquad p,q\in\PP_{\infty}(A).
\end{equation*}
Under this operation $p\oplus q$ and $q\oplus p$ are different matrices. We would like this operation to be commutative, and so we introduce some equivalence relations to quotient by.
\begin{definition}
	We say that two projections $p,q\in\PP(A)$ are \emph{Murray Von-Neumann equivalent} if there exists a partial isometry $v\in A$ such that $vv^*=p$ and $v^*v=q$. Te write $p\sim q$. We say that $p$ and $q$ are \emph{unitarily equivalent} if there exists a unitary $u\in A^+$ such that $upu^*=q$, where $A^+$ is the minimal unitization of $A$ and we write $p\sim_u q$. We say that $p$ and $q$ are homotopic is there is a norm continuous path of projections $p_t$ such that $p_0=p$ and $p_1=q$, and we write $p\sim_h q$.
\end{definition}
\begin{remark}
	We learn from \cite[Proposition~2.2.7]{BlueBook} that if $p,q$ are projections in a $C^*$-algebra $A$ then $p\sim_h q$ implies $p\sim_u q$, and $p\sim_u q$ implies $p\sim q$, but in general the reverse implications do not hold. However, if $A^+$ denotes the minimal unitization of $A$, then $p\sim_uq$ if and only if $p\sim q$ and $1-p\sim 1-q$ in $A^+$ (\cite[Proposition~2.2.2]{BlueBook}), and if $\UU_0(A)$ denotes the connected component of the identity in $\UU(A)$, then $p\sim_h q$ if and only if $p=u^*qu$ for some $u\in\UU_0(A)$ (\cite[Proposition~2.2.6]{BlueBook}).
\end{remark}
We can extend our notion of Murray-Von Neumann equivalence to $\PP_{\infty}(A)$ as follows:
\begin{definition}
	Let $A$ be a $C^*$-algebra and let $p\in\PP_n(A)$, $q\in\PP_m(A)$. We say that $p$ and $q$ are Murray-Von Neumann equivalent if there exists some matrix $v\in M_{n,m}(A)$ such that $p=vv^*$ and $q=v^*v$. We write $p\sim_0 q$.
\end{definition}
Following the proof of \cite[Proposition~4.1.2]{Mundey}, for any $p\in \PP_n(A)$ and $q\in\PP_m(A)$ we have
\begin{equation*}
	\begin{pmatrix}
		p&0\\
		0&q\\
	\end{pmatrix}=
	\begin{pmatrix}
		0&q\\
		p&0\\
	\end{pmatrix}
	\begin{pmatrix}
		0&q\\
		p&0\\
	\end{pmatrix}^*
	\quad\text{and}\quad
	\begin{pmatrix}
	q&0\\
	0&p\\
	\end{pmatrix}=
	\begin{pmatrix}
	0&q\\
	p&0\\
	\end{pmatrix}^*
	\begin{pmatrix}
	0&q\\
	p&0\\
	\end{pmatrix},
\end{equation*}
so we see that $p\oplus q\sim_0 q\oplus p$. We also note that if $p\sim_0 p'$ and $q\sim_0 q'$ then there exist $v$ and $w$ such that $p=vv^*$, $p'=v^*v$, $q=ww^*$ and $q'=w^*w$, so we obtain
\begin{equation*}
	p\oplus q=\diag(v,w)\diag(v,w)^*\sim_0 \diag(v,w)^*\diag(v,w)=p'\oplus q',
\end{equation*}
so $\oplus$ gives $\PP_{\infty}(A)/\sim_0$ the well defined structure of an abelian semigroup. We would like to create $K_0(A)$ using this semigroup. For this we will need to define the Grothendieck construction for semigroups.
\begin{proposition}
	Let $S$ be an abelian semigroup. Then the relation $\sim$ on $S\times S$ $(x_1,y_1)\sim (x_2,y_2)$ if there exists some $z\in S$ such that $x_1+y_2+z=x_2+y_1+z$ is an equivalence relation, and $S\times S/\sim$ is an abelian group with identity $[(x,x)]$ for any $x\in S$ and inverses $-[(x,y)]=[(y,x)]$. We denote $S\times S/\sim $ by $\mathcal{G}(S)$ and call it the Grothendieck group of $S$. There is a semigroup homomorphism $\gamma_S:S\to\GG(S)$, $\gamma_S(x)=[(x,0)]$, and $\GG(S)$ is generated by elements of the form $\gamma_S(x)-\gamma_S(y)$ for $x,y\in S$.
\end{proposition}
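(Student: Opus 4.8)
The plan is to verify each claim in the order it is stated: first that $\sim$ is an equivalence relation, then that the binary operation descends to $\GG(S)=S\times S/\!\sim$, then that this operation makes $\GG(S)$an abelian group with the stated identity and inverses, and finally that $\gamma_S$ is a semigroup homomorphism with the stated image. None of these steps is deep; the work is entirely bookkeeping with the semigroup axioms (associativity and commutativity of $+$), and the single subtlety worth flagging is that $S$ need not have a cancellation property, which is precisely why the ``slack'' element $z$ appears in the definition of $\sim$ and why it must be carried along in every computation.

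First I would check that $\sim$ is an equivalence relation. Reflexivity is immediate (take $z$ to be anything, or observe $x_1+y_1+z=x_1+y_1+z$). Symmetry follows by swapping the roles of $(x_1,y_1)$ and $(x_2,y_2)$ with the same $z$. For transitivity, suppose $(x_1,y_1)\sim(x_2,y_2)$ via $z$ and $(x_2,y_2)\sim(x_3,y_3)$ via $w$; then adding the two defining equations and cancelling nothing, I would produce the witness $z'=z+w+x_2+y_2$ (or a similar explicit combination) so that $x_1+y_3+z'=x_3+y_1+z'$ — here the non-cancellative nature of $S$ forces the witness to absorb $x_2+y_2$, and getting this bookkeeping right is the one place to be careful. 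Next, to see the operation $[(x_1,y_1)]+[(x_2,y_2)]=[(x_1+x_2,y_1+y_2)]$ is well defined, I would assume $(x_1,y_1)\sim(x_1',y_1')$ via $z$ and add $x_2$ to both sides, using commutativity to rearrange, obtaining $(x_1+x_2,y_1+y_2)\sim(x_1'+x_2,y_1'+y_2)$; doing this in each coordinate separately and using transitivity gives well-definedness.

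Then I would verify the group axioms. Associativity and commutativity of $+$ on $\GG(S)$ are inherited coordinatewise from $S$. For the identity, I would show $[(x,x)]=[(y,y)]$ for all $x,y$ (witnessed directly) and that $[(a,b)]+[(x,x)]=[(a+x,b+x)]=[(a,b)]$, the last equality witnessed by $z=0$ (or by the observation $a+(b+x)=a+b+x=(a+x)+b$). For inverses, $[(a,b)]+[(b,a)]=[(a+b,a+b)]$ which is the identity. This establishes that $\GG(S)$ is an abelian group. Finally, $\gamma_S(x+y)=[(x+y,0)]=[(x,0)]+[(y,0)]=\gamma_S(x)+\gamma_S(y)$ shows $\gamma_S$ is a semigroup homomorphism, and since $[(x,y)]=[(x,0)]+[(0,y)]=[(x,0)]-[(y,0)]=\gamma_S(x)-\gamma_S(y)$, every element of $\GG(S)$ has the claimed form, so $\GG(S)$ is generated by the image of $\gamma_S$ under differences. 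The main (modest) obstacle throughout is simply tracking the slack witnesses $z$ correctly in the absence of cancellation; once transitivity is handled cleanly the rest is routine.
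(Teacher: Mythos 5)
Your argument is correct and is the standard direct verification; note that the paper itself does not prove this proposition but defers to \cite[3.1.2]{BlueBook} and \cite[Proposition~4.1.4]{Mundey}, so there is no in-paper proof to compare against. Your handling of the one genuinely delicate point --- absorbing $x_2+y_2$ into the witness for transitivity because $S$ need not be cancellative --- is exactly right. The only blemish is the phrase ``witnessed by $z=0$'': a bare abelian semigroup need not contain a neutral element (though the statement's own use of $\gamma_S(x)=[(x,0)]$ already presupposes one, and in the application to $\PP_\infty(A)/\sim_0$ the zero projection supplies it), and your parenthetical alternative --- that the two sides are already equal, so any $z\in S$ serves as witness --- is the correct fix and should be the primary argument.
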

A proof of these facts can be found in both \cite[3.1.2]{BlueBook} and \cite[Proposition~4.1.4]{Mundey}. Note that $\gamma_S$ is not always injective.
\begin{example}
	We have $\GG(\N,+)\cong\Z$, $\GG(\N\cup\{\infty\},+)$ is the trivial group and $\GG(\N,\times)\cong(\Q^+,\times)$. If $S$ is an abelian group then we have $\GG(S)\cong S$.
\end{example}
Finally we are ready to define $K_0(A)$ for unital $C^*$-algebras.
\begin{definition}\label{K_0}
	Let $A$ be a unital $C^*$-algebra. We define
	\begin{equation*}
		K_0(A)=\GG(\PP_{\infty}(A)/\sim_0).
	\end{equation*}
	Identifying the equivalence class $[p]_0$ under Murray-Von Neumann equivalence of a projection in $\PP_{\infty}$ with it's image under $\gamma_S$ in $K_0(A)$ we may write
	\begin{equation*}
		K_0(A)=\{[p]_0-[q]_0:p,q\in\PP_n(A),n\in\N\}.
	\end{equation*}
\end{definition}
There is of course nothing stopping us from defining $K_0(A)$ in the same way as Definition~\ref{K_0} for non-unital $C^*$-algebras, for technical reasons however we define non-unital $A$ as follows. Given a homomorphism $\phi:A\to B$ of $C^*$-algebras, if $p\in A$ is a projection then $\phi(p)$ is also a projection in $B$. It can be shown using some universal properties of the Grothendieck construction that $\phi$ induces a map $\phi_*:K_0(A)\to K_0(B)$ such that 
\begin{equation*}
	\phi_*([p]_0-[q_0])=[\phi(p)]_0-[\phi(q)]_0.
\end{equation*}
\begin{definition}
	Let $A$ be a non-unital $C^*$-algebra. Let $A^+$ be the minimal unitization of $A$ and let $\pi:A^+\to\C$ be the projection homomorphism onto $\C$. We define $K_0(A)$ to be the kernel of the induced map $\pi_*:K_0(A^+)\to K_0(\C)$.
\end{definition}

\begin{example}\label{K theory of C}
	We compute $K_0(\C)$ using the hints given in \cite[Exercise 2.9]{BlueBook}. We claim that $K_0(\C)\cong \Z$. To see this we show that equivalence classes of projections in the matrix algebra $M_{\infty}(\C)$ are determined precisely by their rank. A handy fact we will use is that the trace of a projection $P$ in $M_n(\C)$ is its rank. This is because the eigenvalues of $P$ are either zero or one, and so since projections are self-adjoint, by the spectral theorem we may diagonalise $P$ so that
	\begin{equation*}
	P=U^*DU
	\end{equation*}
	for $U$ the unitary matrix whose columns are the eigenvectors for $P$ and $D$ the diagonal matrix whose entries are the corresponding eigenvalues for $P$. That is, $D$ is a diagonal matrix with a diagonal zero for each zero-eigenspace and one for each one-eigenspace. The image of $P$ can be written as the direct sum of the non-zero eigenspaces of $P$, so the rank of $P$ is equal to the number of non-zero eigenspaces. Taking traces is invariant under unitary conjugation so
	\begin{equation*}
	\Tr(P)=\Tr(D)=\text{number of non-zero eigenspaces}=\text{Rank}(P).
	\end{equation*}
	Now to show that $K_0(\C)\cong \Z$ we will prove that two projections $P$ and $Q$ are Murray-Von-Neumann equivalent if and only if they have the same rank. First suppose that $P=V^*V$ and $Q=VV^*$. Taking the trace we see
	\begin{align*}
	\Tr(P)&=\Tr(V^*V)=\sum_{i,j=1}^nV^*_{ij}V_{ji}\\
	&=\sum_{i,j=1}^n\bar{V_{ji}}V_{ji}=\sum_{i,j=1}^nV_{ji}\bar{V_{ji}}\\
	&=\sum_{i,j=1}^nV_{ji}V^*_{ij}=\Tr(VV^*)=\Tr(Q).
	\end{align*}
	Since the trace of a projection is its rank, $P$ and $Q$ have the same rank.\\ 
	Suppose instead that $P$ and $Q$ are projections in $M_n(\C)$ with the same rank. Let
	\begin{equation*}
	B_P=\{v_1\dots,v_k,v_{k+1},\dots,v_n\}\qquad\text{ and }\qquad B_Q=\{w_1,\dots w_k,w_{k+1},\dots,w_n\}
	\end{equation*}
	be the bases of orthonormal eigenvectors for $P$ and $Q$ of $\C^n$, ordered so that $v_i$ and $w_i$ have corresponding eigenvalue 1 for $i\leq k$ and corresponding eigenvalue 0 for $i>k$. Define $\widetilde{V}$ to be the change of basis matrix from $B_P$ to $B_Q$. If $D$ is the diagonal matrix with $k$ ones and $n-k$ zeros, we define $V=D\widetilde{V}$ and claim that
	\begin{equation*}
	P=V^*V\qquad\text{ and }\qquad Q=VV^*.
	\end{equation*}
	The matrix $V$ is defined on the basis $B_P$ as
	\begin{equation*}
	Vv_i=\begin{cases}
	w_i&i\leq k\\
	0&i>k
	\end{cases}
	\end{equation*}
	and similarly $V^*$ is defined on $B_Q$ as
	\begin{equation*}
	V^*w_i=\begin{cases}
	v_i&i\leq k\\
	0&i>k\\
	\end{cases}.
	\end{equation*}
	So then writing $z\in\C^n$ as $z=\sum_{i=1}^nc_iv_i$ we compare the values of $Pz$ and $V^*Vz$ and deduce they are the same
	\begin{align*}
	Pz&=\sum_{i=1}^nc_iPv_i&&V^*Vz=\sum_{i=1}^nc_iV^*Vv_i\\
	&=\sum_{i=1}^kc_iv_i&&=\sum_{i=1}^kc_iV^*w_i\\
	& &&=\sum_{i=1}^lc_iv_i.\\
	\end{align*}
	In the same way, if we write $z$ in the basis $B_Q$ as $z=\sum_{i=1}^nc_iw_i$ then we compute
	\begin{align*}
	Qz&=\sum_{i=1}^nc_iQw_i&&VV^*z=\sum_{i=1}^nc_iVV^*w_i\\
	&=\sum_{i=1}^kc_iw_i&&=\sum_{i=1}^kc_iVv_i\\
	& &&=\sum_{i=1}^lc_iw_i\\
	\end{align*}
	whence we deduce $Q=VV^*$ and so $P$ and $Q$ are Murray-Von-Neumann equivalent. Note that $P$ and $Q$ being the same rank was essential in ensuring that we did not lose any information when forming $V$.\\
	So we can deduce from this that any rank $k$ projection in $M_n(\C)$ is equivalent to the diagonal matrix with $k$ ones and $n-k$ zeros
	\begin{equation*}
	D_k=\diag(1,\dots,1,0\dots,0).
	\end{equation*}
	What we have shown so far is that two projections in the same $M_n(\C)$ are equivalent if and only if they have the same rank. We wish to show this is true even for projections in matrix algebras of different sizes.\\
	Suppose that $P\sim Q$ and that $P\in M_m(\C)$, $Q\in M_n(\C)$ for $n\neq m$ with respective ranks $k$ and $l$. Without loss of generality, suppose $n>m$. Then $P$ is Murray-Von-Neumann equivalent to $D_k\in M_m(\C)$ and $Q$ to $D_l\in M_n(\C)$ for some $k$ and $l$, whence by transitivity $D_k\sim D_l$. We know that $D_k\sim D_k\oplus 0_{n-m}$, and $D_k\oplus 0_{n-m}$ is a rank $k$ projection in $M_n(\C)$. By transitivity we must have $D_l\sim D_k\oplus 0_{m-n}$ and so since we have two equivalent projections of the same dimension of ranks $k$ and $l$, we deduce that $l=k$, whence $P$ and $Q$ have the same rank. We conclude that two projections in $M_{\infty}(\C)$ are Murray-Von-Neumann equivalent if and only if they have the same rank. This means that the semigroup of equivalence classes is isomorphic to $\Z^{+}$ via the map $[P]\mapsto $Rank$(P)$. Consequently when we form its Grothendieck group we obtain
	\begin{equation*}
		K_0(\C)\cong \Z.
	\end{equation*}
\end{example}
We now would like to define $K_1(A)$. This requires a significantly smaller amount of work than defining $K_0(A)$. Similarly to $K_0(A)$, we wish to build $K_1(A)$ out of matrices over unitary elements of $A^+$ under an equivalence relation. 
\begin{definition}
	Let $U\in \UU_n(A^+)$ and $V\in \UU_m(A^+)$. We say that $U\sim_1 V$ if there exists some $k\geq \max{m,n}$ such that $U\oplus 1_{k-n}$ is homotopic to $V\oplus 1_{k-m}$ in $\UU_k(A^+)$. Then $\sim_1$ is an equivalence relation on $\UU_{\infty}(A^+)$ and we define
	\begin{equation*}
		K_1(A)\coloneqq \UU_{\infty}(A^+)/\sim_1.
	\end{equation*}
\end{definition}
\begin{remark}\label{Eh}
	It can be shown that $K_1(A)$ is an abelian group with addition $[u]+[v]=[u\oplus v]$ and identity element $[1]$.
	In \cite[Proposition~8.1.6]{BlueBook} it is shown that if $A$ is a unital $C^*$-algebra then $\UU_{\infty}(A^+)/\sim_1\cong \UU_{\infty}(A)/\sim_1$.
\end{remark}
\begin{example}\label{K1 of C}
	We wish to show that $K_0(\C)=0$. By Remark~\ref{Eh} we need only consider unitaries in $M_n(\C)$ rather than $M_n(\C^+)$. \cite[Lemma~2.1.3]{BlueBook} states that if $A$ is a unital $C^*$-algebra, then every unitary $u\in A$ with $\sigma(u)\neq \T$ is of the form $u=\exp(ib)$ for some self-adjoint $b\in A$. Since the spectrum of any matrix $M\in \MM_n(\C)$ is finite, we may write $U=\exp(iB)$ for some self adjoint $B\in M_n(\C)$ for all $U\in\UU(M_n(\C))=\UU_n(\C)$. Hence the map $t\mapsto \exp(itB)$ is a homotopy joining $U$ to the identity matrix. Thus, if $U\in \UU_n(\C)$ and $V\in\UU_m(\C)$ then $V\sim_1 1_m\sim_1 1_n\sim_1 U$, and so $A/\sim_1$ is the trivial group consisting of one element. Thus we have $K_1(\C)=0$.
\end{example}
The following Lemma is a restatement of \cite[Theorem~6.3.2]{BlueBook} for direct sums. Theorem~6.2.3 is stated in terms of inductive limits in \cite{BlueBook}, but since we only need it for direct sums we will restate it to save building machinery on inductive limits that we will not need.
\begin{lemma}\label{lemma: K is continuous}
	Let $\{A_n\}$ be a countable collection of $C^*$-algebras. Then we have
	\begin{equation*}
	K_i\Big(\bigoplus_{n=1}^{\infty}A_n\Big)=\oplus_{n=1}^{\infty}K_i(A_n).
	\end{equation*}
\end{lemma}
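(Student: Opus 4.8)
The plan is to identify $\bigoplus_{n=1}^{\infty}A_n$ as a sequential inductive limit of its finite partial sums, and then combine continuity of $K$-theory with its additivity over finite direct sums. Write $B_N=\bigoplus_{n=1}^{N}A_n$ and let $\iota_N\colon B_N\to B_{N+1}$ be the inclusion $(a_1,\dots,a_N)\mapsto(a_1,\dots,a_N,0)$. First I would check that $\bigoplus_{n=1}^{\infty}A_n$, together with the canonical inclusions $B_N\hookrightarrow\bigoplus_{n=1}^{\infty}A_n$, is the inductive limit of the system $(B_N,\iota_N)$: the union of the (images of the) partial sums is dense, since a sequence vanishing at infinity is the norm limit of its truncations, and the universal property follows because any compatible family of homomorphisms out of the $B_N$ determines a bounded $*$-homomorphism on this dense $*$-subalgebra, which then extends to the completion.

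Next I would record the additivity of $K$-theory: for $C^*$-algebras $C$ and $D$ the corner embeddings $c\mapsto\diag(c,0)$ and $d\mapsto\diag(0,d)$ induce an isomorphism $K_i(C\oplus D)\cong K_i(C)\oplus K_i(D)$, and iterating this gives $K_i(B_N)\cong\bigoplus_{n=1}^{N}K_i(A_n)$ in such a way that $(\iota_N)_*$ becomes the standard inclusion $\bigoplus_{n=1}^{N}K_i(A_n)\hookrightarrow\bigoplus_{n=1}^{N+1}K_i(A_n)$. Then I would apply continuity of $K$-theory in the form of \cite[Theorem~6.3.2]{BlueBook}, giving $K_i\big(\varinjlim B_N\big)\cong\varinjlim\big(K_i(B_N),(\iota_N)_*\big)$. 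Since the inductive limit of the groups $\bigoplus_{n=1}^{N}K_i(A_n)$ along the standard inclusions is precisely $\bigoplus_{n=1}^{\infty}K_i(A_n)$, stringing these identifications together yields $K_i\big(\bigoplus_{n=1}^{\infty}A_n\big)\cong\bigoplus_{n=1}^{\infty}K_i(A_n)$.

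The only point demanding real care is the first step, namely confirming that the $c_0$-direct sum genuinely is the $C^*$-algebraic inductive limit of the finite sums and not merely a completion of their union; once that is secured, the rest is formal bookkeeping with the cited theorem. An alternative route, avoiding inductive limits entirely, would use the fact that a projection of norm strictly less than $1$ is zero, so that any projection in a matrix algebra over $\bigoplus_{n}A_n$ has all but finitely many coordinates vanishing; this localises every class in $K_0\big(\bigoplus_{n}A_n\big)$ to some $B_N$ and reduces $K_0$ to the finite case directly, with the analogous norm-approximation and homotopy argument inside $\big(\bigoplus_{n}A_n\big)^{+}$ handling $K_1$. I would nonetheless favour the inductive-limit argument, since it is shorter and the cited result is tailored to exactly this situation.
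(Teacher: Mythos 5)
Your argument is correct and is exactly the route the paper intends: the paper offers no written proof, simply declaring the lemma to be a restatement of \cite[Theorem~6.3.2]{BlueBook} (continuity of $K$-theory) specialised to direct sums, and your proposal supplies precisely the missing bookkeeping — realising the $c_0$-direct sum as the inductive limit of its finite partial sums, invoking finite additivity, and then applying the cited continuity theorem. Nothing further is needed.
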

\chapter{KK-theory}
\label{KK-theory}

\begin{definition}
	Let $(A,\alpha)$ and $(B,\beta)$ be separable, graded $C^*$-algebras. A Kasparov $A$--$B$ module $(X,\phi,F,\alpha_X)$ is a quadruple consisting of a countably generated, graded $A$--$B$ correspondence $X$ with left action $\phi$ by $A$ that is graded by $\alpha_X$, and an odd adjointable operator $F\in\LL(X)$ such that the three operators
	\begin{equation}\label{KK}
	(F-F^*)\phi(a),\qquad (F^2-1)\phi(a),\qquad [F,\phi(a)]^{\gr}
	\end{equation}
	are in $\KK(X)$ for every $a\in A$. We say that a Kasparov module is degenerate if each operator in Equation \eqref{KK} is the zero operator. We say that a Kasparov module $(X,\phi,F,\alpha_X)$ is non-degenerate if $\phi$ is non-degenerate in the sense of Remark~\ref{Sam}.
\end{definition}
\begin{remark}
	In \cite{Kasnotes} the notation $(A,X_B,F)$ is used to denote the Kasparov $A$--$B$ module $(X,\phi,F,\alpha_X)$, with the left action and grading operators implicit. We have chosen our notation to match that of \cite{ThePaper} since we will be focusing heavily on the grading operators and left actions in what is to follow. The operator $F$ we often call the \textit{Fredholm} operator for the Kasparov module $(X,\phi,F,\alpha_X)$ for historical reasons.
\end{remark}
\begin{example}\label{Trivial Kas Module}
	If $\phi:A\to B$ is a graded homomorphism then $_{\phi}B_B$ is a graded $A$--$B$ correspondence graded by $\beta$. We know from Example~\ref{Fang} that $\phi(a)\in \KK(B_B)$ for all $a\in A$, so for any odd operator $F$ the conditions of Equation \eqref{KK} are automatically satisfied since $\KK(B_B)$ is an ideal. In particular, $(B_B,\phi,0,\beta)$ is a Kasparov module.
\end{example}
\begin{example}\label{Deg Eg}
	If $(X,\phi,F,\alpha_X)$ and $(Y,\psi,G,\alpha_Y)$ are Kasparov modules then their direct sum $(X\oplus Y,\phi\oplus \psi,F\oplus G,\alpha_X\oplus\alpha_Y)$ is also a Kasparov module. In particular, $(X\oplus X,\phi\oplus(\phi\circ\alpha_A),\begin{psmallmatrix}0&1\\1&0\\\end{psmallmatrix},\alpha_X\oplus-\alpha_X)$ is a degenerate Kasparov module. We see that $\begin{psmallmatrix}0&1\\1&0\\\end{psmallmatrix}$ is odd since
	\begin{equation*}
	\begin{pmatrix}\alpha_x&0\\0&-\alpha_x\\\end{pmatrix}\begin{pmatrix}0&1\\1&0\\\end{pmatrix}\begin{pmatrix}\alpha_x&0\\0&-\alpha_x\\\end{pmatrix}=\begin{pmatrix}
	0&-\alpha^2_x\\
	-\alpha^2_x&0\\
	\end{pmatrix}
	=-\begin{pmatrix}
	0&1\\
	1&0\\	
	\end{pmatrix}.
	\end{equation*}
	Then by Equation \eqref{Def} we have
	\begin{align*}
	\Bigg[\begin{pmatrix}0&1\\1&0\\\end{pmatrix},\phi(a)\oplus \phi(\alpha_A(a))\Bigg]^{\gr}&=\begin{pmatrix}0&1\\1&0\\\end{pmatrix}\begin{pmatrix}\phi(a)&0\\0&\phi(\alpha_A(a))\\\end{pmatrix}-\begin{pmatrix}\phi(\alpha_A(a))&0\\0&\phi(\alpha^2_A(a))\\\end{pmatrix}\begin{pmatrix}0&1\\1&0\\\end{pmatrix}\\
	&=\begin{pmatrix}
	0&\phi(\alpha_A(a))\\
	\phi(a)&0\\
	\end{pmatrix}-
	\begin{pmatrix}
	0&\phi(\alpha_A(a))\\
	\phi(a)&0\\
	\end{pmatrix}\\
	&=0.
	\end{align*}
	Since $\begin{psmallmatrix}
	0&1\\1&0\\
	\end{psmallmatrix}$
	is self adjoint and square one we see that we have a degenerate Kasparov module.
\end{example}
\begin{definition}
	We say that two Kasparov modules $(X,\phi,F,\alpha_X)$ and $(Y,\psi,G,\alpha_Y)$ are unitarily equivalent if there exists an even (in other words graded) unitary map $U:X\to Y$ such that $UF=GU$ and $U\phi(a)=\psi(a)U$ for all $a\in A$. We write $(X,\phi,F,\alpha_X)\cong (Y,\psi,G,\alpha_Y)$.
\end{definition}
\begin{example}
	Given a non-degenerate Kasparov $A$--$B$ module $(X,\phi,F,\alpha_X)$ we have the unitary equivalence $(X,\phi,F,\alpha_X)\cong (X\otimes_{\iota}B,\phi\otimes 1,F\otimes 1,\alpha_X\otimes\beta)$. We have seen in Example~\ref{left and right} that $U:X\otimes_{\iota}\to X$, $U(x\otimes b)=xb$ defines a unitary equivalence of $A$--$B$ correspondences, so we need only check that $U$ intertwines $F$ and $F\otimes 1$, and that $U$ is even. We compute
	\begin{equation*}
	U\big((F\otimes 1)(x\otimes b)\big)=U(Fx\otimes b)=F(xb)=(Fx)b=FU(x\otimes b)
	\end{equation*}
	and
	\begin{equation*}
	U\Big((\alpha_X\otimes\beta)(x\otimes b)\Big)=U(\alpha_X(x)\otimes\beta(b))=\alpha_X(x)\beta(b)=\alpha_X(xb)=\alpha_X(U(x\otimes b)),
	\end{equation*}
	so $U$ indeed defines a unitary equivalence of Kasparov modules. We saw in Example~\ref{left and right} that the map $V:A\otimes_{\phi}X\to X$ given by $V(a\otimes x)=\phi(a)x$ is a unitary equivalence of correspondences when $\phi$ is non-degenerate. It is not the case however that $(A\otimes X,L\otimes 1,1\otimes F,\alpha_A\otimes\alpha_X)$ is unitarily equivalent to $(X,\phi,F,\alpha_X)$ under $V$ from Example~\ref{left and right} where $L(a)b=ab$. This is because the operator $1\otimes F$ cannot even be properly defined on $A\otimes X$ unless it commutes with all of $\phi(A)$. If $U:A\otimes X\to X$ is the unitary observing the unitary equivalence of Example~\ref{left and right}, then we can of course say that $(A\otimes X,L\otimes 1,U^*FU,\alpha_A\otimes\alpha_X)$ is unitarily equivalent to $(X,\phi,F,\alpha_X)$.
\end{example}
\begin{definition}
	Let $(A,\alpha)$ and $(B,\beta)$ be graded $C^*$-algebras. A homotopy between two Kasparov $A$--$B$ modules $(X_0,\phi_0,F_0,\alpha_{X_0})$ and $(X_1,\phi_1,F_1,\alpha_{X_1})$ is a Kasparov $A$-$C([0,1];B)$ module $(X,\phi,F,\alpha_X)$ such that if $\ep_t:C([0,1];B)\to B$ is the evaluation map then
	\begin{equation*}
	(X\otimes_{\ep_t}B_B,\phi\otimes 1,F\otimes 1,\alpha_X\otimes\beta)
	\end{equation*}
	is unitarily equivalent to $(X_t,\phi_t,F_t,\alpha_{X_t})$ for each $t=0,1$.
\end{definition}
\begin{remark}
	It is shown in \cite[Proposition~3.4.7]{JonosThesis} that homotopy is an equivalence relation.
\end{remark}
\begin{lemma}\label{Deg Lem}
	Every Degenerate Kasparov $A$--$B$ module $(X_B,\phi,F,\alpha_X)$ is homotopic to the zero module.
\end{lemma}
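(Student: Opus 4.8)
The plan is to build the homotopy as the \emph{cone} on $X$. Set
\begin{equation*}
Y \coloneqq C_0\big((0,1];X\big)=\{f:[0,1]\to X \text{ continuous}: f(0)=0\},
\end{equation*}
with the pointwise right action $(f\cdot h)(t)=f(t)\cdot h(t)$ of $C([0,1];B)$ and the pointwise pairing $\IP{f,g}(t)=\IP{f(t),g(t)}_B$, which takes values in the ideal $C_0((0,1];B)\subseteq C([0,1];B)$. One checks that $Y$ is complete, hence a Hilbert $C([0,1];B)$-module (not full, but fullness is not required), and that it is countably generated: if $\{x_j\}$ generates $X_B$ and $\{\rho_k\}\subseteq C_0((0,1])$ generates $C_0((0,1])$, then the functions $t\mapsto\rho_k(t)x_j$ generate $Y$ over $C([0,1];B)$. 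Note $C([0,1];B)$ is separable since $B$ is, so $Y$ is a legitimate underlying module for a Kasparov $A$--$C([0,1];B)$ module. Define $\psi(a)f=\phi(a)\circ f$, $Gf=F\circ f$ and $\alpha_Y f=\alpha_X\circ f$, all pointwise; these are well defined because $\phi(a)$, $F$, $\alpha_X$ all send $0$ to $0$, $G$ is odd for $\alpha_Y$ because $F$ is odd for $\alpha_X$, and $\psi$ is a graded homomorphism into $\LL(Y)$ (with $G$ adjointable, adjoint $f\mapsto F^*\circ f$).

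The key point is that \emph{degeneracy is inherited pointwise}. Since $(F-F^*)\phi(a)$, $(F^2-1)\phi(a)$ and $[F,\phi(a)]^{\gr}$ are the zero operator on $X$ for every $a\in A$, the operators $(G-G^*)\psi(a)$, $(G^2-1)\psi(a)$ and $[G,\psi(a)]^{\gr}$ are the zero operator on $Y$. Hence $(Y,\psi,G,\alpha_Y)$ is a (degenerate) Kasparov $A$--$C([0,1];B)$ module, so it is a candidate homotopy and we only have to identify its endpoint evaluations. For $t=0$: every $f\in Y$ has $\IP{f,f}(0)=0$, so for $b\in B$ we get $\IP{f\otimes b,f\otimes b}_{Y\otimes_{\ep_0}B}=b^*\ep_0(\IP{f,f})b=b^*\IP{f,f}(0)b=0$, whence $Y\otimes_{\ep_0}B=\{0\}$ and the $t=0$ evaluation is trivially unitarily equivalent to the zero module. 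For $t=1$: the assignment $f\otimes b\mapsto f(1)\cdot b$ preserves inner-products (since $\ep_1$ is a homomorphism), and it has dense range because any $x\in X$ equals $f(1)$ for $f(t)=\rho(t)x$ with a fixed $\rho\in C_0((0,1])$, $\rho(1)=1$; so Lemma~\ref{lemma: Well defined unitaries on tensor product} supplies a unitary $Y\otimes_{\ep_1}B\to X$, and one checks directly that it intertwines $G\otimes 1$ with $F$, $\psi\otimes 1$ with $\phi$, and $\alpha_Y\otimes\beta$ with $\alpha_X$. Thus the $t=1$ evaluation is unitarily equivalent to $(X,\phi,F,\alpha_X)$.

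Combining these, $(Y,\psi,G,\alpha_Y)$ is a homotopy from $(X,\phi,F,\alpha_X)$ to the zero module, as required. I expect no genuine obstacle here: the degeneracy hypothesis eliminates any need to analyse $F$ beyond its pointwise extension, and the only real work is the routine bookkeeping in the two endpoint identifications (especially verifying that the $t=1$ unitary of Lemma~\ref{lemma: Well defined unitaries on tensor product} intertwines the Fredholm and grading operators).
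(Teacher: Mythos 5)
Your proposal is correct and is essentially the paper's own argument: the paper uses the cone $C_0([0,1);X)$ (vanishing at $t=1$) with the same pointwise operators, observes that degeneracy makes all three Kasparov conditions hold identically, and identifies the two endpoint evaluations exactly as you do, via Lemma~\ref{lemma: Well defined unitaries on tensor product}. The only difference is the orientation of the interval, which is immaterial.
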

\begin{proof}
	The following homotopy is an unpacking of \cite[17.2.3]{Blackadar}. We claim that
	\begin{equation}
	\left(C_0([0,1);X),\widetilde{\phi},\widetilde{F},\widetilde{\alpha}_X\right)\label{homotopy}
	\end{equation}
	is the desired homotopy, where
	\begin{equation*}
	C_0([0,1);X)=\overline{C_c([0,1);X)}=\{f\in C([0,1];X):f(1)=0\}
	\end{equation*}
	and the maps $\widetilde{\phi}$, $\widetilde{F}$ and $\widetilde{\alpha}_X$ are defined by
	\begin{align*}
	(\widetilde{\phi}(a))(f)(t)&=\phi(a)(f(t))\\
	(\widetilde{F})(f)(t)&=F(f(t))\\
	(\widetilde{\alpha}_X)(f)(t)&=\alpha_X(f(t)).\\
	\end{align*}
	To show this we need to show that the $A$--$B$ module
	\begin{equation}
	\left(C_0([0,1);X)\Otimes _{\ep_t}B,\widetilde{\phi}\Otimes 1, \widetilde{F}\Otimes 1,\widetilde{\alpha}_X\Otimes \alpha_B\right)\label{the thing}		
	\end{equation}
	is a Kasparov module and is unitarily equivalent to $[X,\phi,F,\alpha_X]$ at $t=0$ and unitarily equivalent to the zero module at $t=1$. We are viewing $C_0([0,1);X)$ as an $A$--$C([0,1];B)$ correspondence with $C([0,1];B)$-valued inner-product
	\begin{equation*}
	\IP{f,g}_{C([0,1];B)}(t)=\IP{f(t),g(t)}_B
	\end{equation*}
	where the right-hand inner-products in both scenarios are taking place in $X$. Recall that $B_B$ has right multiplication $b\cdot c=bc$ and $B$-valued inner-product
	\begin{equation*}
	\IP{b,c}_B=b^*c.
	\end{equation*} 
	Before showing unitary equivalence we must show that our proposed homotopy is indeed a Kasparov module. That is we need to show that
	\begin{equation}
	1-(\widetilde{F})^2,\quad[\widetilde{F},\widetilde{\phi}(a)]^{\gr}\;\;\text{      and      }\;\; (\widetilde{F})-(\widetilde{F})^*\label{a}
	\end{equation}
	are all compact in $\LL(C_0([0,1);X))$ and that $\widetilde{F}$ is an odd operator with respect to the grading $\widetilde{\alpha}_X$. We start with oddness of $\widetilde{F}$
	\begin{align*}
	\widetilde{\alpha}_X \widetilde{F} \widetilde{\alpha}_X(f)(t)&=\alpha_XF\alpha_X(f(t))\\
	&=-F(f(t))\qquad\text{since $F$ is odd in $\LL(X)$}\\
	&=-\widetilde{F}(f)(t)\\
	\end{align*}
	and so $\widetilde{F}$ is an odd operator. Now we check the first operator in \eqref{a} is compact
	\begin{align*}
	(1-(\widetilde{F})^2)(f)(t)&=f(t)-F^2(f(t))\\
	&=(1-F^2)(f(t))\\
	&=0
	\end{align*}
	where we have used that $1-F^2=0$ on $X$, so $1-(\widetilde{F})^2= 0$ which is compact. Now for the second operator in \eqref{a} we use equation \eqref{Neat} and that $\widetilde{F}$ is odd to simplify the graded commutator
	\begin{align*}
	&[\widetilde{F},\widetilde{\phi}(a)]^{\gr}(f)(t)\\
	&=(\widetilde{F})(\widetilde{\phi}(a))(f)(t)-(\widetilde{\alpha}_X)(\widetilde{\phi}(a))(\widetilde{\alpha}_X)(\widetilde{F})(f)(t)\\
	&=F\phi(a)(f(t))-\alpha_X\phi(a)\alpha_X F(f(t))\\
	&=[F,\phi(a)]^{\gr}(f(t))\\
	&=0.\\
	\end{align*}
	Lastly we see, since
	\begin{align*}
	\IP{\widetilde{F}f,g}_{C_0([0,1);B)}(t)&=\IP{F(f(t)),g(t)}_B\\
	&=\IP{f(t),F^*(g(t))}_B\\
	&=\IP{f,\widetilde{F^*}g}(t),\\
	\end{align*}
	that $(\widetilde{F})^*=(\widetilde{F^*})$ which allows us to compute
	\begin{align*}
	\widetilde{F}(f)(t)-\widetilde{F}^*(f)(t)&=F(f(t))-F^*(f(t))=0.\\
	\end{align*}
	So \eqref{homotopy} is indeed a Kasparov module.\\
	The next step in our unpacking is to show that \eqref{the thing} is unitarily equivalent to the zero module at $t=1$. Recall that
	\begin{equation*}
	C_0([0,1);X)\Otimes _{\ep_t}B
	\end{equation*}
	is the completion of the quotient of the algebraic tensor product by the ideal $I$ generated by element $f\otimes b$ such that
	\begin{equation*}
	\IP{f\otimes b,f\otimes b}_B=0
	\end{equation*}
	where the $B$-valued inner-product on simple tensors in $C_0([0,1);X)\odot B$ is defined by
	\begin{equation*}
	\IP{f\otimes b,g\otimes c}_B=\IP{b,\ep_t(\IP{f,g}_{C_0([0,1);B)})c}_B=b^*\IP{f(t),g(t)}_Bc.
	\end{equation*}
	Since $f(1)=0$ for all $f\in C_0([0,1);X)$ the inner-product $\IP{f\otimes b,f\otimes b}_B$ on the algebraic tensor product is zero everywhere for $t=1$. The ideal $I$ is then all of the algebraic tensor product. Thus when we quotient by $I$ we get the trivial algebra and so
	\begin{equation*}
	C_0([0,1);X)\Otimes _{\ep_1}B=0.
	\end{equation*}
	Unitary equivalence in this situation is trivially observed by the identity map.\\
	Now we need to show that for $t=0$ there is a degree zero unitary
	\begin{equation*}
	U:C_0([0,1);X)\Otimes _{\ep_0}B\to X
	\end{equation*}
	such that $FU=U\widetilde{F}\Otimes 1$ and $\phi(a)U=U\widetilde{\phi}(a)\Otimes 1$. 
	We compute
	\begin{align*}
	\IP{f(0)b,g(0)c}_B&=b^*\IP{f(0),g(0)}_Bc=\IP{b,\ep_0(\IP{f,g})c}_B=\IP{f\otimes b,g\otimes c}_{C_0([0,1);X)\Otimes_{\ep_0}B},\\
	\end{align*}
	so by Lemma~\ref{lemma: Well defined unitaries on tensor product} there is a well defined map $U:C_0([0,1);X)\Otimes_{\ep_0}B\to X$ such that $U(f\otimes b)=f(0)b$. Cohen factorisation states that any $x\in X$ is of the form $x=y\IP{y,y}$ for some $y\in X$. Thus if we define $f:[0,1]\to X$ by $f(t)=(1-t)y$, then $f\in C_0([0,1);X)$ and we have
	\begin{equation*}
		U(f\otimes \IP{y,y})=f(0)\IP{y,y}=y\IP{y,y}=x.
	\end{equation*}
	Hence $U$ is surjective and Lemma~\ref{lemma: Well defined unitaries on tensor product} shows that $U$ is unitary.
	We also need $U$ to be of degree zero. Recall that the grading operator on the tensor product is given by
	\begin{equation*}
	\widetilde{\alpha}_X\otimes\alpha_B(f(\cdot)\otimes b)=\alpha_X(f(\cdot))\otimes\alpha_B(b)
	\end{equation*}
	For $U$ to be of degree one we want $U$ to intertwine the two gradings $\widetilde{\alpha}_X\otimes\alpha_B$ and $\alpha_X$ so that 
	\begin{equation*}
	U \left(\widetilde{\alpha}_X\otimes\alpha_B \right) =\alpha_XU.
	\end{equation*}
	We compute
	\begin{align*}
	U \widetilde{\alpha}_X\otimes\alpha_B (f(\cdot)\otimes b)&= U(\alpha_X(f(\cdot))\otimes\alpha_B(b))\\
	&=\alpha_X(f(0))\alpha_B(b)\\
	&=\alpha_X(f(0)b)\\
	&=\alpha_X U(f\otimes b)\\
	\end{align*}
	and so $U$ is of degree one.
	Now the last thing we have to check is that $U$ intertwines both $\widetilde{F}\Otimes 1$ and $F$, and $\widetilde{\phi}(a)\Otimes 1$ and $\phi(a)$. We compute
	\begin{align*}
	U\widetilde{F}\Otimes 1(f(\cdot)\otimes b)&=U(F(f(\cdot))\otimes b)\\
	&=F(f(0))\cdot b\\
	&=F(f(0)b)\\
	&=FU(f\otimes b)\\
	\end{align*}
	so $U$ intertwines the first two maps. Lastly
	\begin{align*}
	U\widetilde{\phi}(a)(f(\cdot)\otimes b)&=U(\phi(a)(f(\cdot))\otimes b)\\
	&=(\phi(a)(f(0))\cdot b\\
	&=\phi(a)(f(0)b)\\
	&=\phi(a)U(f\otimes b)\\
	\end{align*}
	and so $U$ intertwines the last two maps. So $C_0([0,1);X)\Otimes _{\ep_0}B$ and $X$ are both unitarily equivalent. Hence \eqref{homotopy} is a homotopy of $X$ to the zero module.
\end{proof}
\begin{remark}
	The homotopy we have used in this proof is itself a degenerate Kasparov $A$--$C([0,1];B)$ module. One part of this proof that relied on the original module $(X,\phi,F,\alpha_X)$ being degenerate was that $1-(\widetilde{F})^2$ is compact. If $1-F^2$ were not zero, then $1-(\widetilde{F})^2=1\otimes(1-F^2)$ will not usually be compact. This is mostly due to the fact that the constant function 1 does not vanish at zero.
\end{remark}
\begin{remark}
	Of particular interest to us will be a special case of homotopy called an operator homotopy. If $_{\phi}X$ is a graded $A$--$B$ correspondence and $t\mapsto F_t$ is a norm continuous homotopy in $\LL(X)$ such that $(X,\phi,F_t,\alpha_X)$ is a Kasparov module for every $t\in[0,1]$ then $(X,\phi,F_0,\alpha_X)$ and $(X,\phi,F_1,\alpha_X)$ are called \textit{operator homotopic}. An operator homotopy is a special case of a homotopy as follows. The set $C([0,1];X)$ is a graded $A$-$C([0,1];B)$ correspondence with point-wise right action
	\begin{equation*}
	(f\cdot g)(t)=f(t)\cdot g(t),\qquad f\in C([0,1];X),g\in C([0,1];B),
	\end{equation*}
	point-wise left action $\overline{\phi}$
	\begin{equation*}
	(\overline{\phi}(a)f)(t)=\phi(a)(f(t))
	\end{equation*}
	and point-wise grading $\overline{\alpha_X}$
	\begin{equation*}
	(\overline{\alpha_X}f)(t)=\alpha_X(f(t)).
	\end{equation*}
	Defining $\overline{F}\in\LL(C([0,1];X))$ by $(\overline{F}f)(t)=F_t(f(t))$ we obtain a Kasparov $A$-$C([0,1];B)$ module
	\begin{equation*}
	(C([0,1];X),\overline{\phi},\overline{F},\overline{\alpha_X})
	\end{equation*}
	which is a homotopy joining $(X,\phi,F_0,\alpha_X)$ and $(X,\phi,F_1,\alpha_X)$.
	If $(X,\phi,F,\alpha)$ is a Kasparov module and $K$ is an odd compact operator, then the straight line $F+tK$ from $F$ to $F+K$ is an operator homotopy to the Kasparov module $(X,\phi,F+K,\alpha)$. In particular, if $(X,\phi,F,\alpha)$ and $(X,\phi,G,\alpha)$ are two Kasparov modules with $F-G$ compact, then they are homotopic. The exact same construction shows that there is a norm continuous path of left actions $\phi_t$ for which $(X,\phi_t,F,\alpha_X)$ is a Kasparov module for all $t$, then $(X,\phi_0,F,\alpha_X)$ and $(X,\phi_t,F,\alpha_X)$ are homotopic. 
\end{remark}

\begin{definition}
	Let $A$ and $B$ be $C^*$-algebras. We define $KK(A,B)$ to be the set of homotopy equivalence classes of Kasparov $A$-$B$ modules.
\end{definition}
\begin{lemma}\label{essential}
	Suppose that $(_AX_B,\phi,F,\alpha)$ is a Kasparov module, that $X$ decomposes as a graded direct sum $X=Y^{\perp}\oplus Y$ and that $\phi(A)X\subseteq Y$. Let $P$ be the orthogonal projection onto $Y$. Then $\phi$ decomposes as $\phi_Y\oplus 0$ on this direct sum and 
	\begin{equation}\label{Kasp}
	(Y,\phi_Y,PFP,\alpha|_{Y}) 
	\end{equation}
	is a Kasparov module representing the same class as $(X,\phi,F,\alpha)$.
\end{lemma}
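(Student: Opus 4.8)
The plan is to proceed in three steps: decompose $\phi$ and identify $(Y,\phi_Y,\alpha|_Y)$ as a correspondence, verify that $(Y,\phi_Y,PFP,\alpha|_Y)$ is a Kasparov module, and then build an operator homotopy which, together with Lemma~\ref{Deg Lem}, identifies its class with that of $(X,\phi,F,\alpha)$.

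First I would record the structural consequences of the hypothesis $\phi(A)X\subseteq Y$. For $x\in Y^{\perp}$ and $a\in A$ we have $\phi(a^{*})\phi(a)x\in Y$, so $\IP{\phi(a)x,\phi(a)x}=\IP{x,\phi(a^{*})\phi(a)x}=0$, whence $\phi(a)|_{Y^{\perp}}=0$; thus $\phi$ vanishes on $Y^{\perp}$ and restricts to $\phi_Y(a)=\phi(a)|_Y$ on $Y$, with $\phi_Y(a)$ adjointable on $Y$ (adjoint $\phi_Y(a^{*})$, using $\phi(A)Y\subseteq Y$). Consequently $\phi(a)=P\phi(a)=P\phi(a)P=\phi(a)P$ for all $a\in A$. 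Since the direct sum $X=Y^{\perp}\oplus Y$ is graded, $\alpha$ preserves each summand, so $P$ commutes with $\alpha$ (i.e.\ $P$ is even), $\alpha|_Y$ is a grading on the Hilbert $B$-module $Y$ compatible with $\phi_Y$, and $Y=PX$ is countably generated because the images under $P$ of a generating set for $X$ generate $Y$. Hence $(Y,\phi_Y,\alpha|_Y)$ is a countably generated graded $A$--$B$ correspondence.

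Next I would check the three Kasparov conditions for $F_Y:=PFP|_Y$. One has $F_Y^{*}=PF^{*}P|_Y$, and since $P$ is even and $F$ is odd, $F_Y$ is odd. Because $P\phi(a)=\phi(a)=\phi(a)P$, the operators $(F_Y-F_Y^{*})\phi_Y(a)$ and (using \eqref{Neat}) $[F_Y,\phi_Y(a)]^{\gr}$ are the restrictions to $Y$ of $P\big((F-F^{*})\phi(a)\big)P$ and $P\big([F,\phi(a)]^{\gr}\big)P$ respectively, hence lie in $P\KK(X)P|_Y=\KK(Y)$. For $(F_Y^{2}-1)\phi_Y(a)$ I would use the identity $F\phi(a)=\phi(\alpha_A(a))F+[F,\phi(a)]^{\gr}$ together with $Q\phi(b)=0$ for $Q:=1-P$, which gives $QF\phi(a)=Q[F,\phi(a)]^{\gr}\in\KK(X)$; then $(PFP)^{2}\phi(a)-PF^{2}P\phi(a)=-PFQF\phi(a)\in\KK(X)$ and $PF^{2}P\phi(a)-\phi(a)=P(F^{2}-1)\phi(a)\in\KK(X)$, so $\big((PFP)^{2}-1\big)\phi(a)=P\big[\,(PFP)^{2}-1\,\big]\phi(a)\,P\in\KK(X)$ and its restriction to $Y$, which is $(F_Y^{2}-1)\phi_Y(a)$, lies in $\KK(Y)$. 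This establishes that $(Y,\phi_Y,PFP,\alpha|_Y)$ is a Kasparov module.

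Finally, for equality of classes, let $D=QFP+PFQ$ be the off-diagonal part of $F$ with respect to $Y^{\perp}\oplus Y$ and set $F_t=F-tD$, a norm-continuous affine path with $F_0=F$ and $F_1=QFQ+PFP$ block diagonal; each $F_t$ is odd since $P,Q$ are even. From $D\phi(a)=QF\phi(a)$, $D^{*}\phi(a)=QF^{*}\phi(a)$ and $\phi(\alpha_A(a))D=-[F,\phi(a)]^{\gr}Q$ — all compact by the computations above (using $(F-F^{*})\phi(a)$, $[F,\phi(a)]^{\gr}\in\KK(X)$ and $Q\phi(b)=0$) — one checks directly that $(F_t-F_t^{*})\phi(a)$, $(F_t^{2}-1)\phi(a)$ and $[F_t,\phi(a)]^{\gr}$ differ from the corresponding operators for $F$ by compacts, so $(X,\phi,F_t,\alpha)$ is a Kasparov module for all $t$. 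By the operator-homotopy principle this gives $(X,\phi,F,\alpha)\sim(X,\phi,F_1,\alpha)$ in $KK(A,B)$, and since $F_1$ is block diagonal and $\phi$ splits, $(X,\phi,F_1,\alpha)=\big(Y^{\perp},0,QFQ|_{Y^{\perp}},\alpha|_{Y^{\perp}}\big)\oplus\big(Y,\phi_Y,PFP,\alpha|_Y\big)$. The first summand is degenerate (all three of its operators vanish because its left action is $0$), hence homotopic to the zero module by Lemma~\ref{Deg Lem}; as homotopy respects direct sums, $(X,\phi,F_1,\alpha)$ and $(Y,\phi_Y,PFP,\alpha|_Y)$ represent the same $KK$-class, which finishes the proof. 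I expect the main obstacle to be the bookkeeping in the operator-homotopy step, i.e.\ verifying that every cross term produced by $D$ is compact; each such term, however, reduces to the single observation that $Q\phi(a)=0$ while $[F,\phi(a)]^{\gr}$ and $(F-F^{*})\phi(a)$ are compact.
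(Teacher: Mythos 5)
Your proof is correct and follows the same overall strategy as the paper: show $\phi$ vanishes on $Y^{\perp}$, compress $F$ by $P$ and verify the three Kasparov conditions using $(1-P)\phi(a)=0$, then split off a degenerate summand. The one place you genuinely diverge is the final step, and your version is the more careful one. The paper simply asserts
\begin{equation*}
[X,\phi,F,\alpha]=[Y,\phi_Y,PFP,\alpha|_Y]\oplus[Y^{\perp},0,(1-P)F(1-P),\alpha|_{Y^{\perp}}],
\end{equation*}
which silently discards the off-diagonal part $D=QFP+PFQ$ of $F$; since $D$ need not be compact, this identity of classes is not literally a unitary equivalence and requires justification. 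Your straight-line operator homotopy $F_t=F-tD$, with the observations that $D\phi(a)=Q[F,\phi(a)]^{\gr}$, $D^*\phi(a)=QF^*\phi(a)$ and $\phi(\alpha_A(a))D=-[F,\phi(a)]^{\gr}Q$ are all compact, is exactly the missing argument, and all the cross terms you would need to check do reduce to those three facts together with $Q\phi(a)=0$. The rest of your write-up (the orthogonality argument for $\phi(a)|_{Y^{\perp}}=0$ via $\IP{\phi(a)x,\phi(a)x}=\IP{x,\phi(a^*a)x}=0$, the evenness of $P$, and the identity $P\KK(X)P|_Y=\KK(Y)$) matches the paper's computations up to cosmetic differences.
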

\begin{proof}
	We first wish to show that $\phi$ decomposes as $\phi_Y\oplus 0$ on $Y\oplus Y^{\perp}$. Fix $y\in Y$, $\hat{y}\in Y^{\perp}$ and $a\in A$. Since $\phi(A)X\subseteq Y$, we have $\phi(a^*)y\in Y$. Thus
	\begin{equation*}
	0=\IP{\phi(a^*)y,\hat{y}}=\IP{y,\phi(a)\hat{y}},
	\end{equation*}
	and so $\phi(a)\hat{y}\in Y^{\perp}$, which shows $\phi(A)Y^{\perp}\subseteq Y^{\perp}$. Since $\phi(A)Y^{\perp}\subseteq \phi(A)X\subseteq Y$, we must have $\phi(A)Y^{\perp}\in Y\cap Y^{\perp}=\{0\}$, giving $\phi(a)\hat{y}=0$ for all $a\in A$ and $\hat{y}\in Y^{\perp}$. Thus
	\begin{equation*}
	\phi=\phi_Y\oplus 0
	\end{equation*}
	on $Y\oplus Y^{\perp}$ where $\phi_Y=\phi|_Y$.\\
	To see that $(Y,\phi_Y,PFP,\alpha\big|_{Y})$  is a Kasparov module we need to check that the operator $PFP$ satisfies the three Kasparov module conditions. That $PFP$ is odd follows from $F$ and $P$ being respectively odd and even operators. The operators $\phi(a)$ and $P$ commute since they both act as zero on $Y^{\perp}$ and $P$ is the identity on $Y$, and since $(1-P)$ and $\phi(a)$ have orthogonal ranges, $(1-P)\phi(a)=0$. Note that the projection $P$ is the identity on $Y$. Thus we compute
	\begin{align*}
	\left(PFP-(PFP)^*\right)\phi(a)&=P\left(F-F^*\right)P\phi(a)\\
	&=P\left(F-F^*\right)\phi(a)P,\\
	\end{align*}
	\begin{align*}
	[PFP,\phi(a)]^{\gr}&=PFP\phi(a)-\phi(\alpha(a))PFP\\
	&=P\left(F\phi(a)-\phi(\alpha(a))F\right)P\\
	&=P[F,\phi(a)]^{\gr}P,\\
	\end{align*}
	\begin{align*}
	\left((PFP)^2-1\right)\phi(a)&=\left(PFPFP-P\right)\phi(a)\\
	&=\left(PF^2P-P\right)\phi(a)+PF(1-P)FP\phi(a)\\
	&=P\left(F^2-1\right)P\phi(a)+PF(1-P)F\phi(a)P\\
	&=P\left(F^2-1\right)\phi(a)P+PF(1-P)\phi(\alpha(a))FP+PF(1-P)[F,\phi(a)]^{\gr}P\\
	&=P\left(F^2-1\right)\phi(a)P+PF(1-P)[F,\phi(a)]^{\gr}P
	\end{align*}
	all of which are compact because $(X,\phi,F,\alpha)$ is a Kasparov module. Thus $(Y,\phi_Y,PFP,\alpha|_Y)$ is a Kasparov module.\\
	Lastly we wish to show that \eqref{Kasp} represents the same class as $(X,\phi,F,\alpha)$. We have
	\begin{equation*}
	[X,\phi,F,\alpha]=[Y,\phi_Y,PFP,\alpha|_Y]\oplus[Y^{\perp},0,(1-P)F(1-P),\alpha|_{Y^{\perp}}].
	\end{equation*}
	Since the left action on the right-hand module is zero, $(Y^{\perp},0,(1-P)F(1-P),\alpha|_{Y^{\perp}})$ is a degenerate Kasparov module, and so represents the zero class in $KK(A,B)$. So the difference between \eqref{Kasp} and $[X,\phi,F,\alpha]$ is zero, whence they represent that same class.
\end{proof}
\begin{remark}
	In \cite{ThePaper} a non-degenerate left action is called \textit{essential}. We will borrow some of their terminology and call the sub-module $\overline{\phi(A)X}\subset X$ the \textit{essential subspace} of $X$.
\end{remark}

\begin{theorem}
	Let $A$ and $B$ be $C^*$-algebras. The set $KK(A,B)$ is an abelian group under the operation of direct sums.
\end{theorem}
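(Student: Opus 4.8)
The plan is to verify the group axioms in turn, treating the existence of inverses as the one substantive point.

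\emph{Preliminaries and the routine axioms.} I would first record two facts used throughout. The direct sum of two Kasparov modules is again a Kasparov module (the opening assertion of Example~\ref{Deg Eg}), and a unitary equivalence of Kasparov modules is a homotopy: if $U$ is a unitary equivalence from $(X_0,\phi_0,F_0,\alpha_0)$ to $(X_1,\phi_1,F_1,\alpha_1)$, the constant field $C([0,1];X_0)$ with the pointwise left action, operator and grading is a Kasparov $A$--$C([0,1];B)$ module whose fibre at $t=0$ is $(X_0,\phi_0,F_0,\alpha_0)$ and whose fibre at $t=1$ is $(X_0,\phi_0,F_0,\alpha_0)\cong(X_1,\phi_1,F_1,\alpha_1)$ via $U$. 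Granting these, $\oplus$ passes to a well-defined operation on $KK(A,B)$, since the direct sum of two homotopies is a homotopy (using $(X\oplus Y)\otimes_{\epsilon_t}B\cong(X\otimes_{\epsilon_t}B)\oplus(Y\otimes_{\epsilon_t}B)$, and that unitary equivalences of the summands add). Commutativity is given by the even unitary $(x,y)\mapsto(y,x)$ and associativity by the obvious reassociating even unitary, both being unitary equivalences and hence homotopies. Finally $0=(0,0,0,0)$ is neutral: $X\oplus 0\cong X$, so $[X,\phi,F,\alpha]+[0]=[X,\phi,F,\alpha]$; indeed adding any degenerate module leaves the class fixed, since degenerate modules are homotopic to $0$ by Lemma~\ref{Deg Lem}.

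\emph{Inverses.} Given $(X,\phi,F,\alpha)$ I would propose its negative to be $(X,\phi\circ\alpha_A,-F,-\alpha)$. That this is a Kasparov module follows by direct checking: $-\alpha$ is a grading of $X$ compatible with the homomorphism $\phi\circ\alpha_A$, the operator $-F$ is odd for $-\alpha$, the induced grading on $\LL(X)$ is unchanged because $(-\alpha)T(-\alpha)=\alpha T\alpha$, and $((-F)^2-1)(\phi\circ\alpha_A)(a)$, $((-F)-(-F)^*)(\phi\circ\alpha_A)(a)$, $[-F,(\phi\circ\alpha_A)(a)]^{\gr}$ are compact since the corresponding operators for $(X,\phi,F,\alpha)$ are and $\alpha_A$ is a $*$-automorphism. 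The claim is that
\begin{equation*}
(X,\phi,F,\alpha)\oplus(X,\phi\circ\alpha_A,-F,-\alpha)=\bigl(X\oplus X,\ \phi\oplus(\phi\circ\alpha_A),\ F\oplus(-F),\ \alpha\oplus(-\alpha)\bigr)
\end{equation*}
is homotopic to $0$. For this I would use the rotation
\begin{equation*}
F_t=\begin{pmatrix}\cos(\tfrac{\pi t}{2})F & \sin(\tfrac{\pi t}{2})\\ \sin(\tfrac{\pi t}{2}) & -\cos(\tfrac{\pi t}{2})F\end{pmatrix},\qquad t\in[0,1],
\end{equation*}
which is norm continuous, odd at every $t$ for $\alpha\oplus(-\alpha)$, equals $F\oplus(-F)$ at $t=0$ and $\begin{psmallmatrix}0&1\\1&0\end{psmallmatrix}$ at $t=1$. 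Since the scalars $\cos(\tfrac{\pi t}{2}),\sin(\tfrac{\pi t}{2})$ commute with $F$, the off-diagonal terms of $F_t^2$ cancel and $F_t^2-1=\cos^2(\tfrac{\pi t}{2})\bigl((F^2-1)\oplus(F^2-1)\bigr)$; similarly $F_t-F_t^*$ is $\cos(\tfrac{\pi t}{2})$ times a block-diagonal operator built from $F-F^*$, and the graded commutator of $F_t$ with the (constant) left action $\phi\oplus(\phi\circ\alpha_A)$ is $\cos(\tfrac{\pi t}{2})$ times a block-diagonal operator built from the operators $[F,\phi(a)]^{\gr}$. Hence $\bigl(X\oplus X,\phi\oplus(\phi\circ\alpha_A),F_t,\alpha\oplus(-\alpha)\bigr)$ is a Kasparov module for every $t$, and the operator homotopy of the preceding remarks produces a homotopy from the direct sum above to $\bigl(X\oplus X,\phi\oplus(\phi\circ\alpha_A),\begin{psmallmatrix}0&1\\1&0\end{psmallmatrix},\alpha\oplus(-\alpha)\bigr)$. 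This last module is exactly the degenerate module of Example~\ref{Deg Eg}, which is homotopic to $0$ by Lemma~\ref{Deg Lem}; composing the two homotopies gives $[X,\phi,F,\alpha]+[X,\phi\circ\alpha_A,-F,-\alpha]=0$, so $KK(A,B)$ is a group.

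\emph{The main obstacle.} The delicate step is the rotation homotopy, and specifically the fact that the left action on the second summand must be twisted by $\alpha_A$. With the untwisted action $\phi\oplus\phi$, the graded commutator $[F_t,\phi(a)\oplus\phi(a)]^{\gr}$ for $a$ odd acquires off-diagonal entries $2\sin(\tfrac{\pi t}{2})\phi(a)$, which are not compact in general, so the path leaves the class of Kasparov modules; the $\alpha_A$-twist is precisely what makes those off-diagonal terms cancel and what makes the endpoint agree with the degenerate module of Example~\ref{Deg Eg}. Once this is arranged, the remaining verifications — oddness of $F_t$, the three Kasparov conditions along the path, and unitary equivalence implying homotopy — are routine with the results already established in this chapter.
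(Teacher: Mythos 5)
Your proposal is correct and follows essentially the same route as the paper: the same candidate inverse $(X,\phi\circ\alpha_A,-F,-\alpha)$, the same rotation operator homotopy to the degenerate module of Example~\ref{Deg Eg}, and the same appeal to Lemma~\ref{Deg Lem}. The only differences are that you spell out the routine axioms the paper leaves as an exercise and add a (correct) explanation of why the $\alpha_A$-twist on the second summand is essential, both of which are welcome but do not change the argument.
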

\begin{proof}
	This proof is an expansion of \cite[17.3.3]{Blackadar} and \cite[Theorem 3.3.10]{Kasnotes}. Clearly the class of the zero module acts as an identity on $KK(A,B)$, and associativity is left as an exercise, so we need only show the existence of inverses. We claim that given a Kasparov class with representative $(X,\phi,F,\alpha_Y)$, its inverse is given by the class of the module\\ $(X,\phi\circ\alpha_A,-F,-\alpha_X)$. That is, we wish to show that
	\begin{equation}\label{Greg}
	(X\oplus X,\phi\oplus(\phi\circ\alpha_A),F\oplus-F,\alpha_X\oplus-\alpha_X)
	\end{equation}
	is homotopic to the zero module.
	According to Lemma~\ref{Deg Lem} we need only find a homotopy from $\eqref{Greg}$ a degenerate module. Recalling Example~\ref{Deg Eg}, the module
	\begin{equation}\label{Bird}
	(X\oplus X,\phi\oplus(\phi\circ\alpha_A),\begin{psmallmatrix}
	0&1\\
	1&0\\
	\end{psmallmatrix},\alpha_X\oplus-\alpha_X)
	\end{equation}
	is degenerate, so we will show that
	\begin{equation}\label{nathan}
	(X\oplus X,\phi\oplus(\phi\circ\alpha_A),\begin{psmallmatrix}
	F\cos t&\sin t\\
	\sin t&-F\cos t\\
	\end{psmallmatrix},\alpha_X\oplus-\alpha_X)
	\end{equation}
	is an operator homotopy from \eqref{Greg} to \eqref{Bird}. Clearly $\begin{psmallmatrix}
	F\cos t&\sin t\\
	\sin t&-F\cos t\\
	\end{psmallmatrix}$ is a norm continuous map, so we need only show that \eqref{nathan} is a Kasparov module for each $t$. The calculation
	\begin{align*}
	\begin{pmatrix}
	\alpha_X&0\\
	0&-\alpha_X\\
	\end{pmatrix}
	\begin{pmatrix}
	F\cos t&\sin t\\
	\sin t&-F\cos t\\
	\end{pmatrix}
	\begin{pmatrix}
	\alpha_x&0\\
	0&-\alpha_x\\
	\end{pmatrix}&=
	\begin{pmatrix}
	\cos t \alpha_X F \alpha_X&-\sin t\alpha_X^2\\
	-\sin t\alpha_X^2&-\cos t \alpha_X F\alpha_X\\
	\end{pmatrix}\\
	&-\begin{pmatrix}
	F\cos t&\sin t\\
	\sin t&-F\cos t\\
	\end{pmatrix}
	\end{align*}
	shows that $\begin{psmallmatrix}
	F\cos t&\sin t\\
	\sin t&-F\cos t\\
	\end{psmallmatrix}$ is odd. We also have
	\begin{align*}
	&\left[\begin{pmatrix}
	F\cos t&\sin t\\
	\sin t&-F\cos t\\
	\end{pmatrix}-\begin{pmatrix}
	F^*\cos t&\sin t\\
	\sin t&-F^*\cos t\\
	\end{pmatrix}\right]
	\begin{pmatrix}
	\phi(a)&0\\
	0&\phi(\alpha_A(a))\\
	\end{pmatrix}\\
	&=\begin{pmatrix}
	\cos t(F-F^*)\phi(a)&0\\
	0&-\cos t(F-F^*)\phi(\alpha_A(a))\\
	\end{pmatrix}
	\end{align*}
	which is compact because $(F-F^*)\phi(a)$ is compact. We see that
	\begin{align*}
	&\left[\begin{pmatrix}
	F\cos t&\sin t\\
	\sin t&-F\cos t\\
	\end{pmatrix}^2-
	\begin{pmatrix}
	1&0\\
	0&1\\
	\end{pmatrix}\right]\begin{pmatrix}
	\phi(a)&0\\
	0&\phi(\alpha_A(a))\\
	\end{pmatrix}&\\
	&=\begin{pmatrix}
	\cos^2tF^2+\sin^2t-1&0\\
	0&\cos^2tF^2+\sin^2t-1\\
	\end{pmatrix}
	\begin{pmatrix}
	\phi(a)&0\\
	0&\phi(\alpha_A(a))\\
	\end{pmatrix}\\
	&=\begin{pmatrix}
	\cos^2t(F^2-1)\phi(a)&0\\
	0&\cos^2t(F^2-1)\phi(\alpha_A(a))\\
	\end{pmatrix}
	\end{align*}
	is compact because $(F^2-1)\phi(a)$ is compact, and lastly we see
	\begin{align*}
	&\left[\begin{pmatrix}
	F\cos t&\sin t\\
	\sin t&-F\cos t\\
	\end{pmatrix},\begin{pmatrix}
	\phi(a)&0\\
	0&\phi(\alpha_A(a))\\
	\end{pmatrix}\right]^{\gr}\\
	&=\begin{pmatrix}
	\cos t F\phi(a)&\sin t\phi(\alpha_A(a))\\
	\sin t\phi(a)&-\cos t F\phi(\alpha_A(a))\\
	\end{pmatrix}-\begin{pmatrix}
	\cos t \phi(\alpha_A(a))F&\sin t\phi(\alpha_A(a))\\
	\sin t\phi(a)&-\cos t \phi(a)F\\
	\end{pmatrix}\\
	&=\begin{pmatrix}
	\cos t[F,\phi(a)]^{\gr}&0\\
	0&\cos t[F,\phi(a)]^{\gr}\\
	\end{pmatrix}
	\end{align*}
	is compact since $[F,\phi(a)]^{\gr}$ is compact. Thus \eqref{nathan} defines an operator homotopy between \eqref{Greg} and \eqref{Bird}, and so \eqref{Greg} is a degenerate Kasparov module whence we deduce that
	\begin{equation*}
	-[X,\phi,F,\alpha_X]=[X,\phi\circ\alpha,-F,\alpha_X].\qedhere
	\end{equation*}
\end{proof}
\begin{definition}
	Let $A$ and $B$ be graded $C^*$-algebras. We define $KK_0(A,B)=KK(A,B)$ and $KK_n(A,B)=KK(A,B\Otimes\Cliff_n)$. We write $KK_*(A,B)$ for the pair $(KK_0(A,B),KK_1(A,B))$. 
\end{definition}
\begin{remark}
	If $_{\phi}X$ is an ungraded $A$--$B$ correspondence and $F\in\LL(X)$ is an operator satisfying
	\begin{equation}\label{NEED}
		(F-F^*)\phi(a)\in\KK(X),\quad (1-F^2)\phi(a)\in\LL(X),\qquad\text{ and }\qquad [F,\phi(a)]\in\KK(X)
	\end{equation}
	for all $a\in A$, then in \cite{Kasnotes} the triple $(X,\phi,F)$ is called an \emph{odd} Kasparov module. The module $X$ always carries the trivial grading, but the only odd operator under the trivial grading is zero, so we cannot make $(X,\phi,F)$ into a Kasparov module under the trivial grading. However, the information $(X,\phi,F)$ does define a Kasparov $A\Otimes\Cliff_1$--$B$ module. Suppose that $A$ and $B$ are trivially graded. Then graded tensor product $A\Otimes\Cliff_1$ is equal to the ungraded tensor product which is independent of tensor norm since $\Cliff_1$ is nuclear. If $e$ denotes the generator of $\Cliff_1$ then $A\Otimes \Cliff_1$ is graded by the operator $\alpha_{A\Otimes\Cliff_1}(a\otimes 1+b\otimes e)=a\otimes 1-b\otimes e$. The direct sum $X\oplus X$ becomes an $A\Otimes\Cliff_1$--$B$ correspondence when equipped with diagonal right action of $B$ and left action
	\begin{equation*}
		\widetilde{\phi}\Big(a\otimes 1+b\otimes e\Big)=\begin{pmatrix}
			\phi(a)&\phi(b)\\
			\phi(b)&\phi(a)\\
		\end{pmatrix}
	\end{equation*}
	of $A\Otimes\Cliff_1$. We claim that $(X\oplus X,\widetilde{\phi},\begin{psmallmatrix}
		0&-iF\\
		iF&0\\
	\end{psmallmatrix},\begin{psmallmatrix}
		1&0\\
		0&-1\\
	\end{psmallmatrix})$ is a Kasparov $A\Otimes\Cliff_1$--$B$ module. The Fredholm operator $\begin{psmallmatrix}
	0&-iF\\
	iF&0\\
	\end{psmallmatrix}$ is odd because
	\begin{equation*}
		\begin{pmatrix}
		1&0\\
		0&-1\\
		\end{pmatrix}\begin{pmatrix}
		0&-iF\\
		iF&0\\
		\end{pmatrix}\begin{pmatrix}
		1&0\\
		0&-1\\
		\end{pmatrix}=-
		\begin{pmatrix}
		0&-iF\\
		iF&0\\
		\end{pmatrix}.
	\end{equation*}
	Using Equation \eqref{NEED} we see for all $a\otimes 1+b\otimes e\in A\Otimes\Cliff_1$ that both
	\begin{equation*}
		\Bigg(\begin{pmatrix}
		1&0\\
		0&1\\
		\end{pmatrix}-\begin{pmatrix}
		0&-iF\\
		iF&0\\
		\end{pmatrix}^2\Bigg)\widetilde{\phi}(a\otimes 1+b\otimes e)=
		\begin{pmatrix}
			(1-F^2)\phi(a)&(1-F^2)\phi(b)&\\
			(1-F^2)\phi(b)&(1-F^2)\phi(a)&\\
		\end{pmatrix}
	\end{equation*}
	and
	\begin{equation*}
		\Bigg(\begin{pmatrix}
		0&-iF\\
		iF&0\\
		\end{pmatrix}-\begin{pmatrix}
		0&-iF\\
		iF&0\\
		\end{pmatrix}^*\Bigg)\widetilde{\phi}(a\otimes 1+b\otimes e)=
		i\begin{pmatrix}
		-(F-F^*)\phi(b)&-(F-F^*)\phi(a)&\\
		(F-F^*)\phi(a)&(F-F^*)\phi(b)&\\
		\end{pmatrix}
	\end{equation*}
	are compact. We compute the graded commutator
	\begin{align*}
		\Big[\begin{pmatrix}
		0&-iF\\
		iF&0\\
		\end{pmatrix},\widetilde{\phi}(a\otimes 1+b\otimes e)\Big]&=\begin{pmatrix}
		0&-iF\\
		iF&0\\
		\end{pmatrix}\begin{pmatrix}
		\phi(a)&\phi(b)\\
		\phi(b)&\phi(a)\\
		\end{pmatrix}-\begin{pmatrix}
		\phi(a)&\phi(b)\\
		\phi(b)&\phi(a)\\
		\end{pmatrix}\begin{pmatrix}
		0&-iF\\
		iF&0\\
		\end{pmatrix}\\
		&=i\begin{pmatrix}
			-[F,\phi(b)]&-[F,\phi(a)]\\
			[F,\phi(a)]&[F,\phi(b)]\\
		\end{pmatrix}
	\end{align*}
	which is compact by Equation \eqref{NEED}. Hence $(X\oplus X,\widetilde{\phi},\begin{psmallmatrix}
	0&-iF\\
	iF&0\\
	\end{psmallmatrix},\begin{psmallmatrix}
	1&0\\
	0&-1\\
	\end{psmallmatrix})$ is a Kasparov $A\Otimes\Cliff_1$--$B$ module. Taking the homotopy class of this module we see that the information $(X,\phi,F)$ defines a class in $KK_1(A,B)$. It can be shown that all classes in $KK_1(A,B)$ arise this way, so whilst $KK_0(A,B)$ is generated by classes of Kasparov modules, we may think of $KK_1(A,B)$ as being generated by \emph{odd} Kasparov modules.
\end{remark}

\section{The Kasparov product}
We have seen that $KK(A,B)$ is an abelian group under direct sums of Kasparov modules. There is also a multiplication called the Kasparov product that turns the group $KK(A,A)$ into a ring. For our purposes, the Kasparov product is a map $\Otimes _B:KK(A,B)\times KK(B,C)\to KK(A,C)$ such that for all $X,Y\in KK(A,B)$ and $U,V\in KK(B,C)$ we have 
\begin{equation*}
	(X\oplus Y)\Otimes U=X\Otimes U \oplus Y\Otimes U\qquad\text{ and }\qquad X\Otimes(U\oplus V)=X\Otimes U\oplus X\Otimes V. 
\end{equation*}
If $A=B=C$, then this gives $KK(A,A)$ a ring structure with identity class $[A_A,\iota,0,\alpha_A]$ where $\iota:A\to\LL(A_A)$ is left multiplication. The true form of the Kasparov product is much more general than what we will be describing: the full detail can be found in \cite{Kasnotes} and \cite{Blackadar}. In general, the Kasparov product is of the form
\begin{equation*}
[X,\phi,F,\alpha_X]\Otimes _B[Y,\psi,G,\alpha_Y]=[X\otimes_{\psi}Y,\phi\otimes 1,H,\alpha_X\Otimes \alpha_Y],
\end{equation*}
and is usually quite difficult to compute, the main problem being what the operator $H$ should be. In the case that one of the modules is the class of a homomorphism, the product is quite easy to compute. Given a graded homomorphism $\phi:(A,\alpha_A)\to(B,\alpha_B)$, the quadruple $(B_B,\phi,0,\alpha_B)$ is a Kasparov $A$--$B$ module. If $(X,\psi,F,\alpha_X)$ is a Kasparov $B$--$C$ module, then we may form a Kasparov $A$-$C$ module $(X,\psi\circ\phi,F,\alpha_X)$ whose class in $KK(A,C)$ we denote by $\phi^*[X,\psi,F,\alpha_X]$. We then have
\begin{equation}\label{triv right}
[B_B,\phi,0,\alpha_B]\Otimes _B[X,\psi,F,\alpha_X]=\phi^*[X,\psi\,F,\alpha_X].
\end{equation}
Similarly, if $(Y,\psi,G,\alpha_Y)$ is a Kasparov $C$-$A$-module the we may form the Kasparov module $(Y\Otimes _{\phi} B_B,\psi\Otimes 1,G\Otimes 1,\alpha_Y\Otimes \alpha_B)$ whose class in $KK(C,B)$ we denote $\phi_*[Y,\psi,G,\alpha_Y]$, and then
\begin{equation}\label{triv left}
[Y,\psi,G,\alpha_Y]\Otimes _A[B_B,\phi,0,\alpha_B]=\phi_*[Y,\psi,G,\alpha_Y].
\end{equation}
One more form of the Kasparov product that we will need is when neither module is trivial, but the left actions on both modules are by compacts. Let $(X,\phi,0,\alpha_X)$ be a Kasparov $A$--$B$ module and let $(Y,\psi,0,\alpha_Y)$ be a Kasparov $B$--$C$ module. Since the Fredholm operator is $0$ and each quadruple defines a Kasparov module, the condition $(1-0^2)\phi(a)\in\KK(X)$ for all $a\in A$ demands that $\phi(a)\in\KK(X)$ for all $a\in A$. Similarly, $\psi(b)\in\KK(Y)$ for all $b\in B$. The form of the Kasparov product for these two modules is then
\begin{equation}\label{Fredholm zero Kasp product}
	[X,\phi,0,\alpha_X]\Otimes [Y,\psi,0,\alpha_Y]=[X\Otimes_{\psi}Y,\phi\Otimes 1,0,\alpha_X\Otimes \alpha_Y].
\end{equation}
A fact that we will need is that recalling the definition of the additive inverse, we have
\begin{equation}\label{minus id}
[A_A,\alpha_A,0,-\alpha_A]=-[\text{id}_A]
\end{equation}
in $KK(A,A)$.
\begin{definition}\label{Def: KK equiv}
	Let $A$ and $B$ be $C^*$-algebras. We say that $A$ and $B$ are $KK$-equivalent if there exists $KK$-classes $[X]\in KK(A,B)$ and $[Y]\in KK(B,A)$ such that $[X]\Otimes_B [Y]=[\text{id}_A]$ and $[Y]\Otimes_A [X]=[\text{id}_B]$.
\end{definition}
\begin{remark}
	If $A$ and $B$ are $KK$-equivalent then Definition~\ref{Def: KK equiv} tells us that for any $C^*$-algebra $C$, there group isomorphisms $\Otimes [X]:KK(C,A)\to KK(C,B)$ and $[Y]\Otimes :KK(A,C)\to KK(B,C)$. As such, $A$ and $B$ are `indistinguishable' in $KK$-theory.
\end{remark}
As the name may suggest, $KK$-theory generalises $K$-theory in the following sense.
\begin{theorem}\label{theorem: triv graded KK is K}[{\cite[Proposition~17.5.5,17.5.6]{Blackadar}}]
	Let $A$ be a trivially graded, $\sigma$-unital $C^*$-algebra. There are isomorphisms $KK_0(\C,A)\cong K_0(A)$ and $KK_1(\C,A)\cong K_1(A)$.
\end{theorem}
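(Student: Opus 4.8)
\begin{sketch}
The plan is to construct, in each parity, an ``index'' homomorphism $KK_i(\C,A)\to K_i(A)$ out of the Calkin-type extension
\begin{equation*}
0\longrightarrow\KK(\HH_A)\longrightarrow\LL(\HH_A)\longrightarrow\LL(\HH_A)/\KK(\HH_A)\longrightarrow 0,
\end{equation*}
and to check it is an isomorphism by exhibiting an explicit inverse built from projections (for $i=0$) or unitaries (for $i=1$). Writing $Q(A):=\LL(\HH_A)/\KK(\HH_A)$, the two facts from ordinary $K$-theory that will be needed are: $\KK(\HH_A)\cong A\otimes\KK(\HH)$ is stable, so $K_*(\KK(\HH_A))\cong K_*(A)$; and $K_*(\LL(\HH_A))=0$ (an Eilenberg swindle, using $\HH_A\cong\HH_A\oplus\HH_A\oplus\cdots$), so the connecting maps $\partial\colon K_1(Q(A))\to K_0(\KK(\HH_A))$ and $\partial\colon K_0(Q(A))\to K_1(\KK(\HH_A))$ of the extension are isomorphisms. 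The $\sigma$-unitality of $A$ is what keeps all the Hilbert $A$-modules in play countably generated, hence covered by Kasparov's stabilisation theorem (Theorem~\ref{stab theorem}) and Lemma~\ref{frames exist}.

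\textbf{Reduction.} Let $(X,\phi,F,\alpha_X)$ represent a class in $KK_0(\C,A)$. Since $\C$ is trivially graded, $e:=\phi(1)$ is an even projection in $\LL(X)$ with $\overline{\phi(\C)X}=eX$, so Lemma~\ref{essential} lets us replace $X$ by $eX$ and assume $\phi$ is unital; then $[F,\phi(a)]^{\gr}=0$ automatically and only $F-F^{*}\in\KK(X)$ and $F^{2}-1\in\KK(X)$ remain. As $A$ is trivially graded, the $(\pm1)$-eigenspaces $X_0,X_1$ of $\alpha_X$ are Hilbert $A$-submodules, $F$ is off-diagonal $F=\left(\begin{smallmatrix}0&F_1\\F_0&0\end{smallmatrix}\right)$, and the conditions read $F_1\equiv F_0^{*}$, $F_1F_0\equiv1_{X_0}$, $F_0F_1\equiv1_{X_1}$ modulo compacts; a straight-line operator homotopy (changing $F$ by an odd compact) makes $F_1=F_0^{*}$. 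Adding the degenerate module $(\HH_A\oplus\HH_A,\,0,\,\left(\begin{smallmatrix}0&1\\1&0\end{smallmatrix}\right),\,\diag(1,-1))$, which represents $0$ by Lemma~\ref{Deg Lem} (cf.\ Example~\ref{Deg Eg}), and then invoking Theorem~\ref{stab theorem} on each summand, we may take $X_0=X_1=\HH_A$. The pair $(F_0,F_0^{*})$ now exhibits $F_0$ as a generalised Fredholm operator between Hilbert $A$-modules, and its index — the formal difference of a kernel and a cokernel projection, made precise via a parametrix $G$ for $F_0$ modulo $\KK(\HH_A)$ — is a well-defined element $\operatorname{ind}(F_0)\in K_0(\KK(\HH_A))\cong K_0(A)$.

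\textbf{Isomorphism.} The assignment $[X,\phi,F,\alpha_X]\mapsto\operatorname{ind}(F_0)$ defines $\Phi\colon KK_0(\C,A)\to K_0(A)$; that it is well defined and additive is checked by running the reduction over $C([0,1];A)$ for a homotopy of Kasparov modules (and over $B$ at the evaluation maps in the definition of homotopy). For the inverse I use the standard construction attaching to a formal difference of finitely generated projective Hilbert $A$-modules $[P]-[Q]$ the Kasparov module with $P$ in even and $Q$ in odd degree, scalar left action, and zero Fredholm operator — this is a Kasparov module precisely because the identity operators on $P$ and $Q$ are compact — and a direct computation (with Example~\ref{K theory of C} supplying the base case $A=\C$) shows $\Phi$ and this map are mutually inverse. (Injectivity of $\Phi$ may also be seen directly: $\operatorname{ind}(F_0)=0$ lets one deform $F_0$ through Fredholm operators to an invertible one, after which the module splits off a degenerate summand and a ``projection'' summand representing $0$.)

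\textbf{Odd case and main difficulty.} For $KK_1(\C,A)$ I use the description above of $KK_1$-classes by \emph{odd} Kasparov modules $(X,\phi,F)$ over trivially graded algebras with $F-F^{*}$, $(1-F^{2})\phi(a)$ and $[F,\phi(a)]$ compact. The same reduction puts such a module in the form $X=\HH_A$, $\phi$ unital, $F=F^{*}$ with $F^{2}-1\in\KK(\HH_A)$; then $\tfrac{1}{2}(1+F)$ is a projection in $Q(A)$, giving a class in $K_0(Q(A))$ that $\partial$ carries to an element of $K_1(\KK(\HH_A))\cong K_1(A)$, with inverse sending a unitary $u$ with $u-1\in M_n(A)$ to the odd module determined by a self-adjoint lift of $\partial^{-1}[u]\in K_0(Q(A))$. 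The substance of the proof lies not in these constructions but in two technical verifications: that the whole chain of reductions — essential subspace, operator homotopies, added degenerates, stabilisation, parametrices — is compatible with homotopy of Kasparov modules, so that $\Phi$ and its odd analogue are genuinely well defined, all while the $\Z_2$-gradings are kept straight; and the two ordinary $K$-theory inputs $K_*(\KK(\HH_A))\cong K_*(A)$ and $K_*(\LL(\HH_A))=0$ quoted above. Both can instead simply be cited from \cite[Propositions~17.5.5, 17.5.6]{Blackadar}.
\end{sketch}
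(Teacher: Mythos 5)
The paper does not prove this statement: it is quoted verbatim from \cite[Propositions~17.5.5, 17.5.6]{Blackadar} precisely to avoid building the machinery you sketch. Your outline is a correct reconstruction of the cited argument — essential-subspace reduction, off-diagonal form of $F$ over a trivially graded base, stabilisation to $\HH_A$, the generalised Fredholm index valued in $K_0(\KK(\HH_A))\cong K_0(A)$, and in the odd case the boundary map of $0\to\KK(\HH_A)\to\LL(\HH_A)\to Q(A)\to 0$ together with $K_*(\LL(\HH_A))=0$ — and you correctly identify that the genuine work (compatibility of the whole reduction with homotopy, and the two ordinary $K$-theory inputs) is exactly what the paper outsources to Blackadar.
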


\section{Exact sequences in $KK$-theory}
 In this short section we state some results about exact sequences and $KK$-theory without proof.
\begin{definition}
	Let $(A,\alpha)$ be a graded $C^*$-algebra with $J\triangleleft$ an ideal such that $\alpha(J)\subseteq J$. A short exact sequence
	\begin{equation*}
		0\to J\to A\to A/J\to 0
	\end{equation*}
	is \emph{semi-split} if there exists a completely positive, norm-decreasing map $\lambda:A/J\to A$ that commutes with $\alpha$. We call $J$ a semi-split ideal in $A$.
\end{definition}
\begin{remark}\label{remark:nuclear implies semi-split}
	In \cite[Example~19.5.2]{Blackadar} it is noted that if $A$ is a nuclear $C^*$-algebra then applying \cite[Theorem~15.8.3]{Blackadar} with $A=D/J$ and $\psi:D/J\to D/J$ the identity map, every ideal in $A$ is semi-split.==
	In \cite[1.2~Remarks(b)]{Skandalis} it is noted that given a completely positive norm-decreasing map $\lambda:A/J\to A$ which does not commute with $\alpha$, if $a_0$ and $a_1$ denote the odd and even components $a$, one can define a completely positive, norm decreasing map $\widetilde{\lambda}:A/J\to A$ which does commute with $\alpha$ by the formula
	\begin{equation*}
		\widetilde{\lambda}(a)=\lambda(a_0)_0+\lambda(a_1)_1.
	\end{equation*}
\end{remark}
\begin{theorem}\label{theorem: exact sequences in KK}
	(cf. \cite[Theorem~1.1]{Skandalis}, \cite[Theorem~19.5.6]{Blackadar}) Suppose that $A$ is a graded $C^*$-algebra, and suppose that
	\begin{equation*}
		0\to J\xrightarrow{j} A\xrightarrow{q} A/J\to 0
	\end{equation*}
	is a semi-split exact sequence. Then for every separable graded $C^*$-algebra $D$ there exist homomorphisms $\delta:KK_i(D,A/J)\to KK_{i+1}(D,J)$ with subscripts mod 2 such that the following six-term sequence is exact.
	\begin{equation*}
	\parbox[c]{0.8\textwidth}{\hfill
		\begin{tikzpicture}[yscale=0.8, >=stealth]
		\node (00) at (0,0) {$KK_1(B, A/J)$};
		\node (40) at (4,0) {$KK_1(B, A)$};
		\node (80) at (8,0) {$KK_1(B, J)$};
		\node (82) at (8,2) {$KK_0(B, A/J)$};
		\node (42) at (4,2) {$KK_0(B, A)$};
		\node (02) at (0,2) {$KK_0(B, J)$};
		\draw[->] (02)-- node[above] {${\scriptstyle{q_*}}$} (42);
		\draw[->] (42)-- node[above] {${\scriptstyle j_*}$} (82);
		\draw[<-] (82)--(80)-- node[right] {${\scriptstyle \delta}$} (82);
		\draw[->] (80)-- node[above] {${\scriptstyle{q_*}}$} (40);
		\draw[->] (40)-- node[above] {${\scriptstyle j_*}$} (00);
		\draw[<-] (00)--(02)-- node[left] {${\scriptstyle \delta}$} (00);
		\end{tikzpicture}\hfill\hfill}
	\end{equation*}
	Similarly, if $A$ is separable, then for any $\sigma$-unital graded $C^*$-algebra $D$ there exist homomorphisms $\delta:KK_i(J,D)\to KK_{i+1}(A/J,D)$ with subscripts mod 2 such that the following six-term sequence is exact.
	\begin{equation*}
	\parbox[c]{0.8\textwidth}{\hfill
		\begin{tikzpicture}[yscale=0.8, >=stealth]
		\node (00) at (0,0) {$KK_1(A/J, D)$};
		\node (40) at (4,0) {$KK_1(A, D)$};
		\node (80) at (8,0) {$KK_1(J, D)$};
		\node (82) at (8,2) {$KK_0(A/J, D)$};
		\node (42) at (4,2) {$KK_0(A, D)$};
		\node (02) at (0,2) {$KK_0(J, D)$};
		\draw[<-] (02)-- node[above] {${\scriptstyle q^*}$} (42);
		\draw[<-] (42)-- node[above] {${\scriptstyle j^*}$} (82);
		\draw[->] (82)--(80)-- node[right] {${\scriptstyle \delta}$} (82);
		\draw[<-] (80)-- node[above] {${\scriptstyle q^*}$} (40);
		\draw[<-] (40)-- node[above] {${\scriptstyle j^*}$} (00);
		\draw[->] (00)--(02)-- node[left] {${\scriptstyle \delta}$} (00);
		\end{tikzpicture}\hfill\hfill}
	\end{equation*}
\end{theorem}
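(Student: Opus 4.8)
The plan is to deduce this from the standard mapping-cone machinery for $KK$-theory, as in \cite[Chapter~19]{Blackadar} and \cite{Skandalis}, so I will only outline the steps. The first step is to attach to the extension $0\to J\xrightarrow{j}A\xrightarrow{q}A/J\to 0$ a class $\partial_{q}\in KK_{1}(A/J,J)$, its extension invariant: starting from the graded completely positive norm-decreasing section $\lambda\colon A/J\to A$ (which we may assume commutes with the gradings by Remark~\ref{remark:nuclear implies semi-split}), a Stinespring dilation of $\lambda$ produces a concrete Kasparov $A/J$--$J$ module whose homotopy class is $\partial_{q}$, and the connecting maps will be realised as Kasparov product with $\partial_{q}$, namely $\delta=\;\cdot\;\Otimes_{A/J}\partial_{q}$ in the first sequence and $\delta=\partial_{q}\Otimes_{A/J}\;\cdot\;$ in the second.

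The heart of the argument is to show that the natural inclusion $J\hookrightarrow C_{q}$ into the mapping cone $C_{q}=\{(a,f)\in A\oplus C_{0}([0,1);A/J):f(0)=q(a)\}$ is a $KK$-equivalence. This is exactly the point at which the hypothesis that the extension be semi-split is used: the graded c.p.\ section $\lambda$ allows one to write down the Kasparov classes implementing the inverse equivalence, and Kasparov's technical theorem is then invoked to assemble the required adjointable operators into genuine Kasparov modules, with the odd/even parity of all operators dictated by the gradings. Granting this, one applies the general Puppe exact sequence attached to the $*$-homomorphism $q$ (respectively $j$), which produces a half-infinite exact sequence in $KK_{*}(D,-)$ (respectively $KK_{*}(-,D)$) involving $J$, $A$, $A/J$ and their suspensions; feeding in the $KK$-equivalence $J\simeq C_{q}$ and collapsing via the Bott periodicity isomorphisms $KK_{i}(D,SB)\cong KK_{i+1}(D,B)$ and $KK_{i}(SA,B)\cong KK_{i+1}(A,B)$ (which in this graded setting rest on $\Cliff_{2}\cong M_{2}(\C)$ and Remark~\ref{remark: graded and ungraded tensor product}) turns the long sequence into the stated cyclic six-term sequence. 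Identifying the horizontal arrows with $j_{*},q_{*}$ (respectively $j^{*},q^{*}$) and checking naturality are then routine.

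I expect the main obstacle to be precisely the mapping-cone $KK$-equivalence: this is the only step where a bare short exact sequence does not suffice and the completely positive splitting is genuinely required, and it is also the step most sensitive to the gradings, since one must verify that the Stinespring dilation and the operators extracted from Kasparov's technical theorem can be chosen compatibly with the relevant gradings. The separability of $A$ in the contravariant case and the $\sigma$-unitality of $D$ in the covariant case enter exactly here, as the hypotheses under which Kasparov's technical theorem and the stabilisation theorem (Theorem~\ref{stab theorem}) are available in the relevant variable; with those in place, the two sequences follow from the same construction run in opposite slots.
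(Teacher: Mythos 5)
The paper does not actually prove this theorem: it appears in a section explicitly described as stating results ``without proof,'' with the content imported from \cite{Skandalis} and \cite{Blackadar}, so there is no in-paper argument to compare yours against. Your outline is a faithful summary of the standard proof in those references: build the extension class $\partial_q\in KK_1(A/J,J)$ from a Stinespring dilation of the completely positive splitting, prove that the inclusion of $J$ into the mapping cone $C_q$ is a $KK$-equivalence (the one step where semi-splitness and Kasparov's technical theorem are genuinely required), run the Puppe sequence for $q$ (resp.\ $j$), and collapse via Bott periodicity to get the cyclic six-term sequence, identifying $\delta$ with Kasparov product against $\partial_q$. Two small slips, neither structural: in the contravariant sequence the connecting map is the product over $J$, so $\delta=\partial_q\Otimes_J(\,\cdot\,)$ rather than $\Otimes_{A/J}$; and the hypotheses pair up as $D$ separable in the covariant case versus $A$ separable and $D$ $\sigma$-unital in the contravariant case, so your attribution of the $\sigma$-unitality of $D$ to the covariant case is reversed.
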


\section{Graded $K$-theory}
We have finally built enough machinery to define graded $K$-theory. In this section we provide definitions and examples that will be of use to us later of graded $K$-theory.
\begin{definition}
	Let $(A,\alpha)$ be a graded $C^*$-algebra. We define the \emph{graded $K$-groups} $K^{\gr}_0(A)$ and $K_1^{\gr}(A)$ associated to $A$ to be the abelian groups $K_*^{\gr}(A)=KK_*(\C,A)$. 
\end{definition}
\begin{remark}
	Note that if $A$ is trivially graded then Theorem~\ref{theorem: triv graded KK is K} tells us that the graded $K$-groups of $A$ are the regular $K$ groups $K_*(A)$. By definition we have $K_0^{\gr}(A\Otimes\Cliff_1)=KK(\C,A\Otimes\Cliff_1)=K_1^{\gr}(A)$. Using Corollary~\ref{cor: Bott periodicity} we obtain
	\begin{equation*}
		K_1^{\gr}(A\Otimes\Cliff_1)=KK_1(\C,A\Otimes\Cliff_1)=KK_2(\C,A)\cong KK(\C,A)=K^{\gr}_0(A).
	\end{equation*}
	In particular, for $A=\Cliff_1$ we have
	\begin{equation*}
		K_i^{\gr}(\Cliff_1)=K_{i+1}^{\gr}(\C)=\begin{cases}
			0&i=0\\
			\Z&i=1.\\
		\end{cases}
	\end{equation*}
	For $\Cliff_2$ obtain
	\begin{equation*}
		K_i^{\gr}(\Cliff_2)=K_{i}^{\gr}(\C)=\begin{cases}
			\Z&i=0\\
			0&i=1.\\
		\end{cases}
	\end{equation*}
	Hence, we obtain $K^{\gr}_*(\Cliff_n)=(\Z,0)$ if $n$ is even and $K^{\gr}_*(\Cliff_n)=(0,\Z)$ if $n$ is odd.
\end{remark}
\begin{example}
	This Example comes from \cite[Example~3.10]{ThePaper}.
	Consider $C(\T)$ with the grading $\alpha_{C(\T)}(f)(x)=f(\overline{x})$. Let $C_0(\R)$ and $\C\oplus\C$ be trivially graded and define $\iota:C_0(\R)\Otimes\Cliff_1\to\C(\T)$ by
	\begin{equation*}
		\iota(f\otimes (1-e)/2+g\otimes(1+e)/2)=e^{i\theta}\mapsto \begin{cases}
			f(\tan(\theta-\pi/2))&\theta\in [0,\pi]\\
			g(\tan(\theta-\pi/2))&\theta\in [-\pi,0]\\
		\end{cases}
	\end{equation*}
	where $e$ is the odd generator of $\Cliff_1$.
	Since $f$ and $g$ vanish at infinity, $\iota(f\otimes(1-e)/2 +g\otimes(1+e)/2)$ is continuous. For all $f,g\in C_0(\R)$ we have
	\begin{align*}
		\iota\Big((f\otimes(1-e)/2+g\otimes(1+e)/2)^*\Big)&=\iota(\overline{g}\otimes (1-e)/2+\overline{f}\otimes(1+e)/2)\\
		&=e^{i\theta}\mapsto \begin{cases}
			\overline{g}(\tan(\theta-\pi/2))&\theta\in [0,\pi]\\
			\overline{f}(\tan(\theta-\pi/2))&\theta\in [-\pi,0]\\
		\end{cases}\\
		&=\overline{\left(e^{i\theta}\mapsto \begin{cases}
		f(\tan(\theta-\pi/2))&\theta\in [0,\pi]\\
		g(\tan(\theta-\pi/2))&\theta\in [-\pi,0]\\
		\end{cases}\right)}\\
		&=\iota(f\otimes(1-e)/2+g\otimes(1+e)/2)^*,\\
	\end{align*}
	so $\iota$ is $*$-preserving. Since $(1-e)/2$ and $(1+e)/2$ are orthogonal we compute for $f,g,a,b\in C_0(\R)$
	\begin{align*}
		\iota\Big((&f\otimes(1-e)/2+g\otimes(1+e)/2)(a\otimes(1-e)/2+b\otimes(1+e)/2\Big)\\&=\iota\Big(fa\otimes (1-e)/2+gb\otimes(1+e)/2\Big)\\
		&=e^{i\theta}\mapsto \begin{cases}
		a(\tan(\theta-\pi/2))f(\tan(\theta-\pi/2))&\theta\in [0,\pi]\\
		b(\tan(\theta-\pi/2))g(\tan(\theta-\pi/2))&\theta\in [-\pi,0]\\
		\end{cases}\\
		&=\iota(f\otimes(1-e)/2+g\otimes(1+e)/2)\iota(a\otimes(1-e)/2+b\otimes(1+e)/2)\\
	\end{align*}
	so $\iota$ is multiplicative and hence a homomorphism. Recall that $\Cliff_1$ is graded such that the odd subspace is spanned by $e$ and the even subspace by $1$. If $\alpha_{C_0(\R)\otimes\Cliff}$ denotes the grading on $C_0(\R)\otimes\Cliff_1$ with $C_0(\R)$ trivially graded then we compute
	\begin{align*}
		\iota(\alpha_{\Cliff}(f\otimes(1-e)/2+g\otimes(1+e)/2))&=\iota(f\otimes(1+e)/2+g\otimes(1-e)/2)\\
		&=e^{i\theta}\mapsto \begin{cases}
		g(\tan(\theta-\pi/2))&\theta\in [0,\pi]\\
		f(\tan(\theta-\pi/2))&\theta\in [-\pi,0]\\
		\end{cases}\\
		&=\alpha_{C(\T)}\left(e^{i\theta}\mapsto \begin{cases}
		f(\tan(\theta-\pi/2))&\theta\in [0,\pi]\\
		g(\tan(\theta-\pi/2))&\theta\in [-\pi,0]\\
		\end{cases}\right)\\
		&=\alpha_{C(\T)}(\iota(f\otimes(1-e)/2+g\otimes(1+e)/2)
	\end{align*}
	Thus $\iota$ is a graded homomorphism.
	Define $\ep:\C(\T)\to\C\oplus\C$ by $\ep(f)=(f(1),f(-1))$. Since $f(1)$ and $f(-1)$ are fixed by the grading $\alpha_{C(\T)}$ we see that $\ep$ is a graded homomorphism for $\C\oplus\C$ with the trivial grading. The image of $\iota$ is the ideal in $C(\T)$ consisting of all functions which vanish at $1$ and $-1$, which is precisely the kernel of $\ep$. Since $\iota$ is clearly injective, we obtain a graded short exact sequence
	\begin{equation}\label{exact}
		0\to C_0(\R)\Otimes\Cliff_1\xrightarrow{\iota}C(\T)\xrightarrow{\ep}\C\oplus\C\to 0.
	\end{equation}
	Hence by Theorem~\ref{theorem: exact sequences in KK} we obtain an exact sequence in $KK$-theory
	\begin{equation*}
	\parbox[c]{0.8\textwidth}{\hfill
		\begin{tikzpicture}[yscale=0.8, >=stealth]
		\node (00) at (0,0) {$KK_1(\C, \C\oplus\C)$};
		\node (40) at (4,0) {$KK_1(\C, C(\T)$};
		\node (80) at (8,0) {$KK_1(\C, C_0(\R)\Otimes\Cliff_1)$};
		\node (82) at (8,2) {$KK_0(\C, \C\oplus\C)$};
		\node (42) at (4,2) {$KK_0(\C, C(\T))$};
		\node (02) at (0,2) {$KK_0(\C, C_0(\R)\Otimes\Cliff_1)$};
		\draw[->] (02)-- node[above] {$\quad{\scriptstyle{\iota_*}}\quad$} (42);
		\draw[->] (42)-- node[above] {$\quad{\scriptstyle \ep_*}\quad$} (82);
		\draw[<-] (82)--(80)-- node[right] {${\scriptstyle \delta}$} (82);
		\draw[->] (80)-- node[above] {$\quad{\scriptstyle{\iota_*}}\quad$} (40);
		\draw[->] (40)-- node[above] {$\quad{\scriptstyle \ep_*}\quad$} (00);
		\draw[<-] (00)--(02)-- node[left] {${\scriptstyle \delta}$} (00);
		\end{tikzpicture}\hfill\hfill}
	\end{equation*}
	By Theorem~\ref{theorem: triv graded KK is K} since $\C\oplus\C$ is trivially graded we have $KK_*(\C,\C\oplus\C)=K_*(\C^2)=(\Z^2,0)$ where we have used Example~\ref{K theory of C} and Example~\ref{K1 of C}. Similarly, since 
	\begin{equation*}
		KK_i(\C,C_0(\R)\Otimes\Cliff_1)=KK_{i+1}(\C,C_0(\R))
	\end{equation*}
	and since $C_0(\R)$ is trivially graded, using Theorem~\ref{theorem: triv graded KK is K} we have 
	\begin{equation*}
		KK_i(\C,C_0(\R)\Otimes\Cliff_1)=K_{i+1}(C_0(\R))
	\end{equation*}
	which is $(\Z,0)$ by \cite[Page~234]{BlueBook}. Thus, writing $KK_i(\C,C(\T))=K_i^{\gr}(C(\T))$, our exact sequence becomes
	\begin{equation*}
	\parbox[c]{0.8\textwidth}{\hfill
		\begin{tikzpicture}[yscale=0.8, >=stealth]
		\node (00) at (0,0) {$0$};
		\node (40) at (4,0) {$K_1^{\gr}(C(\T))$};
		\node (80) at (8,0) {$0$};
		\node (82) at (8,2) {$\Z^2$};
		\node (42) at (4,2) {$K_0^{\gr}(C(\T))$};
		\node (02) at (0,2) {$\Z$};
		\draw[->] (02)-- node[above] {$\quad{\scriptstyle{\iota_*^0}}\quad$} (42);
		\draw[->] (42)-- node[above] {$\quad{\scriptstyle \ep_*^0}\quad$} (82);
		\draw[<-] (82)--(80)-- node[right] {${\scriptstyle \delta}$} (82);
		\draw[->] (80)-- node[above] {$\quad{\scriptstyle{\iota_*^1}}\quad$} (40);
		\draw[->] (40)-- node[above] {$\quad{\scriptstyle \ep_*^1}\quad$} (00);
		\draw[<-] (00)--(02)-- node[left] {${\scriptstyle \delta}$} (00);
		\end{tikzpicture}\hfill\hfill}
	\end{equation*}
\end{example}
The first isomorphisms theorem for groups implies that $K_0^{\gr}(C(\T))/\ker(\ep_*^0)\cong\Z^2$ and $K_1^{\gr}(C(\T))/\ker(\ep_*^1)\cong 0$. Exactness gives $\ker(\ep_*^1)=\Image(\iota_*^1)=0$, so we deduce that we deduce $K_1^{\gr}(C(\T))=0$. Applying exactness to $\ep_*^0$ gives $\ker(\ep_*^0)=\Image(\iota_*^0)$ and the first isomorphism theorem again gives $\Image(\iota_*^0)\cong\Z/\ker(\iota_*^0)$. Exactness once more gives $\ker(\iota_*^0)=\Image(\delta)=0$, hence we have $\ker(\ep_*^0)=\Image(\iota_*^0)=\Z$. Thus $K^{\gr}_0(C(\T))\cong\Z^3$ and we deduce that $K^{\gr}_*(C(\T))=(\Z^3,0)$ because extensions of free abelian groups split. 

Note that if we instead give $C(\T)$ the trivial grading then the exact sequence \eqref{exact} is no longer graded, since despite $C_0(\R)$ having the trivial grading, $C_0(\R)\Otimes\Cliff_1$ is non-trivially graded. Hence this particular Example cannot be applied to compute the regular $K$-theory of $C(\T)$. 
Indeed, by \cite[Page~234]{BlueBook} we have $K_*(C(\T))\cong (\Z,\Z)$ which is quite different from $K_*^{\gr}(C(\T))$.

\section{A dash of Morita equivalence}
In this section we develop a small amount of theory behind Morita equivalence to be used very briefly in Theorem~\ref{MAINTHEOREM}.
\begin{definition}
	Let $(A,\alpha_A)$ and $(B,\alpha_B)$ be graded $C^*$-algebras. We say that $A$ and $B$ are Morita equivalent if there exists a full graded Hilbert $B$ module $X$ and a graded isomorphism $\phi:A\to\KK(X)$.
\end{definition}
\begin{remark}
	In \cite[13.7]{Blackadar} this definition is referred to as \emph{Strongly} Morita equivalent. 
\end{remark}
\begin{proposition}\label{Morita is equivalence}
	Morita equivalence is an equivalence relation.
\end{proposition}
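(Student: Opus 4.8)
We must check the three properties of an equivalence relation. Throughout, every module carries the data of the Morita equivalence it belongs to, and "graded iso" abbreviates "graded isomorphism".

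\emph{Reflexivity} is immediate: for a graded $C^*$-algebra $(A,\alpha_A)$ the module $A_A$ is full (Example~\ref{dang}), and Example~\ref{Graded compact isomorphism} provides a graded iso $A\cong\KK(A_A)$.

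\emph{Symmetry.} Suppose $X$ is a full graded Hilbert $B$-module with a graded iso $\phi\colon A\to\KK(X)$. Using $\phi$ I would equip $X$ with the left inner product ${}_A\IP{x,y}:=\phi^{-1}(\Theta_{x,y})$, which satisfies ${}_A\IP{x,y}z=x\IP{y,z}_B$, and then build the \emph{conjugate module} $\widetilde X=\{\bar x:x\in X\}$ with $\bar x+\bar y=\overline{x+y}$, $\lambda\bar x=\overline{\bar\lambda x}$, right action $\bar x\cdot a:=\overline{\phi(a^*)x}$, and $A$-valued inner product $\IP{\bar x,\bar y}_A:={}_A\IP{x,y}$. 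A routine but fiddly check of conjugate-linearities shows $\widetilde X$ is a right inner-product $A$-module; positivity and definiteness follow from $\Theta_{x,x}\geq 0$ (Lemma~\ref{Panang}) and $\normof{\Theta_{x,x}}=\normof{x}^2$, the latter also giving $\normof{\bar x}_{\widetilde X}=\normof{x}_X$, hence completeness. One then verifies that $\psi_0\colon B\to\LL(\widetilde X)$, $\psi_0(b)\bar x:=\overline{x b^*}$, is a $\ast$-homomorphism with $\psi_0(\IP{x,y}_B)=\Theta_{\bar x,\bar y}$; fullness of $X$ over $B$ forces $\psi_0(B)=\KK(\widetilde X)$, while $\psi_0(b)=0$ gives $b\IP{x,x}_Bb^*=0$ for all $x$ and hence, testing against an approximate identity, $b=0$, so $\psi_0$ is an isomorphism. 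Fullness of $\widetilde X$ over $A$ is immediate since $\IP{\bar x,\bar y}_A=\phi^{-1}(\Theta_{x,y})$ ranges over a dense subset of $\KK(X)\cong A$. Finally $\alpha_{\widetilde X}(\bar x):=\overline{\alpha_X(x)}$ is a grading on $\widetilde X$ (this uses that $\phi$, hence $\phi^{-1}$, is graded and that $\widetilde{\alpha_X}(\Theta_{x,y})=\Theta_{\alpha_X(x),\alpha_X(y)}$), with respect to which $\psi_0$ is graded. Thus $B$ is Morita equivalent to $A$.

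\emph{Transitivity.} Suppose $X$ is a full graded Hilbert $B$-module with graded iso $\phi\colon A\to\KK(X)$ and $Y$ a full graded Hilbert $C$-module with graded iso $\psi\colon B\to\KK(Y)$. Form $Z:=X\otimes_BY$ with $\psi$ the left action of $B$ on $Y$; it is a graded Hilbert $C$-module with grading $\alpha_X\otimes\alpha_Y$, and its left $A$-action $\phi\otimes 1$ is injective by Lemma~\ref{T} since $\psi$ is. As $\psi$ maps onto $\KK(Y)$ we have $\psi^{-1}(\KK(Y))=B$, so Lemma~\ref{compact otimes 1} (with $I=B$) gives $\KK(X)\otimes 1\subseteq\KK(Z)$; conversely, the description $\KK(Z)=\overline{\Span}\{T_x k T_y^*:k\in\KK(Y),\,x,y\in X\}$ from the proof of that lemma, combined with $k=\psi(b)$, $T_x\psi(b)=T_{xb}$, $T_xT_y^*=\Theta_{x,y}\otimes 1$ and $XB=X$, yields $\KK(Z)=\KK(X)\otimes 1$. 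Hence $\phi\otimes 1\colon A\to\KK(Z)$ is a bijective homomorphism, so an isomorphism, and it is graded because $\phi$ is and $\widetilde{\alpha_X\otimes\alpha_Y}(\phi(a)\otimes 1)(x\otimes y)=\phi(\alpha_A(a))x\otimes y$. Finally $Z$ is full over $C$: fullness of $X$ over $B$ and surjectivity of $\psi$ make the elements $\psi(\IP{x_1,x_2}_B)$ dense in $\KK(Y)$, after which fullness of $Y$ over $C$ gives $\overline{\Span}\IP{Z,Z}_C=\overline{\Span}\{c_1c_2:c_1,c_2\in C\}=C$. So $A$ is Morita equivalent to $C$, and Morita equivalence is an equivalence relation.

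The main obstacle is the symmetry step: the conjugate module has to be assembled by hand, and every structural identity re-derived while tracking which slots are conjugate-linear. Transitivity is comparatively easy once one observes that identifying $\KK(Z)$ exactly is already contained in the internals of the proof of Lemma~\ref{compact otimes 1}.
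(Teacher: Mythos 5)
Your proof is correct and follows essentially the same route as the paper: reflexivity via $A_A$ and Example~\ref{Graded compact isomorphism}, transitivity by identifying $\KK(X\otimes_B Y)$ with $\KK(X)$ through $k\mapsto k\otimes 1$, and symmetry via a conjugate module which is exactly the paper's dual module $X^*$ (with $\flat(x)\cdot K=\flat(K^*x)$ and $\IP{\flat(x),\flat(y)}=\Theta_{x,y}$) transported along $\phi$ so that it is a right $A$-module from the outset. The only cosmetic difference is that you extract the surjectivity of $k\mapsto k\otimes 1$ from the $\overline{\Span}\{T_xkT_y^*\}$ description inside Lemma~\ref{compact otimes 1}, where the paper computes $\Theta_{x\otimes y,w\otimes z}=\Theta_{x\cdot\psi^{-1}(\Theta_{y,z}),w}\otimes 1$ directly.
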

\begin{proof}
	This proof is an expansion of \cite[Proposition~3.18]{RaeburnWilliams}.
	Let $(A,\alpha_A)$ be a graded $C^*$-algebra. Then the Hilbert $A$ module $A_A$ is full and Example~\ref{Graded compact isomorphism} shows that $\KK(A_A)$ and $A$ are graded isomorphic, so we have reflexivity. Let $(B,\alpha_B)$ and $(C,\alpha_C)$ be graded $C^*$-algebras, let $X$ be a full Hilbert $B$ module, $Y$ a full Hilbert $C$ module and $\phi:A\to\KK(X)$ and $\psi:B\to\KK(Y)$ be isomorphisms. Then $X$ is an $A$--$B$ correspondence with left action $\phi$, $Y$ is a $B$--$C$ correspondence with left action $\psi$ and $X\otimes_{\psi}Y$ is a full $C$ module. This is because by Example~\ref{Fish} the left action $\psi$ on $Y$ is non-degenerate, so
	\begin{align*}
		\overline{\Span}\{\IP{z,w}:z,w\in X\otimes_{\psi}Y\}&=\overline{\Span}\{\IP{y,\psi(\IP{x,u})v}:x,u\in X,u,v\in Y\}\\
		&=\overline{\Span}\{\IP{y,v}:u,v\in Y\}=C\\
	\end{align*}
	by fullness of $Y$. Since our left action $\psi:B\to\KK(X)$ is injective and by compacts, Lemma~\ref{compact otimes 1} and Lemma~\ref{T} tell us that that map $\theta_{x,y}\mapsto \theta_{x,y}\otimes 1$ has range in the compact operators on $X\otimes_BY$ and is injective. Hence if we can show that this map is surjective we will have $\KK(X\otimes_BY)\cong \KK(X)\cong A$. We compute for $x,w,u\in X$ and $y,z,v\in Y$
	\begin{align*}
		\Theta_{x \otimes y, w \otimes z}(u \otimes v)
		&= (x \otimes y) \cdot \langle w \otimes z, u \otimes v\rangle_C\\
		&= x \otimes (y \cdot \langle z, \psi(\langle w, u\rangle_B) v\rangle_C)\\
		&= x \otimes \Theta_{y,z}(\psi(\langle w, u\rangle_B) \cdot v)\\
		&= x  \psi^{-1}(\Theta_{y,z}) \otimes \psi(\langle w, u\rangle_B) \cdot v\\
		&= (x \cdot \psi^{-1}(\Theta_{y,z})) \cdot \langle w, u\rangle_B \otimes v\\
		&= \Theta_{x \cdot \psi^{-1}(\theta_{y,z}), w}(u) \otimes v\\
		&= (\Theta_{x \cdot \psi^{-1}(\theta_{y,z}), w} \otimes 1)(u \otimes v).\\
	\end{align*}
	Hence $\theta_{x,y}\mapsto \theta_{x,y}\otimes 1$ is surjective and we deduce that $\KK(X\otimes_BY)\cong \KK(X)\cong A$. Now for symmetry we introduce the concept of the dual module $X^*$. Given our Hilbert $B$-module $X$ the dual module $X^*$ is a Hilbert $\KK(X)$-module. As an additive set we define $X^*=X$. To avoid confusion, matching notation with \cite{RaeburnWilliams} we write $\flat:X\to X^*$ for the identity map and we write elements in $X^*$ as $\flat(x)$ where $x\in X$. We give $X^*$ the scalar multiplication $\lambda\flat(x)=\flat(\overline{\lambda}x)$ for $\lambda\in\C$ and $x\in X^*$. We define a right action by $\KK(X)$ and inner product taking values in $\KK(X)$ on $X^*$ by
	\begin{equation*}
		\flat(x)\cdot K=\flat(K^*(x))\qquad\text{and}\qquad \IP{\flat(x),\flat(y)}_{\KK(X)}=\Theta_{x,y}
	\end{equation*}
	for all $x,y\in X^*$ and $K\in\KK(X)$. We compute
	\begin{align*}
		\normof{\flat(x)}&=\normof{\IP{\flat(x),\flat(x)}}^{1/2}\\
		&=\normof{\Theta_{x,x}}^{1/2}\\
		&=\sup_{\normof{z}\leq 1}\normof{x\IP{x,z}}^{1/2}\\
		&=\normof{x\normof{x}}^{1/2}\\
		&=\normof{x}.\\
	\end{align*}
	The dual module $X^*$ is full because
	\begin{equation*}
		\overline{\Span}\{\IP{\flat(x),\flat(y)}:\flat(x),\flat(y)\in X^*\}=\overline{\Span}\{\Theta_{x,y}:x,y\in X\}=\KK(X).
	\end{equation*}
	For a finite sum $\sum_n\Theta_{\flat(x_n),\flat(y_n)}$,
	 \begin{align*}
	 	\Bignormof{\sum_n\Theta_{\flat(x_n),\flat(y_n)}}&=\sup_{\normof{\flat(z)}\leq 1}\Bignormof{\sum_n\flat(x_n)\IP{\flat(y_n),\flat(z)}}\\
	 	&=\sup_{\normof{z}\leq 1}\Bignormof{\flat(\sum_n\Theta_{z,y_n}x_n)}=\sup_{\normof{z}\leq 1}\Bignormof{\sum_n\Theta_{z,y_n}x_n}\\
	 	&=\sup_{\normof{z}\leq 1}\Bignormof{\sum_nz\IP{y_n,x_n}}=\Bignormof{\sum_n\IP{y_n,x_n}}\\
	 	&=\Bignormof{\Big(\sum_n\IP{y_n,x_n}\Big)^*}=\Bignormof{\sum_n\IP{x_n,y_n}}.\\
	 \end{align*}
	 So there is a well defined linear map $\psi:\Span\{\Theta_{\flat(x),\flat(y)}:x,y\in X\}\to B$ such that
	 \begin{equation*}
		\psi\Big(\sum_n\Theta_{\flat(x_n),\flat(y_n)}\Big)=\sum_n\IP{x_n,y_n}. 
	 \end{equation*} 
	 Our calculation shows that $\psi$ is norm preserving, hence uniformly continuous so Lemma~\ref{Sang} shows that there is a norm preserving extension of $\psi$ to $\KK(X^*)$ such that if $K_n$ is a convergent sequence of finite rank operators then $\psi(\lim_{n\to\infty}K_n)=\lim_{n\to\infty}\psi(K_n)$. 
	 For $x,y,u,v\in X$, we compute
	 \begin{align*}
		 \Theta_{\flat(x),\flat(y)}\Theta_{\flat(u),\flat(v)}&=\flat(x)\IP{\flat(y),\flat(u)\IP{\flat(v),\cdot}}=\flat(x)\IP{\flat(y),\flat(u)}\IP{\flat(v),\cdot}\\
		 &=\Theta_{\flat(x)\IP{\flat(y),\flat(u)},\flat(v)}=\Theta_{\flat(\Theta_{u,y}(x)),\flat(v)}\\
		 &=\Theta_{\flat(u\IP{y,x}),\flat(v)}.\\
	 \end{align*}
	 Thus
	 \begin{align*}
		 \psi(\Theta_{\flat(x),\flat(y)}\Theta_{\flat(u),\flat(v)})&=\psi(\Theta_{\flat(u\IP{y,x}),\flat(v)})=\IP{u\IP{y,x},v}\\
		 &=\IP{x,y}\IP{u,v}=\psi(\Theta_{\flat(x),\flat(y)})\psi(\Theta_{\flat(u),\flat(v)}).\\
	 \end{align*} 
	 We also have
	 \begin{equation*}
	 \psi(\Theta_{\flat(x),\flat(y)}^*)=\psi(\Theta_{\flat(y),\flat(x)})=\IP{x,y}=\psi(\Theta_{\flat(x),\flat(y)})^*.
	 \end{equation*}
	 Since the rank one operators span a dense subset of $\KK(X^*)$ and $\psi$ is continuous, we deduce that $\psi(K_1K_2)=\psi(K_q)\psi(K_2)$ and $\psi(K_1^*)=\psi(K_1)^*$ for all $K_1,K_2\in\KK(X^*)$. Thus $\psi$ is a homomorphism.	 
	 Since $X$ is full, $\psi$ is surjective, and since $\psi$ is isometric it is injective, hence we have an isomorphism $\KK(X^*)\cong B$. We compute
	 \begin{equation*}
		 \psi(\widetilde{\alpha}_X(\Theta_{\flat(x),\flat(y)}))=\psi(\Theta_{\flat(\alpha_X(x)),\flat(\alpha_X(y))})=\IP{\alpha_X(x),\alpha_X(y)}=\alpha_B(\IP{x,y}),
	 \end{equation*}
	 so since the rank one operators span a dense subset in $\KK(X^*)$ and $\psi$ is continuous we deduce that $\psi$ is a graded isomorphism.
\end{proof}
The main result we will need about Morita equivalence is that Morita equivalent $C^*$-algebras are $KK$-equivalent. Let $(A,\alpha_A)$ and $(B,\alpha_B)$ be Morita equivalent graded $C^*$-algebras, so that there is a graded Hilbert $B$-module $X$ with $\KK(X)\cong A$. If $\phi:A\to\KK(X)$ and $\psi^{-1}:B\to\KK(X^*)$ implement the (graded) isomorphisms between $A$ and $\KK(X)$ and $B$ and $\KK(X^*)$, with $\psi$ defined by $\psi(\Theta_{\flat(x),\flat(y)})=\IP{x,y}$ as in the proof of Proposition~\ref{Morita is equivalence}, then $(X,\phi,0,\alpha_X)$ and $(X^*,\psi^{-1},0,\alpha_X)$ are Kasparov $A$--$B$ and $B$--$A$ modules. We use this set up for the following Theorem.
\begin{theorem}\label{Morita equiv KK}(cf. \cite[Proposition 1.5.7]{Kasnotes})
	 With notation as above, we have
	\begin{equation*}
		[X,\phi,0,\alpha_X]\Otimes_B[X^*,\psi^{-1},0,\alpha_X]=[\text{id}_A]\quad\text{and}\quad [X^*,\psi^{-1},0,\alpha_X]\Otimes_A[X,\phi,0,\alpha_X]=[\text{id}_B].
	\end{equation*}
	That is, $A$ and $B$ are $KK$-equivalent.
\end{theorem}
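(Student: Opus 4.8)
The plan is to evaluate both products using the zero-Fredholm form of the Kasparov product \eqref{Fredholm zero Kasp product}, which applies precisely because $\phi(A)=\KK(X)$ and $\psi^{-1}(B)=\KK(X^*)$ are left actions by compact operators. It gives
\begin{equation*}
[X,\phi,0,\alpha_X]\otimes_B[X^*,\psi^{-1},0,\alpha_X]=[X\otimes_{\psi^{-1}}X^*,\phi\otimes 1,0,\alpha_X\otimes\alpha_X]
\end{equation*}
and symmetrically $[X^*,\psi^{-1},0,\alpha_X]\otimes_A[X,\phi,0,\alpha_X]=[X^*\otimes_{\phi}X,\psi^{-1}\otimes 1,0,\alpha_X\otimes\alpha_X]$. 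Since $[\text{id}_A]=[A_A,\iota,0,\alpha_A]$ and $[\text{id}_B]=[B_B,\iota,0,\alpha_B]$ with $\iota$ left multiplication, it suffices to exhibit graded correspondence isomorphisms $X\otimes_{\psi^{-1}}X^*\cong A_A$ (as $A$--$A$ correspondences) and $X^*\otimes_{\phi}X\cong B_B$ (as $B$--$B$ correspondences); no genuine product operator ever has to be constructed.

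For the first isomorphism I would define $U\colon X\odot X^*\to A_A$ on simple tensors by $U(x\otimes\flat(y))=\phi^{-1}(\Theta_{x,y})$. Using that $X^*$ carries the $\KK(X)$-valued inner product $\IP{\flat(y),\flat(v)}=\Theta_{y,v}$ and right action $\flat(y)\cdot K=\flat(K^*y)$, and identifying $\KK(X)$ with $A$ via $\phi$, a short manipulation of rank-one operators and the axioms \eqref{IP1}--\eqref{IP5} shows $\IP{U(x\otimes\flat(y)),U(u\otimes\flat(v))}_A=\IP{\flat(y),\psi^{-1}(\IP{x,u}_B)\flat(v)}$, which is exactly the hypothesis of Lemma~\ref{lemma: Well defined unitaries on tensor product}. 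Fullness of $X$ makes $\{\Theta_{x,y}\}$ dense in $\KK(X)\cong A$, so $U$ has dense range and Lemma~\ref{lemma: Well defined unitaries on tensor product} upgrades it to a unitary $\widetilde U\in\LL(X\otimes_{\psi^{-1}}X^*,A_A)$. That $\widetilde U$ intertwines the left actions is the identity $\Theta_{\phi(a)x,y}=\phi(a)\Theta_{x,y}$, and that it intertwines the gradings follows from $\widetilde{\alpha}_X(\Theta_{x,y})=\Theta_{\alpha_X(x),\alpha_X(y)}$ together with the fact, proved inside Proposition~\ref{Morita is equivalence}, that $\phi$ and $\psi$ are graded isomorphisms. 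The second isomorphism is handled identically with $V\colon X^*\odot X\to B_B$, $V(\flat(x)\otimes y)=\IP{x,y}_B$: the inner-product identity of Lemma~\ref{lemma: Well defined unitaries on tensor product} reduces to $\IP{y,x}_B\IP{u,v}_B=\IP{y,\phi(\IP{\flat(x),\flat(u)})v}_B$, fullness of $X$ again gives dense range, and $V$ intertwines the left $B$-action (through $\psi^{-1}$) with left multiplication on $B$ by the same computation that showed $\psi$ is multiplicative in Proposition~\ref{Morita is equivalence}, while the grading axiom $\alpha_B(\IP{x,y}_B)=\IP{\alpha_X(x),\alpha_X(y)}_B$ handles the gradings.

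The only real care needed is the bookkeeping of the two identifications $X^*\leftrightarrow A$ (through $\phi$) and $\KK(X^*)\leftrightarrow B$ (through $\psi$, with $\psi(\Theta_{\flat(x),\flat(y)})=\IP{x,y}_B$), since every piece of structure on $X^*$ is defined in terms of operators on $X$; once those dictionaries are written down, each required equality is a routine computation. I expect this translation between the $X$-picture and the $X^*$-picture to be the main obstacle — though not a deep one — with all the substantive content supplied by Lemma~\ref{lemma: Well defined unitaries on tensor product} and the product formula \eqref{Fredholm zero Kasp product}.
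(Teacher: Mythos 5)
Your proposal is correct and follows essentially the same route as the paper: apply the zero-Fredholm product formula \eqref{Fredholm zero Kasp product} and then exhibit a graded unitary on simple tensors via Lemma~\ref{lemma: Well defined unitaries on tensor product}, with $x\otimes\flat(y)\mapsto\Theta_{x,y}$ (the paper lands in $\KK(X)_{\KK(X)}$ and then identifies with $A_A$ through $\phi$, which is the same as your composing with $\phi^{-1}$, and it treats the second product by the symmetric argument you spell out). The only quibble is that density of the range of $U$ comes from the definition of $\KK(X)$ as the closed span of rank-one operators rather than from fullness of $X$; fullness is what you need for $V$.
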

\begin{proof}
	Using~\ref{Fredholm zero Kasp product} we compute
	\begin{equation*}
		[X,\phi,0,\alpha_X]\Otimes_B[X^*,\psi^{-1},0,\alpha_X]=[X\Otimes_{\psi^{-1}} X^*,\phi\Otimes 1_{X^*},0,\alpha_X\Otimes_{A}\alpha_X].
	\end{equation*}
	We wish to show that there is a well defined unitary map $U:X\otimes X^*\to\KK(X)_{\KK(X)}$ such that $U(x\otimes \flat(y))=\Theta_{x,y}$. We compute
	\begin{align*}
		\IP{U(x\otimes\flat(y)),U(u\otimes\flat(v))}&=\IP{\Theta_{x,y},\Theta_{u,v}}\\
		&=\Theta_{x,y}^*\Theta_{u,v}\\
		&=\Theta_{y,x}\Theta_{u,v}\\
		&=\Theta_{\Theta_{y,x}u,v}\\
		&=\Theta_{y\IP{x,u},v}\\
		&=y\IP{x,u}\IP{v,\cdot}\\
		&=y\IP{v\IP{u,x},\cdot}\\
		&=\Theta_{y,v\IP{u,x}}\\
	\end{align*}
	Thus, we have
	\begin{align*}
		\IP{x\otimes\flat(y),u\otimes\flat(v)}&=\IP{\flat(y),\psi^{-1}(\IP{x,u})\flat(v)}\\
		&=\IP{\flat(y),\Theta_{\flat(x),\flat(u)}\flat(v)}\\
		&=\IP{\flat(y),\flat(x)\IP{\flat(u),\flat(v)}}\\
		&=\IP{\flat(y),\flat(v\IP{u,x})}=\IP{\flat(y),\flat(\Theta_{v,u}x)}\\
		&=\Theta_{y,v\IP{u,x}}\\
		&=\IP{U(x\otimes\flat(y)),U(u\otimes\flat(v))}.\\
	\end{align*}
	Hence $U$ is unitary. For any $a\in A$ and $x,y\in X$,
	\begin{equation*}
		U(\phi(a)\Otimes 1_{X^*}(x\otimes\flat(y)))=U(\phi(a)x\otimes\flat(y))=\Theta_{\phi(a)x,y}=\phi(a)\Theta_{x,y}.
	\end{equation*}
	Thus
	\begin{align*}
		U(\alpha_X\Otimes_{A}\alpha_X(x\otimes\flat(y)))&=U(\alpha_X(x),\flat(\alpha_X(y))=\Theta_{\alpha_X(x),\alpha_X(y)}\\
		&=\widetilde{\alpha_X}(\Theta_{x,y})=\widetilde{\alpha_X}(U(x\otimes\flat(y))).
	\end{align*}
	Let $\iota:A\to\LL(A_A)$ be left multiplication.
	Since $U0=0=0U$, we deduce that
	\begin{equation*}
		(X\Otimes_{\psi^{-1}} X^*,\phi\Otimes 1_{X^*},0,\alpha_X\Otimes_{A}\alpha_X)\cong (\KK(X)_{\KK(X)},\phi,0,\widetilde{\alpha_X}).
	\end{equation*}
	Hence we have
	\begin{align*}
		[X,\phi,0,\alpha_X]\Otimes_B[X^*,\psi^{-1},0,\alpha_X]&=[X\Otimes_{\psi^{-1}} X^*,\phi\Otimes 1_{X^*},0,\alpha_X\Otimes_{A}\alpha_X]\\
		&=[\KK(X)_{\KK(X)},\phi,0,\widetilde{\alpha_X}]\\
		&=[A_{A},\iota,0,\alpha_A]=[\text{id}_A].\\
	\end{align*}
	A symmetric argument shows that
	\begin{equation*}
		[X^*,\psi^{-1},0,\alpha_X]\Otimes_A[X,\phi,0,\alpha_X]=[\text{id}_B].
	\end{equation*}
	Hence $A$ and $B$ are $KK$-equivalent.
\end{proof}

\begin{corollary}\label{cor: Matrices are morita equiv}
	Let $A$ and $B$ be graded $C^*$-algebras and let $\HH$ be an infinite dimensional Hilbert space. Let $\KK$ denote $\KK(\HH)$. There are isomorphisms
	\begin{equation}\label{first}
		KK(A,B)\cong KK(A\otimes \KK,B\otimes\KK)\cong KK(A,B\otimes \KK)\cong KK(A\otimes \KK,B)	
	\end{equation}
	and
	\begin{equation}\label{second}
		KK(A,B)\cong KK(A\otimes M_n(\C),B\otimes M_n(\C))\cong KK(A,B\otimes M_n(\C))\cong KK(A\otimes M_n(\C),B)	
	\end{equation}
	for all $n$.
\end{corollary}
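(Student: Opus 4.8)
The plan is to realise $A\otimes\KK$ and $A\otimes M_n(\C)$ (both tensor factors $\KK$ and $M_n(\C)$ taken trivially graded) as the algebra of compact operators on a natural full graded Hilbert $A$-module, deduce graded Morita equivalences $A\sim A\otimes\KK$ and $A\sim A\otimes M_n(\C)$ and likewise for $B$, promote these to $KK$-equivalences via Theorem~\ref{Morita equiv KK}, and finally read off every asserted isomorphism from the functoriality of $KK(-,-)$ in each variable under $KK$-equivalence.

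First I would treat the stabilised case. Equip the standard module $\HH_A$ of Definition~\ref{def: standard moduel} with the diagonal grading $\alpha_A^{\infty}$ of Example~\ref{eg: direct sum grading}. This module is full over $A$, since $\IP{(a,0,0,\dots),(b,0,0,\dots)}=a^*b$ and products $a^*b$ are dense in $A$ (Example~\ref{dang}). Writing $\HH_A$ as the completion of $\HH\odot A$ as in Definition~\ref{def: standard moduel}, the canonical identification sends $\KK(\HH_A)$ to $\KK(\HH)\otimes\KK(A_A)\cong A\otimes\KK$, and under it the induced grading $\widetilde{\alpha_A^{\infty}}$ on $\KK(\HH_A)$ corresponds to $\alpha_A$ acting on the $A$-factor with $\KK$ trivially graded; this is the countable-rank analogue of Example~\ref{Graded compact isomorphism}. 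Hence $A\otimes\KK\to\KK(\HH_A)$ is a graded isomorphism onto the compacts of a full graded Hilbert $A$-module, i.e.\ $A$ and $A\otimes\KK$ are Morita equivalent. The finite-rank case is identical with $\HH_A$ replaced by the free module $A^n=\bigoplus_{i=1}^n A_A$ of Example~\ref{direct sum} carrying the diagonal grading: here $\KK(A^n)\cong M_n(\KK(A_A))\cong M_n(A)\cong A\otimes M_n(\C)$ as graded algebras, and $A^n$ is full over $A$ for the same reason.

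By Theorem~\ref{Morita equiv KK} these Morita equivalences yield $KK$-equivalences $A\sim_{KK}A\otimes\KK$, $B\sim_{KK}B\otimes\KK$, $A\sim_{KK}A\otimes M_n(\C)$ and $B\sim_{KK}B\otimes M_n(\C)$. The remark following Definition~\ref{Def: KK equiv} says that a $KK$-equivalence $A_1\sim_{KK}A_2$ induces, for every $C^*$-algebra $C$, group isomorphisms $KK(C,A_1)\cong KK(C,A_2)$ and $KK(A_1,C)\cong KK(A_2,C)$ implemented by Kasparov product with the equivalence classes. Applying this with $A_1=A$, $A_2=A\otimes\KK$, $C=B$ gives $KK(A,B)\cong KK(A\otimes\KK,B)$; with $A_1=B$, $A_2=B\otimes\KK$, $C=A$ gives $KK(A,B)\cong KK(A,B\otimes\KK)$; and with $A_1=B$, $A_2=B\otimes\KK$, $C=A\otimes\KK$ gives $KK(A\otimes\KK,B)\cong KK(A\otimes\KK,B\otimes\KK)$. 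Chaining these three isomorphisms gives \eqref{first}, and replacing $\KK$ by $M_n(\C)$ throughout gives \eqref{second}.

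The only genuinely delicate point is the graded bookkeeping of the second paragraph: one must verify that $\HH_A$ and $A^n$ carry honest gradings compatible with $\alpha_A$ (Example~\ref{eg: direct sum grading}), that they are full, and — the crux — that the canonical isomorphisms $\KK(\HH_A)\cong A\otimes\KK$ and $\KK(A^n)\cong A\otimes M_n(\C)$ intertwine the induced grading $\widetilde{\alpha}$ on the compacts with $\alpha_A\otimes\mathrm{id}$ on the tensor product. For the finite-rank case this is a direct matrix computation extending Example~\ref{Graded compact isomorphism}; for the stabilised case it follows by choosing a frame for $\HH_A$ via Kasparov's stabilisation theorem (Theorem~\ref{stab theorem}, Lemma~\ref{frames exist}) and checking the identification on rank-one operators, where every term is explicit. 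Everything else is formal manipulation of the $KK$-equivalences.
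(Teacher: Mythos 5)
Your proposal is correct and follows essentially the same route as the paper: realising $A\otimes\KK$ as $\KK(\HH_A)$ and $A\otimes M_n(\C)$ as $\KK(A^n)$ for the full graded modules $\HH_A$ and $A^n$, invoking Theorem~\ref{Morita equiv KK} to upgrade the resulting Morita equivalences to $KK$-equivalences, and chaining the induced isomorphisms in each variable. The paper's proof is terser (it just exhibits the isomorphisms on rank-one operators via $\Theta_{(a),(b)}\mapsto\sum_{i,j}(a_ib_j^*)\otimes\Theta_{e_i,e_j}$ and the matrix-unit analogue), but your additional attention to the graded bookkeeping is consistent with what the paper leaves implicit.
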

\begin{proof}
	Let $\{e_i\}$ be an orthonormal basis for $\HH$.
	There is an isomorphism $\KK(\HH_A)\cong A\otimes\KK$ is observed by the linear map $\psi:\KK(\HH_A)\to A\otimes\KK$ determined by
	\begin{equation*}
		\psi(\Theta_{(a),(b)})=\sum_{i,j=1}^{\infty}(a_ib_j^*)\otimes \Theta_{e_i,e_j}.
	\end{equation*}
	Hence $\HH_A$ defines a Morita equivalence of $A$ and $A\otimes\KK$ where $\KK$ has the trivial grading giving Equation \eqref{first}. Similarly, for any $n$ we have an isomorphism $\varphi:A\otimes M_n(\C)\to \KK(A^n)$ determined by
	\begin{equation*}
		\varphi(\Theta_{(a),(b)})=\sum_{i,j=1}^{n}(a_ib_j^*)\otimes E_{i,j}
	\end{equation*}
	where $E_{ij}$ is the matrix unit with zeroes everywhere and a one in the $i$-$j$tj entry. Hence $A^n$ defines a Morita equivalence of $A\otimes M_n(\C)$ and $A$, giving equation \eqref{second}.
\end{proof}

\begin{corollary}\label{cor: Bott periodicity}
	For all $C^*$-algebras $A$ and $B$ we have $KK_{i+2}(A,B)\cong KK_i(A,B)$.
\end{corollary}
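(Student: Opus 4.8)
The plan is to move the whole computation into the second variable and reduce it to two ingredients already established: Clifford $2$-periodicity and the Morita-invariance of $KK$. By definition $KK_{i+2}(A,B)=KK(A,B\Otimes\Cliff_{i+2})$ and $KK_i(A,B)=KK(A,B\Otimes\Cliff_i)$, so it suffices to produce a graded isomorphism $B\Otimes\Cliff_{i+2}\cong(B\Otimes\Cliff_i)\otimes M_2(\C)$, where the right-hand side is the \emph{ungraded} tensor product with $M_2(\C)$ trivially graded and the grading coming from $B\Otimes\Cliff_i$ on the first tensorand; for then Corollary~\ref{cor: Matrices are morita equiv}, Equation~\eqref{second} with $n=2$ applied to the algebra $B\Otimes\Cliff_i$, gives $KK(A,(B\Otimes\Cliff_i)\otimes M_2(\C))\cong KK(A,B\Otimes\Cliff_i)=KK_i(A,B)$, and the corollary follows.

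To build that isomorphism I would proceed in three steps. First, Clifford $2$-periodicity, Equation~\eqref{Cliff tensor 2 period}, gives a graded isomorphism $\Cliff_{i+2}\cong\Cliff_i\Otimes\Cliff_2$, and hence, using associativity of the (graded, minimal) tensor product, a graded isomorphism $B\Otimes\Cliff_{i+2}\cong(B\Otimes\Cliff_i)\Otimes\Cliff_2$. Second, by Example~\ref{Eg: Cliff 2} the grading on $\Cliff_2\cong M_2(\C)$ is the inner grading implemented by $\operatorname{diag}(1,-1)$, so Lemma~\ref{lemma: graded tensor isom ungraded} (see also Remark~\ref{remark: graded and ungraded tensor product}) supplies a graded isomorphism $(B\Otimes\Cliff_i)\Otimes\Cliff_2\cong(B\Otimes\Cliff_i)\otimes M_2(\C)$ onto the ungraded tensor product with $M_2(\C)$ trivially graded. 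Third, a graded isomorphism $\theta\colon C\to C'$ induces an isomorphism $\theta_*\colon KK(A,C)\to KK(A,C')$ with inverse $(\theta^{-1})_*$; this is the special case of the Kasparov product recorded in Equation~\eqref{triv left}, or can be checked directly from the definition of a Kasparov module. Composing these three isomorphisms gives exactly the required $KK_{i+2}(A,B)\cong KK(A,(B\Otimes\Cliff_i)\otimes M_2(\C))$.

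The only delicate point is the bookkeeping of gradings: I must check that the isomorphism furnished by Lemma~\ref{lemma: graded tensor isom ungraded} carries the grading $\alpha_{B\Otimes\Cliff_i}\Otimes\beta_{\Cliff_2}$ precisely to the grading on $(B\Otimes\Cliff_i)\otimes M_2(\C)$ under which $M_2(\C)$ is trivially graded, since that is the form of the tensor product appearing in Corollary~\ref{cor: Matrices are morita equiv}. This is exactly what the cited form of the lemma provides, but it is the step where all the nontrivial grading data is absorbed. Everything else---associativity of the graded tensor product and functoriality of $KK(A,-)$ under graded isomorphisms of the second variable---is routine.
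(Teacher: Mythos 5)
Your proposal is correct and follows essentially the same route as the paper: Clifford $2$-periodicity to split off $\Cliff_2$, the inner grading on $\Cliff_2\cong M_2(\C)$ to replace the graded tensor product by the ungraded one (Lemma~\ref{lemma: graded tensor isom ungraded}), and Corollary~\ref{cor: Matrices are morita equiv} to absorb the $M_2(\C)$ factor. The only difference is cosmetic bookkeeping — you apply the Morita-invariance step to $B\Otimes\Cliff_i$ while the paper first folds $\Cliff_i$ back into the subscript and applies it to $B$ — and your explicit attention to which grading survives on $(B\Otimes\Cliff_i)\otimes M_2(\C)$ is a welcome tightening of a point the paper glosses over.
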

\begin{proof}
	We have using Equation~\ref{Cliff tensor 2 period}
	\begin{equation*}
		KK_{i+2}(A,B)=KK_0(A,B\Otimes\Cliff_{i+2})=KK_i(A,B\Otimes\Cliff_{2}\Otimes\Cliff_i)=KK_i(A,B\Otimes\Cliff_{2}).
	\end{equation*}
	We computed in Example~\ref{Eg: Cliff 2} that $\Cliff_2$ is isomorphic to $M_2(\C)$ with the standard grading with even and odd subspaces given by diagonal and off-diagonal matrices. In Remark~\ref{remark: graded and ungraded tensor product} we saw that under this grading we have $B\Otimes M_2(\C)\cong B\otimes M_2(\C)$ where the right hand tensor product is ungraded. Thus by Corollary~\ref{cor: Matrices are morita equiv} we have
	\begin{equation*}
		KK_{i+1}(A,B)\cong KK(A,B\Otimes\Cliff_2)\cong KK(A,B\otimes M_2(\C))\cong KK(A,B).
	\end{equation*}
\end{proof}
\chapter{Pimsner's six-term exact sequence in $KK$-theory}
\label{The Exact Sequence!}

This chapter aims to expand on the proof of \cite[Theorem 4.2 and Theorem 4.4]{ThePaper} and simultaneously to remove the hypothesis used there that the left action of $A$ on $X$ be by compact operators. We will be making heavy reference to \cite{FreeProbTheory} as is done in \cite{ThePaper}.
Throughout this chapter we will be working with $C^*$ correspondences $_AX_A$ with the same algebra $A$ on the left and right. We assume that the left action $\varphi:A\to \LL(X)$ is injective, but do not assume that $\varphi$ is by compacts as in \cite[Chapter 4]{ThePaper}.

Recall that if $X$ is a graded correspondence then $\OO_X$ and $\TT_X$ are graded $C^*$-algebras with gradings $\alpha_{\TT}$ and $\alpha_{\OO}$ such that
\begin{equation*}
	\alpha_{\TT}(T_x)=T_{\alpha(x)}\qquad\text{ and }\alpha_{\OO}(S_x)=S_{\alpha(x)}.
\end{equation*}
Throughout this chapter $I$ will indicate the ideal $\varphi^{-1}(\KK(X))$ from Lemma~\ref{compact otimes 1}.

We wish to show that under some general conditions, $A$ and $\TT_A$ are $KK$ equivalent. That is, we wish to find two mutually inverse Kasparov classes in $KK(A,\TT_X)$ and $KK(\TT_X,A)$. We set $[i_A]$ to be the natural class in $KK(A,\TT_X)$ of the inclusion $i_A:A\hookrightarrow\TT_X$ of Lemma~\ref{Trivial Kas Module}. That is,
\begin{equation*}
[i_A]=[\TT_X,i_A,0,\alpha_{\TT}].
\end{equation*}
For the other class in $KK(\TT_X,A)$ we construct an action of $\TT_X$ on $\FF_X$ as in \cite[Chapter~4]{FreeProbTheory} as follows. Let $\pi_0:\TT_X\to \LL(\FF_X)$ be the natural action of $\TT_X$ on $\FF_X$. Define a linear map $t:X\to\LL(\FF_X)$ by
\begin{equation*}
t(x)(\xi)=\begin{cases}
\pi_0(x)\xi& \text{if }\xi\in X^{\otimes n},\; n\geq 1\\
0&\text{if }\xi\in A=X^{\otimes 0}.\\
\end{cases}
\end{equation*}
Then each $t(x)$ is adjointable with
\begin{equation*}
t(x)^*(\xi)=\begin{cases}
\pi_0(x)^*\xi& \text{if }\xi\in X^{\otimes n},\; n\geq 2\\
0&\text{if }\xi\in X^{\otimes n},\; n<2.\\
\end{cases}
\end{equation*}
Together with the restriction of the diagonal left action $\varphi^{\infty}$ of $A$ on $\FF_X$ to the sub-module $\bigoplus_{i=1}^{\infty}X^{\otimes n}$, the pair $(t,\varphi^{\infty})$ is a representation of $_AX_A$ on $\LL(\FF_X)$. Hence by the universal property~\ref{theorem: universal property of Toeplitz} of $\TT_X$ there exists a homomorphism $\pi_1:\TT_X\to \LL(\FF_X)$ such that for $x\in X$ and $a\in A$, 
\begin{equation*}
\pi_1(x)=t(x)\qquad  \text{ and } \qquad\pi_1(a)=\varphi^{\infty}(a).
\end{equation*}
We aim to show that the quadruple
\begin{equation}\label{M}
M=\Big(\FF_X\oplus\FF_X,\pi_0\oplus(\pi_1\circ\alpha_{\TT}),\begin{psmallmatrix}
0&1\\
1&0\\
\end{psmallmatrix},\begin{psmallmatrix}
\alpha_{\infty}&0\\
0&-\alpha_{\infty}\\
\end{psmallmatrix}\Big)
\end{equation}
defines a Kasparov module in $KK(\TT_X,A)$. Since we have given the second copy of $\FF_X$ the opposite grading to the first copy, the operator $\begin{psmallmatrix}
0&1\\
1&0\\
\end{psmallmatrix}$
is odd. This operator is also clearly self adjoint and square one. We can then compute the graded commutator using Equation \eqref{Neat}:
\begin{align}
\label{Comm}\Big[\begin{pmatrix}
0&1\\
1&0\\
\end{pmatrix},\pi_0\oplus(\pi_1\circ\alpha_{\TT})(T)\Big]^{\gr}&=\begin{pmatrix}
0&\pi_1(\alpha_{\TT}(T))\\
\pi_0(T)&0\\
\end{pmatrix}-
\begin{pmatrix}
0&\pi_0(\alpha_{\TT}(T))\\
\pi_1(T)&0\\
\end{pmatrix}\\
&=\begin{pmatrix}
0&\pi_1(\alpha_{\TT}(T))-\pi_0(\alpha_{\TT}(T))\\
\pi_0(T)-\pi_1(T)&0\\
\end{pmatrix}.\nonumber\\\nonumber
\end{align}
We wish to show the operator $\pi_0(T)-\pi_1(T)$ is compact. It suffices to establish this for $T=T_x$. Let $\{e_i\}$ be an approximate identity for $A$. We claim that for $x\in X^{\otimes n}$, the operator $\pi_0(T_x)-\pi_1(T_x)$ is the norm limit of the rank one operators $\Theta_{x,e_i}$. Fix $\ep>0$. Choose $i$ large enough such that
\begin{equation*}
\normof{\IP{x,x}-\IP{x,x}e_i}<\ep/2.
\end{equation*}
Note that the operators $\pi_0(T_x)-\pi_1(T_x)$ and $\Theta_{x,e_i}$ act non-trivially only on the sub-module $A=X^{\otimes 0}\subset\FF_X$ where we have for $b\in A$
\begin{equation*}
\left(\pi_0(T_x)-\pi_1(T_x)\right)b=xb\quad\text{ and }\quad \Theta_{x,e_i}b=x(e_ib).
\end{equation*}
Since both operators act non-trivially only on $A$ we have
\begin{equation*}
\normof{\pi_0(T_x)-\pi_1(T_x)-\Theta_{x,e_i}}=\sup_{\normof{b}\leq 1, b\in A\subset \FF_X}\normof{\left(\pi_0(T_x)-\pi_1(T_x)-\Theta_{x,e_i}\right)b}.
\end{equation*}
This gives
\begin{align*}
\normof{\pi_0(T_x)-\pi_1(T_x)-\Theta_{x,e_i}}^2&=\sup_{\normof{b}\leq 1, b\in A}=\normof{\left(\pi_0(T_x)-\pi_1(T_x)-\Theta_{x,e_i}\right)b}^2\\
&=\sup_{\normof{b}\leq 1, b\in A}\normof{xb-xe_ib}^2\\
&\leq\sup_{\normof{b}\leq 1, b\in A}\normof{x-xe_i}^2\normof{b}^2\\
&\leq \normof{x-xe_i}^2\\
&=\normof{\IP{x-xe_i,x-xe_i}}\\
&=\normof{\IP{x,x}-\IP{x,x}e_i-e_i\IP{x,x}+e_i\IP{x,x}e_i}\\
&\leq \normof{\IP{x,x}-\IP{x,x}e_i}+\normof{e_i}\normof{\IP{x,x}-\IP{x,x}e_i}\\
&<\ep,\\
\end{align*}
and so we do indeed have $\pi_0(T_x)-\pi_1(T_x)\in\KK(\FF_X)$. Since $\KK(\FF_X)$ is a $C^*$-sub-algebra of $\LL(\FF_X)$, and $\pi_i$ is a homomorphism for $i=0,1$, and $T_x$ generate $\TT_X$, we deduce that $\pi_0(T)-\pi_1(T)$ is compact for all $T\in\TT_X$. Thus equation \eqref{Comm} shows that 
\begin{equation*}
[\begin{psmallmatrix}
0&1\\
1&0\\
\end{psmallmatrix},\pi_0\oplus(\pi_1\circ\alpha)(a)]^{\gr}\in\KK(\FF_X).
\end{equation*}
Hence $M$ in Equation \eqref{M} defines a Kasparov module. Using the notation of \cite{ThePaper}, let $\FF_X\ominus A$ be the sub-module $\oplus_{n=1}^{\infty}X^{\otimes n}\subset\FF_X$. Since the essential subspace of the module $M$ is $\FF_X\oplus(\FF_X\ominus A)$, Lemma~\ref{essential} tells us that if $P$ if the projection onto $\FF_X\ominus A$, then the class $[M]$ is represented by the Kasparov module
\begin{equation}\label{M essential}
[M]=\left[\FF_X\oplus(\FF_X\ominus A),\pi_0\oplus(\pi_1\circ\alpha_{\TT}),\begin{psmallmatrix}
0&1\\
P&0\\
\end{psmallmatrix},\begin{psmallmatrix}
\alpha_{\infty}&0\\
0&-\alpha_{\infty}\\
\end{psmallmatrix}\right].
\end{equation}
Note that our classes $[i_A]$ and $[M]$ give the classes $\alpha$ and $\beta$ in \cite{FreeProbTheory} when $A$ is trivially graded.
\begin{theorem}[{cf.\cite[Theorem~4.2]{ThePaper}, \cite[Theorem~4.4]{FreeProbTheory}}]\label{KK equivalence}
	With notation as above, the pair $[M]$ and $[i_A]$ are mutually inverse. That is, $[i_a]\otimes_{\TT_X}[M]=[\text{id}_A]$ and $[M]\otimes_A[i_A]=[\text{id}_{\TT_X}]$. In particular, $A$ and $\TT_X$ are $KK$-equivalent as graded $C^*$-algebras.
\end{theorem}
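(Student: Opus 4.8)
The plan is to compute the two Kasparov products $[i_A]\Otimes_{\TT_X}[M]$ and $[M]\Otimes_A[i_A]$ directly, exploiting the fact that $[i_A]$ is the class of the graded homomorphism $i_A\colon A\hookrightarrow\TT_X$. Consequently neither product requires the general construction of the Kasparov product operator: \eqref{triv right} gives $[i_A]\Otimes_{\TT_X}[M]=i_A^*[M]$, the class of $M$ with its left action precomposed with $i_A$, while \eqref{triv left} gives $[M]\Otimes_A[i_A]=(i_A)_*[M]=[(\FF_X\oplus\FF_X)\otimes_{i_A}\TT_X,\,(\pi_0\oplus(\pi_1\circ\alpha_{\TT}))\otimes1,\,\begin{psmallmatrix}0&1\\1&0\end{psmallmatrix}\otimes1,\,\begin{psmallmatrix}\alpha_{\infty}&0\\0&-\alpha_{\infty}\end{psmallmatrix}\otimes\alpha_{\TT}]$, where $\begin{psmallmatrix}0&1\\1&0\end{psmallmatrix}\otimes1$ makes sense by Lemma~\ref{T}.

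For the first identity, note that $\pi_0\circ i_A=\varphi^\infty$, while by construction of $\pi_1$ from the representation $(t,\varphi^\infty|_{\FF_X\ominus A})$ the map $\pi_1\circ i_A$ agrees with $\varphi^\infty$ on $\FF_X\ominus A$ and is zero on $A=X^{\otimes0}$. Hence both the left action of $i_A^*[M]$ and the operator $\begin{psmallmatrix}0&1\\1&0\end{psmallmatrix}$ respect the graded decomposition $\FF_X\oplus\FF_X=(A\oplus A)\oplus\bigl((\FF_X\ominus A)\oplus(\FF_X\ominus A)\bigr)$, so $i_A^*[M]$ splits as a direct sum. On $(\FF_X\ominus A)^{\oplus2}$ the summand has exactly the shape of the degenerate module of Example~\ref{Deg Eg}, so it is zero by Lemma~\ref{Deg Lem}. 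On $A\oplus A$ the left action is $\iota\oplus0$, supported on the first copy, so Lemma~\ref{essential} replaces this summand by $[A,\iota,P\begin{psmallmatrix}0&1\\1&0\end{psmallmatrix}P,\alpha_A]$ with $P$ the projection onto the first copy; since $P\begin{psmallmatrix}0&1\\1&0\end{psmallmatrix}P=0$ this is $[A_A,\iota,0,\alpha_A]=[\text{id}_A]$. Adding the two summands yields $[i_A]\Otimes_{\TT_X}[M]=[\text{id}_A]$.

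For the second identity I would first construct a graded isomorphism of Hilbert $\TT_X$-modules $\FF_X\otimes_{i_A}\TT_X\cong\TT_X$. With the notation $T_\xi$ of Lemma~\ref{comapcts inclusion in Toeplitz} (so $T_a=\varphi^\infty(a)=i_A(a)$ for $a\in A$), the rule $\xi\otimes S\mapsto T_\xi S$ preserves inner products because $T_\xi^*T_\eta=\varphi^\infty(\IP{\xi,\eta})=i_A(\IP{\xi,\eta})$, so Lemma~\ref{lemma: Well defined unitaries on tensor product} yields an isometry; it is even since $\alpha_{\TT}(T_\xi)=T_{\alpha_{\infty}(\xi)}$ and $i_A$ is graded, and it has dense range since $i_A$ is non-degenerate (if $\varphi$ fails to be non-degenerate one works with the essential form \eqref{M essential} of $[M]$ and absorbs the degenerate complement). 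Under this isomorphism $\pi_0\otimes1$ becomes left multiplication $L$ of $\TT_X$ on itself, because $T_{\pi_0(T)\xi}=T\,T_\xi$. Carrying this out on both copies of $\FF_X$, $(i_A)_*[M]$ becomes $[\TT_X\oplus\TT_X,\,L\oplus L'',\,\begin{psmallmatrix}0&1\\1&0\end{psmallmatrix},\,\begin{psmallmatrix}\alpha_{\TT}&0\\0&-\alpha_{\TT}\end{psmallmatrix}]$, where $L''$ is the transport of $\pi_1\circ\alpha_{\TT}\otimes1$. Since $\pi_0(T)-\pi_1(T)\in\KK(\FF_X)$ for all $T$ — the very compactness established while verifying that $M$ is a Kasparov module — and since $\KK(\FF_X)\otimes1\subseteq\KK(\FF_X\otimes_{i_A}\TT_X)$ by Lemma~\ref{compact otimes 1} (here $i_A^{-1}(\KK((\TT_X)_{\TT_X}))=A$), the off-diagonal entries of the graded commutator are compact, and an operator homotopy together with a final application of Lemma~\ref{essential} (again using $P\begin{psmallmatrix}0&1\\1&0\end{psmallmatrix}P=0$) identifies this class with $[\TT_X,L,0,\alpha_{\TT}]=[\text{id}_{\TT_X}]$. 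Hence $[M]\Otimes_A[i_A]=[\text{id}_{\TT_X}]$, so $A$ and $\TT_X$ are $KK$-equivalent as graded $C^*$-algebras.

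The main obstacle is the second identity. Establishing the graded Hilbert $\TT_X$-module isomorphism $\FF_X\otimes_{i_A}\TT_X\cong\TT_X$ and transporting the left action, Fredholm operator and grading through it takes care, and — unlike in the first identity — after the transport the operator $\begin{psmallmatrix}0&1\\1&0\end{psmallmatrix}$ does not exactly intertwine with the split left action $L\oplus L''$. One must therefore produce an explicit operator homotopy (in the style of the rotation homotopies used elsewhere in the thesis, and as in \cite[Theorem~4.4]{FreeProbTheory} and \cite[Theorem~4.2]{ThePaper}) that uses the compactness of $\pi_0-\pi_1$ to collapse the redundant $\TT_X$-summand, checking at every stage that all maps are even so the homotopy lives in graded $KK$. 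The non-degeneracy subtleties for $\varphi$ are a minor nuisance, handled throughout by working with the essential form \eqref{M essential} and Lemma~\ref{essential}. What makes the whole argument tractable is precisely that both products are products with a homomorphism class, so no Kasparov connection ever needs to be constructed.
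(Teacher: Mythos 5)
Your treatment of the first product is correct and in fact takes a different, arguably cleaner route than the paper: instead of splitting $\FF_X\oplus\FF_X$ as (copy of $A$) $\oplus$ ($\FF_X\ominus A$ doubled) and peeling off the degenerate summand of Example~\ref{Deg Eg} directly, the paper adds $-[\mathrm{id}_A]=[A,\alpha_A,0,-\alpha_A]$, absorbs it via the unitary $(x,y,a)\mapsto(x,y-a)$, and then runs the straight-line operator homotopy $\begin{psmallmatrix}0&P+t(1-P)\\P+t(1-P)&0\end{psmallmatrix}$ to reach a degenerate module. Both arguments are valid; yours avoids the auxiliary unitary and the operator homotopy at the cost of having to observe that the left action and Fredholm operator both respect your decomposition, which they do.

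The second product, however, has a fatal gap at its foundation: the claimed graded unitary $\FF_X\otimes_{i_A}\TT_X\cong\TT_X$, $\xi\otimes S\mapsto T_\xi S$, does not exist. The identity $T_\xi^*T_\eta=i_A(\IP{\xi,\eta}_{\FF_X})$ that you invoke to verify inner-product preservation holds only when $\xi$ and $\eta$ lie in the \emph{same} tensor power $X^{\otimes n}$; for $\xi,\eta$ in different tensor powers one has $\IP{\xi,\eta}_{\FF_X}=0$ while $T_\xi^*T_\eta\neq 0$ in general (e.g.\ for $\xi=x\in X$ and $\eta=a\in A=X^{\otimes 0}$, the operator $T_x^*\varphi^\infty(a)$ sends $y\in X$ to $\IP{x,\varphi(a)y}$). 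So the orthogonal summands $X^{\otimes n}\otimes\TT_X$ of $\FF_X\otimes_A\TT_X$ have non-orthogonal images $T_{X^{\otimes n}}\TT_X$ in $\TT_X$, and Lemma~\ref{lemma: Well defined unitaries on tensor product} does not apply. This is not a repairable detail: if $(\FF_X\otimes_A\TT_X,\pi_0\otimes 1)$ were unitarily equivalent to $(\TT_X,L)$, the whole $KK$-equivalence would be nearly trivial, and the delicacy of Pimsner's argument lies precisely in the failure of this identification. The paper's route is to identify only the subspace $A\otimes_A\TT_X\cong\TT_X$ (via $a\otimes T\mapsto\varphi^\infty(a)T$, where the inner-product computation genuinely works), use it to define an auxiliary \emph{compact} left action $\tau$ of $\TT_X$ supported on that subspace with $[(\FF_X\oplus\FF_X)\otimes_A\TT_X,0\oplus\tau,\cdot\,,\cdot]=-[\mathrm{id}_{\TT_X}]$, and then build a rotation homotopy of left actions $\psi_t(\xi)=\cos(\pi t/2)\bigl(\pi_0'(\alpha_{\TT}(T_\xi))-\pi_1'(T_\xi)\bigr)+\sin(\pi t/2)\tau(T_\xi)+\pi_1'(T_\xi)$, promoted to a homomorphism $\pi_t'$ of $\TT_X$ via the universal property of Theorem~\ref{theorem: universal property of Toeplitz}, joining $\pi_0'\circ\alpha_{\TT}$ to $\pi_1'+\tau$ through compact perturbations. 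Your instinct that the compactness of $\pi_0-\pi_1$ and a rotation homotopy must enter is right, but they must be deployed on $(\FF_X\oplus\FF_X)\otimes_A\TT_X$ itself, not after a collapse to $\TT_X\oplus\TT_X$ that is unavailable.
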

\begin{proof}
	Since $[i_A]$ is the class of a homomorphism, Equations \eqref{triv left} and \eqref{triv right} show us how to compute the Kasparov products $[i_A]\Otimes _{\TT_X}[M]$ and $[M]\Otimes _{A}[i_A]$. Using the representative \eqref{M essential} of $[M]$, we obtain
	\begin{equation}\label{Brian}
	[i_A]\Otimes _{\TT_X}[M]=\left[\FF_X\oplus(\FF_X\ominus A),(\pi_0\oplus\pi_1\circ\alpha_{\TT})\circ i_A,\begin{psmallmatrix}
	0&1\\
	P&0\\
	\end{psmallmatrix},\begin{psmallmatrix}
	\alpha_{\infty}&0\\
	0&-\alpha_{\infty}\\
	\end{psmallmatrix}\right].
	\end{equation}
	To show that \eqref{Brian} is the class of the identity on $A$, we will add $-[\text{id}]$ and show that we are left with the class of a degenerate module. Recalling Equation \eqref{minus id} we obtain
	\begin{align*}
	\big(\FF_X\oplus(\FF_X\ominus A),(\pi_0\oplus&\pi_1\circ\alpha_{\TT})\circ i_A,\begin{psmallmatrix}
	0&1\\
	P&0\\
	\end{psmallmatrix},\begin{psmallmatrix}
	\alpha_{\infty}&0\\
	0&-\alpha_{\infty}\\
	\end{psmallmatrix} \big)\oplus(A,\alpha,0,-\alpha_X)\\
	&\cong
	\left(\FF_X\oplus\FF_X,(\pi_0\oplus\pi_0\circ\alpha_{\TT})\circ i_A,\begin{psmallmatrix}
	0&P\\
	P&0\\
	\end{psmallmatrix},\begin{psmallmatrix}
	\alpha_{\infty}&0\\
	0&-\alpha_{\infty}\\
	\end{psmallmatrix}\right)\\
	\end{align*}
	via the unitary $U:\FF_X\oplus(\FF_X\ominus A)\oplus A\to\FF_X\oplus\FF_X$ given by $U(x,y,a)=(x,y-a)$. Note that there is a mistake in the proof of \cite[Theorem~4.2]{ThePaper} where the Fredholm operator on the right is written as $\begin{psmallmatrix}
	0&1\\
	1&0\\
	\end{psmallmatrix}$. Now taking the straight line operator homotopy 
	$\begin{psmallmatrix}
	0&P+t(1-P)\\
	P+t(1-P)&0\\
	\end{psmallmatrix}$
	from
	$\begin{psmallmatrix}
	0&P\\
	P&0\\
	\end{psmallmatrix}$
	to
	$\begin{psmallmatrix}
	0&1\\
	1&0\\
	\end{psmallmatrix}$
	shows that
	\begin{align*}
	\big[\FF_X\oplus&\FF_X,(\pi_0\oplus\pi_0\circ\alpha_{\TT})\circ i_A,\begin{psmallmatrix}
	0&P\\
	P&0\\
	\end{psmallmatrix},\begin{psmallmatrix}
	\alpha_{\infty}&0\\
	0&-\alpha_{\infty}\\
	\end{psmallmatrix}\big]\\
	&=\left[\FF_X\oplus\FF_X,(\pi_0\oplus\pi_0\circ\alpha_{\TT})\circ i_A,\begin{psmallmatrix}
	0&1\\
	1&0\\
	\end{psmallmatrix},\begin{psmallmatrix}
	\alpha_{\infty}&0\\
	0&-\alpha_{\infty}\\
	\end{psmallmatrix}\right].\\
	\end{align*}
	Thus,
	\begin{equation}\label{Deng}
	[i_A]\Otimes _{\TT_X}[M]=\left[\FF_X\oplus\FF_X,(\pi_0\oplus\pi_0\circ\alpha_{\TT})\circ i_A,\begin{psmallmatrix}
	0&1\\
	1&0\\
	\end{psmallmatrix},\begin{psmallmatrix}
	\alpha_{\infty}&0\\
	0&-\alpha_{\infty}\\
	\end{psmallmatrix}\right].
	\end{equation}
	Since $\begin{psmallmatrix}
	0&1\\
	1&0\\
	\end{psmallmatrix}$
	commutes with $(\pi_0\oplus\pi_0\circ\alpha_{\TT})\circ i_A$, the right-hand module in Equation \eqref{Deng} is degenerate and so represents the zero class. We deduce that $[i_A]\Otimes _{\TT_X}[M]=[\text{id}_A]$.\\
	For the reverse composition, \eqref{triv right} shows us that
	\begin{equation*}
	[M]\Otimes _{\TT_X}[i_A]=\Big[(\FF_X\oplus\FF_X)\Otimes \TT_X,(\pi_0\oplus\pi_1\circ\alpha_{\TT})\Otimes 1_{\TT_X},\begin{psmallmatrix}
	0&1\\
	1&0\\
	\end{psmallmatrix},\begin{psmallmatrix}
	\alpha_X^{\infty}\Otimes \alpha_{\TT}&0\\
	0&-\alpha_X^{\infty}\Otimes \alpha_{\TT}\\
	\end{psmallmatrix}\Big]
	\end{equation*}
	where we have identified $(\FF_X\oplus\FF_X)\Otimes \TT_X$ with $(\FF_X\Otimes \TT_X)\oplus(\FF_X\Otimes \TT_X)$ in defining our Fredholm operator. Continuing this identification, we write $\pi_0'$ and $\pi_1'$ for the left actions $\pi_0\Otimes 1_{\TT_X}$ and $\pi_1\circ\alpha_{\TT}\Otimes 1_{\TT_X}$ on $\FF_X\Otimes \TT_X$. Since the left action $\varphi$ of $A$ on $X$ is non-degenerate, so too is the natural left action $i_A$ of $A$ on $\TT_X$. Thus Example~\ref{left and right} shows that the balanced tensor product $A\Otimes _A \TT_X$ over this left action is isomorphic to $\TT_X$ via the unitary $U:A\otimes\TT_X\to\TT_X$ determined by 
	\begin{equation}\label{U}
	U(a\otimes T)=\varphi^{\infty}(a)T. 
	\end{equation}
	This allows us to define a left action of $\TT_X$ on $A\Otimes \TT_X$ by 
	\begin{equation*}
	S\cdot(a\otimes T)=U^{-1}\alpha_{\TT}(S)U(a\otimes T).
	\end{equation*}
	This action is compact since $\TT_X$ is acting on itself as in Example~\ref{Fang}.	We can then extend this to a compact action $\tau$ of $\TT_X$ on all of $\FF_X\Otimes \TT_X$ by defining the action to be trivial on $X^{\otimes n}\Otimes \TT_X$ when $n\geq 1$. Since $\tau(S)$ is compact for all $S\in \TT_X$, we obtain a Kasparov module
	\begin{equation*}			\left((\FF_X\oplus\FF_X)\Otimes _A\TT_X,(0\oplus \tau),\begin{psmallmatrix}
	0&1\\
	1&0
	\end{psmallmatrix},\begin{psmallmatrix}
	\alpha_X^{\infty}\Otimes \alpha_{\TT}&0\\
	0&-\alpha_X^{\infty}\Otimes \alpha_{\TT}\\
	\end{psmallmatrix}\right).
	\end{equation*}
	By construction the essential subspace of this module is the sub-module $(0\oplus A)\Otimes \TT_X$ in $(\FF_X\oplus\FF_X)\Otimes \TT_X$. The projection of $(\FF_X\oplus\FF_X)\Otimes \TT_X$ onto this essential subspace is $\begin{psmallmatrix}
	0&0\\
	0&1-P\\
	\end{psmallmatrix}$, so by Lemma~\ref{essential} we obtain 
	\begin{align*}
	\Big[(\FF_X\oplus\FF_X)\Otimes _A\TT_X,&(0\oplus \tau),\begin{psmallmatrix}
	0&1\\
	1&0\\
	\end{psmallmatrix},\begin{psmallmatrix}
	\alpha_X^{\infty}\Otimes \alpha_{\TT}&0\\
	0&-\alpha_X^{\infty}\Otimes \alpha_{\TT}\\
	\end{psmallmatrix} \Big]\\
	&=\left[(0\oplus A)\Otimes \TT_X,(0\oplus\tau),\begin{psmallmatrix}
	0&0\\
	0&1-P\\
	\end{psmallmatrix}\begin{psmallmatrix}
	0&1\\
	1&0\\
	\end{psmallmatrix}\begin{psmallmatrix}
	0&0\\
	0&1-P\\
	\end{psmallmatrix},
	\begin{psmallmatrix}
	\alpha_X^{\infty}\Otimes \alpha_{\TT}&0\\
	0&-\alpha_X^{\infty}\Otimes \alpha_{\TT}\\
	\end{psmallmatrix}\right]\\
	&=\left[A\Otimes \TT_X,\tau,0,\alpha_A\Otimes \alpha_{\TT}\right].\\
	\end{align*}
	The unitary $U$ defined in Equation \eqref{U} defines a unitary equivalence
	\begin{equation*}
	\left[A\Otimes \TT_X,\tau,0,\alpha_A\Otimes \alpha_{\TT}\right]=[\TT_X,\alpha_{\TT},0,-\alpha_{\TT}]=-[\text{id}_{\TT_X}],
	\end{equation*}
	where we have used \eqref{minus id} in the last equality. Putting this together we deduce that
	\begin{equation*}
	\Big[(\FF_X\oplus\FF_X)\Otimes _A\TT_X,(0\oplus \tau),\begin{psmallmatrix}
	0&1\\
	1&0\\
	\end{psmallmatrix},\begin{psmallmatrix}
	\alpha_X^{\infty}\Otimes \alpha_{\TT}&0\\
	0&-\alpha_X^{\infty}\Otimes \alpha_{\TT}\\
	\end{psmallmatrix} \Big]=-[\text{id}_{\TT_X}].
	\end{equation*}
	Since $\pi_0'\oplus\pi_1'$ and $0\oplus \tau$ have complimentary essential subspaces in $(\FF_X\oplus\FF_X)\Otimes \TT_X$ we obtain,
	\begin{align}
	\nonumber[M]\Otimes _A[i_A]-[\text{id}_{\TT_X}]&=\Big[(\FF_X\oplus\FF_X)\Otimes \TT_X,(\pi_0\oplus\pi_1\circ\alpha_{\TT})\Otimes 1_{\TT_X},\begin{psmallmatrix}
	0&1\\
	1&0\\
	\end{psmallmatrix},\begin{psmallmatrix}
	\alpha_X^{\infty}\Otimes \alpha_{\TT}&0\\
	0&-\alpha_X^{\infty}\Otimes \alpha_{\TT}\\
	\end{psmallmatrix}\Big]\\\nonumber
	&\qquad\quad+\Big[(\FF_X\oplus\FF_X)\Otimes _A\TT_X,(0\oplus \tau),\begin{psmallmatrix}
	0&1\\
	1&0\\
	\end{psmallmatrix},\begin{psmallmatrix}
	\alpha_X^{\infty}\Otimes \alpha_{\TT}&0\\
	0&-\alpha_X^{\infty}\Otimes \alpha_{\TT}\\
	\end{psmallmatrix} \Big]\\
	&=\Big[(\FF_X\oplus\FF_X)\Otimes _A\TT_X,(\pi'_0\oplus \pi'_1+\tau),\begin{psmallmatrix}
	0&1\\
	1&0\\
	\end{psmallmatrix},\begin{psmallmatrix}
	\alpha_X^{\infty}\Otimes \alpha_{\TT}&0\\
	0&-\alpha_X^{\infty}\Otimes \alpha_{\TT}\\
	\end{psmallmatrix} \Big].\label{zero class}\\\nonumber
	\end{align}
	We claim that \eqref{zero class} is the zero class in $KK(\TT_X,\TT_X)$, and aim to prove it by finding a homotopy of graded homomorphisms $\pi'_t:\TT_X\to \LL(\FF_X\Otimes _A\TT_X)$ from $\pi_0'\circ\alpha_{\TT}$ to $\pi_1'+\tau$ such that for each $t\in[0,1]$,
	\begin{equation}\label{KAS}
	\left((\FF_X\oplus\FF_X)\Otimes _A\TT_X,\pi_0'\oplus\pi'_t,\begin{psmallmatrix}
	0&1\\
	1&0\\
	\end{psmallmatrix},\begin{psmallmatrix}
	\alpha_X^{\infty}\Otimes \alpha_{\TT}&0\\
	0&-\alpha_X^{\infty}\Otimes \alpha_{\TT}\\
	\end{psmallmatrix}\right)
	\end{equation}
	is a Kasparov module. To obtain $\pi'_t$ we construct a covariant representation $(\tilde{\varphi}^{\infty}\circ \alpha_A,\psi_t)$ of $X$ in $\LL(\FF_X\Otimes _A\TT_X)$ and exploit the universal property Theorem~\ref{theorem: universal property of Toeplitz} of $\TT_X$ to extend this representation to the desired homotopy $\pi_t'$. For each $t$ define $\psi_t:X\to\LL(\FF_X\Otimes _A\TT_X)$ by
	\begin{equation*}
	\psi_t(\xi)=\left(\cos(\pi t/2)(\pi_0'(\alpha_{\TT}(T_{\xi}))-\pi_1'(T_{\xi}))+\sin(\pi t/2)\tau(T_{\xi})\right)+\pi'_1(T_{\xi}).
	\end{equation*}
	Recall that we denoted the diagonal left action action of $A$ on $\FF_X$ by $\varphi^{\infty}$. We define $\tilde{\varphi}^{\infty}$ to be $\varphi^{\infty}\Otimes 1_{\TT_X}$. Since $\pi_0$ and $\pi_1$ together with $\varphi^{\infty}$ form covariant representations of $X$ in $\TT_X$ we have the equalities
	\begin{align*}
	\pi'_1(T_{\xi})\tilde{\varphi}^{\infty}(\alpha_A(a))&=(\pi_1(\alpha_{\TT}(T_{\xi}))\varphi^{\infty}(\alpha_A(a)))\Otimes 1_{\TT_X}\\
	&=(\pi_1((T_{\alpha_X(\xi)}))\varphi^{\infty}(\alpha_A(a)))\Otimes 1_{\TT_X}\\
	&=(\pi_1(T_{\alpha_X(\xi)\alpha_A(a)}))\Otimes 1_{\TT_X}\\
	&=(\pi_1(T_{\alpha_X(\xi a)}))\Otimes 1_{\TT_X}\\
	&=\pi'_1(T_{\xi a}).\\
	\end{align*}
	The same reasoning gives us 
	\begin{align*}
	\tilde{\varphi}^{\infty}(\alpha_A(a))\pi'_1(T_{\xi})&=\pi'_1(T_{\varphi(a)\xi}),\\
	\tilde{\varphi}^{\infty}(\alpha_A(a))\pi_0'(\alpha(T_{\xi}))&=\pi_0'(\alpha_{\TT}(T_{\varphi(a)\xi})),\text{ and }\\
	\pi_0'(\alpha_{\TT}(T_{\xi}))\tilde{\varphi}^{\infty}(\alpha_A(a))&=\pi_0'(\alpha_{\TT}(T_{\xi a})).\\
	\end{align*}
	We wish to show that 
	\begin{equation*}
	\tau(T_{\xi a})\tilde{\varphi}^{\infty}(\alpha_A(a))=\tau(T_{\xi a})
	\end{equation*}
	and
	\begin{equation*}
	\tilde{\varphi}^{\infty}(\alpha_A(a))\tau(T_{\xi a})=\tau(T_{\varphi(a)\xi}).
	\end{equation*}
	This is trivially true on $(\FF_X\oplus\FF_X\ominus A)\Otimes \TT_X$ since $\tau$ acts as zero on this sub-module, so we check that for $b\otimes T\in A\Otimes \TT_X$
	\begin{align*}
	\tau(T_{\xi})\tilde{\varphi}^{\infty}(\alpha_A(a))(b\otimes T)&=U^{-1}\alpha_{\TT}(T_{\xi})U\tilde{\varphi}^{\infty}(\alpha_A(a))(b\otimes T)\\
	&=U^{-1}\alpha_{\TT}(T_{\xi})U(\alpha_A(a)b\otimes T)\\
	&=U^{-1}\alpha_{\TT}(T_{\xi})\varphi^{\infty}(\alpha_A(a)b)T\\
	&=U^{-1}\alpha_{\TT}(T_{\xi a})\varphi^{\infty}(b)T\\
	&=\tau(T_{\xi a})(b\otimes T).\\
	\end{align*}
	A similar argument then gives
	\begin{equation*}
	\tilde{\varphi}^{\infty}(\alpha_A(a))\tau(T_{\xi})=\tau(T_{\varphi(a)\xi}).
	\end{equation*}
	Altogether we then obtain
	\begin{equation*}
	\tilde{\varphi}^{\infty}(\alpha_A(a))\psi_t(\xi)=\psi_t(\varphi(a)\xi)\qquad\text{ and }\qquad \psi_t(\xi)\tilde{\varphi}^{\infty}(\alpha_A(a))=\psi_t(\xi a).
	\end{equation*}
	Next we check that $\psi_t$ is compatible with the inner-product. Note that for all $\xi,\zeta\in X$ and $T\in\TT_X$ we have
	\begin{align*}
	\Image(\pi_0'(\alpha_{\TT}(T_{\xi}))-\pi_1'(T_{\xi}))&\subseteq X\Otimes\TT_X,\\
	\Image(\tau(T))&\subseteq A\Otimes \TT_X,\text{ and }\\
	\Ker(\pi'_1(T_{\zeta}^*))&\subseteq (\FF_X\ominus X\ominus A)\Otimes\TT_X\\
	\end{align*}
	which are all mutually orthogonal. Thus, since $\pi'_1(T_{\xi})^*=\pi'_1(T^*_{\xi})$ we have
	\begin{align*}
	\psi_t(\xi)^*\psi_t(\eta)&=\big(\left(\cos(\pi t/2)(\pi_0'(\alpha_{\TT}(T_{\xi}))-\pi_1'(T_{\xi}))+\sin(\pi t/2)\tau(T_{\xi})\right)+\pi'_1(T_{\xi})\big)^*\\
	&\quad\big(\left(\cos(\pi t/2)(\pi_0'(\alpha_{\TT}(T_{\eta}))-\pi_1'(T_{\eta}))+\sin(\pi t/2)\tau(T_{\eta})\right)+\pi'_1(T_{\eta})\big)\\
	&=\big(\left(\cos(\pi t/2)(\pi_0'(\alpha_{\TT}(T_{\xi}))-\pi_1'(T_{\xi}))+\sin(\pi t/2)\tau(T_{\xi})\right)\big)^*\\
	&\quad\big(\cos(\pi t/2)(\pi_0'(\alpha_{\TT}(T_{\eta}))-\pi_1'(T_{\eta}))+\sin(\pi t/2)\tau(T_{\eta})\big)+\pi'_1(T_{\xi})^*\pi'_1(T_{\eta}).\\
	\end{align*}
	Writing $\tilde{P}$ for the projection onto $(\FF_X\ominus A)\Otimes\TT_X$, we note that $\pi'_1$ can be written as $$\pi'_1=\tilde{P}(\pi_0'\circ\alpha_{\TT})\tilde{P}.$$ Since $\pi'_1$ is a homomorphism we then have
	\begin{equation*}
	\pi_1'(T_{\xi})^*\pi_1'(T_{\eta})=\pi_1'(T_{\xi}^*T_{\eta})=\pi'_1(\IP{\xi,\eta})=\tilde{P}\tilde{\varphi}^{\infty}(\alpha_A(\IP{\xi,\eta}))\tilde{P},
	\end{equation*}
	and similarly
	\begin{equation*}
	\pi'_0(T_{\xi})^*\pi'_0(T_{\eta})=\tilde{\varphi}^{\infty}(\IP{\xi,\eta}).
	\end{equation*}
	For $\zeta\in X$ we have
	\begin{equation*}
	\pi'_1(T_{\zeta})-\pi'_0(\alpha_{\TT}(T_{\zeta}))=\pi_0'(\alpha_{\TT}(T_{\zeta}))(1-\tilde{P}).
	\end{equation*}
	We compute that for any $a,b\in A$ and $T\in \TT_X$
	\begin{align*}
	\tau(\varphi^{\infty}(a))(b\otimes T)&=U^{-1}\alpha_{\TT}(\varphi^{\infty}(a))U(b\otimes T)\\
	&=U^{-1}\varphi^{\infty}(\alpha_A(a))\varphi^{\infty}(b)T\\
	&=U^{-1}\varphi^{\infty}(\alpha_A(a)b)T\\
	&=\alpha_A(a)b\otimes T.\\
	\end{align*}
	Since $\tau$ acts trivially on $(\FF_X\ominus A)\Otimes \TT_X$ we
	deduce that $\tau(\varphi^{\infty}(a))=(1-\tilde{P})\varphi^{\infty}(\alpha_A(a))(1-\tilde{P})$.
	We resume our computation of $\psi_t(\xi)^*\psi_t(\eta)$:
	\begin{align*}
	&\big(\left(\cos(\pi t/2)(\pi_0'(\alpha_{\TT}(T_{\xi}))-\pi_1'(T_{\xi}))+\sin(\pi t/2)\tau(T_{\xi})\right)\big)^*\\
	&\quad\big(\cos(\pi t/2)(\pi_0'(\alpha_{\TT}(T_{\eta}))-\pi_1'(T_{\eta}))+\sin(\pi t/2)\tau(T_{\eta})\big)\\
	&=\cos^2(\pi t/2)\big(\pi_0'(\alpha_{\TT}(T_{\xi}))(1-\tilde{P})\big)^*\pi_0'(\alpha_{\TT}(T_{\eta}))(1-\tilde{P})+\sin^2(\pi t/2)\tau(T_{\xi})^*\tau(T_{\eta})\\
	&=\cos^2(\pi t/2)(1-\tilde{P})\pi_0'(\alpha_{\TT}(T_{\xi}))^*\pi_0'(\alpha_{\TT}(T_{\eta}))(1-\tilde{P})+\sin^2(\pi t/2)\tau(\varphi^{\infty}(\IP{\xi,\eta}))\\
	&=(1-\tilde{P})\Big(\cos^2(\pi t/2)\tilde{\varphi}^{\infty}(\alpha_{A}(\IP{\xi,\eta}))+\sin^2(\pi t/2)\tilde{\varphi}^{\infty}(\alpha_A(\IP{\xi,\eta}))\Big)(1-\tilde{P})\\
	&=(1-\tilde{P})\tilde{\varphi}^{\infty}(\alpha_{A}(\IP{\xi,\eta}))(1-\tilde{P}).\\
	\end{align*}
	Since $\tilde{\varphi}^{\infty}\tilde{P}=\tilde{P}\tilde{\varphi}^{\infty}$, we deduce that
	\begin{align*}
	\psi_t(\xi)^*\psi_t(\eta)&=(1-\tilde{P})\tilde{\varphi}^{\infty}(\alpha_{A}(\IP{\xi,\eta}))(1-\tilde{P})+\tilde{P}\tilde{\varphi}^{\infty}(\alpha_{A}(\IP{\xi,\eta}))\tilde{P}\\
	&=\tilde{\varphi}^{\infty}(\alpha_{A}(\IP{\xi,\eta}))(1-\tilde{P})^2+\tilde{\varphi}^{\infty}(\alpha_{A}(\IP{\xi,\eta}))\tilde{P}^2\\
	&=\tilde{\varphi}^{\infty}(\alpha_{A}(\IP{\xi,\eta})),\\
	\end{align*}
	so we deduce that for each $t\in[0,1]$, $(\tilde{\varphi}^{\infty}\circ\alpha_A,\psi_t)$ is a covariant representation of $X$ in $\LL(\FF_X\Otimes\TT_X)$. Thus the universal property Theorem~\ref{theorem: universal property of Toeplitz} gives us a homomorphism $\pi'_t:\TT_X\to\LL(\FF_X\Otimes\TT_X)$ such that $\pi'_t(T_{\xi})=\psi_t(\xi)$ for $\xi\in X$ and $\pi'_t(a)=\tilde{\varphi}^{\infty}(\alpha_A(a))$ for $a\in A$.\\
	\quad We wish to show that this operator is compact, so that $\pi'_1$ is a compact perturbation of $\pi'_t$. For $t\in[0,1]$ and $\xi\in X$ we have
	\begin{equation*}
	\pi'_t(T_{\xi})-\pi'_1(T_{\xi})=\cos(\pi t/2)(\pi_0'(\alpha_{\TT}(T_{\xi}))-\pi_1'(T_{\xi}))+(\sin(\pi t/2)-1)\tau(T_{\xi}).
	\end{equation*}
	This operator vanishes on $\tilde{P}(\FF_X\Otimes\TT_X)$, so to see that $\pi'_t-\pi_1'$ is compact we need only show it on $A\Otimes\TT_X$. Recall from Lemma~\ref{compact otimes 1} that an operator $k\Otimes 1$ on $A\Otimes \TT_X$ is compact if and only if $k$ is a compact operator on $A\cdot I$ where $I$ is the ideal $(\varphi^{\infty})^{-1}(A)$ of elements in $A$ which act compactly by the left action $\varphi^{\infty}$ of $A$ on $\TT_X$. Since $\varphi^{\infty}$ is an inclusion of $A$ in $\TT_X$ so that the left action is simply left multiplication, Example~\ref{Fang} tells us that the left action is entirely by compacts. Thus, $I=A$ and so $A\cdot I=A$ and we deduce that $k\otimes 1$ is compact on $A\Otimes\TT_X$ if and only if $k$ is compact on $A$. Example \eqref{Fang} again tells us that the left action of multiplication by $A$ on $A$, which is the restriction of $\varphi^{\infty}$ to $A\subset\FF_X$ is compact. Thus, when restricted to $A\Otimes\TT_X$, $\tilde{\varphi}^{\infty}=\varphi^{\infty}\Otimes 1_{\TT_X}$ is compact. By Cohen Factorisation there exists $y\in X$ such that $\xi=y\IP{y,y}$, and we deduce that on $A\Otimes\TT$,
	\begin{equation*}
	\pi'_t(T_{\xi})-\pi'_1(T_{\xi})=\pi'_t(T_{y\IP{y,y}})-\pi'_1(T_{y\IP{y,y}})=\left(\pi'_t(T_{y})-\pi'_1(T_{y})\right)\tilde{\varphi}^{\infty}(\IP{y,y})
	\end{equation*}
	is compact. Since the elements $T_{\xi}$ generate $\TT_X$, we see that $\pi'_t(a)-\pi'_1(a)$ is a compact operator for every $a\in \TT_X$. Thus $\pi'_t$ is a compact perturbation of $\pi'_1$, and we see that \eqref{KAS} is a Kasparov module for all $t\in[0,1]$. Thus we have a homotopy of left actions and we deduce that \eqref{zero class} is represented by the class
	\begin{equation*}
	\left((\FF_X\oplus\FF_X)\Otimes _A\TT_X,\pi_0'\oplus\pi'_0,\begin{psmallmatrix}
	0&1\\
	1&0\\
	\end{psmallmatrix},\begin{psmallmatrix}
	\alpha_X^{\infty}\Otimes \alpha_{\TT}&0\\
	0&-\alpha_X^{\infty}\Otimes \alpha_{\TT}\\
	\end{psmallmatrix}\right).
	\end{equation*}
	This class is degenerate and so \eqref{zero class} is in fact the zero class. Hence $[M]\Otimes[i_A]=[\text{id}_{\TT_X}]$.
\end{proof}

Let $I$ be the ideal $I=\varphi^{-1}(\KK(X))$ of Lemma~\ref{compact otimes 1}. We have seen in Lemma~\ref{comapcts inclusion in Toeplitz} that there is an embedding
\begin{equation*}
	j:\KK(\FF_{X,I})\hookrightarrow \TT_X
\end{equation*}
where $\FF_{X,I}$ denotes the Fock space as a right $I$ module. The embedding $j$ defines a Kasparov $\KK(\FF_{X,I})$-$\TT_X$ module $(\TT_X,j,0,\alpha_{\TT})$ whose class in $KK(\KK(\FF_{X,I}),\TT_X)$ we denote $[j]$.
If 
\begin{equation*}
	\iota:\KK(\FF_{X,I})\to\LL(\FF_{X,I})
\end{equation*}
is the inclusion map then $\iota$ also defines a class
\begin{equation*}
	[\FF_{X,I},\iota,0,\alpha_X^{\infty}]\in KK(\KK(\FF_{X,I}),I).
\end{equation*} 
Let $[X]=[X,\varphi,0,\alpha_X]\in KK(I,A)$, and let $\iota_I:I\hookrightarrow A$ be the inclusion of $I$ in $A$. The inclusion $\iota_I$ also defines a Kasparov $I$-$A$ module $(A_A,\iota_I,0,\alpha_A)$ whose class in $KK(I,A)$ we denote by $[\iota_I]$.

\begin{lemma}\label{Hard work happened}
	With notation as above we have
	\begin{equation*}
		[j]\Otimes_{\TT_X}[M]=[\FF_{X,I},\iota,0,\alpha_X^{\infty}]\Otimes_I([\iota_I]-[X]).
	\end{equation*}
\end{lemma}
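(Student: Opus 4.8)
The plan is to compute both sides of the claimed identity explicitly as Kasparov $\KK(\FF_{X,I})$--$A$ classes and verify that the resulting modules are unitarily equivalent (up to an operator homotopy and a degenerate summand). Since $[j]$, $[\FF_{X,I},\iota,0,\alpha_X^{\infty}]$, $[\iota_I]$ and $[X]$ are all given by homomorphisms or by Fredholm-zero modules, every Kasparov product in sight can be computed using the explicit formulas \eqref{triv left}, \eqref{triv right} and especially \eqref{Fredholm zero Kasp product}, so no genuinely hard Kasparov product estimate is needed.

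First I would expand the left-hand side. Since $[j]=[\TT_X,j,0,\alpha_{\TT}]$ is the class of the homomorphism $j:\KK(\FF_{X,I})\to\TT_X$, Equation \eqref{triv right} gives $[j]\Otimes_{\TT_X}[M]=j^*[M]$, which by the representative \eqref{M} (or \eqref{M essential}) is
$$
\Big(\FF_X\oplus\FF_X,\ (\pi_0\oplus\pi_1\circ\alpha_{\TT})\circ j,\ \begin{psmallmatrix}0&1\\1&0\end{psmallmatrix},\ \begin{psmallmatrix}\alpha_X^{\infty}&0\\0&-\alpha_X^{\infty}\end{psmallmatrix}\Big).
$$
The key computation here is to identify the two left actions $\pi_0\circ j$ and $\pi_1\circ j$ of $\KK(\FF_{X,I})$ on $\FF_X$: using that $j(\Theta_{x,y})=T_xT_y^*$ (the operator appearing in Lemma~\ref{comapcts inclusion in Toeplitz}) and the formulas for $\pi_0$ and $\pi_1$ from the definition of $M$, one finds that $\pi_0\circ j$ is the inclusion of $\KK(\FF_{X,I})$ into $\LL(\FF_X)$, while $\pi_1\circ j$ differs from $\pi_0\circ j$ by a shift that kills the $X^{\otimes 0}=A$ summand. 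Passing to the essential submodule via Lemma~\ref{essential}, and tracking what the shift does, I expect the left-hand module to become the difference of two Fredholm-zero modules built from $\FF_{X,I}$, reflecting the decomposition $\FF_X=\bigoplus_n X^{\otimes n}$ with $X^{\otimes n}\cong X^{\otimes(n-1)}\otimes_A X$.

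For the right-hand side I would use \eqref{Fredholm zero Kasp product}: since $\iota:\KK(\FF_{X,I})\to\LL(\FF_{X,I})$ has image in the compacts and $[\iota_I],[X]$ are classes over $I$ that both have Fredholm operator zero (with $[\iota_I]$ given by a homomorphism so trivially compact-valued, and $[X]=[X,\varphi,0,\alpha_X]$ requiring $\varphi(I)\subseteq\KK(X)$, which holds by definition of $I=\varphi^{-1}(\KK(X))$), we get
$$
[\FF_{X,I},\iota,0,\alpha_X^{\infty}]\Otimes_I[\iota_I]=[\FF_{X,I}\Otimes_{\iota_I}A_A,\ \iota\Otimes 1,\ 0,\ \alpha_X^{\infty}\Otimes\alpha_A]
$$
and
$$
[\FF_{X,I},\iota,0,\alpha_X^{\infty}]\Otimes_I[X]=[\FF_{X,I}\Otimes_{\varphi}X,\ \iota\Otimes 1,\ 0,\ \alpha_X^{\infty}\Otimes\alpha_X].
$$
Using Lemma~\ref{ONE} to identify $\FF_{X,I}\Otimes_{\iota_I}A\cong \FF_{X,A}=\FF_X$ (with its standard $A$-module structure), and the natural isomorphism $\FF_{X,I}\Otimes_{\varphi}X\cong\bigoplus_{n\geq 1}X^{\otimes n}=\FF_X\ominus A$ coming from $X^{\otimes(n-1)}\otimes_A X\cong X^{\otimes n}$, the right-hand side becomes the class of $(\FF_X,\iota,0,\alpha_X^{\infty})$ minus the class of $(\FF_X\ominus A,\iota',0,\alpha_X^{\infty})$, where the left actions are the obvious restrictions of the $\KK(\FF_{X,I})$-action. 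I would then match this against the essential form of the left-hand side from the previous paragraph, using Lemma~\ref{essential} and Lemma~\ref{Deg Lem} to discard degenerate summands and a straight-line operator homotopy of Fredholm operators to pass between the off-diagonal operator $\begin{psmallmatrix}0&1\\1&0\end{psmallmatrix}$ and the decomposed form.

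The main obstacle I anticipate is bookkeeping: carefully verifying that the left actions $\pi_0\circ j$ and $\pi_1\circ j$ really are (conjugate to) the inclusion $\iota$ and its shifted cousin, and that all the identifications of tensor products with subspaces of $\FF_X$ are compatible with the gradings and with the left $\KK(\FF_{X,I})$-actions. In particular the shift isomorphism $X^{\otimes(n-1)}\otimes_A X\to X^{\otimes n}$ and the identification $\FF_{X,I}\otimes_{\iota_I}A\to\FF_X$ need to be checked to intertwine everything in sight, and the sign conventions in the grading $\alpha_X^{\infty}\Otimes\alpha_A$ versus $\begin{psmallmatrix}\alpha_X^{\infty}&0\\0&-\alpha_X^{\infty}\end{psmallmatrix}$ must be reconciled exactly as in the proof of Theorem~\ref{KK equivalence}. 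Once those identifications are pinned down, the equality of the two $KK$-classes should follow formally from \eqref{Fredholm zero Kasp product}, Lemma~\ref{essential} and Lemma~\ref{Deg Lem}.
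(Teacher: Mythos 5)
Your proposal follows essentially the same route as the paper's proof: expand $[j]\Otimes_{\TT_X}[M]$ via \eqref{triv right}, homotope the Fredholm operator to zero (legitimate precisely because $\pi_0\circ j$ and $\pi_1\circ\alpha_{\TT}\circ j$ take values in $\KK(\FF_X)$), pass to essential subspaces to land on $\FF_{X,I}$ and $\FF_{X,I}\ominus A$, and match the two summands against $[\FF_{X,I},\iota,0,\alpha_X^{\infty}]\Otimes_I[\iota_I]$ and $-[\FF_{X,I},\iota,0,\alpha_X^{\infty}]\Otimes_I[X]$ via the concatenation unitaries and \eqref{minus id}. The one slip is the claim $\FF_{X,I}\Otimes_{\iota_I}A\cong\FF_X$: the balanced tensor product is $\overline{\FF_{X,I}\cdot A}=\FF_{X,I}$, not all of $\FF_X$, but this is harmless for the argument since the two modules differ by a summand on which $\KK(\FF_{X,I})$ acts as zero, so their $KK$-classes agree by Lemma~\ref{essential}.
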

\begin{proof}
	Since $[j]$ is the class of a trivial module we may use \eqref{triv right} to compute
	\begin{equation*}
		[j]\Otimes_{\TT_X}[M]=\Big[\FF_X\oplus(\FF_X\ominus A),(\pi_0\oplus \pi_1\circ\alpha_{\TT})\circ j,\begin{psmallmatrix}
		0&1\\
		P&0\\
		\end{psmallmatrix},\begin{psmallmatrix}
		\alpha_X^{\infty}&0\\
		0&-\alpha_X^{\infty}\\
		\end{psmallmatrix}\Big].
	\end{equation*}
	Since $\pi_0\circ j$ and $\pi_1\circ\alpha_A\circ j\subseteq \KK(\FF_X)$, the straight line joining $\begin{psmallmatrix}
		0&1\\
		P&0\\
	\end{psmallmatrix}$ to $0$ is an operator homotopy and we may write
	\begin{align}
		\nonumber[j]\Otimes_{\TT_X}[M]&=\Big[\FF_X\oplus(\FF_X\ominus A),(\pi_0\oplus \pi_1\circ\alpha_{\TT})\circ j,0,\begin{psmallmatrix}
		\alpha_X^{\infty}&0\\
		\nonumber0&-\alpha_X^{\infty}\\
		\end{psmallmatrix}\Big]\\
		\nonumber&=\left[\FF_X,\pi_0\circ j,0,\alpha_X^{\infty}\right]+\left[\FF_X\ominus A,\pi_1\circ\alpha_{\TT}\circ j,0,-\alpha_X^{\infty}\right]\\
		\label{Eqn}&=\left[\FF_X,\iota,0,\alpha_X^{\infty}\right]+\left[\FF_X\ominus A,\pi_1\circ j\circ \alpha_{\KK},0,-\alpha_X^{\infty}\right].\\\nonumber
	\end{align}
	We have $\overline{\KK(\FF_{X,I})\FF_X}=\FF_{X,I}$ and similarly $\overline{\KK(\FF_{X,I})\FF_X\ominus A}=\FF_{X,I}\ominus A$. Thus we may replace each representative module in \eqref{Eqn} with its essential subspace to obtain
	\begin{equation*}
		[j]\Otimes_{\TT_X}[M]=\left[\FF_{X,I},\iota,0,\alpha_X^{\infty}\right]+\left[\FF_{X,I}\ominus A,\pi_1\circ j\circ \alpha_{\KK},0,-\alpha_X^{\infty}\right].
	\end{equation*}
	The map 
	\begin{equation*}
		U:\FF_{X,I}\Otimes_I A_A\to \FF_{X,I}, \quad U(\xi\otimes a)=\xi\cdot a
	\end{equation*} 
	defines an unitary equivalence of Kasparov modules 
	\begin{equation*}
		(\FF_{X,I}\Otimes_I A_A,\iota\Otimes 1,0,\alpha_X^{\infty}\Otimes \alpha_A)\cong (\FF_{X,I},\iota,0,\alpha_X^{\infty}).
	\end{equation*}
	Thus,
	\begin{equation*}
		[\FF_{X,I},\iota,0,\alpha_X^{\infty}]\Otimes_I [\iota_I]=\left[\FF_{X,I},\iota,0,\alpha_X^{\infty}\right].
	\end{equation*}
	There is a well defined map $V:\FF_{X,I}\Otimes_I X\to \FF_{X,I}\ominus A$ such that if $c_i\in \C$ and $\xi_i\in X^{\otimes i}$ then 
	\begin{equation*}
		V\Big(\Big(\sum_i c_i\xi_i\Big)\otimes a\Big)=\sum_i c_i\xi_i\otimes a. 
	\end{equation*}
	The map $V$ defines an isomorphism $\FF_{X,I}\Otimes_I X\cong \FF_{X,I}\ominus A$ which carries $\pi_0\Otimes 1_X$ to $\pi_1$ and hence $(\pi_0\circ j)\otimes 1_X$ to $\pi_1\circ j$. The isomorphism $V$ also carries $-\alpha^{\infty}_X$ to $-\alpha^{\infty}_X\Otimes \alpha_X$, so we deduce that
	\begin{align*}
		\left[\FF_{X,I}\ominus A,\pi_1\circ j\circ \alpha_{\KK},0,-\alpha_X^{\infty}\right]&=\left[\FF_{X,I}\otimes_I X, (\pi_0\circ j\circ\alpha_{\KK})\Otimes 1_X,0,-\alpha_X^{\infty}\Otimes\alpha_X\right]\\
		&=[\FF_{X,I},\pi_0\circ j\circ\alpha_{\KK},0,-\alpha_X^{\infty}]\Otimes_I [X]\\
		&=-[\FF_{X,I},\pi_0\circ j,0,\alpha_X^{\infty}]\Otimes_I [X],\\
	\end{align*}
	where we have used \eqref{minus id} in the last line. We conclude that
	\begin{equation*}
		[j]\Otimes_{\TT_X}[M]=\left[\FF_{X,I},\iota,0,\alpha_X^{\infty}\right]-\left[\FF_{X,I},\pi_0\circ j,0,\alpha_X^{\infty}\right]=[\FF_X,\iota,0,\alpha_X]\Otimes_I([\iota_I]-[X]).\qedhere
	\end{equation*}
\end{proof}
We finally arrive at the main Theorem of this thesis. By comparison with\\ \cite[Theorem~4.4]{ThePaper}, we have removed the hypothesis that the left action of $A$ on $X$ be by compacts, and we obtain a slightly different exact sequence.
\begin{theorem}\label{MAINTHEOREM}(cf. \cite[Theorem~4.9]{FreeProbTheory}, \cite[Theorem 4.4]{ThePaper})
	Let $(A,\alpha_A)$ and $(B,\alpha_B)$ be graded, separable $C^*$-algebras and let $A$ be nuclear. Let $X$ be a full, graded $A$--$A$ correspondence with injective left action $\phi$, and let $I=\phi^{-1}(\KK(X))$. Then with notation as above we have two exact six-term sequences as follows.\footnote{This diagram is borrowed from the course code for \cite{ThePaper}.}
	\begin{equation}\label{eq:exact1}
	\parbox[c]{0.8\textwidth}{\hfill
		\begin{tikzpicture}[yscale=0.8, >=stealth]
		\node (00) at (0,0) {$KK_1(B, \OO_X)$};
		\node (40) at (4,0) {$KK_1(B, I)$};
		\node (80) at (8,0) {$KK_1(B, A)$};
		\node (82) at (8,2) {$KK_0(B, \OO_X)$};
		\node (42) at (4,2) {$KK_0(B, A)$};
		\node (02) at (0,2) {$KK_0(B, I)$};
		\draw[->] (02)-- node[above] {${\scriptstyle{\Otimes_A ([\iota_I] - [X])}}$} (42);
		\draw[->] (42)-- node[above] {${\scriptstyle i_*}$} (82);
		\draw[->] (82)--(80);
		\draw[->] (80)-- node[above] {${\scriptstyle{\Otimes_A ([\iota_I] - [X])}}$} (40);
		\draw[->] (40)-- node[above] {${\scriptstyle i_*}$} (00);
		\draw[->] (00)--(02);
		\end{tikzpicture}\hfill\hfill}
	\end{equation}
	\begin{equation}\label{eq:exact2}
	\parbox[c]{0.8\textwidth}{\hfill
		\begin{tikzpicture}[yscale=0.8, >=stealth]
		\node (00) at (0,0) {$KK_1(\OO_X, B)$};
		\node (40) at (4,0) {$KK_1(A, B)$};
		\node (80) at (8,0) {$KK_1(I, B)$};
		\node (82) at (8,2) {$KK_0(\OO_X, B)$};
		\node (42) at (4,2) {$KK_0(A, B)$};
		\node (02) at (0,2) {$KK_0(I, B)$};
		\draw[<-] (02)-- node[above] {${\scriptstyle{([\iota_I] - [X]) \Otimes_A}}$} (42);
		\draw[<-] (42)-- node[above] {${\scriptstyle i^*}$} (82);
		\draw[<-] (82)--(80);
		\draw[<-] (80)-- node[above] {${\scriptstyle{([\iota_I] - [X]) \Otimes_A}}$} (40);
		\draw[<-] (40)-- node[above] {${\scriptstyle i^*}$} (00);
		\draw[<-] (00)--(02);
		\end{tikzpicture}\hfill\hfill}
	\end{equation}
\end{theorem}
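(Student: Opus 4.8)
The plan is to assemble the structural results established above rather than to argue from scratch. Put $J:=\KK(\FF_{X,I})$. Since $\phi$ is injective, Theorem~\ref{Cuntz-Pimsner exact sequence} provides a short exact sequence of graded $C^*$-algebras
\begin{equation*}
0\longrightarrow J\xrightarrow{\ j\ }\TT_X\xrightarrow{\ q\ }\OO_X\longrightarrow 0,
\end{equation*}
with $j$ the embedding of Lemma~\ref{comapcts inclusion in Toeplitz}. The first step is to check that this extension is semi-split: since $A$ is nuclear, so is $\TT_X$ (equivalently $\OO_X$), and therefore every ideal of $\TT_X$ is semi-split in the sense required by Theorem~\ref{theorem: exact sequences in KK} — this is the content of the remark on nuclear algebras accompanying that theorem. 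As $A$, hence $\TT_X$, is separable and $B$ is separable, Theorem~\ref{theorem: exact sequences in KK} then yields two exact six-term sequences: one in $KK_*(B,-)$ through $KK_*(B,J)$, $KK_*(B,\TT_X)$, $KK_*(B,\OO_X)$ with maps $\cdot\,\Otimes_J[j]$, $\cdot\,\Otimes_{\TT_X}[q]$ and connecting map $\delta$; and one in $KK_*(-,B)$ through $KK_*(J,B)$, $KK_*(\TT_X,B)$, $KK_*(\OO_X,B)$ with maps $[j]\,\Otimes_{\TT_X}\cdot$, $[q]\,\Otimes_{\OO_X}\cdot$ and connecting map $\delta$.

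The second step is to replace the $\TT_X$-slot by $A$ and the $J$-slot by $I$ using two $KK$-equivalences. By Theorem~\ref{KK equivalence}, $[i_A]\in KK(A,\TT_X)$ and $[M]\in KK(\TT_X,A)$ are mutually inverse, so $\cdot\,\Otimes_{\TT_X}[M]$ is an isomorphism $KK_*(B,\TT_X)\to KK_*(B,A)$ with inverse $\cdot\,\Otimes_A[i_A]$, and likewise $[M]\,\Otimes_A\cdot$ is an isomorphism $KK_*(\TT_X,B)\to KK_*(A,B)$. For the ideal, the Hilbert $I$-module $\FF_{X,I}$ of Lemma~\ref{ONE} is full — $\FF_X$ is full over $A$ because $A=X^{\otimes 0}$ is a summand, and $I$ is an ideal, so $\IP{\FF_{X,I},\FF_{X,I}}_I$ is dense in $I$ — and graded, and the inclusion $\iota\colon J=\KK(\FF_{X,I})\hookrightarrow\LL(\FF_{X,I})$ exhibits $J$ as $\KK(\FF_{X,I})$; hence $J$ and $I$ are Morita equivalent, and Theorem~\ref{Morita equiv KK} shows that $\cdot\,\Otimes_J[\FF_{X,I},\iota,0,\alpha_X^\infty]$ is an isomorphism $KK_*(B,J)\to KK_*(B,I)$ with a mutually inverse class on the other side (and similarly on the contravariant side). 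Conjugating the two six-term sequences by these isomorphisms at the $\TT_X$- and $J$-slots again produces exact six-term sequences, now through the groups displayed in \eqref{eq:exact1} and \eqref{eq:exact2}; exactness is preserved because one has merely pre- or post-composed with isomorphisms, and by associativity of the Kasparov product the transformed maps are again Kasparov products with fixed classes.

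The third step is to identify those transformed maps. Along $KK_*(B,-)$: the composite $KK_*(B,J)\xrightarrow{\cdot\,\Otimes_J[j]}KK_*(B,\TT_X)\xrightarrow{\cdot\,\Otimes_{\TT_X}[M]}KK_*(B,A)$ is Kasparov product with $[j]\,\Otimes_{\TT_X}[M]$, which by Lemma~\ref{Hard work happened} equals $[\FF_{X,I},\iota,0,\alpha_X^\infty]\,\Otimes_I([\iota_I]-[X])$; since the first factor $[\FF_{X,I},\iota,0,\alpha_X^\infty]$ is exactly the class implementing the Morita isomorphism $KK_*(B,J)\cong KK_*(B,I)$, the induced map $KK_*(B,I)\to KK_*(B,A)$ is $\cdot\,\Otimes_I([\iota_I]-[X])$, the map appearing in \eqref{eq:exact1}. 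Next, $\cdot\,\Otimes_A[i_A]$ followed by $\cdot\,\Otimes_{\TT_X}[q]$ is Kasparov product with $[i_A]\,\Otimes_{\TT_X}[q]=[q\circ i_A]$ (a product of classes of homomorphisms being the class of the composite), and $q\circ i_A=j_A\colon A\to\OO_X$ is precisely the canonical inclusion $i$, so this map is $i_*$. The connecting maps $\delta$ are carried over unchanged, which establishes \eqref{eq:exact1}. The sequence \eqref{eq:exact2} is obtained identically from the $KK_*(-,B)$ six-term sequence using the contravariant Kasparov product: there Lemma~\ref{Hard work happened} again turns the transformed inclusion map into $\big([\iota_I]-[X]\big)\Otimes_A\cdot$ and the transformed quotient map into $i^*=(q\circ i_A)^*$.

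Most of the genuine difficulty has already been shouldered by the earlier results: Theorem~\ref{KK equivalence}, the $KK$-equivalence $A\simeq\TT_X$ (the technical heart, following \cite{FreeProbTheory}), and Lemma~\ref{Hard work happened}, the computation $[j]\Otimes_{\TT_X}[M]=[\FF_{X,I},\iota,0,\alpha_X^\infty]\Otimes_I([\iota_I]-[X])$. The remaining point requiring care is bookkeeping: one must verify that conjugating an exact six-term sequence by $KK$-equivalences — which are not $\ast$-homomorphisms — still yields an exact sequence with the maps conjugated as Kasparov products (this uses associativity of the Kasparov product together with the fact that the maps in the extension sequence are themselves products with $[j]$ and $[q]$), and one must confirm that the semi-split hypothesis of Theorem~\ref{theorem: exact sequences in KK} genuinely holds, which is exactly what the nuclearity of $A$ provides.
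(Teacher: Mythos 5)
Your proposal is correct and follows essentially the same route as the paper: both start from the semi-split extension $0\to\KK(\FF_{X,I})\to\TT_X\to\OO_X\to 0$ of Theorem~\ref{Cuntz-Pimsner exact sequence} (with nuclearity of $A$ supplying the completely positive splitting), apply Theorem~\ref{theorem: exact sequences in KK}, and then transport the resulting six-term sequence along the $KK$-equivalences $\TT_X\simeq A$ (Theorem~\ref{KK equivalence}) and $\KK(\FF_{X,I})\simeq I$ (Morita equivalence), identifying the transformed maps via Lemma~\ref{Hard work happened} and the identity $q\circ i_A=i$. The paper packages the "conjugation by isomorphisms" step as an explicit commutative diagram whose inner hexagon is the Skandalis sequence and whose outer hexagon is the desired one, but this is the same argument you describe.
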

\begin{proof}
	As in \cite[Theorem~4.4]{ThePaper}, we just prove exactness of the first diagram since the second follows from an almost identical argument. Since $A$ is nuclear so is $\TT_X$ by \cite[Theorem~6.3]{ToeplitzNuclear}. The groupoid $\GG$ of \cite[Theorem~6.3]{ToeplitzNuclear} is in this instance the Deaconu-Renault groupoid (see \cite[Example~2.1.16]{Groupoids}) for the action of $\N$ on $\Z \cup \{\infty\}$ by addition, and is therefore amenable by \cite[Lemma~3.5]{GroupoidAmenable}. By Remark~\ref{remark:nuclear implies semi-split} the quotient map $q:\TT_X\to\OO_X$ admits a completely positive splitting. Hence \cite[Theorem~1.1]{Skandalis} applied to the graded short exact sequence
	\begin{equation*}
		0\rightarrow\KK(\FF_{X,I})\xrightarrow{\ \ j\ \ }\TT_X\xrightarrow{\ \ q\ \ }\OO_X\rightarrow 0
	\end{equation*}
	of Theorem~\ref{Cuntz-Pimsner exact sequence} gives connecting homomorphisms $\delta:KK_i(B,\OO_X)\to KK_{i+1}(B,\KK(\FF_{X,I}))$ for which the following six-term exact sequence is exact.\footnote{This diagram too is borrowed from the source code for \cite{ThePaper}.}
	\[
	\begin{tikzpicture}[yscale=0.8]
	\node (00) at (0,0) {$KK_1(B, \OO_X)$};
	\node (40) at (4,0) {$KK_1(B, \TT_X)$};
	\node (80) at (8,0) {$KK_1(B, \KK(\FF_{X,I}) )$.};
	\node (82) at (8,2) {$KK_0(B, \OO_X)$};
	\node (42) at (4,2) {$KK_0(B, \TT_X)$};
	\node (02) at (0,2) {$KK_0(B, \KK(\FF_{X,I}))$};
	\draw[-stealth] (02)-- node[above] {${\scriptstyle j_*}$} (42);
	\draw[-stealth] (42)-- node[above] {${\scriptstyle q_*}$} (82);
	\draw[-stealth] (82)-- node[right] {${\scriptstyle \delta}$} (80);
	\draw[-stealth] (80)-- node[above] {${\scriptstyle j_*}$} (40);
	\draw[-stealth] (40)-- node[above] {${\scriptstyle q_*}$} (00);
	\draw[-stealth] (00)-- node[left] {${\scriptstyle \delta}$}(02);
	\end{tikzpicture}
	\]
	Note that the maps $j_*$ and $q_*$ are the induced maps as in \eqref{triv left}. We define $\delta':KK_*(B,\OO_X)\to KK_{*+1}(B,A)$ by $\delta'=(\cdot \Otimes[\FF_{X,I},\iota,0,\alpha_X^{\infty}])\circ\delta$ and, let $i:A\to\OO_X$ be the inclusion of $A$ into $\OO_X$. Consider the following diagram.\footnote{This diagram is also lovingly borrowed from the source code for \cite{ThePaper}.}
	
	\[
	\begin{tikzpicture}[yscale=0.8]
	\node (00) at (0,0) {$KK_1(B, \OO_X)$};
	\node (40) at (4,0) {$KK_1(B, \TT_X)$};
	\node (80) at (8,0) {$KK_1(B, \KK(\FF_{X,I}) )$};
	\node (82) at (8,2) {$KK_0(B, \OO_X)$};
	\node (42) at (4,2) {$KK_0(B, \TT_X)$};
	\node (02) at (0,2) {$KK_0(B, \KK(\FF_{X,I}))$};
	\draw[-stealth] (02)-- node[above] {${\scriptstyle j_*}$} (42);
	\draw[-stealth] (42)-- node[above] {${\scriptstyle q_*}$} (82);
	\draw[-stealth] (82)-- node[right] {${\scriptstyle \delta}$} (80);
	\draw[-stealth] (80)-- node[below] {${\scriptstyle j_*}$} (40);
	\draw[-stealth] (40)-- node[below] {${\scriptstyle q_*}$} (00);
	\draw[-stealth] (00)-- node[left] {${\scriptstyle \delta}$} (02);
	\node (00') at (-2,-2) {$KK_1(B, \OO_X)$};
	\node (40') at (4,-2) {$KK_1(B, A)$};
	\node (80') at (10,-2) {$KK_1(B, I)$};
	\node (02') at (-2,4) {$KK_0(B, I)$};
	\node (42') at (4,4) {$KK_0(B, A)$};
	\node (82') at (10,4) {$KK_0(B, \OO_X)$};
	\draw[-stealth] (02')-- node[above] {${\scriptstyle \Otimes ([\iota_I] - [X])}$} (42');
	\draw[-stealth] (42')-- node[above] {${\scriptstyle i_*}$} (82');
	\draw[-stealth] (82')-- node[right] {${\scriptstyle \delta'}$} (80');
	\draw[-stealth] (80')-- node[below] {${\scriptstyle \Otimes ([\iota_I] - [X])}$} (40');
	\draw[-stealth] (40')-- node[below] {${\scriptstyle i_*}$} (00');
	\draw[-stealth] (00')-- node[left] {${\scriptstyle \delta'}$} (02');
	\draw[-stealth] (02)--(02') node[pos=0.25, anchor=south west, inner sep=0pt] {$\scriptstyle \Otimes [\FF_{X,I}, \iota, 0, \alpha_X^\infty]$};
	\draw[-stealth, out=75, in=285] (42) to node[pos=0.5, right] {${\scriptstyle\Otimes[M]}$} (42');
	\draw[-stealth, out=255, in=105] (42') to node[pos=0.5, left] {${\scriptstyle (i_A)_*}$} (42);
	\draw[-stealth] (82') to node[anchor=south east, inner sep=1pt] {$\scriptstyle\text{id}$} (82);
	\draw[-stealth] (80)--(80') node[pos=0.25,  anchor=north east, inner sep=0pt] {$\scriptstyle \Otimes [\FF_{X,I}, \iota, 0, \alpha_X^\infty]$};
	\draw[-stealth, out=255, in=105] (40) to node[pos=0.5, left] {${\scriptstyle\Otimes[M]}$} (40');
	\draw[-stealth, out=75, in=285] (40') to node[pos=0.5, right] {${\scriptstyle (i_A)_*}$} (40);
	\draw[-stealth] (00') to node[anchor=north west, inner sep=1pt] {$\scriptstyle\text{id}$} (00);
	\end{tikzpicture}
	\]
	We aim to show that the inside and outside diagrams commute. By definition of the maps $\delta'$ the left-hand and right-hand squares commute. Lemma~\ref{Hard work happened} implies that the top left and bottom right squares commute. By definition we have $q\circ i_A=i$ as homomorphisms, so the induced maps $q_*\circ (i_A)_*$ and $i_*$ are equal as well. So the top right and bottom left squares commute as well. By Theorem~\ref{KK equivalence} the maps $(i_A)_*$ and $\Otimes [M]$ are mutually inverse. Since the class $[\FF_{X,I},\iota,0,\alpha_X^{\infty}]$ defines a Morita equivalence between $\KK(\FF_{X,I})$ and $I$, by Theorem~\ref{Morita equiv KK} we deduce that $\Otimes [\FF_{X,I},\iota,0,\alpha_X^{\infty}]$ is an isomorphism. Thus every map between the inner and outer square is an isomorphism, and it follows that the outer rectangle is exact as required.
\end{proof}
\chapter{Graph algebras}
\label{Graph algebras}

In this Chapter we compute the graded $K$-theory of graph algebras using Theorem~\ref{MAINTHEOREM} as in \cite{ThePaper}. Since~\ref{MAINTHEOREM} does not require compact left actions, we are able to compute graded $K$-theory for graph algebras of row infinite graphs.\\
\begin{lemma}\label{lemma: trivially graded A gives direct sum}
	Let $A$ be a trivially graded $C^*$-algebra, $(_{\phi}X,\alpha_X)$ be a graded $A$-$A$ correspondence and let $I=\phi^{-1}(\KK(X))$. Then $\alpha_X\in\LL(X)$ and it is an even, self-adjoint unitary with respect to $\widetilde{\alpha}_X$. Let
	\begin{align*}
		X_0&=\overline{\Span}\{x+\alpha_X(x):x\in A\}\qquad\text{ and }\\
		X_1&=\overline{\Span}\{x-\alpha_X(x):x\in A\}
	\end{align*}
	be the even and odd subspaces of $X$. Then $X\cong X_0\oplus X_1$ as $A$-$A$ correspondences, and in $KK(I,A)$, we have $[X,\phi,0,\alpha_X]=[X_0,\phi|_{X_0},0,\text{id}]-[X_1,\phi|_{X_1},0,\text{id}]$.
\end{lemma}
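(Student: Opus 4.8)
The plan is to show three things in sequence: that $\alpha_X$ is adjointable (in fact an even self-adjoint unitary for $\widetilde\alpha_X$), that the algebraic decomposition $X = X_0 \oplus X_1$ is a Hilbert-module, and then a correspondence, direct sum in the sense of Definition~\ref{def: Direct sum of Hilbert modules} and Definition~\ref{def:isomorphic and direct sum as correspondences}, and finally that the Kasparov class splits with the stated signs.

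First I would observe that since $A$ is trivially graded, $\alpha_A = \text{id}$, so the defining relations $\alpha_X(xb) = \alpha_X(x)\alpha_B(b) = \alpha_X(x)b$ and $\alpha_B(\IP{x,y}) = \IP{\alpha_X(x),\alpha_X(y)}$ become simply $A$-linearity of $\alpha_X$ and $\IP{x,y} = \IP{\alpha_X x, \alpha_X y}$. The second identity, combined with $\alpha_X^2 = 1$, shows by the polarisation argument already used in Proposition~\ref{Jeddabub} that $\IP{\alpha_X x, y} = \IP{x, \alpha_X y}$, so $\alpha_X$ is adjointable with $\alpha_X^* = \alpha_X$; being self-inverse it is unitary, and $\widetilde\alpha_X(\alpha_X) = \alpha_X \circ \alpha_X \circ \alpha_X = \alpha_X$ shows it is even. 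The spectral-projection formulas $p_0 = (1+\alpha_X)/2$ and $p_1 = (1-\alpha_X)/2$ are then complementary projections in $\LL(X)$ with ranges $X_0$ and $X_1$; these are closed submodules, and orthogonality $\IP{x_0, x_1} = 0$ for $x_0 \in X_0, x_1 \in X_1$ follows from $\IP{p_0 y, p_1 z} = \IP{p_1 p_0 y, z} = 0$. Since $\alpha_X$ commutes with $\phi(A)$ (here $\alpha_X \phi(a) = \phi(\alpha_A(a))\alpha_X = \phi(a)\alpha_X$ using triviality of $\alpha_A$), each $X_i$ is invariant under $\phi$, so $X = X_0 \oplus X_1$ as $A$--$A$ correspondences.

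For the $KK$-statement, I would note that on $X_0 \oplus X_1$ the grading operator $\alpha_X$ acts as $\text{id} \oplus (-\text{id})$, and the Fredholm operator $0$ decomposes as $0 \oplus 0$. Thus the Kasparov module $(X, \phi, 0, \alpha_X)$ is unitarily equivalent to the direct sum $(X_0, \phi|_{X_0}, 0, \text{id}) \oplus (X_1, \phi|_{X_1}, 0, -\text{id})$, and additivity of $KK$-classes under direct sums of Kasparov modules gives
\begin{equation*}
	[X,\phi,0,\alpha_X] = [X_0, \phi|_{X_0}, 0, \text{id}] + [X_1, \phi|_{X_1}, 0, -\text{id}].
\end{equation*}
Finally, applying Equation~\eqref{minus id} (or rather the general principle that negating the grading on a Fredholm-zero module negates the class, which is the content of the additive-inverse construction) yields $[X_1, \phi|_{X_1}, 0, -\text{id}] = -[X_1, \phi|_{X_1}, 0, \text{id}]$, giving the claimed identity. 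One should double-check that the left actions here have compact image (so that these are genuinely Fredholm-zero Kasparov modules over $I$ rather than $A$): since $I = \phi^{-1}(\KK(X))$ and each $X_i$ is $\phi$-invariant, $\phi(a)|_{X_i} \in \KK(X_i)$ for $a \in I$, so restricting the left action to $I$ is legitimate.

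The main obstacle I anticipate is purely bookkeeping rather than conceptual: verifying carefully that $X_i$ as defined by the closed span over $A$ (note the statement writes $x \in A$, which should read $x \in X$) really coincides with the range of the projection $p_i$ and is a Hilbert submodule in its own right, and checking that the negation-of-grading step is correctly justified — the cleanest route is to invoke the identity $-[Y,\psi,F,\alpha_Y] = [Y, \psi\circ\alpha_A, -F, \alpha_Y]$ from the inverse construction in $KK$, specialised to $F = 0$, $\alpha_A = \text{id}$, which immediately gives $-[X_1,\phi|_{X_1},0,\text{id}] = [X_1,\phi|_{X_1},0,-\text{id}]$.
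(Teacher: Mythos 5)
Your proposal is correct and follows essentially the same route as the paper: establish that $\alpha_X$ is an even self-adjoint unitary using triviality of $\alpha_A$, decompose $X$ into the $\pm 1$-eigenspaces of $\alpha_X$, verify orthogonality and $\phi$-invariance, and then split the Kasparov class by additivity and the additive-inverse formula. One transcription slip: the inverse identity should read $-[Y,\psi,F,\alpha_Y]=[Y,\psi\circ\alpha_A,-F,-\alpha_Y]$ (the grading is negated on the right-hand side), which is what your final conclusion $-[X_1,\phi|_{X_1},0,\text{id}]=[X_1,\phi|_{X_1},0,-\text{id}]$ actually relies on.
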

\begin{proof}
	Since $A$ is trivially graded, for $x,y\in X$ we have
	\begin{equation*}
		\IP{\alpha_X(x),y}=\IP{\alpha_X(x),\alpha^2_X(y)}=\alpha_A(\IP{x,\alpha_X(y)})=\IP{x,\alpha_X(y)},
	\end{equation*}
	so we see that $\alpha_X$ is adjoint with $\alpha_X^*=\alpha_X$. Since $\alpha_X$ is self inverse we deduce that it is unitary. The grading operator $\alpha_X$ is even with respect to $\widetilde{\alpha}_X$ because
	\begin{equation*}
		\widetilde{\alpha}_X(\alpha_X)=\alpha_X\circ\alpha_X\circ\alpha_X=\alpha_X.
	\end{equation*} 
	We have $X=X_0\oplus X_1$ as vector spaces, so to show that we have a direct sum of correspondences we need only show that $X_0\perp X_1$ and $\phi(A)X_i\subseteq X_i$ by Definition~\ref{def:isomorphic and direct sum as correspondences}. Fix $x\in X_0$ and $y\in X_1$. Then we have
	\begin{equation*}
		\IP{x,y}=\alpha_A(\IP{x,y})=\IP{\alpha_X(x),\alpha_X(y)}=-\IP{x,y},
	\end{equation*}
	so we deduce that $\IP{x,y}=0$ and so $X_0\perp X_1$. For $x\in X_i$ and $a\in A$ we have
	\begin{equation*}
		\alpha_X(\phi(a)x)=\phi(\alpha_A(a))\alpha(x)=(-1)^i\phi(a)x,
	\end{equation*}
	so we see that $\phi(A)X_i\subseteq X_i$. Thus we have $X=X_0\oplus X_1$ as $A$-$A$ correspondences, and so $\phi$ decomposes as $\phi_{X_0}\oplus\phi_{X_1}$. In particular, restricting the left action $\phi$ to the ideal $I\triangleleft A$ we have $X=X_0\oplus X_1$ as $I$-$A$ correspondences. On $X_i$, $\alpha_X$ acts as $(-1)^i$, so $\alpha_X$ restricts to the trivial grading on $X_0$ and multiplication by $-1$ on $X_1$. This allows us to write 
	\begin{align*}
		[X,\phi,0,\alpha_X]&=[X_0\oplus X_1,\phi_{X_0}\oplus\phi_{X_1},0,\alpha_{X}]\\
		&=[X_0,\phi_{X_0},0,\text{id}]+[X_1,\phi_{X_1},0,-\text{id}]\\
		&=[X_0,\phi_{X_0},0,\text{id}]+[X_1,\phi_{X_1}\circ\alpha_A,-0,-\text{id}]\\
		&=[X_0,\phi_{X_0},0,\text{id}]-[X_1,\phi_{X_1},0,\text{id}].\qedhere\\
	\end{align*}	
\end{proof}
Let $(E^0,E^1,r,s)$ be a directed graph with no sinks, and $\delta:E^1\to \Z_2$ be a function, where $Z_2$ is viewed multiplicatively. Give $E^0$ the discrete topology and consider the $C_0(E)$-$C_0(E)$ correspondence $X$ of Example~\ref{Marns}. As shown in Lemma~\ref{Graph compact left action}, the ideal $I=\phi^{-1}(\KK(X))$ is equal to the ideal 
\begin{equation*}
	I=\{f\in C_0(E^0):f(v)=0\text{ whenever }|s^{-1}(v)|<\infty\}.
\end{equation*}
Giving $C_0(E^0)$ the trivial grading, we obtain a grading $\alpha_X$ of $X$ by defining $\alpha(1_e)=\delta(e)1_e$ and extending to $X$ by linearity and continuity. Proposition 12 of \cite{CuntzKreigerIsCuntzPimsner} says that $\OO_X$ is isomorphic to the Cuntz-Krieger algebra $C^*(E)$ since $E$ has no sinks. Since $E^0$ is countable and $C_0(E^0)$ is trivially graded, we may write $C_0(E^0)=\bigoplus_{v\in E^0}C_0(v)=\bigoplus_{v\in E^0}\C$. Since $C_0(E^0)$ is trivially graded we have $K^{\gr}_*(C_0(E^0))=K_*(C_0(E^0))$. Thus Lemma~\ref{lemma: K is continuous} gives
\begin{equation*}
K^{\gr}_*(C_0(E^0))=K_*(C_0(E^0))=K_*(\oplus_{v\in E^0} \C)=\oplus_{v\in E^0}KK_*(\C)=(\Z^{E^0},0)
\end{equation*}
where we have used the result of Examples~\ref{K theory of C}~and~\ref{K1 of C} in the last equality. Now let $F$ be the subset
\begin{equation*}
	F=\{v\in E^0:|s^{-1}(v)|<\infty\}
\end{equation*}
of $E^0$. Lemma~\ref{Graph compact left action} says that the ideal $I=\phi^{-1}(\KK(X))$ is equal to $C_0(F)$. By the same argument as above, we see that $K^{\gr}_*(C_0(F))=(\Z^{F},0)$.
The exact sequence of Theorem~\ref{MAINTHEOREM} then reduces to
\begin{equation*}
0 \rightarrow K^{\gr}_1(C^*(E)) \xrightarrow{\ \partial \ } \Z^{F} \xrightarrow{\dis\Otimes _{C_0(E^0)}([\iota]-[X])}  \Z^{E^0} \xrightarrow{\ \iota_*\ } K^{\gr}_0(C^*(E)) \rightarrow 0.
\end{equation*}
Exactness tells us that $\partial$ is injective and $\iota_*$ is surjective, so the first isomorphism theorem tells us that
\begin{equation*}
K_1^{\gr}(C^*(E))=\Image(\partial)\qquad\text{and}\qquad K_0^{\gr}(C^*(E))=\Z^{E^0}/\Ker(\iota_*).
\end{equation*}
Letting $f=\dis\Otimes _{C_0(E^0)}(1-[X])$ and applying exactness once more we obtain
\begin{equation*}
K_1^{\gr}(C^*(E))=\Ker(f)\qquad\text{and}\qquad K_0^{\gr}(C^*(E))=\Z^{E^0}/\Image(f)=\text{Coker}(f).
\end{equation*}
Now we wish to analyse the map $\bigOtimes _{C_0(E^0)}([\iota_I]-[X])$. Following the work of \cite[Chapter 8]{ThePaper}, we construct the generators of $K_0(C_0(E^0))$ as follows. Let $\C_v=\{z\delta_v:z\in \C\}$ be a copy of the complex numbers, considered as being embedded in $C_0(E^0)$. We may make $\C_v$ into a Hilbert $C_0(E^0)$-module by defining a right action $(z\delta_v)a=a(v)z\delta_v$ for $a\in C_0(E^0)$, and inner-product $\IP{z\delta_v,w\delta_v}=\overline{z}w\delta_v$. We define a homomorphism $\varphi_v:\C\to C_0(E^0)$ by $\varphi_v(z)=z\delta_v$. Left multiplication by $\varphi_v(\cdot)$ in $C_0(E^0)$ then defines a left action of $\C$ on $\C_v$ by $\varphi_v(z)(w\delta_v)=z\delta_vw\delta_v=zw\delta_v$ which is just left multiplication by the complex number $z$. This makes $\C_v$ into a $\C$-$C_0(E^0)$ correspondence. The $KK$ classes $\{[\C_v]\}_{v\in E^0}$ then generate $KK_0(\C,C_0(E^0))$, and there is a group isomorphism $\theta_{E^0}:KK_0(\C,C_0(E^0))\to \Z^{E^0}$ such that $\theta_{E^0}([\C_v])=\delta_v$ where $1_v\in \Z^{E^0}$ is the group element whose $v$th component is 1 and all other components are zero. Similarly, the $KK$ classes $\{[\C_v]\}_{v\in F}$ generate $KK_0(\C,C_0(F))$, and there is an isomorphism $\theta_F:KK_0(\C,C_0(F))\to \Z^{F}$ such that $\theta_F([\C_v])=\delta_v$. Let $X_0$ and $X_1$ denote the even and odd subspaces of $X$. Define the graded adjacency matrix $A^{\delta}_{F,E^0} \in M_{F\times E^0}(\C)$ by
\begin{equation*}
(A^{\delta}_{F,E^0})_{v,w}=\sum_{f\in vE^1w}\delta(f)
\end{equation*}
for $v\in F$ and $w\in E^0$. Since all vertices in $F$ emit finitely many edges we see that this matrix is well defined.
We will show that the map $\bigOtimes_{C_0(E^0)}([\iota_I]-[X])$ can be computed in terms of $A^{\delta}_{F,E^0}$.
\begin{lemma}
	Let $X$, $E$, $\theta_{E^0}$ and $\theta_F$ be as above. Then the following diagram commutes
	\begin{equation*}
	\xymatrix{
		K^{\gr}_0(C_0(F)) \ar[r]^{\ \ \ \  \theta_F}\ar[d]_{\Otimes [X]}&\Z^F\ar[d]^{(A^{\delta}_{F,E^0})^T}\\
		K^{\gr}_0(C_0(E^0)) \ar[r]^{\ \ \ \  \theta_{E^0}}&\Z^{E^0}\\
	}
	\end{equation*}
\end{lemma}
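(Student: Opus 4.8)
The plan is to chase the generator $[\C_v]$ around both paths of the diagram and check they agree, for each $v\in F$. Going right then down, $\theta_F([\C_v])=1_v\in\Z^F$, and applying $(A^{\delta}_{F,E^0})^T$ yields the vector in $\Z^{E^0}$ whose $w$th component is $(A^{\delta}_{F,E^0})_{v,w}=\sum_{f\in vE^1w}\delta(f)$. Going down then right, I must compute the Kasparov product $[\C_v]\Otimes_{C_0(F)}[X]$, where here $[X]$ is really the class $[X,\varphi,0,\alpha_X]\in KK(I,A)$ with $I=C_0(F)$ and $A=C_0(E^0)$, restricted appropriately; by Equation~\eqref{Fredholm zero Kasp product} (both Fredholm operators are zero and the left action on $\C_v$ is by compacts since $\C_v$ is one-dimensional over each fibre) this product is the class $[\C_v\otimes_{C_0(F)}X,\varphi_v\otimes 1,0,\text{id}\otimes\alpha_X]$.

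The key computation is therefore to identify the correspondence $\C_v\otimes_{C_0(F)}X$ as a $\C$--$C_0(E^0)$ correspondence together with its grading. I would use Lemma~\ref{lemma: Well defined unitaries on tensor product} (or directly Example~\ref{left and right}/Lemma~\ref{ONE}) to show that $\C_v\otimes_{C_0(F)}X\cong \delta_v\cdot X = \overline{\Span}\{1_f : s(f)=v\}$, the closed span of the point-mass sections over edges emitted by $v$; the isomorphism sends $\delta_v\otimes 1_f\mapsto 1_f$ for $f\in vE^1$ (using $(\phi(\delta_v)1_f)(e)=\delta_v(s(e))1_f(e)$). Since $v\in F$ emits finitely many edges, this is a finite-dimensional space, and the right $C_0(E^0)$-action on $1_f$ is multiplication by the value at $r(f)$, so as a correspondence $\delta_v\cdot X\cong\bigoplus_{f\in vE^1}\C_{r(f)}$, with the grading $\alpha_X$ acting on the summand indexed by $f$ as multiplication by $\delta(f)$. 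Then, using Lemma~\ref{lemma: trivially graded A gives direct sum} (splitting into even and odd parts according to the sign of $\delta(f)$) together with additivity of the Kasparov product over direct sums and the formula $\theta_{E^0}([\C_w])=1_w$, I get $\theta_{E^0}\big([\C_v]\Otimes[X]\big)=\sum_{f\in vE^1}\delta(f)\,1_{r(f)}=\sum_{w\in E^0}\Big(\sum_{f\in vE^1w}\delta(f)\Big)1_w$, which is exactly $(A^{\delta}_{F,E^0})^T(1_v)$.

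The main obstacle I expect is bookkeeping the gradings correctly through the tensor-product isomorphism: one must verify that the unitary $\C_v\otimes_{C_0(F)}X\to\bigoplus_{f\in vE^1}\C_{r(f)}$ genuinely intertwines $\text{id}\otimes\alpha_X$ with the diagonal grading $\bigoplus_f \delta(f)$, and that when $\delta(f)=-1$ the corresponding summand contributes $-[\C_{r(f)}]$ in $KK_0$ rather than $+[\C_{r(f)}]$ — this is precisely where Lemma~\ref{lemma: trivially graded A gives direct sum} and Equation~\eqref{minus id} are invoked. A secondary subtlety is checking that $\C_v$ with its left $\C$-action does satisfy the hypotheses for the $F=0$ form of the Kasparov product (left action by compacts), which is immediate since $\LL(\C_v)\cong\KK(\C_v)\cong\C$. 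Once these gradings are pinned down, the rest is a direct comparison of two elements of $\Z^{E^0}$, and commutativity of the diagram follows since the $[\C_v]$, $v\in F$, generate $K^{\gr}_0(C_0(F))$.
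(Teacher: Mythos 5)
Your proposal is correct, and it lands on exactly the same finite-dimensional module as the paper --- the span of $\{1_f : s(f)=v\}\cong\bigoplus_{f\in vE^1}\C_{r(f)}$ with the grading acting by $\delta(f)$ on the $f$-summand, finished off by Lemma~\ref{lemma: trivially graded A gives direct sum} and Equation~\eqref{minus id} to convert the odd summands into minus signs. The route there differs in one genuine respect. You compute the Kasparov product via Equation~\eqref{Fredholm zero Kasp product}, using that both Fredholm operators vanish and both left actions are by compacts, so the product becomes the honest balanced tensor product $\C_v\otimes_{C_0(F)}X$, which you then identify with the corner $\phi(\delta_v)X$ via $z\delta_v\otimes x\mapsto z\phi(\delta_v)x$ (an inner-product-preserving surjection, justified by Lemma~\ref{lemma: Well defined unitaries on tensor product} since $\phi(\delta_v)$ is a projection). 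The paper instead treats $[\C_v]$ as the class of the homomorphism $\varphi_v:z\mapsto z\delta_v$, so that by Equation~\eqref{triv right} the product is just the pullback $[X_i,\phi\circ\varphi_v,0,\mathrm{id}]$, and then invokes Lemma~\ref{essential} to compress to the essential subspace $\overline{\phi\circ\varphi_v(\C)X_i}=\overline{\Span}\{1_f:f\in vE^1_i\}$; no tensor product is ever formed. The paper also applies the even/odd splitting to all of $X$ before taking the product, whereas you split the finite-dimensional result afterwards --- this ordering is immaterial. Your version buys a computation that does not rely on recognising $[\C_v]$ as the class of a homomorphism, at the cost of the tensor-product identification; the paper's version avoids tensor products entirely but leans on the essential-subspace lemma. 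Both are complete arguments, and your flagged subtleties (the grading bookkeeping through the unitary, and the compactness of the left action on $\C_v$) are precisely the right points to check.
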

\begin{proof}
	
	Let $E^1_i$ be the partition of $E^1$ into the sets
	\begin{equation*}
	E^1_0=\{e\in E^1:\delta(e)=1\},\qquad E^1_1=\{e\in E^1:\delta(e)=-1\}.
	\end{equation*}
	It suffices to check that $(A^{\delta}_{F,E^0})^{T}\theta_F([\C_v])=\theta_{E^0}([\C_v]\Otimes[X])$. We have
	\begin{align*}
	\left((A^{\delta}_{F,E^0})^T\theta_F([\C_v])\right)_{u}&=\left((A^{\delta}_{F,E^0})^T\delta_v\right)=\sum_{w\in F}(A^{\delta}_{F,E^0})_{w,u}(\delta_v)_w\\
	&=(A^{\delta}_{F,E^0})_{v,u}=\sum_{f\in vE^1u}\delta(f)\\
	\end{align*}
	Thus
	\begin{equation}\label{Brilliant}
	(A^{\delta}_{F,E^0})^T\theta_F([\C_v])=\sum_{u\in E^0, f\in vE^1u}\delta(f)\delta_u=\sum_{u\in E^0,f\in vE^1_0u}\delta_u-\sum_{u\in E^0,f\in vE^1_1u}\delta_u.
	\end{equation}
	Now we wish to compute $\theta_{E^0}([\C_v]\Otimes[X])$. Using Lemma~\ref{lemma: trivially graded A gives direct sum} and using~\ref{triv right} we compute on generators $[\C_v]$
	\begin{align}
	\theta_{E^0}\left([\C_v]\Otimes _{C_0(E^0)}[X,\phi,0,\alpha_X]\right)&=\theta_{E^0}\left([\C_v]\Otimes _{C_0(E^0)}[X_0,\phi,0,\text{id}]-[\C_v]\Otimes _{C_0(E^0)}[X_1,\phi,0,\text{id}]\right)\nonumber\\
	&=\theta_{E^0}\left([X_0,\phi\circ\varphi_v,0,\text{id}]-[X_1,\phi\circ\varphi_v,0,\text{id}]\right).\label{8}\\\nonumber 
	\end{align}
	Fix $v\in E^0$. To understand the class $[X_i,\phi\circ\varphi_v,0,\text{d}]$ we aim to apply Lemma~\ref{essential} to replace $X_i$ with the essential subspace for $\phi\circ\varphi_v$. For this we claim that if we write $s^{-1}_i(g)=E^1_i\cap s^{-1}(g)$ then the essential subspace is given by
	\begin{equation*}
		\phi\circ\varphi_v(C_0(E^0))X_i=\overline{\Span}\{\delta_g:g\in s^{-1}(v)\}.
	\end{equation*}
	We fix a finitely supported function $c:E^1_i:\C$ and $f\in E^1_i$ and calculate
	\begin{align*}
	\Big(\phi\circ\varphi_v(z)\sum_{e\in E^1_i}c_e1_e\Big)(f)&=z\delta_v(s(f))\sum_{e\in E^1_i}c_e1_e(s(f))\\
	&=zc_f\delta_v(s(f))\in \overline{\Span}\{\delta_g:g\in s^{-1}(v)\}.\\
	\end{align*}
	Also if $s(g)=v$ with $g\in E^1_i$ then
	\begin{equation*}
		\delta_g=\phi\circ\varphi_v(1)\delta_g\in \phi\circ\varphi_v(C_0(E^0))X_i.
	\end{equation*}
	Hence by Lemma~\ref{essential} we may write
	\begin{equation*}
	[X_i,\phi\circ\varphi_v,0,\text{id}]=[\overline{\Span}\{\delta_g:g\in s^{-1}(v)\},\phi\circ\varphi_v,0,\text{id}].
	\end{equation*}
	Elements of the module $\overline{\Span}\{\delta_g:g\in s^{-1}(v)\}$ may be written in the form $\sum_{e\in s^{-1}_i(v)}c_e1_e$. There is a unitary map $U:\overline{\Span}\{\delta_g:g\in s^{-1}(v)\}\to \bigoplus_{e\in vE^1_iw}[\C_w]$ such that 
	\begin{equation*}
	U\Big(\sum_{e\in s^{-1}_i(v)}c_e1_e\Big)=\sum_{e\in vE^1_iw}c_e\delta_{r(e)}.
	\end{equation*}
	Since our gradings are trivial $U$ is even. Since our Fredholm operator is zero we have $U0=0=0U$. By construction $U$ intertwines the left actions, giving us a unitary equivalence. Hence
	\begin{equation*}
	[X_i,\phi\circ\varphi_v,0,\text{id}]=\sum_{\substack{w\in E^0_i,\\ f\in vE^1w}}[\C_w].
	\end{equation*}
	Finally we resume our computation \eqref{8} of the left-hand side of the diagram
	\begin{align*}
	\theta_{E^0}\left([\C_v]\Otimes [X,\phi,0,\alpha_X]\right)&=\theta_{E^0}\left([X_0,\phi\circ\varphi_v,0,\text{id}]-[X_1,\phi\circ\varphi_v,0,\text{id}]\right)\\
	&=\theta_{E^0}\Big(\dis\sum_{w\in E^0, f\in vE^1_0w}[\C_w]-\sum_{w\in E^0, f\in vE^1_1w}[\C_w]\Big)\\
	&=\sum_{w\in E^0, f\in vE^1_0w}\delta_w-\sum_{w\in E^0, f\in vE^1_1w}\delta_w\\
	\end{align*}
	which is exactly $(A^{\delta}_{F,E^0})^T\theta_F([\C_v])$ as we calculated in \eqref{Brilliant}.
\end{proof}
\begin{corollary}\label{Cor: Graph K theory}
	Let $(E^0,E^1,r,s)$ be a directed graph with no sinks, with $\delta:E^0\to\Z_2$ inducing a grading on $C^*(E)$, and let $A^{\delta}_{F,E^0}$ be the graded adjacency matrix. Suppose that $A^{\delta}_{F,E^0}$ has block matrix form
	\begin{equation*}
		A^{\delta}=\begin{pmatrix}
			A^{\delta}_{F,F}&A^{\delta}_{F,E^0\bs F}\\
		\end{pmatrix}
	\end{equation*}
	where $A^{\delta}_{F,F}\in M_{F\times F}(\C)$ and $A^{\delta}_{F,E^0\bs F}\in M_{F,E^0\bs F}(\C)$ are the matrices with entries
	\begin{equation*}
		(A^{\delta}_{F,F})_{v,w}=\sum_{e\in vE^1w}\delta(f)\qquad (A^{\delta}_{F,E^0\bs F})_{v,w}=\sum_{e\in vE^1w}\delta(f).
	\end{equation*}
	Let $\iota:\Z^F\to\Z^{E^0}$ be the $E^0\times F$ given in block form by
	\begin{equation*}
		\iota=\begin{pmatrix}
			1_{F\times F}\\
			0_{F\times E^0}\\
		\end{pmatrix}.
	\end{equation*}
	Then the graded $K$-theory of $C^*(E)$ is given by
	\begin{equation*}
	K_0(C^*(E))=\Coker\left(\iota-(A^{\delta}_{F,E^0})^T\right)\qquad\qquad K_1(C^*(E))=\Ker\left(\iota-(A^{\delta}_{F,E^0})^T\right).
	\end{equation*}
\end{corollary}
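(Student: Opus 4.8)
The plan is to read off the corollary from the reduced exact sequence established above together with the commuting square of the preceding Lemma. Recall that, for the graph correspondence $X$ over $C_0(E^0)$, the first six-term sequence~\eqref{eq:exact1} of Theorem~\ref{MAINTHEOREM} with $B=\C$ collapses (using $KK_0(\C,C_0(F))\cong\Z^{F}$, $KK_0(\C,C_0(E^0))\cong\Z^{E^0}$, and $K^{\gr}_1(C_0(F))=K^{\gr}_1(C_0(E^0))=0$) to the exact sequence
\begin{equation*}
0\longrightarrow K^{\gr}_1(C^*(E))\xrightarrow{\ \partial\ }\Z^{F}\xrightarrow{\ f\ }\Z^{E^0}\xrightarrow{\ \iota_*\ }K^{\gr}_0(C^*(E))\longrightarrow 0,
\end{equation*}
where $f=\theta_{E^0}\circ\big(\Otimes_{C_0(E^0)}([\iota_I]-[X])\big)\circ\theta_F^{-1}$, and where exactness has already been used above to conclude $K^{\gr}_1(C^*(E))=\Ker(f)$ and $K^{\gr}_0(C^*(E))=\Coker(f)$. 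It therefore remains only to identify $f$ with $\iota-(A^{\delta}_{F,E^0})^{T}$ as a homomorphism $\Z^{F}\to\Z^{E^0}$. Since the Kasparov product is additive over direct sums and the classes $\{[\C_v]\}_{v\in F}$ generate $KK_0(\C,C_0(F))$, I would verify this on each generator $\delta_v=\theta_F([\C_v])$, $v\in F$.

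I would then split $f(\delta_v)=\theta_{E^0}([\C_v]\Otimes[\iota_I])-\theta_{E^0}([\C_v]\Otimes[X])$ and treat the two terms separately. For the first term, $[\iota_I]$ is the class of the homomorphism $\iota_I\colon C_0(F)\hookrightarrow C_0(E^0)$, so by~\eqref{triv left} we have $[\C_v]\Otimes[\iota_I]=(\iota_I)_*[\C_v]$; since $C_0(F)$ and $C_0(E^0)$ are trivially graded this is the ordinary functorial map in $K$-theory induced by the inclusion $\bigoplus_{v\in F}\C\hookrightarrow\bigoplus_{w\in E^0}\C$, which by Lemma~\ref{lemma: K is continuous} (equivalently, by the direct identification $\C_v\Otimes_{\iota_I}C_0(E^0)\cong\C_v$ as $\C$--$C_0(E^0)$ correspondences) is exactly the block inclusion $\Z^{F}\hookrightarrow\Z^{E^0}$; that is, $\theta_{E^0}([\C_v]\Otimes[\iota_I])=\iota(\delta_v)$. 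For the second term, the commuting square of the preceding Lemma states precisely that $\theta_{E^0}([\C_v]\Otimes[X])=(A^{\delta}_{F,E^0})^{T}\delta_v$. Combining, $f(\delta_v)=\big(\iota-(A^{\delta}_{F,E^0})^{T}\big)\delta_v$ for every $v\in F$, so $f=\iota-(A^{\delta}_{F,E^0})^{T}$, and substituting this into $K^{\gr}_1(C^*(E))=\Ker(f)$ and $K^{\gr}_0(C^*(E))=\Coker(f)$ yields the stated formulas. The block decomposition $A^{\delta}=(A^{\delta}_{F,F}\ \ A^{\delta}_{F,E^0\bs F})$ is merely the record of how $(A^{\delta}_{F,E^0})^{T}$ acts relative to the splitting $\Z^{E^0}=\Z^{F}\oplus\Z^{E^0\bs F}$ and plays no further role.

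Given the preceding Lemma and Theorem~\ref{MAINTHEOREM}, I expect no substantial obstacle here; the only point requiring genuine care is the compatibility of the parametrisations $\theta_F$ and $\theta_{E^0}$ with the ideal inclusion $C_0(F)\hookrightarrow C_0(E^0)$, i.e. that $(\iota_I)_*$ sends the $v$th $\theta_F$-generator to the $v$th $\theta_{E^0}$-generator for each $v\in F$. This reduces to the direct computation that $\C_v\Otimes_{\iota_I}C_0(E^0)$ is the one-dimensional $C_0(E^0)$-module on which $C_0(E^0)$ acts by evaluation at $v$ and $\C$ acts by scalars (using that $v\in F$, so $\delta_v$ already lies in $C_0(F)$), which is exactly the module $\C_v$ used to define $\theta_{E^0}$. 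Everything else is immediate from distributivity of the Kasparov product and from the exactness already in hand.
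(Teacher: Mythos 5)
Your proposal is correct and assembles the same ingredients in the same way the paper does: the corollary follows immediately from the reduced four-term exact sequence derived before the preceding Lemma (which already gives $K_1^{\gr}(C^*(E))=\Ker(f)$ and $K_0^{\gr}(C^*(E))=\Coker(f)$) together with the Lemma's identification of $\Otimes[X]$ with $(A^{\delta}_{F,E^0})^T$ on generators. Your explicit check that $[\C_v]\Otimes[\iota_I]$ corresponds to the block inclusion $\iota$ under $\theta_F$ and $\theta_{E^0}$ is a detail the paper leaves implicit, and it is handled correctly.
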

\begin{example}
	To give an idea about how the grading and the subset $F$ affect the graded $K$-theory of $C^*(E)$ we give the specific example of the following graph. Let $a,b,r,s,p,q\in\N$ be finite, and let $E$ be the graph with two vertices $E^0=\{v,w\}$, edges
	\begin{align*}
		E^1=\{e^1_j:j\leq q\}&\sqcup\{e^{-1}_j:j\leq p\}\sqcup\{f_j:j\in\N\}\sqcup\{g^1_j:j\leq a\}\\
		&\sqcup\{g^{-1}_j:j\leq b\}\sqcup\{h^1_j:j\leq r\}\sqcup\{h^{-1}_j:j\leq s\}
	\end{align*}
	and range and source maps
	\begin{equation*}
		r(e^i_j)=r(g^i_j)=s(g_j^i)=s(f_j)=v\qquad\text{ and }\qquad s(e^i_j)=r(f_j)=s(h^i_j)=r(h^i_j)=w
	\end{equation*}
	for all $i,j$ (see~\ref{fig:graph}).
	\begin{figure}
		\centering	
		\[
	\tikzset{->-/.default=0.5, ->-/.style={decoration={
				markings,
				mark=at position #1 with {\arrow{stealth}}},postaction={decorate}}}
	\begin{tikzpicture}
	\node[circle, inner sep=0.5pt] (u) at (-4,0) {$v$};
	\node[circle, inner sep=0.5pt] (v) at (4,0) {$w$};
	\draw[out=30, in=150, ->-] (u) to node[above, pos=0.5] {$\{f_j : j \in \mathbb{N}\}$} (v);
	\draw[out=200, in =340, ->-] (v) to node[below, pos=0.5] {$\{e^{-1}_j : j \le p\}$} (u);
	\draw[out=225, in =315, ->-] (v) to node[below, pos=0.5] {$\{e^{1}_j : j \le q\}$} (u);
	\draw[->-] (u) .. controls +(-2, 2) and +(-2, -2) .. (u) node[pos=0.5, left, inner sep=1pt] (1) {};
	\draw[->-] (u) .. controls +(-3, 3.5) and +(-3, -3.5) .. (u) node[pos=0.5, left] (2) {$\{g^{-1}_j : j \le b\}$};
	\draw[->-] (v) .. controls +(2, 2) and +(2, -2) .. (v) node[pos=0.5, right, inner sep=1pt] (3) { };
	\draw[->-] (v) .. controls +(3, 3.5) and +(3, -3.5) .. (v) node[pos=0.5, right] (4) {$\{h^{-1}_j : j \le s\}$};
	\node[anchor=south, inner sep=1pt] (5) at (2.north) {$\{g^1_j : j \le a\}$};
	\node[anchor=south, inner sep=1pt] (6) at (4.north) {$\{h^1_j : j \le r\}$};
	\draw (5.east)--(1.west);
	\draw (6.west)--(3.east);
	\end{tikzpicture}
	\]
	\caption{E} \label{fig:graph}
\end{figure}

	Let $\delta:E^1\to\Z$ be any function such that $\delta(\alpha_j^i)=i$ for any symbol $\alpha\in\{e,g,h\}$ (we allow any values for $\delta(f_j)$).
	For this graph the subset $F$ consisting of vertices which emit finitely many edges is $F=\{w\}$. We compute the graded adjacency matrix
	\begin{equation*}
		A^{\delta}_{F,E^0}=(r-s,q-p),
	\end{equation*}
	hence we have
	\begin{equation*}
		\iota-(A^{\delta}_{F,E^0})^T=\begin{pmatrix}
			1-s+r\\
			p-q\\
		\end{pmatrix}.
	\end{equation*}
	Thus we have the graded $K$-theory
	\begin{equation*}
		K^{\gr}_*(C^*(E))=\begin{cases}
			(\Z\oplus \Z,\Z)&1-r+s=0,p=q\\
			(\Z_{1-r+s}\oplus\Z,0)&1-r+s\neq 0,p=q\\
			(\Z\oplus\Z_{p-q},0)&1-r+s=0,p\neq q\\
			(\Z_{1-r+s}\oplus\Z_{p-q},0)&1-r+s\neq 0,p\neq q\\
		\end{cases}.
	\end{equation*}
	In particular we note that $K^{\gr}_*(C^*(E))$ is independent of the values of $a$ and $b$. In fact, if we had let one of $a$ or $b$ be infinite with finitely many edges $g^i_j$ such that $s(f^i_j)=v$ and $r(f^i_j)=w$ then we would obtain the same results. As such, if $E$ is any graph with no sinks and $\delta_1:E^1\to\Z_2$ and $\delta_2:E^1\to\Z_2$ are two functions such that $\delta_1(e)=\delta(e)$ whenever $e\in\{f\in E^1:|s^{-1}(f)|<\infty\}$ then we deduce that $K_*^{\gr}(C^*(E),\delta_1)=K_*^{\gr}(C^*(E),\delta_2)$ where we have identified $\delta_i$ with the induced grading on $C^*(E)$.
\end{example}
\begin{example}
	Consider the graph $B_{\infty}$ with one vertex and infinitely many edges. Fix $\delta:B_{\infty}^1\to\Z_2$. Let $X$ be the $\C$--$\C$ correspondence of Example~\ref{Marns} with the grading induced by $\delta$. By Lemma~\ref{Graph compact left action}, the ideal $I$ consisting of elements of $\C$ for which the left action on $X$ is compact is zero. Hence by Theorem~\ref{Cuntz-Pimsner exact sequence} we have $\TT_X\cong\OO_X$. Since $B_n$ has no sinks, Proposition \cite[12]{CuntzKreigerIsCuntzPimsner} implies the Cuntz-Krieger algebra $\OO_{\infty}\coloneqq C^*(B_{\infty})$ is isomorphic to the Cuntz-Pimsner algebra $\OO_X$. Theorem~\ref{KK equivalence} then gives
	\begin{equation*}
		\KK^{\gr}_*(\OO_{\infty})=\KK^{\gr}_*(\OO_X)=\KK^{\gr}_*(\TT_X)=\KK^{\gr}_*(\C)=(\Z,0).
	\end{equation*}
	In particular we note that $K^{\gr}_*(\OO_X)$ is independent of the grading on $X$.
\end{example}
\chapter{Closing remarks}
\label{ch: closing remarks}

	Let $E$ be a graph with no infinite emitters and consider the $C_0(E^0)$--$C_0(E^0)$ correspondence $X$ of Example~\ref{Marns}. By Lemma~\ref{Graph compact left action} the left action on $X$ is by compacts, and so the set 
	\begin{equation*}
		F=\{v\in E^0:|s^{-1}(v)|<\infty\}
	\end{equation*} 
	is all of $E^0$. Corollary~\ref{Cor: Graph K theory} then reduces to \cite[Corollary~8.3]{ThePaper}.
	Corollary~\ref{Cor: Graph K theory} corresponds in a very natural way to existing results about the regular $K$-theory of graph algebras. Let $E$ be any graph (possibly with sinks). As before, let 
	\begin{equation*}
	F=\{v\in E^0:|s^{-1}(v)|<\infty\},
	\end{equation*}
	and now let
	\begin{equation*}
	G=\{v\in E^0:s^{-1}(v)\neq\emptyset\}.
	\end{equation*}
	That is, $F$ is the set of infinite emitters in $E^0$ and $G$ is the set of sinks in $E^0$.
	Let $A_{F\cap  G,E^0}\in M_{|F\cap G|\times |E^0|}(\C)$ be the (ungraded) adjacency matrix with entries
	\begin{equation*}
	(A_{F\cap G,E^0})_{v,w}=|vE^1w|
	\end{equation*}
	for $v\in F\cap G$ and $w\in E^0$. Let $\iota$ be the $|F\cap G|$ by $|E^0|$ matrix given in block form
	\begin{equation*}
	\iota=\begin{pmatrix}
	1_{F\times F}\\
	0_{F\times E^0}\\
	\end{pmatrix}.
	\end{equation*}
	In \cite[Theorem~6.1]{ArbitraryGraphKTheory} it is shown that there are isomorphisms
	\begin{equation}\label{special K}
	K_0(C^*(E))\cong \Coker(\iota-A_{F\cap G,E^0}^T)\qquad\text{ and }\qquad K_1(C^*(E))\cong \Ker(\iota-A_{F\cap G,E^0}^T).
	\end{equation}
	In the case when $E$ has no sources (so $G=\emptyset$), Corollary~\ref{Cor: Graph K theory} is a direct analogue of this result, replacing the adjacency matrix $A_{F,E^0}$ with the graded adjacency matrix $A^{\delta}_{F,E^0}$. This also suggests that if $E$ has sinks (so the left action on $X$ is not injective) then we should be able to compute the graded $K$-theory of $E$ using~\ref{special K} replacing $A_{F\cap G,E^0}$ with $A^{\delta}_{F\cap G,E^0}$. A problem arises however in that when the left action is not injective: we have the competing definitions of Pimsner and Katsura for $\OO_X$. In \cite[Proposition~3.10]{Katsura}, Katsura showed that the Cuntz-Pimsner algebra $\OO_X$ according to his definition is isomorphic to the Cuntz-Krieger algebra $C^*(E)$ for the $C_0(E^0)$--$C_0(E^0)$ correspondence $X$ of Example~\ref{Marns}. Hence for the purpose of computing the graded $K$-theory of graph algebras with sinks, we would be interested in proving an analogous statement to Theorem~\ref{MAINTHEOREM} for $\OO_X$ according to Katsura's definition with $X$ a correspondence with non-injective left action. In \cite{AddingTails} it is shown that given an $A$--$A$ correspondence $X$, it is possible to construct a $C^*$-algebra $T$ and an $A\oplus T$--$A\oplus T$ correspondence $Y$ with injective left action such that (for Katsura's definition) $\OO_X$ is isomorphic to a full corner of $\OO_Y$ (\cite[Theorem~4.3]{AddingTails}). By \cite[Example~3.6]{RaeburnWilliams}, $\OO_X$ and $\OO_Y$ are Morita equivalent, and hence by Theorem~\ref{Morita equiv KK} $\OO_X$ and $\OO_Y$ are $KK$-equivalent. Hence, employing Theorem~\ref{MAINTHEOREM}, we obtain an exact sequence for $\OO_X$ in terms of $A\oplus T$. Such a process may be useful for computing the graded $K$-theory of graph algebras for graphs with no sources. For this process to be useful, we would need to know about what role the $C^*$-algebra $T$ plays with gradings.

\begin{appendices}
	\chapter{Uniformly continuous extensions}
\label{Appendix}

\begin{theorem}[Uniformly continuous extension theorem]\label{Sang}
	Let $X,Y$ be metric spaces and $Y$ be complete. Let $f:X\rightarrow Y$ be uniformly continuous and denote by $\overline{X}$ the completion of $X$. Then there exists a unique continuous function $\tilde{f}:\overline{X}\rightarrow Y$ such that $\tilde{f}|_X=f$. For any $x\in\overline{X}\bs X$ and any sequence $(x_n)\in X$ with $x_n\to x$, we have
	\begin{equation*}
	\tilde{f}(x)=\lim_{n\to\infty}f(x_n).
	\end{equation*}
	In addition to being continuous, $\tilde{f}$ is uniformly continuous. If $X$ and $Y$ are topological vector spaces with respect to the metric topology and $f$ is linear then so too is $\tilde{f}$. 
\end{theorem}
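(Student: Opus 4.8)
The plan is to build $\tilde f$ sequentially, using density of $X$ in $\overline X$ together with the completeness of $Y$, and then to check each claimed property in turn (well-definedness, agreement with $f$ on $X$, uniform continuity, uniqueness, linearity). First I would fix the construction: given $x\in\overline X$, choose by density a sequence $(x_n)$ in $X$ with $x_n\to x$. Such a sequence is Cauchy, so given $\epsilon>0$, applying uniform continuity of $f$ to the corresponding $\delta>0$ and choosing $N$ with $d_X(x_n,x_m)<\delta$ for $n,m\geq N$ yields $d_Y(f(x_n),f(x_m))<\epsilon$; hence $(f(x_n))$ is Cauchy in the complete space $Y$ and converges. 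I would then \emph{define} $\tilde f(x)=\lim_{n\to\infty}f(x_n)$, which is exactly the formula asserted for $x\in\overline X\setminus X$ (and, taking the constant sequence, shows $\tilde f|_X=f$).

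Next I would show $\tilde f$ is well defined: if $(x_n)$ and $(x_n')$ both converge to $x$, interleave them into a single sequence converging to $x$; by the previous paragraph the interleaved image sequence converges in $Y$, so its two subsequences $(f(x_n))$ and $(f(x_n'))$ share a limit. Then I would prove uniform continuity of $\tilde f$ by an approximation-by-three-terms argument: given $\epsilon>0$, pick $\delta>0$ from the uniform continuity of $f$ so that $d_X(a,b)<\delta$ forces $d_Y(f(a),f(b))\leq\epsilon/2$; set $\eta=\delta/3$. For $x,y\in\overline X$ with $d_{\overline X}(x,y)<\eta$, choose $x_n\to x$ and $y_n\to y$ in $X$; for large $n$ one has $d_X(x_n,y_n)\leq d_X(x_n,x)+d_{\overline X}(x,y)+d_X(y,y_n)<\delta$, whence $d_Y(f(x_n),f(y_n))\leq\epsilon/2$, and passing to the limit gives $d_Y(\tilde f(x),\tilde f(y))\leq\epsilon/2<\epsilon$. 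Continuity is a special case. Uniqueness is the standard fact that two continuous maps into a Hausdorff space that agree on the dense subset $X\subseteq\overline X$ agree everywhere.

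For the linearity clause I would work in the normed/topological-vector-space setting (which is all the paper uses), so that $\overline X$ is again a topological vector space in which addition and scalar multiplication are continuous. Given $x,y\in\overline X$ and a scalar $\lambda$, pick $x_n\to x$, $y_n\to y$ in $X$; continuity of the vector-space operations gives $x_n+y_n\to x+y$ and $\lambda x_n\to\lambda x$, so
\begin{equation*}
\tilde f(x+y)=\lim_n f(x_n+y_n)=\lim_n\big(f(x_n)+f(y_n)\big)=\tilde f(x)+\tilde f(y),
\end{equation*}
using continuity of addition in $Y$, and likewise $\tilde f(\lambda x)=\lim_n f(\lambda x_n)=\lim_n\lambda f(x_n)=\lambda\tilde f(x)$.

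The argument is essentially routine; the only point requiring care is the uniform-continuity estimate, where one must choose the modulus $\eta$ on $\overline X$ strictly smaller than a third of the modulus $\delta$ coming from $f$ so that the intermediate points $x_n,y_n\in X$ stay within distance $\delta$ and so that the non-strict inequality survives passage to the limit. The linearity part carries a minor caveat — it uses that the metric completion of $X$ coincides with its completion as a topological vector space — which holds in all the normed-space applications made in the thesis and which I would simply note.
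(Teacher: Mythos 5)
Your proposal is correct and follows essentially the same route as the paper's proof: define $\tilde f$ on $\overline{X}\setminus X$ as the limit of $f$ along a sequence from $X$ (using uniform continuity to see the image sequence is Cauchy and completeness of $Y$ to get convergence), check independence of the chosen sequence, and establish uniform continuity by approximating two nearby points of $\overline{X}$ by points of $X$ and applying the triangle inequality. Your two small variants — the interleaving trick for well-definedness, and passing to the limit in the inequality $d_Y(f(x_n),f(y_n))\leq\epsilon/2$ rather than routing through continuity of $\tilde f$ at the two points as the paper does — are minor streamlinings of the same argument.
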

\begin{proof}
	Define $\tilde{f}=f$ on $X$. For $x\in\overline{X}\bs X$ choose by density of $X\subseteq\overline{X}$ a sequence $x_n\in X$ with $x_n\rightarrow x$. We wish to define $\tilde{f}(x)=\lim_{n\rightarrow\infty}f(x_n)$.\\ We first3 check that $f(x_n)$ converges to some element of $Y$, so fix $\ep>0$. Since $x_n\in X$ for all $n$ we may find by uniform continuity a $\delta>0$ such that
	\begin{equation*}
	d(x_n,x_m)<\delta\implies d(f(x_n),f(x_m))<\ep.
	\end{equation*}
	Since $(x_n)$ is Cauchy there exists $N$ such that for $n,m\geq N$,
	\begin{equation*}
	d(x_n,x_m)<\delta.
	\end{equation*}
	Hence $f(x_n)$ is a Cauchy sequence, and by completeness converges to some element of $Y$.\\ To see that $f$ is well defined we must show that $\lim_{n\rightarrow\infty}f(x_n)$ is independent of choice of $(x_n)$. Let $(x_n)$, $(y_n)$ be two sequences in $X$ that converge to $x$. We wish to show that
	$d(f(x_n),f(y_n))\rightarrow 0$,
	so fix $\ep>0$. Since $x_n,y_n\in X$, by uniform continuity there exists $\delta >0$ such that 
	\begin{equation*}
	d(x_n,y_n)<\delta\implies d(f(x_n),f(y_n))<\ep.
	\end{equation*}
	Now choose $N$ such that for $n\geq N$,
	\begin{equation*}
	d(x_n,x)<\delta/2\;\;\text{ and }\;\;d(y_n,x)<\delta/2.
	\end{equation*}
	Then for $n\geq N$ we have
	\begin{equation*}
	d(x_n-y_n)\leq d(x_n,x)+d(x,y_n)<\delta.
	\end{equation*}
	Hence $d(f(x_n),f(y_n))<\ep$ and so $d(f(x_n),f(y_n))\rightarrow 0$. Since $f(x_n)$ and $f(y_n)$ are both themselves convergent, we deduce that $\lim_{n\rightarrow\infty}f(x_n)=\lim_{n\rightarrow\infty}f(y_n)$ and so $f(x)$ is is well defined.\\ Now we must show that $\tilde{f}$ is uniformly continuous, so fix $\ep>0$. By uniform continuity, choose $\delta>0$ such that
	\begin{equation*}
	d(x,y)<\delta\implies d(f(x),f(y))<\ep/3
	\end{equation*}
	for all $x,y\in X$. Let $z,w\in\overline{X}$ with $d(z,w)<\delta/3$. Clearly by construction $f$ is continuous on all of $\overline{X}$, and so we may pick $\delta/3>\sigma>0$ such that for all $x\in\overline{X}$,
	\begin{equation*}
	d(z,x)<\sigma\implies d(f(z),f(x))<\ep/3
	\end{equation*}
	and for all $y\in\overline{X}$, 
	\begin{equation*}
	d(y,w)<\sigma\implies d(f(y),f(w))<\ep/3.
	\end{equation*}
	By density of $X$, there exist $x,y\in X$ such that $d(z,x),d(y,w)<\sigma$. Then we have
	\begin{equation*}
	d(x,y)\leq d(x,z)+d(z,w)+d(w,y)<2\sigma/3 +\delta/3<\delta.
	\end{equation*}
	Hence $d(f(x),f(y))<\ep/3$ by uniform continuity on $X$. Thus
	\begin{equation*}
	d(f(z),f(w))\leq d(f(z),f(x))+d(f(x),f(y))+d(f(y),f(w))<\ep.
	\end{equation*}
	Thus $\tilde{f}$ is uniformly continuous on $\overline{X}$. If $X$ and $Y$ are vector spaces and $f$ is linear then by the algebra of limits $\tilde{f}$ is linear too. The function $\tilde{f}$ is unique since if $g$ is any other continuous extension of $f$ to $\overline{X}$ then $\tilde{f}$ and $g$ agree on $X$ and for $x\in\overline{X}$ and any sequence $x_n\in X$ converging to $x$, we have
	\begin{align*}
	g(x)&=\lim_{n\rightarrow\infty}g(x_n)\qquad\text{ by contintuity}\\
	&=\lim_{n\rightarrow\infty}f(x_n)\qquad\text{ since $g$ and $f$ agree on $X$}\\
	&=\tilde{f}(x).\qedhere\\
	\end{align*}
\end{proof}
	\chapter{Infinite direct sums}
\label{Direct sums}

The following content combines definitions and discussion from \cite[Section~9]{CStarNotes} and \cite[Section~7]{TopoNotes}.
When taking finite direct sums of vector spaces, the direct sum is exactly the Cartesian product. When taking infinite direct sums however, the Cartesian product and direct sums of vector spaces become quite different. Recall that if $S$ is an indexing set, and for each $s\in S$ we have a set $X_s$, the Cartesian product $\prod_{s\in S}X_s$ is defined to be the set of all functions $x:S\to \bigcup_{s\in S}X_s$ such that $x(s)\in X_s$. We write $x_s$ for $x(s)$ and refer to $x_s$ as the `$s$th component of $x$'.
\begin{definition}
	Let $S$ be an indexing set, and suppose that for each $s\in S$ we have a vector space $V_s$ over the same field $K$. The \textit{direct sum} $\bigoplus_{s\in S}V_s$ is defined to be the set of all $v\in \prod_{s\in S}V_s$ such that $v_s\neq 0$ for only finitely many $s\in S$. The direct sum is a vector space with
	\begin{equation*}
	(v+w)_s=v_s+w_s,\qquad \text{ and }\qquad (\lambda v)_s=\lambda v_s
	\end{equation*}
	for $v,w\in\bigoplus_{s\in S}V_s$ and $\lambda\in K$.
\end{definition}
\begin{remark}
	If the indexing set $S$ is given the discrete topology, then the direct sum $\bigoplus_{s\in S}V_s$ can be thought of as compactly supported continuous functions $C_c(S;\bigcup_{s\in S}V_s)$ such that $v_s\in V_s$. Since a subset of $S$ is compact if and only if it is finite, it becomes clear why the Cartesian product and direct sum are the same when the set $S$ is finite, and different when $S$ is infinite. For a collection of Hilbert modules $\{X_s\}_{s\in S}$ over a $C^*$-algebra $A$ we can make sense of right multiplication and an inner-product on the vector space direct sum $\bigoplus_{s\in S}X_s$ by the formulas
	\begin{equation*}
	(xa)_s=x_sa\qquad\text{ and }\qquad \IP{x,y}=\sum_{s\in S}\IP{x_s,y_s}.
	\end{equation*}
	This sum is well defined since only finitely many terms are non-zero. However for infinite $S$, the vector space direct sum $\bigoplus_{s\in S}X_s$ is not complete in the norm induced by the inner product. What we would really like out of a direct sum of Hilbert modules, is something akin to `$C_0(S;\bigcup_{s\in S}V_s)$'. We will first need to establish a some facts about summation over indexing sets.
\end{remark}
\begin{lemma}\label{absolute}
	Let $A$ be a $C^*$-algebra and $a_n$ be a sequence of positive elements. If $\sum_na_n$ converges then it does so unconditionally: if $\sigma:\N\to\N$ is a bijection, then $\sum_n a_{\sigma(n)}$ exists and is equal to the sum $\sum_{n}a_n$.
\end{lemma}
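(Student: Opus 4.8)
The plan is to show that for positive elements $a_n$ in a $C^*$-algebra $A$, if $\sum_n a_n$ converges then it converges unconditionally, by comparing partial sums of the rearranged series against partial sums of the original. First I would set $s = \sum_{n=1}^\infty a_n$ and fix a bijection $\sigma:\N\to\N$; write $t_N = \sum_{n=1}^N a_{\sigma(n)}$ for the $N$th partial sum of the rearranged series, and $s_M = \sum_{n=1}^M a_n$ for the $M$th partial sum of the original. The key observation is that because each $a_n \geq 0$, the partial sums $s_M$ form an increasing (in the usual order on self-adjoint elements) sequence bounded above by $s$, and likewise for any finite set $F \subseteq \N$ we have $0 \leq \sum_{n\in F} a_n \leq s$.

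The main steps are as follows. Given $N$, the set $\{\sigma(1),\dots,\sigma(N)\}$ is a finite subset of $\N$, so there is some $M$ with $\{\sigma(1),\dots,\sigma(N)\} \subseteq \{1,\dots,M\}$; positivity of the $a_n$ then gives $0 \leq t_N \leq s_M \leq s$, so $(t_N)$ is an increasing sequence bounded above by $s$. To conclude convergence I would invoke the standard fact that an increasing sequence of self-adjoint elements bounded above converges in norm to its supremum — here the cleanest route is to note that $(s - t_N)$ is a decreasing sequence of positive elements, hence $\normof{s-t_N}$ is decreasing and bounded below by $0$, and then show the limit of these norms is actually $0$. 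For the latter: given $\ep>0$, pick $M_0$ with $\normof{s - s_{M_0}} < \ep$ (possible since $s_M \to s$), then pick $N_0$ large enough that $\{1,\dots,M_0\} \subseteq \{\sigma(1),\dots,\sigma(N_0)\}$ (possible since $\sigma$ is a bijection); for $N \geq N_0$ we get $s_{M_0} \leq t_N \leq s$, hence $0 \leq s - t_N \leq s - s_{M_0}$, and since both endpoints are positive, $\normof{s-t_N} \leq \normof{s - s_{M_0}} < \ep$. Therefore $t_N \to s$ in norm, which is exactly the claim.

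The technical input I would want to have available is the inequality $0 \leq b \leq c \implies \normof{b} \leq \normof{c}$ for positive elements of a $C^*$-algebra (monotonicity of the norm on positives), together with the elementary fact that in a finite sum of positive elements the sum dominates any sub-sum. Both are standard and the paper has used comparable positivity facts already (e.g. in Lemma~\ref{Panang} and the surrounding material), so I would cite them as known rather than reprove them.

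\textbf{Main obstacle.} The only real subtlety is making sure the monotonicity-of-norm step is justified cleanly in the $C^*$-setting rather than just asserted from the scalar case; once that is in hand the argument is a routine $\ep$-chase using the bijectivity of $\sigma$ to sandwich the rearranged partial sums between two cofinal original partial sums. I do not expect any genuine difficulty beyond bookkeeping.
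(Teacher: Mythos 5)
Your argument is correct, and it takes a different route from the paper's. The paper only gives a sketch: it reduces to the scalar case by choosing a faithful representation $\pi:A\to\BB(\HH)$ and observing that for each vector $\xi$ the scalar series $\sum_n(\pi(a_n)\xi,\xi)$ is a convergent series of nonnegative reals, hence rearrangement-invariant; it then asserts that $\sum_n a_{\sigma(n)}$ converges to $\sum_n a_n$. As written, that sketch only establishes convergence of the rearranged partial sums against vector states (a weak-type convergence) and leaves the passage back to norm convergence unjustified. Your sandwich argument avoids this entirely: you bound $0\leq s-t_N\leq s-s_{M_0}$ using cofinality of the sets $\{\sigma(1),\dots,\sigma(N)\}$ among finite subsets of $\N$, and then invoke monotonicity of the norm on positive elements ($0\leq b\leq c\implies\normof{b}\leq\normof{c}$) to get $\normof{s-t_N}<\ep$ directly. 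The only point worth making explicit is that $s-s_M\geq 0$, i.e.\ that the tail $\sum_{n>M}a_n$ is positive; this follows because it is a norm limit of finite sums of positive elements and the positive cone is closed. With that noted, your proof is complete and in fact more rigorous than the paper's sketch, at the modest cost of quoting the order-monotonicity of the norm on positives rather than the rearrangement theorem for nonnegative real series.
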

\begin{sketch}
	We know that this is true for the case $A=\C$, because the positive elements of $\C$ are positive real numbers, and we know that series of positive numbers converge unconditionally. Let $A$ be any $C^*$-algebra and let $\pi:A\to\BB(\HH)$ be a faithful representation. Let $a_n$ be a sequence of positive elements in $A$ such that $\sum_na_n$ converges. Then for any $\xi\in\HH$ the sum $\sum_n(\pi(a_n)\xi,\xi)$ is a convergent sequence of positive numbers. Hence if $\sigma$ is a permutation of $\N$, $\sum_n(\pi(a_n)\xi,\xi)$ converges to $\sum_n(\pi(a_{\sigma(n)})\xi,\xi)$, and hence $\sum_n a_{\sigma(n)}$ converges to $\sum_n a_n$.
\end{sketch}
\begin{definition}
	Let $S$ be an indexing set, $A$ be a $C^*$-algebra and $\{a_s\}_{s\in S}$ be a set of positive elements in $A$. 
	If $a_s\neq 0$ for only countably many $s\in S$ and if for some listing $\{s_n\}_{n\in\N}$ of the countable set $\{s\in S:a_s\neq 0\}$ the sum
	\begin{equation*}
	\sum_{n=1}^{\infty}a_{s_n}
	\end{equation*}
	converges, then we define
	\begin{equation*}
	\sum_{s\in S}a_s\coloneqq \sum_{n=1}^{\infty}a_{s_n}.
	\end{equation*}
	Lemma~\ref{absolute} shows that this sum is independent of choice of listing $\{s_n\}_{n\in\N}$.
	If either the set $\{s\in S:a_s\neq 0\}$ is uncountable, or if the set $\{s\in S:a_s\neq 0\}$ is countable but the sum $\sum_{n=1}^{\infty}\normof{a_{s_n}}$ does not converge for any listing $\{s_n\}_{n\in\N}$, then we say that the sum $\sum_{s\in S}a_s$ diverges.
\end{definition}

\begin{definition}
	Let $S$ be an indexing set, and suppose that for each $s\in S$ we have a right Hilbert module $X_s$ over a $C^*$-algebra $A$. We define the Hilbert module direct sum to be
	\begin{equation*}
	\bigoplus_{s\in S}X_s=\{x\in\prod_{s\in S}X_s:\sum_{s\in S}\IP{x_s,x_s} \text{ converges in }A\}.
	\end{equation*}
	The direct sum is a right Hilbert $A$-module with right action and inner-product given by the formulas
	\begin{equation*}
	(xa)_s=x_sa\qquad\text{ and }\IP{x,y}=\sum_{s\in S}\IP{x_s,y_s}.
	\end{equation*}
\end{definition}
\begin{remark}
	If $x\in\bigoplus_{s\in S}$ then $\sum_{s\in S}\IP{x_s,x_s}$ converges and one can see that
	\begin{equation*}
	\sum_{s\in S}\IP{x_sa,x_sa}=a^*\Big(\sum_{s\in S}\IP{x_s,x_s}\Big)a
	\end{equation*}
	also converges because multiplication in $A$ is continuous, so that $xa\in\bigoplus_{s\in S}X_s$. If $x,y\in\bigoplus_{s\in S}X_s$ and $F\subset S$ is finite then using the Cauchy-Schwartz inequality \eqref{C-S} and H\"{o}lder's inequality for real numbers we see that
	\begin{align*}
	\normof{\sum_{s\notin F}\IP{x_s,y_s}}&\leq\sum_{s\notin F}\normof{\IP{x_s,y_s}^*\IP{x_s,y_s}}^{1/2}\\
	&\leq \sum_{s\notin F}\normof{x_s}^{1/2}\normof{y_s}^{1/2}\leq \Big(\sum_{s\notin F}\normof{x_s}\Big)^{1/2}\Big(\sum_{s\notin F}\normof{y_s}\Big)^{1/2},\\
	\end{align*}
	which which can be made arbitrarily small by choosing $F$ large enough since $x,y\in\bigoplus_{s\in S}X_s$. So we see that the inner-product is well defined. We also would like to know about adjointable operators on the direct sum:
\end{remark}
\begin{lemma}\label{direct sum of operators}
	Suppose that $S$ is an indexing set, and for each $s\in S$ we have a Hilbert $A$-module $X_s$ and an adjointable operator $T_s\in\LL(X_s)$. Then the formula
	\begin{equation}\label{oplus}
	\left(\left(\oplus T_s\right)x\right)_s=T_sx_s
	\end{equation}
	defines an adjointable operator if and only if $\sup_{s\in S}\normof{T_s}$ is finite. We then have $\normof{\oplus T_s}=\sup_{s\in S}\normof{T_s}$.
\end{lemma}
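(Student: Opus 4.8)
The plan is to prove the equivalence and the norm formula by reducing everything to working on the dense subspace of finitely-supported elements of $\bigoplus_{s\in S}X_s$, then extending by uniform continuity via Theorem~\ref{Sang}. First I would suppose $M\coloneqq\sup_{s\in S}\normof{T_s}<\infty$ and show that \eqref{oplus} defines an adjointable operator. For $x\in\bigoplus_{s\in S}X_s$, the key estimate is that $\sum_{s\in S}\IP{T_sx_s,T_sx_s}$ converges: by \cite[Corollary~1.25]{Crossedandunitizations} applied in each $X_s$ we have $\IP{T_sx_s,T_sx_s}\leq\normof{T_s}^2\IP{x_s,x_s}\leq M^2\IP{x_s,x_s}$, so the partial sums of $\sum_s\IP{T_sx_s,T_sx_s}$ are an increasing net dominated by $M^2\sum_s\IP{x_s,x_s}$, which converges by hypothesis; hence the sum converges in $A$ and $(\oplus T_s)x$ lies in the direct sum. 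Linearity is immediate, and boundedness with $\normof{\oplus T_s}\leq M$ follows from $\IP{(\oplus T_s)x,(\oplus T_s)x}=\sum_s\IP{T_sx_s,T_sx_s}\leq M^2\IP{x,x}$ and taking norms. For adjointability I would check directly that $\oplus(T_s^*)$ (which is well-defined by the same argument since $\normof{T_s^*}=\normof{T_s}$) serves as an adjoint: for $x,y$ in the direct sum,
\begin{equation*}
\IP{(\oplus T_s)x,y}=\sum_{s\in S}\IP{T_sx_s,y_s}=\sum_{s\in S}\IP{x_s,T_s^*y_s}=\IP{x,(\oplus T_s^*)y},
\end{equation*}
where interchanging the sum with the inner-product is legitimate because both series converge in $A$ and the inner-product is continuous in each variable.

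For the norm equality, the inequality $\normof{\oplus T_s}\geq\normof{T_t}$ for each fixed $t\in S$ comes from evaluating on elements supported at $t$: given $x_t\in X_t$, the element $x$ with $t$th component $x_t$ and all other components zero lies in the direct sum, $\normof{x}=\normof{x_t}$, and $\normof{(\oplus T_s)x}=\normof{T_tx_t}$, so $\normof{\oplus T_s}\geq\sup_{\normof{x_t}\leq1}\normof{T_tx_t}=\normof{T_t}$. Taking the supremum over $t$ gives $\normof{\oplus T_s}\geq M$, and combined with the reverse inequality above yields $\normof{\oplus T_s}=\sup_{s\in S}\normof{T_s}$.

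For the converse direction, I would prove the contrapositive: if $\sup_{s\in S}\normof{T_s}=\infty$, then \eqref{oplus} cannot define an everywhere-defined adjointable (hence bounded, by Lemma~\ref{Rang}) operator. Indeed, if $\oplus T_s$ were adjointable it would be bounded with some norm $C$, but the finitely-supported computation just described shows $\normof{T_t}\leq\normof{\oplus T_s}=C$ for every $t$, contradicting unboundedness of the family $\{\normof{T_s}\}$. (Alternatively one can exhibit an element of the direct sum not sent into the direct sum, but the boundedness argument is cleaner and avoids constructing an explicit divergent series.) I expect the main obstacle — really the only point requiring care rather than routine bookkeeping — to be the justification that $\sum_{s\in S}\IP{T_sx_s,T_sx_s}$ converges in $A$ when $S$ is an arbitrary (possibly uncountable) index set: this is where one must invoke the definition of summation over an index set from Appendix~\ref{Direct sums} together with Lemma~\ref{absolute}, check that $T_sx_s\neq0$ for only countably many $s$, and dominate the relevant series of positive elements by $M^2\sum_s\IP{x_s,x_s}$ using that the partial sums form a bounded increasing net; everything else is a direct translation of the Hilbert-space proof.
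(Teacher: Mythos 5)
Your proof is correct and, for the norm identity, takes a genuinely more elementary route than the paper's. Both arguments agree on the forward implication (domination of $\IP{T_sx_s,T_sx_s}$ by $\normof{T_s}^2\IP{x_s,x_s}$ via \cite[Corollary~1.25]{Crossedandunitizations}, followed by the direct verification that $\oplus T_s^*$ is an adjoint) and on the converse (evaluating on elements supported at a single index to obtain $\normof{T_t}\le\normof{\oplus T_s}$ for each $t$, so that adjointability forces boundedness of the family via Lemma~\ref{Rang}). Where you diverge is the equality $\normof{\oplus T_s}=\sup_{s}\normof{T_s}$: you observe that the single-index lower bound together with the upper bound $\normof{\oplus T_s}\le \sup_s\normof{T_s}$ already finishes the job, whereas the paper embarks on a considerably longer detour --- characterising when $\oplus T_s$ is invertible, analysing the spectrum of a direct sum of self-adjoint operators, using $r(T)=\normof{T}$ for self-adjoint $T$, and finally applying the $C^*$-identity to $\oplus T_s^*T_s$. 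Your route is shorter, avoids the spectral machinery entirely, and in fact the paper already establishes your lower bound in its converse direction without noticing that it settles the norm formula. One small caution: when you justify convergence of $\sum_s\IP{T_sx_s,T_sx_s}$ by saying the partial sums form a bounded increasing net, note that bounded increasing nets need not converge in norm in a general $C^*$-algebra; the correct justification is that the tails satisfy $0\le\sum_{s\in G\bs F}\IP{T_sx_s,T_sx_s}\le M^2\sum_{s\in G\bs F}\IP{x_s,x_s}$ for finite $F\subseteq G$, so positivity gives $\bignormof{\sum_{s\in G\bs F}\IP{T_sx_s,T_sx_s}}\le M^2\bignormof{\sum_{s\in G\bs F}\IP{x_s,x_s}}$ and the partial sums are Cauchy because those of $\sum_s\IP{x_s,x_s}$ are. (The paper glosses over the same point, so this is a refinement rather than a repair.)
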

\begin{proof}
	Suppose that $\sup_{s\in S}\normof{T_s}$ is finite.
	Since $T_s 0=0$ for each $T_s$, if $x\in\bigoplus_{s\in S} X_s$ then $x_s\neq 0$ for only countably many $s\in S$, and so $T_sx_s\neq 0$ for only countably many $s\in S$. Thus to show that $\oplus T_s$ maps $\bigoplus_{s\in S}X_s$ into itself, we need only show that $\sum_{s\in S}\IP{\oplus T_sx_s,T_sx_s}$ converges in $A$.
	Since $\sup_{s\in S}\normof{T_s}$ is finite, using \cite[Corollary 1.25]{Crossedandunitizations} which says that $\IP{Tx,Tx}\leq\normof{T}^2\IP{x,x}$, we see for $x\in \bigoplus_{s\in S} X_s$ that
	\begin{align*}
	\sum_{s\in S}\IP{((\oplus T_s)x)_s,((\oplus T_s)x)_s}&=\sum_{s\in S}\IP{T_sx_s,T_sx_s}\\
	&\leq\sum_{s\in S}\normof{T_s}^2\IP{x_s,x_s}\\
	&\leq\sup_{s\in S}\normof{T_s}^2\sum_{s\in S}\IP{x_s,x_s}.\\
	\end{align*}
	Since $x\in\bigoplus_{s\in S} X_s$ and $\sup_{s\in S}\normof{T_s}$ is finite, this sum is convergent and we deduce that $\oplus T_s$ maps $\bigoplus_{s\in S}X_s$ into itself.
	To see then that \eqref{oplus} defines an adjointable operator we need only show that $\oplus T_s$ has an adjoint. We compute
	\begin{align*}
	\IP{(\oplus T_s)x,y}&=\sum_{s\in S}\IP{((\oplus T_s)x)_s,y_s}\\
	&=\sum_{s\in S}\IP{T_sx_s,y_s}=\sum_{s\in S}\IP{x_s,T^*_sy_s}\\
	&=\sum_{s\in S}\IP{x_s,((\oplus T^*_s)y)_s}=\IP{x,(\oplus T^*)y}\\
	\end{align*}
	so $\oplus T_s$ is adjointable with adjoint $\oplus T^*_s$. 
	Now suppose that the formula
	\begin{equation*}
	(\oplus T_sx)_s=T_sx_s
	\end{equation*}
	defines an adjointable operator on $\bigoplus_{s\in S}X_s$. Then by Lemma~\ref{Rang} $\oplus T_s$ is bounded. Fix $s\in S$. For any $x\in \bigoplus_{t\in S}X_t$ we have
	\begin{equation*}
	\IP{T_sx_s,T_sx_s}\leq \sum_{t\in S}\IP{T_tx_t,T_tx_t},
	\end{equation*}
	so taking norms gives
	\begin{equation*}
	\normof{T_s}^2=\sup_{\normof{x_s}\leq 1}\normof{\IP{T_sx_s,T_sx_s}}\leq\sup_{\normof{x}\leq 1}\Bignormof{\sum_{t\in S}\IP{T_tx_t,T_tx_t}}=\normof{\oplus T_t}.
	\end{equation*}
	Hence $\normof{T_s}\leq\normof{\oplus T_t}$. Thus the set $\{\normof{T_s}:s\in S\}$ is bounded above, whence $\sup_{s\in S}\normof{T_s}$ is finite.
	
	Now we wish to show that $\normof{\oplus T_s}=\sup_{s\in S}\normof{T_s}$. First we show that $\oplus T_s$ is invertible if and only if each $T_s$ is invertible and $\inf_{s\in S}\normof{T_s}>0$. 
	Suppose that each $T_s$ is invertible with $\inf_{s\in S}\normof{T_s}>0$, and suppose that $(\oplus T_s)x=(\oplus T_s)y$. We claim that $\oplus T^{-1}_s$ is an inverse for $\oplus T_s$. We first need to check that $\oplus T^{-1}_s$ is well defined by showing $\sup_{s\in S}\normof{T^{-1}_s}$ is finite. Since $\inf_{s\in S}\normof{T_s}>0$, we have
	\begin{equation*}
	\sup_{s\in S}\normof{T_s^{-1}}=\sup_{s\in S}\normof{T_s}^{-1}=(\inf_{s\in S}\normof{T_s})^{-1}<\infty.
	\end{equation*}
	Thus $\oplus T_s^{-1}$ is a well defined adjointable operator. We compute that
	\begin{equation*}
	\left((\oplus T_s^{-1})(\oplus T_s)x\right)_s=T_s^{-1}T_sx_s=x_s=T_sT_s^{-1}=\left((\oplus T_s\oplus T_s^{-1})x\right)_s
	\end{equation*}
	and so deduce that $\oplus T_s$ is invertible with inverse $\oplus T_s^{-1}$. Now suppose that $\oplus T_s$ is invertible. We wish to show that each $T_s$ is bijective. Fix $s\in S$ and suppose for some $x_s,y_s\in X_s$ that $T_sx_s=T_sy_s$. Let $x$ and $y$ be the elements of $\bigoplus_{t\in S}X_t$ with $x_t=y_t=0$ if $t\neq s$ and $x_t=x_s$, $y_t=y_s$ if $t=s$. Since $\oplus T_t$ is injective, and since 
	\begin{equation*}
	(\oplus T_t x)_s=T_sx_s=T_sy_s=(\oplus T_ty)_s,
	\end{equation*}
	we deduce that $x_s=y_s$ and so $T_s$ is injective. Similarly, if $x_s\in X_s$, then letting $x$ be as above, by surjectivity of $\oplus T_t$ we may find some $z\in\bigoplus_{t\in S} X_t$ such that $\oplus T_sz=x$, whence
	\begin{equation*}
	(\oplus T_tz)_s=T_sz_s=x_s,
	\end{equation*}
	so $T_s$ is surjective. Lastly, we wish to show that $\inf_{s\in S}\normof{T_s}>0$. We see that for all $x\in\bigoplus_{s\in S}X_s$
	\begin{equation*}
	T_s((\oplus T_s)^{-1}x)_s=(\oplus T_s(\oplus T_s)^{-1}x)_s=x_s,
	\end{equation*}
	so applying $T_s^{-1}$ to both sides we have
	\begin{equation*}
	((\oplus T_s)^{-1}x)_s=T^{-1}_sx_s.
	\end{equation*}
	This shows that $(\oplus T_s)^{-1}=\oplus T_s^{-1}$. Since we know that $\oplus T_s^{-1}$ is an adjointable operator, we must have
	\begin{equation*}
	\sup_{s\in S}\normof{T_s^{-1}}<\infty\implies \inf_{s\in S}\normof{T_s}>0.
	\end{equation*}
	Now we wish to compute the spectral radius of a direct sum of operators. Let $T_s\in\LL(X_s)$ be self adjoint operators with $\sup_s\normof{T_s}$ finite. If $\lambda\in\sigma(T_s)$ for some $s$ then $T_s-\lambda$ is not invertible. Hence $\oplus T_s-\lambda=\oplus (T_s-\lambda)$ is not invertible because and so $\lambda\in\sigma(\oplus T_s)$. We deduce that $\bigcup_{s\in S}\sigma(T_s)\subseteq \sigma(\oplus T_s)$, and so
	\begin{equation*}
		r(\oplus T_s)\geq \sup_sr(T_s)=\sup_s\normof{T_s}
	\end{equation*}
	since each $T_s$ is self-adjoint.
	Now suppose that $\lambda\in\sigma(\oplus T_s)$. If $\lambda\in\sigma(T_s)$ for some $s$ then
	\begin{equation*}
		|\lambda|\leq r(T_s)=\normof{T_s}\leq\sup_s\normof{T_s}.
	\end{equation*}
	Suppose that $\lambda\notin\bigcup_{s\in S}\sigma(T_s)$. Then $T_s-\lambda$ is invertible for all $s$, so we must have $\inf_s\normof{T_s-\lambda}=0$. Hence there is a sequence $T_{s_n}$ converging in norm to $\lambda$. In particular we have $\normof{T_{s_n}}\to\normof{\lambda}=|\lambda|$, and so $|\lambda|\leq \sup_s\normof{T_s}$. Hence we have
	\begin{equation*}
		r(T)=\sup_{\lambda\in\sigma(\oplus T_s)}|\lambda|\leq \sup_s\normof{T_s}
	\end{equation*}
	and so we deduce that $r(T)=\sup_s\normof{T_s}$. Now suppose that $T_s\in\LL(X_s)$ are any adjointable operators with $\sup_s\normof{T_s}$ finite. Then we have
	\begin{align*}
		\normof{\oplus T_s}^2&=\normof{\oplus T_s^*T_s}=r(\oplus T_s^*T_s)\\
		&=\sup_s\normof{T_s^*T_s}=\sup_s\normof{T_s}^2.\\
	\end{align*}
	Hence $\normof{\oplus T_s}=\sup_s\normof{T_s}$.
\end{proof}
\end{appendices}


\begin{bibdiv}
\begin{biblist}

\bib{AtiyahSinger}{article}{
      author={{Atiyah}, M.~F.},
      author={{Singer}, I.~M.},
       title={{The index of ellipctic operators}},
        date={1968},
     journal={Ann. of Math},
      volume={87},
}

\bib{ArbitraryGraphKTheory}{article}{
      author={{Bates}, T.},
      author={{Hong}, J.~H.},
      author={{Raeburn}, I.},
      author={{Szymanski}, W.},
       title={{The ideal structure of the {$C^*$}-algebras of infinite
  graphs}},
        date={2001-09},
     journal={ArXiv Mathematics e-prints},
      eprint={math/0109142},
}

\bib{Blackadar}{book}{
      author={Blackadar, Bruce},
       title={K-theory for operator algebras},
     edition={1},
      series={Mathematical Sciences Research Institute Publications 5},
   publisher={Springer-Verlag New York},
        date={1986},
        ISBN={0521635322,9780521635325},
  url={http://gen.lib.rus.ec/book/index.php?md5=4172A7AD1D34FB7C8DA88F4D60E062F2},
}

\bib{Encyclopedia}{book}{
      author={Blackadar, Bruce},
       title={Operator algebras: theory of {$C^*$}-algebras and von neumann
  algebras},
     edition={1},
      series={Encyclopaedia of mathematical sciences, Operator algebras and
  non-commutative geometry 122., 3},
   publisher={Springer},
        date={2006},
        ISBN={3540284869,9783540284864,9783540285175},
  url={http://gen.lib.rus.ec/book/index.php?md5=7000F138C6DDF46C6F2B1A771D5E09E9},
}

\bib{BrownDouglasFillmore}{article}{
      author={{Brown}, L.~G.},
      author={{Douglas}, R.~G.},
      author={{Fillmore}, P.~A.},
       title={{Extensions of {$C^*$}-algebras and K-homology}},
        date={1977},
     journal={Ann. of Math},
      volume={105},
}

\bib{Desingularisation}{article}{
      author={{Drinen}, D.},
      author={{Tomforde}, M.},
       title={{The {$C^*$}-algebras of arbitrary graphs}},
        date={2005},
     journal={Rocy Mount. Journal of Math.},
      volume={35},
}

\bib{CuntzKreigerIsCuntzPimsner}{article}{
      author={{Fowler}, M., N.~J.~{Laca}},
      author={{Raeburn}, I.},
       title={{The C*-algebras of infinite graphs}},
        date={2000-06},
}

\bib{FowlerRaeburn}{article}{
      author={{Fowler}, N.~J.},
      author={{Raeburn}, I.},
       title={{The Toeplitz algebra of a Hilbert bimodule}},
        date={1998-06},
     journal={ArXiv Mathematics e-prints},
      eprint={math/9806093},
}

\bib{Haag}{article}{
      author={{Haag}, U},
       title={{On {$\mathbb{Z}/2\mathbb{Z}$}-graded KK-theory and its relation
  with the graded Ext-functor}},
        date={1999},
     journal={J. Operator Theory},
      volume={42},
}

\bib{Katsura}{article}{
      author={{Katsura}, T.},
       title={{A construction of $C^*$-algebras from $C^*$-correspondences}},
        date={2003-09},
     journal={ArXiv Mathematics e-prints},
      eprint={math/0309059},
}

\bib{ThePaper}{article}{
      author={{Kumjian}, A.},
      author={{Pask}, D.},
      author={{Sims}, A.},
       title={{Graded C*-algebras, graded K-theory, and twisted P-graph
  C*-algebras}},
        date={2017-06},
     journal={ArXiv e-prints},
      eprint={1706.00563},
}

\bib{Lance}{book}{
      author={Lance, E.~Christopher},
       title={Hilbert c-modules: a toolkit for operator algebraists},
      series={London Mathematical Society Lecture Note Series},
   publisher={Cambridge University Press},
        date={1995},
        ISBN={052147910X,9780521479103},
  url={http://gen.lib.rus.ec/book/index.php?md5=1BA2F5350D5211A4021A9A9549EFC333},
}

\bib{RelativeCuntzPimsner}{article}{
      author={{Muhly}, P.~S.},
       title={{Tensor Algebras over {$C^*$}-correspondences: Representations,
  Dilations, and {$C^*$}-envelopes}},
        date={1998},
     journal={Journal of Functional Analysis},
}

\bib{AddingTails}{article}{
      author={{Muhly}, P.~S.},
      author={{Tomforde}, M.},
       title={{Adding tails to C*-correspondences}},
        date={2002-12},
     journal={ArXiv Mathematics e-prints},
      eprint={math/0212277},
}

\bib{Mundey}{misc}{
      author={Mundey, Alexander},
       title={{Bott Periodicity in Operator K-theory: A selection of Proofs}},
        date={2015},
}

\bib{PimsnerVoiculescu}{article}{
      author={{Pimsner}, M.},
      author={{Voiculescu}, D.},
       title={{Exact sequences for K-groups and Ext-groups of certain
  cross-product {$C^*$}-algebras}},
        date={1980},
     journal={J Operator Theory},
      volume={4},
}

\bib{GraphAlgebras}{book}{
      author={Raeburn, Iain},
       title={Graph algebras},
      series={Regional Conference Series in Mathematics},
   publisher={American Mathematics Society},
        date={2005},
      volume={103},
        ISBN={0-8218-3660-9},
}

\bib{CStarNotes}{misc}{
      author={Raeburn, Iain},
       title={{C*-algebras: Lecture notes for honours students}},
        date={2013},
}

\bib{RaeburnWilliams}{book}{
      author={{Raeburn}~D.P., I., {Williams}},
       title={Morita equivalence and continuous-trace {$C^*$}-algebras},
      series={Mathematical Surveys and Monographs 060},
   publisher={AMS},
        date={1998},
        ISBN={9780821808603},
  url={http://gen.lib.rus.ec/book/index.php?md5=8f84becd30d994e4fb16a3759c1156e1},
}

\bib{ToeplitzNuclear}{article}{
      author={{Rennie}, A.},
      author={{Robertson}, D.},
      author={{Sims}, A.},
       title={{Groupoid Fell bundles for product systems over quasi-lattice
  ordered groups}},
        date={2015-01},
     journal={ArXiv e-prints},
      eprint={1501.05476},
}

\bib{AlgTop}{misc}{
      author={Rennie, Adam},
       title={{Topics in Algebra and Topology}},
        date={2017},
}

\bib{Kasnotes}{book}{
      author={Rennie, Adam},
       title={{Introduction to the Kasparov product}},
        date={2018},
}

\bib{BlueBook}{book}{
      author={R{\o}rdam, Mikael},
      author={Larsen, Flemming},
      author={Laustsen, Niels~Jakob},
       title={An introduction to {$K$}-theory for {$C^*$}-algebras},
      series={London Mathematical Society Student Texts},
   publisher={Cambridge University Press, Cambridge},
        date={2000},
      volume={49},
        ISBN={0-521-78334-8; 0-521-78944-3},
      review={\MR{1783408}},
}

\bib{Crossedandunitizations}{misc}{
      author={Sims, Aidan},
       title={{Unitizations and Crossed Products}},
        date={2014},
}

\bib{TopoNotes}{misc}{
      author={Sims, Aidan},
       title={{Topology and Chaos: Lecture notes for Math323 at University of
  Wollongong}},
        date={2016},
}

\bib{Groupoids}{misc}{
      author={Sims, Aidan},
       title={{Hausdorff etale groupoids and their {$C^*$}-algebras}},
        date={2017},
         url={https://www.uow.edu.au/~asims/files/GroupoidsNotes-2017.pdf},
}

\bib{Skandalis}{article}{
      author={{Skandalis}, G.},
       title={{Exact Sequences for the Kasparov Groups of Graded Algebras}},
        date={1985},
     journal={Canad. J. Math},
}

\bib{JonosThesis}{misc}{
      author={Taylor, Jono},
       title={{Honours notes: KK-theory, {$\mathbb{Z_2}$} graded
  {$C^*$}-algebras, graph algebras: Honours Thesis}},
        date={2018},
}

\bib{Marco}{article}{
      author={{Thill}, M.},
       title={{Introduction to Normed *-Algebras and their Representations, 7th
  ed}},
        date={2010-11},
     journal={ArXiv e-prints},
      eprint={1011.1558},
}

\bib{FreeProbTheory}{book}{
      author={Voiculescu, Dan-Virgil},
       title={Free probability theory},
     edition={1},
      series={Fields Institute Communications},
   publisher={American Mathematics Society},
        date={1997},
      volume={12},
        ISBN={978-0-8218-0675-3},
}

\bib{GroupoidAmenable}{article}{
      author={{Williams}, P., D},
      author={{Sims}, A.},
       title={{The primitive ideals of some etale groupoid {$C^*$}-algebras}},
        date={2016},
     journal={Algebr.Represent.Theory},
      volume={19},
}

\end{biblist}
\end{bibdiv}

\end{document}